\documentclass[10pt, reqno]{amsart}
\usepackage{amssymb, latexsym}
\usepackage{amsmath}
\usepackage{amsthm}
\usepackage{graphicx}
\usepackage{hyperref}
\usepackage{titletoc}
\usepackage{pdfsync}
\usepackage{color, xcolor}
\usepackage{amsthm, amsmath, amssymb}
\usepackage{amsthm, amsmath, amssymb}
\usepackage{mathrsfs}

\hypersetup{
  colorlinks=true,
  linkcolor=blue,
  citecolor=red,
  urlcolor=blue
}

\usepackage{mathrsfs}

\numberwithin{equation}{section}
\newtheorem{theorem}{Theorem}[section]
\newtheorem{proposition}[theorem]{Proposition}
\newtheorem{lemma}[theorem]{Lemma}
\newtheorem{corollary}[theorem]{Corollary}

\usepackage{multirow}
\usepackage[font=bf, aboveskip=15pt]{caption}
\usepackage[toc, page]{appendix}

\theoremstyle{definition}

\newcommand{\R}{\mathbb R}

\newcommand{\cuad}{{\sqcap\kern-.68em\sqcup}}

\newcommand{\e}{\epsilon}

\newcommand{\be}{\begin{equation}}
\newcommand{\ee}{\end{equation}}

\newtheorem{remark}[theorem]{Remark}

\def\G{{\mathfrak G}}

\def\R{{\mathfrak R}}

\def\begeq{\begin{equation}}
\def\endeq{\end{equation}}
\def\p{\partial}

\def\lf{\left}
\def\ri{\right}

\def\R{\Bbb R}

\def\G{\Gamma}

\allowdisplaybreaks

\begin{document}

\title
[Gross-Pitaevskii equation with non-symmetric potential]
{Solutions with one dimensional concentration for a two dimensional Gross-Pitaevskii model with general potential}

\author{Lipeng Duan}
\address{Lipeng Duan,
\newline\indent School of Mathematics and Information Science, Guangzhou University,
\newline\indent Guangzhou 510006, P. R. China.
}
\email{lpduan777@sina.com}

\author{Suting Wei}
\address{Suting Wei,
\newline\indent Department of Mathematics, South China
Agricultural University,
\newline\indent Guangzhou, 510642, P. R. China.
}
\email{stwei@scau.edu.cn}

\author{Jun Yang$^\S$}
\address{School of Mathematics and Information Science,
Guangzhou University,
\newline\indent Guangzhou 510006, P. R. China.
}
\email{jyang2019@gzhu.edu.cn}

\begin{abstract}
We concern standing wave solutions with frequency $\lambda$ to a two dimensional Gross-Pitaevskii equation with a trap potential under the unit mass constraint,
which is used to describe Bose-Einstein condensates with attractive interaction.
First, we investigate the necessary conditions for existence of the solutions with concentration phenomena directed along closed smooth curves.
Next, not only imposing stationary and non-degeneracy conditions on the curves with respect to an auxiliary weighted length involving the trap potential,
but also adding some other technical assumptions,
we select a sequence $\{\lambda_j\}$ of the frequency $\lambda$ with $-\lambda_j\rightarrow +\infty$ and construct solutions with concentration directed along the curves.
Our result partially answers the conjecture raised in [A. Ambrosetti, A. Malchiodi, W.-M. Ni, Comm. Math. Phys. 2003] about necessary condition for solution concentrating at submanifolds.
The solutions constructed in this paper are concentrating on curves whose length are non-uniformly bounded,
and hence the situation is quite different from that in [M. del Pino, M. Kowalczyk, J. Wei, Comm. Pure Appl. Math. 2007].


\vspace{2mm}
{\textbf{Keywords:} Gross-Pitaevskii model, $L^2$-mass constraint, General potential, Concentration on curves}
\vspace{2mm}

{\textbf{AMS Subject Classification:} 35A01, 35B09, 35B25.}

\end{abstract}

\date{\today}

\thanks{$^\S$Corresponding author: Jun Yang, jyang2019@gzhu.edu.cn}

\maketitle

\tableofcontents

\section{Introduction}\label{Section1}

\subsection{ The problem with a constraint}\label{Section1.1}\

Consider the time-dependent Gross-Pitaevskii equation
\begin{align} \label{000original}
i \frac{\partial \Psi}{\partial {\mathfrak t}} = - \Delta
\Psi + W({\bf y} )\Psi - a |\Psi|^2\Psi, \quad \forall\, ({\mathfrak t}, {\bf y}) \in {\mathbb R}\times{\mathbb R}^N, \quad N\geq 2
\end{align}
with a mass constraint
\begin{align} \label{constraint00}
\int_{{\mathbb R}^N} |\Psi|^2 =1,
\end{align}
where $i$ is the imaginary unit, $a>0$ represents the attractive interaction strength and $W( {\bf y} )$ is the external trap potential.
This kind of equation \eqref{000original} with the mass constraint \eqref{constraint00} is a
well-known mathematical model to describe Bose-Einstein condensates
\cite{aftalionriviere, aftalionbook, Cornell, Davis, Gross, LS, Lieb2, Pethick-Smith, Pitaevskii}.

\smallskip
If we look for a standing wave solution of \eqref{000original}-\eqref{constraint00} with the form
$$
\Psi({\mathfrak t}, {\bf y}) = \tilde u({\bf y}) e^{-i \lambda {\mathfrak t}}
$$
where $\lambda \in {\mathbb R}$ is a frequency parameter to be determined and $\tilde u$ is an unknown function,
then the pair $(\lambda, \tilde u)$ must satisfy the following time-independent equation
\begin{align} \label{eq1}
 - \Delta \tilde u + W({\bf y}) \tilde u -a |\tilde u|^2 \tilde u = \lambda \tilde u,
\quad \forall\,{\bf y} \in \mathbb{R}^N,
\end{align}
with the mass constraint
\begin{align} \label{constraint0}
 \int_{{\mathbb R}^N} |\tilde u|^2 {\mathrm d} {\bf y} =1.
\end{align}
We also know that a ground state solution of \eqref{eq1}-\eqref{constraint0} is a minimizer of the following problem
\begin{align}\label{minimzation}
 E_a : = \min_{ u \in \mathscr M} J_a (u),
\end{align}
where
\[ J_a (u) = \int_{{\mathbb R}^N} \Big[\, \frac{1}{2}| \nabla u|^2 + \frac{1}{2} W({\bf y}) u^2 -\frac a 4 |u|^4 \,\Big] {\mathrm d} {\bf y}, \]
and
\[\mathscr M = \Big\{ u \in H^1({\mathbb R}^N)\, :\, \int_{{\mathbb R}^N} |u|^2 {\mathrm d} {\bf y} =1\,\text{and} \,\int_{{\mathbb R}^N} W({\bf y}) |u|^2 {\mathrm d} {\bf y} <+\infty\Big\}. \]
 Readers can refer to \cite{numerics, baoREVIEW, GS, Guo, Guo1, KG} on the ground state solutions of Gross-Pitaevskii equation.
 On the other hand, any solution of \eqref{eq1} whose energy is greater than the ground state is called excited state in the physical literature \cite{Bao}.

\smallskip
 Recently, P. Luo, S. Peng, J. Wei and S. Yan \cite{Luo} studied problem \eqref{eq1}-\eqref{constraint0} in ${\mathbb R}^N$ $ (N=2, 3)$,
and showed that, if $\tilde u_a$ is a solution to problem \eqref{eq1}-\eqref{constraint0} concentrating at some points as $a\rightarrow {\bf a}$, then there must have been
\[ \text{${\bf a} \geq 0$ \quad and \quad $\lambda\to -\infty, $ as $a\to {\bf a}$, } \]
where ${\bf a} = k a_* $ for some integer $k$ if $N=2$, or ${\bf a} =0$ if $N=3$. Here
$$
a_*= \int_{{\mathbb R}^2} |Q|^2 {\mathrm d} {\bf y},
$$
and $Q$ is the unique positive solution to the equation
$
- \Delta Q + Q - Q^3=0
$
in $ \mathbb{R}^2. $

\smallskip
In the present paper,
we will consider the asymptotic behavior of excited state solutions $\tilde u$ to problem \eqref{eq1}-\eqref{constraint0}, as $a \to {\bf a}, $ with some ${\bf a}$ to be determined later, which exhibit a higher dimensional concentration phenomenon. According to the analysis in \cite{Zhouyang} and \cite{Luo}, throughout this paper, we make the assumption that
\begin{align} \label{lambda a to bf a}
-\lambda \rightarrow + \infty, \quad \text{as $a \to {\bf a} $}.
\end{align}
Letting
\begin{align*}
\tilde u = \sqrt{\frac{-\lambda} {a} } u,
\end{align*}
then we get that $ u $ satisfies
\begin{align*}
- \frac 1 \lambda \Delta u - \Big[1- \frac 1 \lambda W({\bf y}) \Big] u + | u|^2 u = 0,
\quad \forall\,{\bf y} \in \mathbb{R}^N.
\end{align*}
Taking
\begin{align}
\epsilon^2 = \frac 1{-\lambda},
\label{epsilon}
\end{align}
 we will transform \eqref{eq1}-\eqref{constraint0} to
\begin{align} \label{eq2}
- \epsilon^2 \Delta u +\big[1+ \epsilon^2 W({\bf y}) \big]u - | u|^2 u = 0,
\quad \forall\,{\bf y} \in \mathbb{R}^N,
\end{align}
 and
\begin{align} \label{constraint1}
\int_{{\mathbb R}^N} | u|^2 {\mathrm d} {\bf y}=a \epsilon^2.
\end{align}
The assumption \eqref{lambda a to bf a} is equivalent to the following fact
\begin{align}
\epsilon \to 0, \quad \text{as\,$a \to {\bf a} $}.
\label{epsilonto0}
\end{align}
 The studies on problem \eqref{eq1}-\eqref{constraint0} (or \eqref{eq2}-\eqref{constraint1}), such as concentration, uniqueness and so on, have been the object of investigation of several researchers. We refer the readers to the papers \cite{Guoqing-chunhua, Zhouyang, Guo JMPA, Guo-luo-peng, Luo, Pistoia} and the references therein.

\smallskip
\subsection{ The singular perturbation problem}\label{Section1.2}\

 Here we mention that elliptic problem \eqref{eq2} is closely related to the following singular perturbation problem
\begin{align} \label{schrodinger}
 - \epsilon^2 \Delta u + V({\bf y}) u - | u|^{p-1} u = 0, \quad u>0, \quad u \in H^1(\Omega), \quad \Omega \subseteq {\mathbb R}^N,
\end{align}
where $1<p<2^*-1, $ $\epsilon >0$ is a small parameter, and $V$ (independent of $\epsilon$) is the potential.
Problem \eqref{schrodinger} appears in many applications, for instance in chemotaxis \cite{LinNiTak1988} and biological pattern formation \cite{Gierer-Meinhardt}.
Problem \eqref{schrodinger} with Dirichlet or Neumann boundary conditions were studied
extensively in recent years, involving the necessary and sufficient for the concentration.
See \cite{AMNconjecture1, AMNconjecture2, Ambrosetti, AmbMalSecc2001,
BP, BP1, BP2, CPY2019, DY10,
delPFelmer1996, delPFelmer1997, delPKowWei1,
FloerWein1986,
GuoYang,
WangWeiYang, WeiYang, Wei-xu-yang, SWei-Yang}
for backgrounds and references about problem \eqref{schrodinger}.
More generally, for the complex value cases, the concentration of solution for the singular perturbation problem
\begin{align} \label{schrodingercomplexvalue}
 - \epsilon^2 \Delta u + V({\bf y}) u - | u|^{p-1} u = 0, \quad u\in \mathbb{C}, \quad u \in H^1(\Omega; \mathbb{C}), \quad \Omega \subseteq {\mathbb R}^N,
\end{align}
 was also studied.
 In \cite{machiodi}, F. Mahmoudi, A. Malchiodi and M. Montenegro exhibited the complex value solutions which concentrate along curves and whose phases are highly oscillatory. L. Wang and C. Zhao in \cite{Wangzhao} studied the nonlinear Schr\"{o}dinger problem with electromagnetic potential and showed the existence of solution which concentrates at a closed curve.


\medskip
\section{Main results}\label{Section2}

\smallskip
Inspired by the previous discussions, the main objective of the present paper is to study problem \eqref{eq2}-\eqref{constraint1} in ${\mathbb R}^2$ with the assumption \eqref{lambda a to bf a} $\,(i.e.\, \eqref{epsilonto0}\,)$ from two perspectives:

\smallskip
\noindent$\clubsuit$
{\emph{
We want to investigate some necessary conditions for the existence of a solution with concentration phenomenon directing along a closed curve.
}}

\smallskip
\noindent$\clubsuit$
{\emph{
For problem \eqref{eq2}-\eqref{constraint1} with non-symmetric potential, we will impose some reasonable conditions on the potential and then construct a solution concentrating at a curve by choosing $\lambda$ $($i.e. the parameter $\epsilon$$)$.
}}

\subsection{ Necessary conditions for solutions with concentration}\label{Section2.2}\

In this part, we present some results about the necessary conditions of the concentration
 phenomenon happening for problem \eqref{eq2}-\eqref{constraint1} and the singular perturbation problem \eqref{schrodinger}.

\subsubsection{Necessary conditions for concentration phenomena to the singular perturbation problem}\

For problem \eqref{schrodinger}, if the concentrations take place at some points $x_{\epsilon, j}$, then $\nabla V (x_{\epsilon, j})=0, $ see \cite{Ambrosetti, AmbMalSecc2001, CPY2019, delPFelmer1996, delPFelmer1997, FloerWein1986}. On the other hand, the necessary conditions for the higher dimensional concentration were also studied.
We now recall the following proposition concerning the radially symmetric case.
\\[2mm]
{\bf Proposition A}. (\cite{AMNconjecture1})
{\em Let $\Omega = {\mathbb R}^N$ in \eqref{schrodinger}, and the function $V$ is radial.
Suppose that problem \eqref{schrodinger} has a radial solution which concentrates at the sphere $|{\bf y}|=\bar r$,
then $\bar r$ is a critical point of $M(r)= r^{N-1} V^\sigma (r)$ with $\sigma = \frac{p+1} {p-1} - \frac 12$.
\qed
}


\bigskip
A rather complicated but interesting problem is to determine the necessary conditions for the non-radial concentration phenomena of higher dimension for problem \eqref{schrodinger}.
The necessary condition for the existence of a non-radial solution with concentration on $k$-dimensional submanifold $\Sigma$ was derived formally in Section 8 (Open Problems and Perspectives) in \cite{AMNconjecture1}, i.e.,
\begin{align}
\bigg[\frac{p+1}{p-1}-\frac{N-k}{2}\bigg] \nabla^\perp V=V \mathbf{H}\quad \mbox{on } \Sigma,
\label{stationarygeneral}
\end{align}
where $\mathbf{H}$ is the mean curvature of $\Sigma$
and $\nabla^\perp$ denotes the component of $V$ normal to $\Sigma$.
This was exemplified in \cite{delPKowWei1}, \cite{SWei-Yang} and also \cite{WangWeiYang}.
However, to the best of our knowledge, there is no rigorous proof to \eqref{stationarygeneral} in the available literature.
This will be addressed in Corollary \ref{corollary2.2} for the two dimensional case.

\subsubsection{Necessary conditions for solutions with concentration to problem \eqref{eq2}-\eqref{constraint1}}\

Let's go back to problem \eqref{eq2}-\eqref{constraint1}.
 In the recent work \cite{Zhouyang}, Q. Guo, S. Tian and Y. Zhou considered problem \eqref{eq2}-\eqref{constraint1} in ${\mathbb R}^2$ with $W({\bf y}) = W(|{\bf y}|)$
and proved the necessary conditions for radial solutions concentrating at spheres.
More precisely,
{\em
 if a radial solution $u_\epsilon$ of the problem \eqref{eq2}-\eqref{constraint1}
concentrated at a sphere $r=r_\epsilon $ with $r=|{\bf y}|$, then there must have been
\[
r_\epsilon \to +\infty, \quad \text{and}\quad M_\epsilon' (r_\epsilon) \to 0, \quad \text{as}\,\epsilon \to 0,
\]
where $M_\epsilon (r) = r\big[1+ \epsilon^2 W(r) \big]^\frac 32$.
}
We note, in \cite{Zhouyang} the symmetry reduces the technical difficulties and allows the authors to handle the problem by ODE methods.

\smallskip
A strong motivation of the current work is to give some necessary conditions for solutions with concentration to \eqref{eq2}-\eqref{constraint1}, without imposing any symmetry assumption on the trapping potential $W$.
To avoid complicated calculations, we here only consider the two dimensional case, i.e., $N=2$,
with concentration directed along a smooth simple closed curve $\Gamma_\epsilon\subseteq{\mathbb R}^2$.
More precisely, the following problems under the assumption \eqref{lambda a to bf a} will be addressed:

\smallskip
{\em
$\bullet$\ If $u_\epsilon$ is a non-radial solution to \eqref{eq2}-\eqref{constraint1} with higher dimensional concentration $($see the form in \eqref{u-decomposition}-\eqref{smallperturbation1}$)$ as $a \to {\bf a}$, then what's the value of ${\bf a}$?
What's the limit of $\G_\epsilon$ when $a\to {\bf a}$ $($or $\epsilon \to 0$ $)$?
What necessary conditions shall the potential $W$ and $\G_\epsilon$ satisfy?
}

\medskip
Note that in the present work, the curves $\G_\epsilon$ depend on the parameter $\epsilon$.
For the above purpose, the following are the settings to avoid some tedious computations and make mathematical analysis rigorous.
Here are the assumptions in the first one for the curves, which are the concentration sets of the solutions.

\smallskip\noindent {\bf{(A1).}}
{\it
Assume that $\Gamma_\epsilon$ are simple smooth closed curves in ${\mathbb R}^2$.
Consider the natural parameterization $\gamma_\epsilon(\theta)$ of $\Gamma_\epsilon$ with positive orientation, where $\theta$ denotes the arclength parameter measured from a fixed point of $\Gamma_\epsilon$.
We denote
\begin{align}
\ell = |\Gamma_\epsilon|,
\label{ell-epsilon}
\end{align}
and the curvature of $\G_\epsilon$ by $\kappa(\theta)$.
We also assume that there exists a positive constant $C_0$ independent of $\epsilon$ such that
\begin{align} \label{curvature bounded}
|\kappa(\theta)|, \, |\kappa'(\theta)|\leq C_0,
\quad\forall\, \theta\in [0, \ell).
\end{align}
}

\smallskip
The assumption $-C_0\leq\kappa(\theta)\leq C_0$ in \eqref{curvature bounded} implies that
there exists a small positive number $M_0$ independent of $\epsilon$ (which can be chosen such that $C_0M_0<1$) in such a way that any point ${\bf y} $ near $\Gamma_\epsilon$ in ${\mathbb R}^2 $ can be represented in {\bf{Fermi coordinates}}
\begin{align}\label{fermi}
{\bf y}= \gamma_\epsilon(\theta) +t \nu_\epsilon(\theta), \quad \forall\,  t\in(-M_0, M_0),\ \forall\,\theta\in [0, \ell),
\end{align}
where $ \nu_\epsilon(\theta) $ denotes the unit outer normal to $\Gamma_\epsilon$.
From \eqref{fermi}, we know that the map
${\bf y} \mapsto (t, \theta)$ is a local diffeomorphism (see \cite{Gilbarg}, Sec. 14.6). By a slight abuse use of notation, for ${\bf y}$ in \eqref{fermi} and any function $W$, we denote $W(t, \theta)$ to actually mean $W({\bf y})$.
We now can propose the assumptions for the setting of the potential function $W$.

\smallskip
\noindent {\bf{(A2).}}
{\it
The smooth potential function $W$ is positive on ${\mathbb R}^2$.
Moreover, there exist two positive constants $C$ and $\varrho<1$ independent of $\epsilon$, such that
\begin{align}
\int_0^{\ell}\big[1+ \epsilon^2 W(0, \theta) \big]\,{\mathrm d}\theta \,\leq\, C\epsilon^{\varrho-1},
\label{length-constraint}
\end{align}
\begin{align}
\max\big\{\, \epsilon^2|W_t (t, \theta)|, \ \epsilon^2|W_{tt} (t, \theta)| \, \big\}
\leq
C \big[1+\epsilon^2 W(0, \theta)\big], \quad \forall\, \theta \in [0, \ell),\ \forall\,t\in(-M_0, M_0),
\label{W_t-control}
\end{align}
and
\begin{align} \label{W_theta-control}
& \max\Big\{\,
\epsilon^2|W_{\theta}(0, \theta)|,\
\epsilon^2|W_{\theta\theta}(0, \theta)|,\
\,\epsilon^2|W_{\theta\theta\theta}(0, \theta)|,\
\, \epsilon^2|W_{t\theta}(0, \theta)|,
\nonumber\\[2mm]
& \quad \quad \quad
\,\epsilon^2|W_{tt\theta}(0, \theta)|,\
\,\epsilon^2|W_{t\theta\theta}(0, \theta)|
\,\Big\}
\,\leq\,
C\big[1+\epsilon^2 W(0, \theta)\big],
\quad \forall\, \theta \in [0, \ell).
\end{align}
}

\smallskip
\begin{remark} \label{remark2-1}
{\emph{
Note that we need the assumptions in \eqref{W_t-control} and \eqref{W_theta-control} such that some estimates can be carried out.
The readers can refer to \eqref{beta'}-\eqref{beta-bound}, the proof of Lemma \ref{Lemma4.2} and also the last paragraph in Section \ref{Section3.2}.
On the other hand, we will use \eqref{length-constraint} to deal with the complicated resonance involved in the higher dimensional non-radial concentration for problem \eqref{eq2}-\eqref{constraint1}, which will be clarified in Remark \ref{remark2-7}.
\qed
}}
\end{remark}

\medskip
Our first goal is to find necessary conditions for the existence of a concentrated solution $u_\epsilon$ to problem \eqref{eq2}-\eqref{constraint1},
whose properties can be described in the following way.

\smallskip
{\it
We denote by $U$ the unique positive solution to the problem
\begin{align}
U'' - U + U^3= 0 \quad \text{for }\, x\in {\mathbb R},
\qquad U'(0) =0, \qquad U(\pm \infty) =0,
\label{block}
\end{align}
and also define a positive function $\beta>1$ in the form
\begin{align}
\beta(\theta) \,=\, \big[1+ \epsilon^2 W(0, \theta) \big]^{\frac 12},
\label{beta}
\end{align}
as well as a smooth cut-off function $\chi$ by
\begin{align}
\chi(t)=1\quad\mbox{if}\ |t|<M_1,
\qquad
\chi(t)=0\quad\mbox{if}\ |t|>2M_1:=M_2,
\label{cut-off000000}
\end{align}
where $M_2<M_0$ with $M_0$ given in \eqref{fermi}.
Suppose that
\begin{align}
u_\epsilon({\bf y})={\mathbf u}({\bf y})+\phi({\bf y})
\label{u-decomposition}
\end{align}
is a solution with concentration on $\Gamma_{\epsilon}$ to problem \eqref{eq2}.
In other words, the function ${\mathbf u}$ has the following local form in the Fermi coordinates $(t, \theta)$
\begin{align}
{\mathbf u} (t, \theta)
\,=\,
\chi(t)\beta(\theta)\, U\big( \beta(\theta)t/\epsilon\big).
\label{mathbfu}
\end{align}
Moreover, the requirements for $\phi$ can be listed in the forms
\begin{align}
\Big\| \frac{\partial^J\phi}{\partial t^{J_1}\partial \theta^{J_2}}\Big\|_{L^2\big(\{ |t|< M_2\}\big)}
=
 \frac{O\big(\epsilon^\varrho\big)}{\displaystyle\int_0^{\ell}|\beta(\theta)|^5\,{\mathrm d}\theta}
\Big\| \frac{\partial^J{\mathbf u}}{\partial t^{J_1}\partial \theta^{J_2}}\Big\|_{L^2\big(\{ |t|< M_2\}\big)},
\label{smallperturbation0}
\end{align}
\begin{align}
\Big\|\frac{\partial^j\phi}{\partial \theta^j}\Big\|_{L^2\big(\{ |t|< M_2\}\big)}
=
 \frac{O\big(\epsilon^\varrho\big)}{\displaystyle\int_0^{\ell}|\beta(\theta)|^5\,{\mathrm d}\theta}
\Big\|\frac{\partial^j{\mathbf u}}{\partial \theta^j} \Big\|_{L^2\big(\{ |t|< M_2\}\big)},
\label{smallperturbation1}
\end{align}
where $\varrho<1$ is the positive constant in \eqref{length-constraint}, $j$ is an index with $2\leq j\leq 3$ and $J=(J_1, J_2)$ is an index of two components with $|J|\leq 1$ or $J=(1, 1)$.
}

\medskip
 Now we give our first result of the present paper.

\begin{theorem} \label{necessary condition theorem}
Suppose that: problem \eqref{eq2}-\eqref{constraint1} has a solution $u_\epsilon$ which concentrates at the smooth simple closed curve $\Gamma_\epsilon$,
i.e. the solution $u_\epsilon$ enjoys the decomposition \eqref{u-decomposition} plus the requirements in \eqref{mathbfu}-\eqref{smallperturbation1}.
If \eqref{lambda a to bf a} and the assumptions in {\bf{(A1)}}-{\bf{(A2)}} hold the validity, then we have the following results

\smallskip
\noindent{\bf{(1).}}
There exists a positive constant $\ell_0$ independent of $\epsilon$ such that
\begin{align}\label{ellzero}
 \ell\,\geq\, \ell_0,\qquad \forall\,\epsilon >0.
\end{align}

\smallskip
\noindent{\bf{(2).}}
For the $\bf a$ given in \eqref{lambda a to bf a}, there holds
\begin{align}
 {\bf a} = + \infty.
\end{align}

\smallskip
\noindent{\bf{(3).}}
The following relation holds:
\begin{align}\label{condition1}
 \frac{ 3 \epsilon^2 W_t(0, \theta)}{\, 2\big[1+\epsilon^2W(0, \theta)\big]\,}
 \,+\,
 \kappa (\theta) \,=\, o(1)
 \qquad \mbox{as}\ \epsilon\rightarrow 0,
\end{align}
\qquad\ for all $\theta \in [0, \ell)$.
\qed
\end{theorem}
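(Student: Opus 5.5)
The plan is to test equation \eqref{eq2} against suitable functions built from the profile ${\mathbf u}$ and read off the three conclusions from local Pohozaev-type identities. Part (1) and part (2) come from the mass constraint \eqref{constraint1}; part (3) comes from testing against $\partial_t u_\epsilon$.

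\emph{Step 1: expand the mass to get (2).} Write $\mathrm{d}{\bf y}=(1+\kappa(\theta)t)\,\mathrm{d}t\,\mathrm{d}\theta$ in the Fermi coordinates \eqref{fermi}. Change variables $s=\beta(\theta)t/\epsilon$. Then
\begin{align*}
\int_{{\mathbb R}^2}{\mathbf u}^2\,\mathrm{d}{\bf y}
=\epsilon\int_0^{\ell}\beta(\theta)\,\mathrm{d}\theta\int_{\mathbb R}U^2\,\mathrm{d}s\,\big(1+O(\epsilon)\big).
\end{align*}
By \eqref{smallperturbation0} with $J=0$, the $\phi$-contribution is of lower order, since $\int_0^\ell\beta^5\geq\ell$. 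So \eqref{constraint1} gives
\begin{align*}
a\epsilon^2=\epsilon\, \|U\|_{L^2}^2\int_0^\ell\beta\,(1+o(1)).
\end{align*}
Because $\beta>1$, this yields $a\geq c\,\ell/\epsilon$.

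\emph{Step 2: prove (1), which then closes (2).} I would argue (1) by contradiction. Suppose $\ell\to0$ along a subsequence. A simple closed curve of length $\ell$ has total curvature $2\pi$, so $\max|\kappa|\geq 2\pi/\ell\to\infty$. This contradicts \eqref{curvature bounded}, and we may take $\ell_0=2\pi/C_0$. With $\ell\geq\ell_0$ in hand, Step 1 gives $a\gtrsim \ell_0/\epsilon\to+\infty$, i.e. ${\bf a}=+\infty$.

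\emph{Step 3: prove (3).} Write \eqref{eq2} in Fermi coordinates, where
\begin{align*}
\Delta=\partial_{tt}+\frac{\kappa}{1+\kappa t}\partial_t+\frac{1}{1+\kappa t}\partial_\theta\Big(\frac{1}{1+\kappa t}\partial_\theta\Big).
\end{align*}
Multiply by $\partial_t u_\epsilon$ and integrate over $t\in(-M_1,M_1)$ at fixed $\theta$; to control the $\phi$ terms it may be necessary to integrate in $\theta$ against a localized test function. The main contributions, after rescaling, are:
\begin{itemize}
\item[(i)] The curvature term $-\epsilon^2\kappa\int(\partial_t u)^2$, which equals $-\epsilon\kappa\beta^3\int U'^2\,\mathrm{d}s$.
\item[(ii)] The potential term from integrating $\big[1+\epsilon^2W\big]u\,u_t$ by parts, namely $-\tfrac12\int\epsilon^2W_t u^2$. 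Expanding $W_t(t,\theta)=W_t(0,\theta)+O(t)$ using \eqref{W_t-control}, this is $-\tfrac12\epsilon^2W_t(0,\theta)\,\epsilon\beta\int U^2\,\mathrm{d}s$.
\item[(iii)] Pure $t$-derivatives and the nonlinearity, which integrate to zero up to the cutoff, where the terms are exponentially small.
\end{itemize}
The one-dimensional Pohozaev identities give $\int U^2=3\int U'^2$ and $\int U^4=4\int U'^2$ (up to the normalization of \eqref{block}). Inserting these and dividing by $\epsilon\beta^3\int U'^2$ produces exactly
\begin{align*}
\kappa+\frac{3\epsilon^2W_t(0,\theta)}{2\beta^2}=\text{error}.
\end{align*}

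\emph{Main obstacle: the error in Step 3.} We must show that the $\theta$-derivative terms $\epsilon^2\partial_\theta^2 u\cdot\partial_t u$ and the $\phi$-terms are $o(\epsilon\beta^3)$. For ${\mathbf u}$ itself, $\partial_\theta$ of ${\mathbf u}$ brings down factors $\beta'/\beta\cdot(1+s)$. By \eqref{W_theta-control}, $\beta'=O(\epsilon^0)\cdot\epsilon^2W_\theta/\beta=O(\beta)$, so these terms are $O(\epsilon^2)$ relative to the main ones, and their mixed structure integrates by parts in $t$ to odd integrands. The $\phi$-terms need the full quantitative smallness in \eqref{smallperturbation0}--\eqref{smallperturbation1}, which holds only in $L^2$ over the whole tube, so they yield an estimate integrated in $\theta$. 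The factor $\big(\int\beta^5\big)^{-1}$ combined with \eqref{length-constraint} should make the total error $O(\epsilon^{\varrho})$ after weighting. To pass to a pointwise statement for each $\theta$, I would first try testing against $\partial_t u_\epsilon\,\eta(\theta)$ with arbitrary smooth $\eta$. The resulting weak identity gives a small integrated error; the uniform bounds on $\kappa'$ and on $\partial_\theta$ of $\epsilon^2W_t/\beta^2$ supplied by (A1)--(A2) then upgrade it to pointwise $o(1)$ via an interpolation (Lipschitz plus $L^1$-small implies sup-small) argument.
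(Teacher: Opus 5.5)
Your route is essentially the paper's. Parts (1) and (2) come from the turning-tangents theorem and the mass constraint, exactly as in Lemma~4.1 and the final subsection of Section~4. Part (3) comes from a local Pohozaev identity in the normal variable, tested against a function of $\theta$, with the $\phi$-terms controlled by \eqref{smallperturbation0}--\eqref{smallperturbation1}. Your multiplier is simpler than the paper's. Testing with $\partial_t u_\epsilon$ against the flat measure $\mathrm{d}t$ makes the nonlinearity a pure $t$-derivative and leaves only $\epsilon^2\kappa\int u_t^2$ and $\tfrac12\epsilon^2\int W_t u^2$, so you only need $\int U^2=3\int U'^2$. The paper instead combines the multipliers $(1+\kappa t)u$ and $(1+\kappa t)^2u_t$, which brings in $\kappa\int(1+\kappa t)u^4$. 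Your Lipschitz-plus-$L^1$ upgrade to a pointwise statement is correct: the $\theta$-derivative of $\kappa+\frac{3\epsilon^2W_t}{2\beta^2}$ is bounded by \eqref{curvature bounded}, \eqref{W_t-control}, \eqref{W_theta-control}, and $\ell\ge\ell_0$. The paper only asserts this step, so your version is more explicit.

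The step that fails as written is (iii). The boundary values of $\tfrac{\epsilon^2}{2}u_t^2$, $\tfrac12(1+\epsilon^2W)u^2$ and $\tfrac14u^4$ at $t=\pm M_1$ are exponentially small for the $\mathbf{u}$-part only. The hypotheses \eqref{smallperturbation0}--\eqref{smallperturbation1} are $L^2$ bounds on the tube and contain no bound on $\phi_{tt}$, so the trace of $\epsilon^2\phi_t^2$ at $t=\pm M_1$ is not controlled by them at all. For $\phi$ itself, a trace inequality gives only something of order $\epsilon^{2\varrho}$ for $\int_0^\ell\phi(\pm M_1,\theta)^2\,\mathrm{d}\theta$. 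That is not $o(\epsilon)$ unless $\varrho>1/2$, and $o(\epsilon)$ is what you need against the main term $\epsilon\int\eta\beta^3(\cdots)$. The paper proves Lemma~\ref{Lemma4.2} precisely for this, by a barrier argument for the equation satisfied by $\phi$.

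A cheaper repair in your setting is to use the multiplier $\zeta(t)\eta(\theta)\partial_t u_\epsilon$, with $\zeta\equiv1$ near $0$ and $\zeta$ supported in $|t|<M_1$. Then there are no boundary terms at all. The new terms carry $\zeta'$, live where $\mathbf{u}$ is exponentially small, and are bulk integrals of $\epsilon^2\phi_t^2$, $(1+\epsilon^2W)\phi^2$ and $\phi^4$, which the hypotheses do control.

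One related point where your sketch is too quick: $\epsilon^2|W_t|\le C\beta^2$ and $\beta$ is not uniformly bounded, so unweighted $L^2$ smallness of $\phi$ does not by itself bound $\epsilon^2\iint W_t\phi^2$. The paper bounds $\iint\beta^2\phi^2\,\mathrm{d}t\,\mathrm{d}\theta$ by $\int_0^\ell\beta^2\,\mathrm{d}\theta\cdot\int\sup_\theta\phi^2\,\mathrm{d}t$. It uses $\sup_\theta|\phi|^2\le C\int_0^\ell(\phi^2+\phi_\theta^2)\,\mathrm{d}\theta$, which holds because $\ell\ge\ell_0$ (Lemma~\ref{Lemma4-3-2026}). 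The factor $\big(\int_0^\ell\beta^5\,\mathrm{d}\theta\big)^{-2}$ in the hypotheses then absorbs $\int_0^\ell\beta^2\,\mathrm{d}\theta$.
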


\smallskip
Note that, by using a similar method as the proof of Theorem \ref{necessary condition theorem},
we can prove the necessary condition in \eqref{stationarygeneral} for the solution concentrating at a closed smooth curve $\G\subset {\mathbb R}^2$ to singular perturbation problem \eqref{schrodinger} in ${\mathbb R}^2$.
This case is much more easier than Theorem \ref{necessary condition theorem}, since the limit set $\Gamma$ is independent of $\epsilon$.
The result will be provided in the following corollary, which is clearly an extension of {\bf Proposition A} given by A. Ambrosetti, A. Malchiodi and W.-M. Ni in \cite{AMNconjecture1}.

\begin{corollary}\label{corollary2.2}
Suppose that problem \eqref{schrodinger} in ${\mathbb R}^2$ has a solution which concentrates at a closed smooth curve $\G$,
then the identity holds
\begin{align*}
 \Big(\frac{p+1} {p-1} - \frac 12\Big) V_t(0, \theta) = - \kappa (\theta)V(0, \theta), \quad \forall\, \theta \in [0, \ell),
\end{align*}
where $\kappa (\theta)$ is the curvature of $\G$, and $(t, \theta)$ are the adapted version of Fermi coordinates for ${\bf y}$ near $\G$.
\qed
\end{corollary}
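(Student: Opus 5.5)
We would prove Corollary \ref{corollary2.2} by a Pohozaev-type (local balancing) identity in Fermi coordinates, following the same scheme as for part (3) of Theorem \ref{necessary condition theorem}, but simpler because $\G$ and $V$ are independent of $\epsilon$. The starting ansatz is the analogue of \eqref{u-decomposition}--\eqref{mathbfu}: near $\G$,
\[
u_\epsilon(t,\theta)=\chi(t)\,\alpha(\theta)\,w\big(\mu(\theta)t/\epsilon\big)+\phi,
\]
with $\phi$ small in the sense of \eqref{smallperturbation0}--\eqref{smallperturbation1}. Here $w$ is the even positive homoclinic of $w''-w+w^p=0$ on $\R$, and, by scaling, $\mu(\theta)=V(0,\theta)^{1/2}$ and $\alpha(\theta)=V(0,\theta)^{1/(p-1)}$. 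In Fermi coordinates, with $\kappa$ the curvature and the normal $\nu$ outward, the Laplacian is
\[
\Delta=\partial_t^2+\frac{\kappa}{1+\kappa t}\,\partial_t+\frac{1}{1+\kappa t}\,\partial_\theta\Big(\frac{1}{1+\kappa t}\,\partial_\theta\Big).
\]

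\textbf{Step 1 (test against the normal derivative).} For each fixed $\theta$, multiply \eqref{schrodinger} by $\partial_t u_\epsilon$ and integrate in $t$ over $|t|<M_2$ with weight $(1+\kappa t)$, i.e.\ against the area element. Equivalently, localize in $\theta$ with a test function $\eta(\theta)$ and then let $\eta$ vary. The terms combine as follows.
\begin{itemize}
\item The $\partial_t^2$ term and the nonlinear term are exact $t$-derivatives, except for the contributions produced by the weight $1+\kappa t$.
\item The term $V u\,u_t$ yields $-\tfrac12\int V_t u^2$ after integrating by parts, plus a weight contribution $-\tfrac12\kappa\int Vu^2$.
\item The weight together with the first-order term $\kappa\partial_t$ yields curvature terms of the form $\kappa\int u_t^2$ and $\kappa\int u^{p+1}$.
\item The tangential term $\partial_\theta(\cdots)$, paired with $u_t$, has size $O(\epsilon^2)$ relative to the leading terms once we integrate by parts in $\theta$ and use that $\alpha,\mu$ are smooth. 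The boundary terms at $|t|\sim M_1$ are exponentially small.
\end{itemize}

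\textbf{Step 2 (evaluate at leading order).} Substitute the ansatz and rescale $t=\epsilon s/\mu$. Every integral becomes $\epsilon\,\alpha^{a}\mu^{b}$ times an explicit integral of $w$. The classical one-dimensional Pohozaev identities give
\[
\int w'^2=\frac{p-1}{p+3}\int w^2,\qquad \int w^{p+1}=\frac{2(p+1)}{p+3}\int w^2,
\]
and we use them to reduce everything to multiples of $\int w^2$. The terms involving $\phi$ are $o(\epsilon)$, by \eqref{smallperturbation0} and Cauchy--Schwarz. This leaves an identity of the form
\[
\epsilon\, c_p\,\alpha^2\mu^{-1}\Big[\Big(\tfrac{p+1}{p-1}-\tfrac12\Big)V_t(0,\theta)+\kappa(\theta)V(0,\theta)\Big]=o(\epsilon),
\]
with $c_p\neq0$, uniformly in $\theta$ (or in the weak sense against every $\eta$).

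\textbf{Step 3 (conclude).} Divide by $\epsilon$ and let $\epsilon\to0$. Since $V$ and $\G$ do not depend on $\epsilon$, the $o(1)$ term disappears, and we obtain exactly the identity stated in Corollary \ref{corollary2.2}. Pointwise validity in $\theta$ follows from the weak form by continuity.

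\textbf{Main obstacle.} The main difficulty is Step 2's bookkeeping. The coefficients of $V_t$ and of $\kappa V$ must combine to exactly $\tfrac{p+1}{p-1}-\tfrac12$ and $1$. This requires tracking the $\theta$-dependence of the amplitude $\alpha$ and width $\mu$, since these contribute $V_t$-type terms through $\partial_t$ of $V(t,\theta)$ expanded to first order. It also requires showing that the mixed $\theta$-derivative terms, together with $\phi$, are genuinely lower order. I would first check the constants on the radial case, where the corollary should reproduce Proposition A with $N=2$, and only then carry out the general computation.
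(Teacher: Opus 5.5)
Your argument is essentially the paper's proof. You use a Pohozaev-type identity in Fermi coordinates (Lemma \ref{lemmaPohozaev}), integrate it against a test function of $\theta$, evaluate it on the profile, and show that the perturbation terms are $o(\epsilon)$ (Sections \ref{Section4.2}--\ref{Section4.5}). The one difference is bookkeeping: the paper tests with $(1+\kappa t)^2u_t$ and adds $\kappa$ times the $(1+\kappa t)u$-identity to cancel $\int Vu^2$ exactly (see \eqref{third identity}), whereas you test with $(1+\kappa t)u_t$ and remove that term through the one-dimensional Pohozaev relations for $w$. The two are equivalent, and your constant is indeed $c_p=-\frac{p-1}{p+3}\int_{\mathbb R}w^2\neq 0$.

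The coefficient bookkeeping you flag as the main obstacle is in fact immediate, since the $\theta$-dependence of $\alpha,\mu$ enters only the lower-order tangential term. The real work, which your proposal only asserts, is controlling $\phi$. In particular you need the $L^\infty$ bound and the exponential decay of $\phi,\phi_t$ (Lemmas \ref{Lemma4-3-2026} and \ref{Lemma4.2}) to discard the boundary terms at $|t|=M$.
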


\smallskip
\subsection{ Existence of solutions with concentration phenomena}\

We now consider the converse of Theorem \ref{necessary condition theorem}: consider problem \eqref{eq2}-\eqref{constraint1} with \eqref{epsilonto0} and construct solutions concentrating at a curve $\G_\epsilon$ which satisfies the necessary conditions in \eqref{condition1}.

\smallskip
 Since the pioneering work by A. Floer and A. Weinstein \cite{FloerWein1986}, to construct concentrated solutions for problem \eqref{eq2}-\eqref{constraint1} or singular perturbation problem \eqref{schrodinger} has been a topic of general interest in the study of singular perturbation theory,
 including cases concentrating on points or $k$-dimensional submanifolds, with $ 1\leq k\leq N-1$.

\smallskip
For the construction of a solution to problem \eqref{schrodinger} concentrating at higher dimensional submanifold of ${\mathbb R}^N$,
we mention that in \cite{AMNconjecture1}, A. Ambrosetti, A. Malchiodi, and W.-M. Ni considered \eqref{schrodinger} in ${\mathbb R}^N$ with radial potential $V$ and showed the existence of higher-dimensional layer solutions.
Furthermore, M. del Pino, M. Kowalczyk, and J. Wei \cite{delPKowWei1} studied \eqref{schrodinger} in ${\mathbb R}^2$ without any symmetry hypothesis on the potential $V$.
By using infinite Lyapunov-Schmidt reduction, a non-radial solution $u_\epsilon$ was constructed, which concentrates at a closed stationary and non-degenerate curve for the weighted functional $\int_\Gamma V^\sigma$ with $\sigma = \frac{p+1} {p-1}- \frac 12$,
provided that $\epsilon$ is small and satisfies a gap condition due to the resonance character of the problem.
See also \cite{WangWeiYang} for solutions concentrating on hypersurfaces.

\smallskip
Here we will mention some works to the constructions of solutions to problem \eqref{eq2}-\eqref{constraint1}.
In \cite{Luo}, P. Luo, S. Peng, J. Wei and S. Yan considered \eqref{eq2}-\eqref{constraint1} with a class of degenerate trapping potential with non-isolated critical points,
and showed that the existence and local uniqueness of excited states by accurately analyzing the location of the concentrated points and the Lagrange multiplier $\lambda$.
Recently, in \cite{Zhouyang}, Q. Guo, S. Tian and Y. Zhou considered the case that $W$ is radially symmetric and studied concentration phenomena of solutions to problem \eqref{eq2}-\eqref{constraint1}.
By using a finite dimensional Lyapunov-Schmidt reduction method and blow-up analysis based on Pohozaev identity, they showed the existence of radial solutions concentrating on a circle with radius $r_\epsilon \to +\infty$ as $\epsilon \to 0$.
Here we point out that the symmetry condition of $W$ in \cite{Zhouyang} reduces the problem to just adjusting one parameter $r_\epsilon$ representing the location of the concentration curve and a finite-dimensional Lyapunov-Schmidt reduction was used.

\smallskip
It's natural to ask whether we could remove the symmetry condition of $W$ imposed in \cite{Zhouyang}.
When $W$ is non-radial, we are unable to transform the problem into a problem of ODE, and whence the current problem is fundamentally different from that in \cite{Zhouyang} involving radial potential.
On the other hand, we know that, for the semi-linear problem with non-symmetric potential, the gluing methods provided in \cite{delPKowWei1} make it possible to construct a solution concentrating at a curve. In general, for semi-linear elliptic problem like \eqref{eq2}-\eqref{constraint1} or \eqref{schrodinger}, the situation of adjusting a finite number of points
({\em finite dimensional Lyapunov-Schmidt reduction}), and that of adjusting a higher dimensional object such as a geodesic in suitable metrics as limiting concentrating sets ({\em infinite Lyapunov-Schmidt reduction}) will be treated.
However, in contrast to the situations of bounded concentration sets in \cite{delPKowWei1} (as well as the corresponding references \cite{machiodi, WangWeiYang, Wangzhao, WeiYang, Wei-xu-yang, SWei-Yang}), the concentration directed along curves $\G_\epsilon$ with non-uniformly bounded length will lead to some atypical challenges, see Remark \ref{remark2-7}.

\smallskip
In the present paper, inspired by \cite{delPKowWei1}, we will consider problem \eqref{eq2}-\eqref{constraint1} with a non-symmetric potential $W$ and construct a solution that concentrates at a curve $\Gamma_\epsilon$, to make an extension of the results in \cite{Zhouyang}.
In addition to the assumptions in {\bf{(A1)}}-{\bf{(A2)}}, by recalling the Fermi coordinates in \eqref{fermi},
the following assumptions in {\bf{(B1)}}-{\bf{(B3)}} will be imposed.
We note that these conditions will play a crucial role in the reduction process of constructing solutions concentrated on the curve $\G_\epsilon$.

\smallskip
The following definitions are adapted from \cite{delPKowWei1}.
Recall the Fermi coordinates defined as in \eqref{fermi}. For $\epsilon$ small, any curve $\G^g_\epsilon$ close to $\G_\epsilon$ can be parameterized as
\begin{align*}
\gamma_\epsilon^g (\theta) = \gamma_\epsilon(\theta) + g (\theta) \nu_\epsilon (\theta), \quad \theta \in [0, \ell).
\end{align*}
Following the analysis and computations as in \cite{delPKowWei1}, we can easily get
\begin{align*}
J_\epsilon (g) & = \int_{\G_\epsilon^g} \big[1+ \epsilon^2 W (g, \theta) \big]^{\frac 32} {\mathrm d} \sigma
\\[2mm]
&= \int_0^{\ell} \Big[1+ \epsilon^2 W (g, \theta) \Big]^{3/2}\, \Big[ (1+\kappa g)^2 + |g'|^2\Big]^{1/2} {\mathrm d} \theta.
\end{align*}
In the present paper, for any fixed small $\epsilon$, we just make the {\emph {stationary assumption} for the curve $\Gamma_\epsilon$, which means that the first variation of the functional $J_\epsilon (g)$ at $g=0$ equals to $0$.
In other words, for any smooth, $\ell$-periodic function $h(\theta)$, there must be
\[
\displaystyle\int_0^{\ell} \Bigg\{ \frac 3 2\big[1+ \epsilon^2 W(0, \theta) \big]^{1/2}\epsilon^2 W_t(0, \theta)
\ +\
\big[1+ \epsilon^2 W(0, \theta) \big]^{\frac 3 2} \kappa \Bigg\} h{\mathrm d} \theta =0,
\]
which is equivalent to the following condition
\begin{align} \label{stationary assumption}
\kappa(\theta) \big[1+ \epsilon^2 W(0, \theta) \big] + \frac 3 2 \epsilon^2 W_t(0, \theta) = 0,
\quad\forall\, \theta\in [0, \ell).
\end{align}
By setting
\begin{align}
\mathbb{F}_\epsilon(h)
=
\Big(\big[1+ \epsilon^2 W(0, \theta) \big]^{\frac32}h' \Big)'
\ -\
\Bigg\{ \Big( \big[1+ \epsilon^2 W(t, \theta) \big]^{\frac32} \Big)_{tt}\Big|_{t=0}
\,-\, 2 \big[1+ \epsilon^2 W(0, \theta) \big]^{\frac32} \kappa^2 \Bigg\} h,
\label{mathbbF}
\end{align}
(see \eqref{weighted-reduced tilde f} for the expression of $\mathbb{F}_\epsilon$ in other coordinates),
we say that a stationary curve $\Gamma_\epsilon$ is {\emph {non-degenerate}} if
the following problem
\begin{align}
\mathbb{F}_\epsilon(h)=0,
\quad
h(0) = h(\ell), \quad h'(0) = h'(\ell),
\label{nondeneracy}
\end{align}
only has the trivial solution $h\equiv 0$.
Inspired by this, to obtain a precise {\em a priori} estimate for the corresponding in-homogeneous problem in the reduction procedure, we propose the following assumptions

\smallskip\noindent {\bf{(B1).}}
{\it{
For any small $\epsilon$, $\Gamma_\epsilon$ is a stationary curve.
Moreover, we assume that
\smallskip
\begin{align}\label{resonace-f}
{\mathrm d}_\epsilon \geq \hat{C}_2 \epsilon^{\alpha}
\quad \text{with}\ {\mathrm d}_\epsilon = \min_{j=1,2,\cdots} |\Lambda_{\epsilon, j}|,
\end{align}
where $\alpha\in(0, 1)$ is a constant with constraint in \eqref{constraint-alpha-varrho}, and $\{\Lambda_{\epsilon, j}\} _{j=1}^\infty$ are the real eigenvalues of the eigenvalue problem
\begin{align}
\mathbb{F}_\epsilon(h) =\beta\Lambda_{\epsilon, j}h,
\qquad
h(0) = h(\ell), \qquad h'(0) = h'(\ell),
\label{mathbbF-boundary}
\end{align}
where $\beta$ has been given in \eqref{beta}.
}}

\medskip
On the other hand, we will also encounter delicate resonance phenomena in the construction of solutions, which were analyzed in \cite{delPKowWei1}.
For the similar purpose, the following gap condition for $\epsilon$ small will be useful.

\smallskip\noindent {\bf{(B2).}}
{\it{
For given $c>0$ small, there holds
\begin{align} \label{gap condition1}
\Big| j^2 \frac{\epsilon^2}{{\tilde\ell}^2} - \lambda_* \Big| \geq \frac{c \epsilon}{{\tilde\ell}}, \quad \forall\,j \in \mathbb N,
\end{align}
where by recalling the unique positive eigenvalue $\lambda_0$ of the eigenvalue problem \eqref{eigenvalue}, we have used the conventions
\begin{align}
\lambda_* = \frac{\lambda_0} {4\pi^2}
\qquad \text{and}\qquad
{\tilde\ell} = \int_0^{\ell} \big[1+ \epsilon^2 W(0, \theta)\big]^{\frac{1}{2}} {\mathrm d} \theta.
\label{lambda*}
\end{align}
}}

Furthermore, in order to guarantee the successful execution of the reduction process, we require the following condition (The readers can refer to Remark \ref{remark2-7} for explanations).

\smallskip \noindent {\bf{(B3).}}
{\it{
For $\varrho, \alpha\in (0, 1)$ given in \eqref{length-constraint} and \eqref{resonace-f} respectively, they will satisfy
\begin{align}
 \varrho> \frac 13+ \frac43 \alpha.
\label{constraint-alpha-varrho}
\end{align}
}
}

\begin{remark}\label{remark2-4}
{\it
Note that ${\tilde\ell}$ has a uniformly positive lower bound due to \eqref{ellzero}, i.e.
\begin{align}
{\tilde\ell}\geq C_0>0
\label{tildeell-lowerbound}
\end{align}
where $C_0$ is a universal constant independent of $\epsilon$.
On the other hand, the assumption in \eqref{length-constraint} implies that
\begin{align}
{\tilde\ell}
\leq C\epsilon^{\varrho-1},
\label{boundneess of length}
\end{align}
where $0<\varrho<1$ has been given in \eqref{length-constraint}.
By combining \eqref{boundneess of length} and \eqref{gap condition1}, we obtain
\begin{align} \label{gap condition222222}
\Big| j^2 \frac{\epsilon^2}{{\tilde\ell}^2} - \lambda_* \Big| \geq C \epsilon^{2-\varrho}, \quad \forall\,j \in \mathbb N.
\end{align}

We shall mention that the validity of \eqref{gap condition1} will be the case whenever ${\epsilon}/{{\tilde\ell}}$ is small and away from the critical numbers ${\sqrt{\lambda_*}}/j$ with $j \in {\mathbb N}$, i.e.,
for arbitrarily small fixed $2c<\sqrt{\lambda_*}$,
\begin{align}
\label{gap condition1.22}
\frac{\epsilon}{{\tilde\ell}} \notin \Bigg[ \frac{\sqrt{\lambda_*}} {j}-\frac{c} {j^2}, \ \frac{\sqrt{\lambda_*}} {j}+\frac{c} {j^2} \Bigg],
\quad \forall\,j \in {\mathbb N}.
\end{align}
From the mass constraint \eqref{constraint1}, there is a relationship between
$\epsilon$ and $a$
given by \eqref{realtion for a eps}, i.e.
\begin{align} \label{realtionship between epsilon and a}
\frac\epsilon{\tilde \ell} = \frac{\varrho_0}{a} \left(1+ O\left(\epsilon^{2-\alpha}\right) \right),
\end{align}
where $\alpha\in (0,1)$ is given in \eqref{resonace-f} and $\varrho_0$ is the constant defined in \eqref{varrho0}.
Under the relation \eqref{realtionship between epsilon and a}, we make the gap condition for $a$
\begin{align}
\label{condition for a}
\frac{\varrho_0}{a}\notin \Bigg[ \frac{\sqrt{\lambda_*}}{j}-\frac{2c}{j^{2}}, \ \frac{\sqrt{\lambda_*}}{j}+\frac{2c} {j^2} \Bigg],
\quad \forall\,j \in \mathbb N.
\end{align}
where $c$ is given by \eqref{gap condition1.22}.
In fact, \eqref{condition for a} is a sufficient condition for the validity of \eqref{gap condition1}, see the analysis in Section \ref{Section9}.
\qed
}
\end{remark}

\smallskip
The following is the theorem for the existence of solutions with concentration.

\begin{theorem}
\label{theorem 1.1}
Assume that $N=2$, the smooth simple closed curves $\Gamma_\epsilon$ and the potential $W$ satisfy the assumptions in
{\bf{(A1)}}-{\bf{(A2)}} and {\bf{(B1)}}-{\bf{(B3)}}.
For problem \eqref{eq2}-\eqref{constraint1} with relation between $a$ and $\epsilon $ in \eqref{realtionship between epsilon and a}, there exists a positive solution $u_{\epsilon}$ with the following properties.

\smallskip
\noindent $\clubsuit$
For ${\bf y}$ given by \eqref{fermi} in the neighbourhood of $\Gamma_\epsilon$,
$u_{\epsilon}$ has the local profile
\begin{equation}
\label{taketheform}
 u_{\epsilon}({\bf y})\,=\, \big[1+ \epsilon^2 W(0, \theta) \big]^{\frac 12}\, U\Bigg(\big[1+ \epsilon^2 W(0, \theta) \big]^{\frac 12}\, \frac{t} \epsilon\Bigg)\big(1+o(1)\big),
\end{equation}
where $U$ is the function given in \eqref{block}.

\smallskip
\noindent$\clubsuit$
The solution $u_{\epsilon}$ satisfies globally
$$u_{\epsilon}({\bf y})\,\leq\, C\exp \Big(-\frac{c_0}{\epsilon} {\mathrm{dist}} ({\bf y}, \Gamma_\epsilon)\Big),$$
where $c_0$ and $C$ are two positive constants independent of $\epsilon$.
\qed
\end{theorem}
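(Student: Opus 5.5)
The construction follows the infinite-dimensional Lyapunov--Schmidt reduction of \cite{delPKowWei1}, adapted to curves $\Gamma_\epsilon$ whose length may grow like $\epsilon^{\varrho-1}$ and to the $\epsilon$-dependent potential $1+\epsilon^2W$.

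\emph{Step 1: coordinates and approximate solution.} Near $\Gamma_\epsilon$ we work in the Fermi coordinates \eqref{fermi}. We then stretch variables, $x=t/\epsilon$ and $z=\epsilon^{-1}\int_0^\theta\beta$, so that the transversal profile becomes the weighted ODE solution. In these variables the Laplacian reads $\partial_x^2+\epsilon\kappa(1+\epsilon\kappa x)^{-1}\partial_x+(1+\epsilon\kappa x)^{-2}\partial_\theta^2+\dots$.

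We take the first approximation $\beta(\theta)U(\beta(\theta)(x-f(\theta)))$, where $f$ is an unknown small normal displacement. We also add correction terms solving $L_0w=$ error, with $L_0=\partial_x^2-1+3U^2$; these correct the $O(\epsilon)$ terms coming from $\kappa$ and $\epsilon^2W_t$. Solvability of each correction requires orthogonality to $U'$.

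At order $\epsilon$, this projection is exactly $\kappa\beta^2+\frac32\epsilon^2W_t$. It vanishes by the stationarity assumption \eqref{stationary assumption}, using the Pohozaev-type identities for $U$ and the exponent $\frac32=\frac{p+1}{p-1}-\frac12$ with $p=3$.

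At the next order, the projection on $U'$ yields the Jacobi-type operator $\mathbb F_\epsilon(f)$ from \eqref{mathbbF}. The assumptions \eqref{W_t-control} and \eqref{W_theta-control} keep all these errors bounded by powers of $\epsilon$ times $\beta^{k}$.

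\emph{Step 2: linear theory and projected problem.} We solve $L(\phi)=h+c(\theta)\chi\beta U'+e(\theta)\chi Z$ with $\int\phi\,U'\,dx=\int\phi\,Z\,dx=0$. Here $Z$ is the positive eigenfunction of $L_0$, with eigenvalue $\lambda_0$, from \eqref{eigenvalue}.

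The a priori estimate in weighted $H^2$ norms is obtained by a Fourier decomposition in $\theta$ on the long circle of stretched length $\tilde\ell/\epsilon$. Off the two modes, coercivity follows from the nondegeneracy of $U$ in 1D. The gluing to the outer region, where $-\epsilon^2\Delta+(1+\epsilon^2W)$ is coercive, is done by a fixed-point argument. The exponential decay bound comes from a maximum-principle comparison in that outer region.

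\emph{Step 3: reduced equations for $f$ and $e$.} Setting $c=0$ gives the equation $\mathbb F_\epsilon(f)=\beta\,\epsilon^{\cdot}(\text{small})+\text{nonlinear}(f,e,\phi)$. We invert $\mathbb F_\epsilon$ using \textbf{(B1)}, which gives an inverse bounded by $\epsilon^{-\alpha}$.

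Setting $e=0$ gives $\epsilon^2\tilde\ell^{-2}$-scaled $e''+\lambda_*$-type equations, namely $\epsilon^2 e''+\lambda_0\beta^2 e=\text{small}$. The Fourier modes of this equation have symbols $j^2\epsilon^2/\tilde\ell^2-\lambda_*$, so the gap condition \textbf{(B2)}, in the form \eqref{gap condition222222}, gives an inverse of size $\epsilon^{\varrho-2}$.

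The full system in $(\phi,f,e)$ is then solved by a contraction in a small ball.

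\textbf{Main obstacle.} The hard part is closing this contraction despite the losses $\epsilon^{-\alpha}$ and $\epsilon^{\varrho-2}$ together with the length factor $\tilde\ell\le C\epsilon^{\varrho-1}$. Global $L^2$ norms over the curve accumulate a factor of $\tilde\ell^{1/2}$, and the resonant mode $e$ couples to $f$. We would track errors in norms normalized by $\int\beta^5$, in the spirit of \eqref{smallperturbation0}, and check that the composite gain is a positive power of $\epsilon$. We expect this gain to be exactly $3\varrho-1-4\alpha>0$, which is the condition \eqref{constraint-alpha-varrho}.

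\emph{Step 4: mass constraint.} Finally, the mass constraint is a computation: $\int u_\epsilon^2=\epsilon^2\tilde\ell\,\varrho_0(1+O(\epsilon^{2-\alpha}))$, which fixes the relation with $a$ as in \eqref{realtionship between epsilon and a}. Positivity of $u_\epsilon$ follows from the maximum principle applied to the negative part.
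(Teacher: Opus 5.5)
Your outline is essentially the paper's own proof. The shared steps are: Fermi coordinates with the Liouville variable $\tau=\frac{2\pi}{\tilde\ell}\int_0^\theta\beta$ (your $z$ up to scaling); an ansatz whose order-$\epsilon$ error is removed via stationarity; inner--outer gluing; a Fourier-mode linear theory with multipliers on $U'$ and $\mathcal Z$; and a reduced system inverted with losses $\epsilon^{-\alpha}$ from \textbf{(B1)} and $\tilde\ell/\epsilon\le C\epsilon^{\varrho-2}$ from \textbf{(B2)}, then closed under \textbf{(B3)}. The paper closes it with $\|f\|_*\le\epsilon^{1-\alpha}$ and $\|e\|_{**}\le\epsilon^{\frac32\varrho+\frac12-2\alpha}$, using a contraction for the Lipschitz part and Schauder's theorem for the compact part rather than a single contraction.

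There are two slips to fix. First, the $\mathcal Z$-mode amplitude $e$ must be a free parameter in the ansatz (the paper uses $U+\epsilon\frac{e}{\beta}\mathcal Z$), and it is distinct from the $\mathcal Z$-multiplier, which you also call $e$. Second, the mass is $\epsilon\tilde\ell\varrho_0(1+O(\epsilon^{2-\alpha}))$, i.e.\ $a=\varrho_0\tilde\ell/\epsilon$, not $\epsilon^2\tilde\ell\varrho_0$.
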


For the convenience of readers, we go back to problem (\ref{eq1})-\eqref{constraint0} and provide another version of Theorem \ref{theorem 1.1}.
By slight abuse of notation, we use $\G^\lambda$ to denote the curve $\G_\epsilon$ by the relation \eqref{epsilon}.

\begin{theorem}
\label{theorem 1.11}
Assume that $N=2$, the smooth simple closed curves $\Gamma^\lambda$ and the potential function $W$ satisfy {\bf{(A1)}}-{\bf{(A2)}} and {\bf{(B1)}}-{\bf{(B3)}}.
There exists a large constant $a^*>0$ such that for all $a > a^*$ satisfying gap condition \eqref{condition for a},
the parameter $\lambda$ in \eqref{eq1}-\eqref{constraint0} can be chosen by the relations \eqref{epsilon} and \eqref{realtionship between epsilon and a}.
Moreover, problem \eqref{eq1}-\eqref{constraint0} has a positive solution $\tilde{u}_\lambda$ with the following properties

\smallskip
\noindent $\clubsuit$
For ${\bf y}$ given by \eqref{fermi} in the neighbourhood of $\Gamma^\lambda $,
$\tilde{u}_\lambda$ takes the form
\begin{equation}
\label{taketheform1}
\tilde{u}_{\lambda}({\bf y})
\,=\,
\sqrt{\frac{-\lambda} {a} }\, \Bigg[1 - \frac 1{\lambda} W(0, \theta) \Bigg]^{1/2}
\, U\Bigg(\Bigg[1 - \frac 1{\lambda} W(0, \theta) \Bigg]^{1/2}\, \frac{t}{\sqrt{-1/\lambda}} \Bigg)\big(1+o(1)\big),
\end{equation}
where $U$ is the function given in \eqref{block}.

\smallskip
\noindent $\clubsuit$
For a number $c_0>0, \,\tilde{u}_{\lambda}$\,satisfies globally
$$
\tilde{u}_{\lambda}({\bf y})\,\leq\, C\sqrt{\frac{-\lambda} a}\, \exp \Big(- \frac{c_0} {\sqrt{-\lambda}}{\mathrm{dist}} ({\bf y}, \Gamma_\lambda)\Big),
$$
where $c_0$ and $C$ are two positive constants independent of $\lambda$.
\qed
\end{theorem}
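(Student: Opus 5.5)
Theorem \ref{theorem 1.11} is Theorem \ref{theorem 1.1} rewritten in the original variables, so the plan is to prove Theorem \ref{theorem 1.1} by the infinite-dimensional Lyapunov--Schmidt reduction of \cite{delPKowWei1}, adapted to curves whose weighted length $\tilde\ell$ may grow like $\epsilon^{\varrho-1}$, and then to translate.

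\textbf{Stretched variables and approximate solution.} We stretch the Fermi coordinates \eqref{fermi}, setting $x=t/\epsilon$ and using a rescaled weighted arclength $s$ with $\mathrm{d}s=\beta(\theta)\,\mathrm{d}\theta/\epsilon$, so that the period becomes $\tilde\ell/\epsilon$. The Laplacian then reads $\partial_x^2+\partial_s^2$ plus $\epsilon$-small terms involving $\kappa$, $\beta'$ and $x$.

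We build the approximate solution
\[
\mathbf u_g=\chi\,\beta\,U\big(\beta(x-g(\theta))\big)+\epsilon\,\omega_1 ,
\]
where $g$ is an unknown normal shift of the curve. The correction $\omega_1$ is chosen to cancel the order-$\epsilon$ error terms, namely $\kappa\,\partial_x$ and $\tfrac{\epsilon^2}{\beta}W_t\,x$. Because $\Gamma_\epsilon$ satisfies the stationary assumption \eqref{stationary assumption}, the part of this error along $U'$ vanishes. This is exactly why condition \eqref{condition1} of Theorem \ref{necessary condition theorem} appears.

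The bounds \eqref{W_t-control}--\eqref{W_theta-control} control the remaining error $E$. We work in weighted $L^2$ or $H^2$ norms on the strip, with an estimate of the form $\|E\|\lesssim \epsilon^2\big(\int_0^\ell\beta^5\big)^{1/2}$.

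\textbf{Linear theory.} We write $u=\mathbf u_g+\phi$ and split $\phi=\phi^\perp+e(\theta)\,Z(x)$, where $Z$ is the positive eigenfunction of $L_0=\partial_x^2-1+3U^2$ with eigenvalue $\lambda_0$ as in \eqref{eigenvalue}. The projected problem orthogonal to $U'$ and $Z$ is solved by a contraction. Its uniform invertibility follows from the nondegeneracy of $U$ and a Fourier decomposition in $s$.

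Projecting onto $Z$ gives a one-dimensional equation
\[
\epsilon^2 e''+\lambda_0 e=\dots
\]
on a period of length $\tilde\ell$. This equation is solvable with inverse norm of order $\tilde\ell/\epsilon$, provided the gap condition \eqref{gap condition1} holds.

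Projecting onto $U'$ gives the reduced equation $\mathbb F_\epsilon(g)=\epsilon^{-1}\times(\text{small})$. By (B1), $\mathbb F_\epsilon$ is invertible with inverse bound $\epsilon^{-\alpha}$.

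The nonlinear system for $(\phi^\perp,e,g)$ is then closed by a fixed point argument. The quadratic terms are controlled because the small factors $\epsilon^{\varrho}$ coming from \eqref{length-constraint} beat the resonance losses $\epsilon^{-\alpha}$ and $\epsilon^{\varrho-2}$. This is precisely where condition (B3), $\varrho>\tfrac13+\tfrac43\alpha$, is used.

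\textbf{Mass constraint.} Once the solution $u_\epsilon$ of \eqref{eq2} exists, we impose \eqref{constraint1}. Computing the mass gives
\[
\int u_\epsilon^2=\epsilon\int_0^\ell\beta\,\mathrm{d}\theta\int_{\mathbb R}U^2\,\big(1+O(\epsilon^{1-\alpha})\big).
\]
Hence $a\epsilon^2=\epsilon\,\tilde\ell\,\|U\|_2^2\,(1+\cdots)$, which is the relation \eqref{realtionship between epsilon and a} with $\varrho_0=\|U\|_2^2$.

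For $a$ large satisfying \eqref{condition for a}, we solve this relation for $\epsilon$ by continuity and monotonicity. Since $\epsilon/\tilde\ell\approx \varrho_0/a$, condition \eqref{condition for a} implies \eqref{gap condition1}.

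\textbf{Conclusion.} Positivity of $u_\epsilon$ follows from the maximum principle. The exponential decay away from $\Gamma_\epsilon$ follows from comparison with $e^{-c_0\mathrm{dist}/\epsilon}$ in the region where $u$ is small. Substituting $\tilde u=\sqrt{-\lambda/a}\,u$ and $\epsilon^2=-1/\lambda$ then gives \eqref{taketheform1} and the global decay bound stated in Theorem \ref{theorem 1.11}.

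\textbf{Main obstacle.} The hardest step is the resonant $Z$-mode. Its inverse costs $\tilde\ell/\epsilon\sim\epsilon^{\varrho-2}$, and this loss must be compensated by showing that the $Z$-projection of the error is $O(\epsilon^2)$ per unit length. Combined with the $\epsilon^{-\alpha}$ loss in the $g$-equation, keeping the bookkeeping consistent is what forces the constraint (B3).
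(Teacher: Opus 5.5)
Your overall architecture matches the paper's: Fermi coordinates with arclength reweighted by $\beta$, the splitting $\phi=\phi^\perp+e\,Z$, Fourier-based invertibility of the projected operator, (B1) for the $U'$-mode, the gap condition for the $Z$-mode, then the mass constraint. However, your approximate solution is one order too crude, and with it the resonant $Z$-mode cannot be closed.

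After the single correction $\epsilon\omega_1$, the error still contains $(g,e)$-independent terms of order $\epsilon^2$ that are even in $x$. These are $\frac{\beta''}{\beta^3}U$, $-\frac12\frac{\epsilon^2W_{tt}}{\beta^4}x^2U$, $\big(\frac{2|\beta'|^2}{\beta^4}+\frac{\beta''}{\beta^3}-\frac{\kappa^2}{\beta^2}\big)xU'$ and $\frac{|\beta'|^2}{\beta^4}x^2U''$. There are also the terms $\mathbf b_1^2(\varpi_{1,x}+\frac23x\varpi_1)$ and $3\mathbf b_1^2U\varpi_1^2$, produced by the curvature term and the nonlinearity acting on $\omega_1$. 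Together these are the paper's $S_{41}+\mathbf B_{41}+\mathbf N_{01}$.

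Their $Z$-projection is $O(\epsilon^2)$ and generically nonzero. The resonant inverse costs $\tilde\ell/\epsilon$, so it only yields an amplitude $e=O(\epsilon\tilde\ell)$. But $e\,Z$ feeds linearly into the $U'$-projection through $\epsilon\,\mathbf b_1(d_1+d_2)\,e$, where $d_1=\int(\mathcal Z'+\frac23x\mathcal Z)U'$ and $d_2=6\int U\mathcal Z\varpi_1U'$. These do not cancel; for $U=\sqrt2\,\mathrm{sech}\,x$ a direct computation gives both positive. Dividing the $U'$-equation by its natural size $\epsilon^2\varrho_1$, this term is a right-hand side of order $\tilde\ell\ge C_0$. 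Condition (B1) then gives only $g=O(\epsilon^{-\alpha}\tilde\ell)$, so the fixed-point argument cannot close. Your stated target, an $O(\epsilon^2)$ $Z$-projection, is therefore the wrong one.

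The paper removes these terms with a second correction $\epsilon^2\phi_2$ solving $L_0\phi_2=-(S_{41}+\mathbf B_{41}+\mathbf N_{01})$. This is solvable because the right side is even, hence orthogonal to $U'$. It then checks that every $(f,e)$-independent term of order $\epsilon^3$ in $S(w_2)$ is odd in $x$. These include the $f$-free parts of $S_5$, $B_5(\epsilon\phi_1)$ and $B_4(\epsilon^2\phi_2)$, as well as $\mathbf b_1^3\varpi_1^3$ and $\mathbf b_3\varpi_1$. Consequently the $(f,e)$-independent part of the $\mathcal Z$-projection is $O(\epsilon^4)$.

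In the paper's normalization $w_0=U+\epsilon\frac{e}{\beta}\mathcal Z$, $e$ is then driven only by quadratic terms such as $\epsilon\beta f^2$. Condition (B3) is exactly the closing condition of the loop $f\to e\to f$. By \eqref{Linfty of f}, $\|e\|_{**}\lesssim\frac{\tilde\ell}{\epsilon}\,\epsilon\,\tilde\ell^{1/2}\|f\|_*^2\lesssim\epsilon^{\frac32\varrho+\frac12-2\alpha}$. Sending this back through $d_1+d_2$, with the $\epsilon^{-\alpha}$ loss from (B1), keeps $\|f\|_*\lesssim\epsilon^{1-\alpha}$ precisely when $\varrho\ge\frac13+\frac43\alpha$. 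An equivalent fix would be to subtract the non-resonant part $-\lambda_0^{-1}\langle E,Z\rangle$ of $e$ by hand.

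Your mass step has a second, smaller gap. Relative accuracy $O(\epsilon^{1-\alpha})$ does not let you pass from \eqref{condition for a} to \eqref{gap condition1}. Near the relevant $j\approx\sqrt{\lambda_*}\,a/\varrho_0$, the excluded intervals have width of order $a^{-2}\sim(\epsilon/\tilde\ell)^2$, so you need a relative error $o(\epsilon/\tilde\ell)$. The paper gets $O(\epsilon^{2-\alpha})$ and uses $\varrho>\alpha$. The sharper bound holds because, by translation invariance of $\int U^2$ and oddness of $\varpi_1$, $f$ and $e$ enter only at relative order $\epsilon\|f\|_*+\epsilon\|e\|_{**}+\epsilon^2$.

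Finally, your appeal to ``continuity and monotonicity in $\epsilon$'' has no support in the construction, whose last step is a Schauder fixed point without uniqueness. The paper does not argue this way; it reads \eqref{realtionship between epsilon and a} directly off the mass of the constructed solution.
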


\medskip
\subsection{ Outline of the strategy and further comments}\label{Section2.3}\

The substitution
\begin{equation}\label{definitionofy}
 u ({\bf y}) \,=\, v(y)\quad\mbox{with}\quad y= {\bf y}/\epsilon,
\end{equation}
in problem \eqref{eq2}-\eqref{constraint1} will imply that
\begin{align} \label{eq3}
 -\Delta v + \big[1+ \epsilon^2 W (\epsilon y) \big] v - |v|^2 v \,=\, 0,
\quad \forall\, y\in {\mathbb R}^2,
\end{align}
and
\begin{align} \label{constraint2}
 \int_{{\mathbb R}^2} |v|^2 {\mathrm d} y \,=\, a.
\end{align}

\smallskip
\noindent$\clubsuit$\
In Sections \ref{Section3.1}-\ref{Section3.2}, we will introduce the Fermi coordinates and write down equation \eqref{eq3} in local forms,
see \eqref{eq4} as well as \eqref{problem-local4}.
After that, some preliminary facts on the profile function $U$, and some conventions will be given in Section \ref{Section3.3}.

\smallskip
\noindent$\clubsuit$\
Section \ref{Section4} is devoted to the proof of Theorem \ref{necessary condition theorem},
which is constituted by a Pohozaev type identity in Lemma \ref{lemmaPohozaev} and delicate analysis to estimate all terms of the identity.

\smallskip
\noindent$\clubsuit$\
On the other hand, the basic philosophy of the construction procedure of a solution to problem \eqref{eq3}-\eqref{constraint2} and the proof of
Theorem \ref{theorem 1.1} will be organized in the following way: we first construct a solution with concentration to equation \eqref{eq3} by refining the approach presented in \cite{delPKowWei1},
and then choose suitable $\epsilon$ to fulfill the constraint \eqref{constraint2}.
More words for the explanations of details will be provided in the following.
\begin{itemize}

\item[(1).]
In Section \ref{Section5}, we first make a try to construct a local approximation solution to \eqref{eq3}.
In order to improve the approximation, suitable correction terms will be added and a good approximate solution $w_2$ will be given in \eqref{newthirdapproximatesolution},
which is involving two free parameters $f$ and $e$.

\smallskip
 \item[(2).]
The inner-outer gluing scheme from \cite{delPKowWei1} will be set up in Section \ref{Section6}.
By means of this useful method, we can deduce the projected version of \eqref{eq3}, see problem \eqref{project 6.13}-\eqref{projectedproblem3}, and then give the existence together with Lipschitz-continuity of its solutions in Proposition \ref{proposition6.5}.

\smallskip
\item[(3).]
In order to get a real solution to \eqref{eq3}, the well-known infinite-dimensional reduction method will be needed in Section \ref{Section7} to derive a nonlinear coupled system of second order differential equations of $f$ and $e$, see \eqref{last reduced equation on U'}-\eqref{last reduced equation on mathcal Z}.
The key step is to find the parameter pair $(f, e)$ by solving that system, which will be done in Section \ref{Section8}.

\smallskip
\item[(4).]
In order to fulfill the mass constraint \eqref{constraint2}, in Section \ref{Section9} we shall substitute the solution constructed in the above steps into it and then find the relation between the parameters $a$ and $\epsilon$, in such a way that $\epsilon$ can be settle down for fixed $a$.
\end{itemize}
We shall mention that we may use the assumptions in {\bf{(A1)}}-{\bf{(A2)}} and {\bf{(B1)}}-{\bf{(B3)}}
without any further notification in Sections \ref{Section5} through Section \ref{Section9} as well as Appendix \ref{appendixa1}.

\medskip
To conclude this section, here are more words to explain Theorem \ref{theorem 1.1} and Theorem \ref{theorem 1.11} and discuss the difficulties encountered in the procedure of constructing solutions with higher dimensional concentration.

\begin{remark} \label{remark2-7}
{\it
In local coordinates $(s, z)$ given in \eqref{eqszfinal},  problem \eqref{eq3} can be expressed as in \eqref{eq4}, and its linearization at the profile $U(s)$ is
\begin{align}
\phi_{ss} + \phi_{zz} - \big[1+ \epsilon^2 W (\epsilon s, \epsilon z) \big] \phi + 3U^2(s)\phi
+\big(\epsilon \kappa -\epsilon^2 \kappa^2 s \big) \phi_{s}+ B_0(\phi)=0,
\label{linearproblem}
\end{align}
where $s\in (-M_0/\epsilon,\, M_0/\epsilon),\ z\in \big[0, \ell/\epsilon\big)$.
As done in \cite{delPKowWei1}, if we neglect the role of $\epsilon^2 W$ and the terms of higher orders in $\epsilon$, the linearization of problem \eqref{eq3} at the profile $U(s)$ has the local form
$$
\phi_{zz}+\phi_{ss}-\phi+3U^2(s)\phi=0, \quad s\in {\mathbb R},\ z\in \big[0, \ell/\epsilon\big).
$$
There exist two types of eigenvalues
\begin{align}
\Big(\frac{2j\pi\epsilon}{\ell}\Big)^2,
\qquad
\lambda_0\,-\,\Big(\frac{2j\pi\epsilon}{\ell}\Big)^2,
\qquad
j\in{\mathbb N}^+,
\label{eigenvalue-problem}
\end{align}
correspond respectively to the eigenfunctions
\begin{align}
\Upsilon_{1,j}=U'(s)\Bigg[\mathbf{c}_{1j}\sin\Big(\frac{2j\pi\epsilon z}{\ell}\Big)  \,+\, \mathbf{c}_{2j}\cos\Big(\frac{2j\pi\epsilon z}{\ell}\Big)\Bigg],
\\[2mm]
\Upsilon_{2,j}={\mathcal Z}(s)\Bigg[\mathbf{c}_{3j}\sin\Big(\frac{2j\pi\epsilon z}{\ell}\Big)  \,+\, \mathbf{c}_{4j}\cos\Big(\frac{2j\pi\epsilon z}{\ell}\Big)\Bigg],
\label{eigenfunction-problem}
\end{align}
where $\lambda_0$ is the unique positive eigenvalue with corresponding eigenfunction ${\mathcal Z}$ to the eigenvalue problem \eqref{eigenvalue},
 and $\mathbf{c}_{1j},\, \mathbf{c}_{2j},\, \mathbf{c}_{3j},\, \mathbf{c}_{4j}$ are constants independent of $\epsilon$.
The effects of $\Upsilon_{1,j}$ and $\Upsilon_{2,j}$ will lead to the resonance phenomena.
In \cite{delPKowWei1}, in order to deal with the resonance, they introduced two free parameters $f$ and $e$ in the Lyapunov-Schmidt reduction procedure.
Of course, the reduced system of nonlinear differential equations involving the parameters $f$ and $e$ will inherit the resonance (The readers can refer to \eqref{last reduced equation on U'}-\eqref{last reduced equation on mathcal Z}, as well as \eqref{problem-auxil}-\eqref{problem-auxil-boundary} and \eqref{reduced for e2 new}),
which is solvable under non-degeneracy condition and the gap condition similar as \eqref{nondeneracy} and \eqref{gap condition222222}.

\smallskip
However, the length $\ell$ of the curve may depend on $\epsilon$ and the term $\epsilon^2 W(0,\theta)$ appears in the operator ${\mathbb F}$ in \eqref{mathbbF}.
These will lead to more complicated situation: the eigenvalues $\Lambda_{\epsilon, j}$ of problem \eqref{mathbbF-boundary} depend on $\epsilon$ and may approach zero
(As a result, the linear problem \eqref{problem-auxil}-\eqref{problem-auxil-boundary} may have resonance).
In order to get a good {\em a priori} estimate for $f$ as in \eqref{reduced mathfrak f boundarycondition} so that the reduction procedure
can be carried out in Section \ref{Section8}, we impose further the assumption \eqref{resonace-f} in {\bf{(B1)}} to handle the resonance caused by the effect $\Upsilon_{1,j}$.

\smallskip
Moreover, we recall that the gap condition in \eqref{gap condition1} relies heavily on the weighted length ${\tilde\ell}$, which is also depends on $\epsilon$.
Whence, the gap condition \eqref{gap condition1} in {\bf{(B2)}} plus the constraint \eqref{length-constraint} in {\bf{(A2)}} will be used to deal with the resonance given by $\Upsilon_{2,j}$.
The readers can refer to Proposition \ref{linear theory for e}.

\smallskip
Therefore, we will encounter more delicate resonance phenomena than that in \cite{delPKowWei1},
and we also impose the constraint {\bf{(B3)}} to balance the competition between the above two resonance phenomena
in such a way that we can save the procedure of construction of approximate solution, which is enough for the resolution theory involving gluing procedure and the reduction method.

\smallskip
Based on the above reasons, we will introduce new norms $\|\cdot\|_*$ and $\|\cdot\|_{**}$ in \eqref{constraint-f}-\eqref{constraint-e} to do estimates for $f$ and $e$.
Besides, in Section \ref{Section6}, we also define a weighted Sobolev norm $\|\cdot\|_{H^2_*(\mathfrak S)} $ in \eqref{norm h2 mathfrak S} to obtain good estimates for the perturbation function.
We shall mention that the adoption of these new norms significantly contributes to the success of the reduction process.
\qed
}
\end{remark}

\medskip
\begin{remark}\label{remark2-8}
{\it
Since the length $\ell$ of curve $\G_\epsilon$ is not uniformly bounded, it is difficult for us to make a prior estimates to functions involving $\theta$ with $\theta\in[0, \ell)$. To overcome this difficulty, we perform the following Liouville transformation
\begin{align*}
 \tau \,=\, \frac{2\pi}{\tilde\ell} \int_0^{\theta} \beta (\xi) \,{\mathrm d} \xi
\qquad\mbox{with}\quad
 {\tilde\ell} \,=\, \int_0^{\ell} \beta (\xi) \,{\mathrm d} \xi,
\end{align*}
and consequently get the bijective map
\begin{align*}
\theta \in [0, \ell)\mapsto \tau \in [0, 2\pi),
\end{align*}
see \eqref{ztau}.
We obtain the expression of the equation \eqref{eq3} in the new coordinates $(x, \tau)$ with $x$ in \eqref{definition of x 2.14}, and further derive the expression of the error function $S(w_2)$, seeing \eqref{sw2new}, where $w_2$ is given in \eqref{newthirdapproximatesolution}.
By projecting the error function onto $\mathcal Z$ and $U$ respectively, we obtain the reduced equations \eqref{last reduced equation on U'}-\eqref{last reduced equation on mathcal Z} on the coordinate $\tau$. To ensure the solvability of reduced equations and successfully implement the reduction process, we need the conditions {\bf{(B1)}}-{\bf{(B3)}}.
 For a detailed analysis of how conditions {\bf{(B1)}}-{\bf{(B3)}} affect the solvability of the reduced equations, please refer to Remark \ref{remark2-7}.
\qed
}
\end{remark}

\begin{remark}\label{remark2-9}
{\it
 In Theorem \ref{theorem 1.1}, we construct non-radial high-dimensional concentrated solutions to problem \eqref{eq2}-\eqref{constraint1}, which pioneers the application of infinite-dimensional Lyapunov-Schmidt reduction method to investigate nonlinear elliptic problems with mass constraint.
 Moreover, we clarify the effect of the length of the concentrating curve in the higher dimensional concentration phenomenon of problem \eqref{eq2}-\eqref{constraint1}, and construct solutions concentrating at $\G_\epsilon$. The results provided in Theorem \ref{theorem 1.1} and Theorem \ref{theorem 1.11} differ significantly from those in \cite{delPKowWei1} and its references \cite{machiodi, WangWeiYang, Wangzhao, WeiYang, Wei-xu-yang, SWei-Yang}. Furthermore, the results provided in this paper significantly complement the application scope of the infinite-dimensional Lyapunov-Schmidt reduction theory introduced in \cite{delPKowWei1}.
\qed
}
\end{remark}


\medskip
\section{Some preliminaries and preparing works} \label{Section3}

\subsection{ The local forms of the equations in Fermi coordinates}\label{Section3.1}\

In the Fermi coordinates $(t, \theta)$ given in \eqref{fermi}, the Riemannian metric of ${\mathbb R}^2$ can be locally written as
\begin{equation*}
\mathfrak{g} \,=\,{\mathrm d}t\otimes{\mathrm d}t \ +\ \big(1 + \kappa(\theta) t\big)^2 {\mathrm d}\theta\otimes{\mathrm d}\theta.
\end{equation*}
The Laplace-Beltrami operator has the form
\begin{align}
\Delta_\mathfrak{g}
& \,=\, \frac{1}{1+\kappa(\theta) t} \frac{\partial}{\partial t} \Bigg[ \big(1+\kappa(\theta) t\big) \frac{\partial}{\partial t} \Bigg]
\ +\
\frac{1}{1+\kappa(\theta) t}\frac{\partial}{\partial \theta} \Bigg[ \frac{1}{1+\kappa(\theta) t} \frac{\partial}{\partial \theta} \Bigg]
\nonumber\\[2mm]
& \,=\, \frac{\partial^2}{\partial t^2}+ \frac{\kappa(\theta)}{1+\kappa(\theta) t} \frac{\partial }{\partial t}
\ +\
\frac{1}{\big(1+\kappa(\theta) t\big)^2 }\frac{\partial^2}{\partial \theta^2}
\ -\
\frac{ t \kappa'(\theta)}{\big(1+ \kappa(\theta) t\big)^3 }\frac{\partial}{\partial \theta}.
\label{Laplacian}
\end{align}
It is trivial that equation \eqref{eq2} takes the local form
\begin{align}
& \frac{\epsilon^2}{1+\kappa(\theta) t} \frac{\partial}{\partial t} \Bigg[ \big(1+\kappa(\theta) t\big) \frac{\partial}{\partial t} u\Bigg]
\ +\
\frac{\epsilon^2}{1+\kappa(\theta) t}\frac{\partial}{\partial \theta} \Bigg[ \frac{1}{1+\kappa(\theta) t} \frac{\partial}{\partial \theta} u\Bigg]
\nonumber\\[2mm]
&\ -\
\big[1+ \epsilon^2 W (t, \theta) \big] u
\ +\
|u|^2 u
 =0.
\label{eq3.1-0}
\end{align}

\medskip
 In the sequel, we will denote
\begin{equation}\label{eqDepsilon}
\tilde{\Gamma}_\epsilon \,=\, \epsilon^{-1}\Gamma_\epsilon,
\end{equation}
and set the natural stretched coordinates in the neighbourhood of the curve $\tilde{\Gamma}_\epsilon$,
\begin{equation}\label{eqszfinal}
(s, z) \,=\,
\epsilon^{-1} \big(t,\, \theta\big),
\end{equation}
which are defined for
\begin{equation}\label{eqFermi-stretched}
s\in \Big(- \frac{ M_0}{\epsilon}, \frac{ M_0}{\epsilon}\Big), \ \ \ z\in
\Big[0, \frac{\ell}{\epsilon} \Big).
\end{equation}
Hence, equation \eqref{eq3} has the local form
\begin{align} \label{eq3.1}
& \frac{1}{1+\epsilon \kappa s} \frac{\partial}{\partial s} \Big[ \big(1+\epsilon \kappa s\big) \frac{\partial}{\partial s} v \Big]
 +
 \frac{1}{1+\epsilon \kappa s}\frac{\partial}{\partial z} \Big[ \frac{1}{1+\epsilon \kappa s} \frac{\partial}{\partial z} v \Big]
 -
 \big[1+ \epsilon^2 W (\epsilon s, \epsilon z) \big] v + |v|^2 v
 =0,
\end{align}
which can also be written in the following way
\begin{align}
v_{ss} + v_{zz} + B_1 (v) - \big[1+ \epsilon^2 W (\epsilon s, \epsilon z) \big] v + |v|^2 v \,=\, 0,
\label{equationlocal}
\end{align}
 where
\begin{align}
B_1(v)
& \,=\,
\frac{\epsilon \kappa (\epsilon z) }{1+\epsilon \kappa (\epsilon z) s} v_{s}
+\Big[ \frac{1}{(1+\epsilon \kappa(\epsilon z) s)^2 } -1 \Big]v_{zz}
+\frac{\epsilon^2 s \kappa' (\epsilon z) }{(1+\epsilon \kappa (\epsilon z) s)^3 }v_z.
\label{B1}
\end{align}

\medskip
In the neighborhood of $\tilde \Gamma_\epsilon$, by taking the Taylor expansion,
we have
\begin{align} \label{expansion of W}
1+ \epsilon^2 W (\epsilon s, \epsilon z)
 \,=\, & 1+ \epsilon^2 W (0, \epsilon z)+ \epsilon^3s W_t(0, \epsilon z) + \frac 12\epsilon^4s^2 W_{tt}(0, \epsilon z)
\nonumber\\[2mm]
&+ \frac 1 6 \epsilon^5s^3 W_{ttt}(0, \epsilon z)
+\frac{1}{24} \epsilon^6s^4 W_{tttt}\big(\hbar(\epsilon z)\epsilon s, \, \epsilon z\big),
\end{align}
where $\hbar(\theta)$ is a smooth function with $|\hbar(\theta)|\leq 1, \, \forall \, \theta\in [0, \ell)$.
On the other hand, it is trivial that
\begin{align*}
B_1(v) & \,=\, \big(\epsilon \kappa -\epsilon^2 \kappa^2 s \big) v_{s} + B_0(v),
\end{align*}
where
\begin{align}
\label{B0v}
B_0 (v) \,=\, \epsilon^3 s^2 \kappa^3(\epsilon z) a_3(\epsilon s, \epsilon z)v_{s}
\,+\,
\epsilon s \kappa a_2 v_{zz}
\,+\,
\epsilon^2s \kappa'(\epsilon z) a_1(\epsilon s, \epsilon z) v_z,
\end{align}
with
\begin{align}
a_3(t, \theta)=\frac{1}{1+t \kappa (\theta) },
\label{a1a2a3-1}
\qquad
a_2(t, \theta)=\frac{-2-t \kappa (\theta)}{(1+t \kappa (\theta) )^2 },
\qquad
a_1(t, \theta)&=\frac{1}{(1+t \kappa (\theta) )^3 }.
\end{align}
The assumptions in \eqref{curvature bounded} imply that
$a_1, a_2$ and $a_3$ are three smooth bounded functions on the following domain
$$
\Big\{\, (t, \theta)\, :\, |t|<M_0, \quad \theta\in [0, \ell)\, \Big\}
$$
provided that the constant $M_0$ in \eqref{fermi} is chosen sufficiently small.

By using the above calculations, we derive from \eqref{equationlocal} to get
\begin{align} \label{eq4}
{\mathcal D}(v):=\,&v_{ss} + v_{zz} - \big[1+ \epsilon^2 W (0, \epsilon z) \big] v + v^3
+\big(\epsilon \kappa -\epsilon^2 \kappa^2 s \big) v_{s}
+ B_0(v)
- \epsilon^3s W_t(0, \epsilon z) v
\nonumber\\[2mm]
 & - \frac 12 \epsilon^4s^2 W_{tt}(0, \epsilon z) v- \frac 16 \epsilon^5s^3 W_{ttt}(0, \epsilon z)v
 -\frac{1}{24} \epsilon^6s^4 W_{tttt}\big(\hbar(\epsilon z)\epsilon s, \, \epsilon z\big)v \,=\, 0,
\end{align}
where $\epsilon s\in(-M_0, M_0),\, \epsilon z\in [0, \ell)$.

\subsection{ Setting up near the curves}\label{Section3.2}\

 To prove Theorem \ref{theorem 1.1}, we need to reformulate problem \eqref{eq3}  in suitable coordinates. In this section, we will  do this work  step by step.

\medskip
\noindent
{\bf{Step 1.}}
By recalling  the function $\beta$ in \eqref{beta},
we define a function ${\tilde v}({\tilde s}, z) $ by the relation
\begin{align} \label{coordinate-tildes}
v(s, z) \,=\, \beta(\epsilon z) {\tilde v}({\tilde s}, z)
\qquad \text{with}\quad
{\tilde s} \,=\, \beta(\epsilon z)s.
\end{align}

We will express equation (\ref{eq4}) in terms of new coordinates $({\tilde s}, z)$ by the following preparations.
It's easy to get that
\begin{align*}
\p_z {\tilde s}
 \,=\,
 \epsilon\beta's \,=\, \epsilon\frac{\beta'}{\beta}{\tilde s},
\qquad
\p_{zz} {\tilde s} \,=\, \epsilon^2\beta''s
\,=\,
\epsilon^2\frac{\beta''}{\beta}{\tilde s}.
\end{align*}
After that, there hold
\begin{align*}
v_{s}& \,=\, \beta^2{\tilde v}_{\tilde s},
\qquad\qquad
v_{ss} \,=\, \beta^3{\tilde v}_{{\tilde s}{\tilde s}},
\qquad\qquad
v_z \,=\, \epsilon\beta' {\tilde v} +\epsilon \beta' {\tilde s}{\tilde v}_{\tilde s} +\beta {\tilde v}_z,
\qquad\qquad
\\[2mm]
v_{zz}&
\,=\,
\epsilon^2\beta'' {\tilde v}
+ 2 \epsilon^2\, \frac{|\beta'|^2}{\beta} {\tilde s} {\tilde v}_{\tilde s}
+2 \epsilon \beta' {\tilde v}_z
+2\epsilon \beta' {\tilde s} {\tilde v}_{z{\tilde s}}
+ \epsilon^2 \frac{|\beta'|^2}{\beta} {\tilde s}^2 {\tilde v}_{{\tilde s}{\tilde s}}
+ \epsilon^2 \beta'' {\tilde s} {\tilde v}_{\tilde s}
+ \beta {\tilde v}_{zz}.
\end{align*}
Furthermore, for later use, we also compute
\begin{align}
\label{beta'}
\beta' (\theta)
\,=\,
\frac{ \epsilon^2 W_\theta(0, \theta)}{ 2 \beta(\theta)}
\,=\,
\frac{ \epsilon^2 W_\theta(0, \theta)}{ 2 \big[1+ \epsilon^2 W(0, \theta) \big]^{\frac 12}},
\end{align}
\begin{align}
\label{beta''}
\beta'' (\theta)
\,=\,
\frac{ \, \epsilon^2 W_{\theta \theta}(0, \theta)\, }{2\beta(\theta)}
\,-\,
\frac{\, \epsilon^4|W_{\theta}(0, \theta)|^2\, }{4\beta^3(\theta)}
\,=\,
\frac{ \, \epsilon^2 W_{\theta \theta}(0, \theta)\, }{2\big[1+ \epsilon^2 W(0, \theta) \big]^{\frac{1}{2}}}
\,-\,
\frac{\, \epsilon^4|W_{\theta}(0, \theta)|^2\, }{4\big[1+ \epsilon^2 W(0, \theta) \big]^{\frac 3 2}},
\end{align}
and
\begin{align}
\label{beta'''}
\beta'''(\theta)
\,=\,
\frac{ \, \epsilon^2 W_{\theta \theta \theta}(0, \theta)\, }{2\big[1+ \epsilon^2 W(0, \theta) \big]^{\frac{1}{2}}}
\,-\,\frac{ 3\epsilon^4 W_{\theta \theta}(0, \theta)\,W_{\theta }(0, \theta) }{4\big[1+ \epsilon^2 W(0, \theta) \big]^{\frac 3 2}}
\,+\,
\frac{\,3 \epsilon^6|W_{\theta}(0, \theta)|^3\, }{8\big[1+ \epsilon^2 W(0, \theta) \big]^{\frac 5 2}}.
\end{align}
The combination of the above formulas and the assumptions in \eqref{W_theta-control} will imply that
\begin{align}
|\beta'|\,\leq\, C\beta,
\qquad
|\beta''|\,\leq\, C\beta,
\qquad
|\beta'''|\,\leq\, C\beta.
\label{beta-bound}
\end{align}

\medskip
Finally, dividing ${\mathcal D}(v)$ by $\beta^3$, we conclude that problem \eqref{eq4} would be transformed to
\begin{align} \label{eq5}
{\tilde S}({\tilde v}) :\,=\, {\tilde v}_{{\tilde s}{\tilde s}} - {\tilde v}+{\tilde v}^3 +\frac{1}{\beta^2} {\tilde v}_{zz} + B_3({\tilde v}) \,=\, 0,
\end{align}
with
\begin{align*}
B_3({\tilde v})\,=\,
& \Bigg[\epsilon \frac{\kappa}{\, \beta\, }
\,-\, \epsilon^2\frac{\kappa^2}{\beta^2} {\tilde s}\Bigg] {\tilde v}_{\tilde s}
+\epsilon^2\frac{\beta''} {\beta^3}{\tilde v}
+ \epsilon^2\, \frac{2|\beta'|^2}{\beta^4} {\tilde s} {\tilde v}_{\tilde s}
+\epsilon \frac{2\beta'} {\beta^3}{\tilde v}_z
+ \epsilon \frac{2\beta'}{\beta^3} {\tilde s} {\tilde v}_{z{\tilde s}}
+ \epsilon^2 \frac{|\beta'|^2}{\beta^4}{\tilde s}^2 {\tilde v}_{{\tilde s}{\tilde s}}
\nonumber\\[2mm]
&
+ \epsilon^2 \frac{\beta''}{\beta^3} {\tilde s} {\tilde v}_{\tilde s}
- \epsilon^3 \frac{W_t(0, \epsilon z)}{\beta^3} {\tilde s} {\tilde v}
-\frac{1}{2}\, \epsilon^4\frac{W_{tt}(0, \epsilon z)}{\beta^4} {\tilde s}^2 {\tilde v}
+ B_2 ({\tilde v}),
\end{align*}
where $\epsilon {\tilde s}\in(-M_0, M_0),\, \epsilon z\in [0, \ell)$.
In the above formula, we have denoted
\begin{align} \label{B2w}
B_2 ({\tilde v}) \,=\,&
 - \frac{\epsilon^5}{6} \frac{\, \, W_{ttt}(0, \epsilon z)}{\beta^5} {\tilde s}^3{\tilde v}
 -\frac{\epsilon^6}{24} \frac{\, \, W_{tttt}\big(\hbar(\epsilon z)\epsilon s, \, \epsilon z\big)}{\beta^6} {\tilde s}^4{\tilde v}
 + \frac{1}{\, \beta^3\, }B_0 (v)
\nonumber\\[2mm]
 \,=\, &
 - \frac{\epsilon^5}{6} \frac{\, \, W_{ttt}(0, \epsilon z)}{\beta^5} {\tilde s}^3{\tilde v}
 -\frac{\epsilon^6}{24} \frac{\, \, W_{tttt}\big(\hbar(\epsilon z)\epsilon s, \, \epsilon z\big)}{\beta^6} {\tilde s}^4{\tilde v}
+\epsilon^3 \kappa^3(\epsilon z) a_3(\epsilon s, \epsilon z) \frac{1}{\, \beta^3\, }{\tilde s}^2{\tilde v}_{\tilde s}
\nonumber\\[2mm]
&\,+\,
\epsilon \kappa(\epsilon z) a_2(\epsilon s, \epsilon z)
\Bigg[
\epsilon^2\frac{\beta''} {\beta^4} {\tilde s} {\tilde v}
+ \epsilon^2\,\frac{2|\beta'|^2}{\beta^5} {\tilde s}^2 {\tilde v}_{\tilde s}
+\epsilon \frac{2\beta'} {\beta^4} {\tilde s}{\tilde v}_z
+ \epsilon \frac{2\beta'}{\beta^4} {\tilde s}^2 {\tilde v}_{z{\tilde s}}
\nonumber\\[2mm]
&\qquad\qquad\qquad\qquad\qquad
+ \epsilon^2 \frac{|\beta'|^2}{\beta^5} {\tilde s}^3{\tilde v}_{{\tilde s}{\tilde s}}
+ \epsilon^2 \frac{\beta''}{\beta^4} {\tilde s}^2 {\tilde v}_{\tilde s}
+\frac{1}{\, \beta^3\, } {\tilde s}{\tilde v}_{zz}
\Bigg]
\nonumber\\[2mm]
&\,+\,
\epsilon^2a_1(\epsilon s, \epsilon z) \kappa'(\epsilon z)
\Bigg[
\epsilon\frac{\beta'}{\beta^4} {\tilde s}{\tilde v}
+\epsilon\frac{\beta'}{\beta^4} {\tilde s}^2 {\tilde v}_{\tilde s}
+ \frac{1}{\, \beta^3\, } {\tilde s}{\tilde v}_z
\Bigg],
\end{align}
where the linear operator $B_0 (v)$ and the function $\hbar$ are defined in \eqref{B0v}, \eqref{expansion of W} respectively.
In the above, the terms $\beta'$ and $\beta''$ have been calculated in \eqref{beta'} and \eqref{beta''}.

\medskip
\noindent{\bf{Step 2.}}
By recalling the definition of ${\tilde\ell}$ in \eqref{lambda*}, we take the transformation
\begin{align}
 \tau (z) \,=\, \frac{2\pi}{\tilde\ell} \int_0^{\epsilon z} \beta (\xi) \,{\mathrm d} \xi,
\label{ztau}
\end{align}
 and
\begin{align}\label{definition of hat v}
{\tilde v}({\tilde s}, z)
\,=\, {\hat v}({\tilde s}, \tau).
\end{align}
The above setting will imply that
\begin{align*}
 \frac{{\mathrm d}\tau} {{\mathrm d}z}
 \,=\, \epsilon\frac{ 2\pi} {\tilde\ell} \beta (\epsilon z),
 \qquad
 \frac{{\mathrm d}^2 \tau} {{\mathrm d} z^2}
 \,=\, \epsilon^2\frac{ 2\pi} { {\tilde\ell}} \beta' (\epsilon z),
\end{align*}
\begin{align*}
{\tilde v}_{\tilde s}({\tilde s}, z) \,=\,{\hat v}_{\tilde s} ({\tilde s}, \tau),\qquad
{\tilde v}_{{\tilde s}{\tilde s}}({\tilde s}, z) \,=\,{\hat v}_{{\tilde s}{\tilde s}} ({\tilde s}, \tau),
\end{align*}
\begin{align*}
{\tilde v}_z ({\tilde s}, z)
\,=\,\epsilon\frac{2\pi}{{\tilde\ell}}\beta(\epsilon z){\hat v}_\tau ({\tilde s}, \tau),
\qquad
 {\tilde v}_{z{\tilde s}} ({\tilde s}, z)
\,=\, \epsilon\frac{2\pi}{{\tilde\ell}}\beta(\epsilon z){\hat v}_{\tau {\tilde s}} ({\tilde s}, \tau),
\end{align*}
and
\begin{align*}
{\tilde v}_{zz}({\tilde s}, z)
\,=\,
&\epsilon\partial_z\Big[\frac{2\pi}{{\tilde\ell}}\beta(\epsilon z){\hat v}_\tau ({\tilde s}, \tau)\Big]
\,=\,
\epsilon^2\frac{2\pi}{{\tilde\ell}}\beta'{\hat v}_\tau ({\tilde s}, \tau)
\,+\,\epsilon^2\frac{ 4\pi^2}{ {\tilde\ell^2}}\beta^2{\hat v}_{\tau\tau }({\tilde s}, \tau).
\end{align*}

\medskip
We substitute the above results into \eqref{eq5} and then obtain
\begin{align}
0= {\tilde S}({\tilde v})
\,=\,&
{\tilde v}_{{\tilde s}{\tilde s}} - {\tilde v}+{\tilde v}^3 +\frac{1}{\beta^2} {\tilde v}_{zz} + \Bigg[\epsilon \frac{\kappa}{\, \beta\, }
\,-\, \epsilon^2\frac{\kappa^2}{\beta^2} {\tilde s}\Bigg] {\tilde v}_{\tilde s}
+\epsilon^2\frac{\beta''} {\beta^3}{\tilde v}
+ \epsilon^2\, \frac{2|\beta'|^2}{\beta^4} {\tilde s} {\tilde v}_{\tilde s}
+\epsilon \frac{2\beta'} {\beta^3}{\tilde v}_z
\nonumber\\[2mm]
&
+ \epsilon \frac{2\beta'}{\beta^3} {\tilde s} {\tilde v}_{z{\tilde s}}
+ \epsilon^2 \frac{|\beta'|^2}{\beta^4}{\tilde s}^2 {\tilde v}_{{\tilde s}{\tilde s}}
+ \epsilon^2 \frac{\beta''}{\beta^3} {\tilde s} {\tilde v}_{\tilde s}
- \epsilon^3 \frac{W_t(0, \epsilon z)}{\beta^3} {\tilde s} {\tilde v}
-\frac{1}{2}\, \epsilon^4\frac{W_{tt}(0, \epsilon z)}{\beta^4} {\tilde s}^2 {\tilde v}
+ B_2 ({\tilde v})
\nonumber\\[2mm]
\,=\,
&{\hat v}_{{\tilde s}{\tilde s}} - {\hat v} + {\hat v}^3
+\epsilon^2\frac{1}{\beta^2}\Bigg[\frac{4\pi^2}{{\tilde\ell}^2} \beta^{2} {\hat v}_{\tau\tau}
+\frac{2\pi}{ {\tilde\ell}}\beta'{\hat v}_\tau
\Bigg]
+ \Big[\epsilon \frac{\kappa}{\, \beta\, }\,-\, \epsilon^2\frac{\kappa^2}{\beta^2} {\tilde s}\Big] {\hat v}_{\tilde s}
+\epsilon^2\frac{\beta''} {\beta^3}{\hat v}
+ \epsilon^2\, \frac{2|\beta'|^2}{\beta^4} {\tilde s} {\hat v}_{\tilde s}
\nonumber\\[2mm]
&
+\epsilon^2 \frac{\beta'} {\beta^2}\frac{4\pi}{{\tilde\ell}}{\hat v}_\tau
+ \epsilon^2 \frac{\beta'}{\beta^2} \frac{4\pi}{{\tilde\ell}} {\tilde s}{\hat v}_{\tau {\tilde s}}
+ \epsilon^2 \frac{|\beta'|^2}{\beta^4}{\tilde s}^2 {\hat v}_{{\tilde s}{\tilde s}}
+ \epsilon^2 \frac{\beta''}{\beta^3} {\tilde s} {\hat v}_{\tilde s}
- \epsilon \frac{\epsilon^2W_t(0, \epsilon z)}{\beta^3} {\tilde s} {\hat v}
\nonumber\\[2mm]
&-\frac{1}{2}\, \epsilon^2\frac{\epsilon^2W_{tt}(0, \epsilon z)}{\beta^4} {\tilde s}^2 {\hat v}
+ B_2 ({\tilde v})
\nonumber\\[2mm]
\,=\,&{\hat v}_{{\tilde s}{\tilde s}} - {\hat v} + {\hat v}^3
+\epsilon^2\frac{4\pi^2}{{\tilde\ell}^2} {\hat v}_{\tau\tau}
\,+\,\epsilon \frac{\kappa}{\beta} \Bigg[ {\hat v}_{\tilde s} +\frac 2 3 {\tilde s}{\hat v}\Bigg]
+\epsilon^2\frac{6\pi}{{\tilde\ell}}\frac{\beta'} {\beta}\frac{1}{\beta}{\hat v}_\tau
+\epsilon^2 \frac{4\pi}{{\tilde\ell}} \frac{\beta'}{\beta^2} {\tilde s}{\hat v}_{\tau {\tilde s}}
\nonumber\\[2mm]
&
+\epsilon^2\Big[ \frac{\beta''}{\beta^3}\,+\,\frac{2|\beta'|^2}{\beta^4} \,-\,\frac{\kappa^2}{\beta^2}\Big]{\tilde s} {\hat v}_{\tilde s}
+
\epsilon^2\frac{\beta''}{\beta^3}{\hat v}
+\epsilon^2 \frac{|\beta'|^2}{\beta^4}{\tilde s}^2 {\hat v}_{{\tilde s}{\tilde s}}
-\frac{1}{2}\, \epsilon^2\frac{\epsilon^2W_{tt}(0, \epsilon z)}{\beta^4} {\tilde s}^2 {\hat v}
+B_2 ({\tilde v}),
\label{eqn-hatv}
\end{align}
where $\epsilon {\tilde s}\in(-M_0, M_0),\, \tau\in [0, 2\pi)$.
In \eqref{eqn-hatv}, we have used the stationary condition in \eqref{stationary assumption}.
In the coordinates $(\tilde{s}, \tau)$, the expression of $B_2 ({\tilde v})$ in \eqref{eqn-hatv} has the form
\begin{align}
B_2 ({\tilde v})
\,=\, &
 - \frac{\epsilon^5}{6} \frac{\, \, W_{ttt}(0, \epsilon z)}{\beta^5} {\tilde s}^3{\hat v}
 -\frac{\epsilon^6}{24} \frac{\, \, W_{tttt}\big(\hbar(\epsilon z)\epsilon s, \, \epsilon z\big)}{\beta^6} {\tilde s}^4{\hat v}
 +\epsilon^3 \kappa^3(\epsilon z) a_3(\epsilon s, \epsilon z) \frac{1}{\, \beta^3\, }{\tilde s}^2{\hat v}_{\tilde s}
\nonumber\\[2mm]
&\,+\,
\epsilon \kappa(\epsilon z) a_2(\epsilon s, \epsilon z)
\Bigg\{
\epsilon^2\frac{\beta''} {\beta^4} {\tilde s} {\hat v}
+ \epsilon^2\,\frac{2|\beta'|^2}{\beta^5} {\tilde s}^2 {\hat v}_{\tilde s}
+\epsilon^2 \frac{2\beta'} {\beta^4} {\tilde s}\frac{ 2\pi}{ {\tilde\ell}}\beta{\hat v}_\tau
+ \epsilon^2 \frac{2\beta'}{\beta^4} {\tilde s}^2 \frac{ 2\pi}{ {\tilde\ell}}\beta{\hat v}_{\tau {\tilde s}}
\nonumber\\[2mm]
&\qquad\qquad\qquad\qquad\qquad
+ \epsilon^2 \frac{|\beta'|^2}{\beta^5} {\tilde s}^3{\hat v}_{{\tilde s}{\tilde s}}
+ \epsilon^2 \frac{\beta''}{\beta^4} {\tilde s}^2 {\hat v}_{\tilde s}
 +\epsilon^2 \frac{1}{\, \beta^3\, } {\tilde s}\Bigg[
\frac{ 4\pi^2} {{\tilde\ell}^2} \beta^{2} {\hat v}_{\tau\tau}
+\frac{ 2\pi}{ {\tilde\ell}}\beta' {\hat v}_\tau
\Bigg]
\Bigg\}
\nonumber\\[2mm]
&\,+\,
\epsilon^2a_1(\epsilon s, \epsilon z) \kappa'(\epsilon z)
\Bigg\{
\epsilon\frac{\beta'}{\beta^4} {\tilde s}{\hat v}
+\epsilon\frac{\beta'}{\beta^4} {\tilde s}^2 {\hat v}_{\tilde s}
+\epsilon\frac{1}{\, \beta^3\, } {\tilde s}\frac{ 2\pi}{ {\tilde\ell}}\beta{\hat v}_\tau
\Bigg\}
\nonumber\\[2mm]
\,=\, &
 - \frac{\epsilon^5}{6} \frac{\, \, W_{ttt}(0, \epsilon z)}{\beta^5} {\tilde s}^3{\hat v}
 -\frac{\epsilon^6}{24} \frac{\, \, W_{tttt}\big(\hbar(\epsilon z)\epsilon s, \, \epsilon z\big)}{\beta^6} {\tilde s}^4{\hat v}
 +\epsilon^3 \kappa^3(\epsilon z) a_3(\epsilon s, \epsilon z) \frac{1}{\, \beta^3\, }{\tilde s}^2{\hat v}_{\tilde s}
\nonumber\\[2mm]
&\,+\,
\epsilon^3 \kappa(\epsilon z) a_2(\epsilon s, \epsilon z)
\Bigg\{
\frac{\beta''} {\beta^4} {\tilde s} {\hat v}
+\,\frac{2|\beta'|^2}{\beta^5} {\tilde s}^2 {\hat v}_{\tilde s}
+\frac{ 6\pi}{ {\tilde\ell}}\frac{\beta'} {\beta^3} {\tilde s}{\hat v}_\tau
+\frac{ 4\pi}{ {\tilde\ell}}\frac{\beta'} {\beta^3} {\tilde s}^2{\hat v}_{\tau \tilde s }
\nonumber\\[2mm]
&\qquad\qquad\qquad\qquad\qquad
+ \frac{|\beta'|^2}{\beta^5} {\tilde s}^3{\hat v}_{{\tilde s}{\tilde s}}
+ \frac{\beta''}{\beta^4} {\tilde s}^2 {\hat v}_{\tilde s}
+\frac{ 4\pi^2} {{\tilde\ell}^2}\frac{1}{\beta} {\tilde s}{\hat v}_{\tau\tau}\Bigg\}
\nonumber\\[2mm]
&\,+\,
\epsilon^3a_1(\epsilon s, \epsilon z) \kappa'(\epsilon z)
\Bigg\{
\epsilon\frac{\beta'}{\beta^4} {\tilde s}{\hat v}
+\frac{\beta'}{\beta^4} {\tilde s}^2 {\hat v}_{\tilde s}
+\frac{ 2\pi}{ {\tilde\ell}}\frac{1}{ \beta^2} {\tilde s}{\hat v}_\tau
\Bigg\}.
\label{B2tildev}
\end{align}

\noindent{\bf{Step 3.}}
We define a function $w (x, \tau) $ by the relation
\begin{align} \label{definition of x 2.14}
{\hat v}({\tilde s}, \tau) \,=\, w(x, \tau)
\qquad \text{with}\quad
x \,=\, {\tilde s}-f(\tau) = \beta s- f(\tau),
\end{align}
where $f :[0, 2\pi) \mapsto{\mathbb R}$ with the constraint \eqref{constraint-f} is a function to be chosen later (see Sections \ref{Section7} and \ref{Section8}) and satisfies the boundary conditions
$$
\quad f(0) \,=\, f(2\pi), \quad f'(0) \,=\, f'(2\pi).
$$

It's easy to get that
\begin{align*}
\p_\tau x & \,=\, f',
\qquad
\p_{\tau\tau} x \,=\, f''.
\end{align*}
After that, there hold
\begin{align*}
&{\hat v}_{{\tilde s}} \,=\, w_x,
\qquad\qquad
{\hat v}_{ss} \,=\, w_{xx},
\\[2mm]
&{\hat v}_\tau \,=\, - f' w_x +w_\tau,
\qquad\qquad
{\hat v}_{{\tilde s}\tau} \,=\, - f' w_{xx} +w_{x\tau},
\\[2mm]
&{\hat v}_{\tau\tau} \,=\, |f'|^2 w_{xx} -f''w_x +w_{\tau\tau} -2f' w_{x\tau}.
\end{align*}
Therefore, we write \eqref{eqn-hatv} in the form
\begin{align}
&\, w_{xx} - w + w^3
+ \frac{\epsilon^2 4\pi^2}{{\tilde\ell}^2} w_{\tau\tau}
+\epsilon^2\frac{4\pi^2}{{\tilde\ell}^2}\Big[|f'|^2 w_{xx} -f''w_x -2f' w_{x\tau}\Big]
\nonumber\\[2mm]
&
+\epsilon \frac{\kappa}{\, \beta\, } \Big[ w_x +\frac{2}{3} (x+f)w\Big]
+\epsilon^2\frac{6\pi}{{\tilde\ell}}\frac{\beta'} {\beta}\frac{1}{\beta}\big[- f' w_{x} +w_{\tau}\big]
\nonumber\\[2mm]
&
+ \epsilon^2 \frac{4\pi}{{\tilde\ell}} \frac{\beta'}{\beta^2} (x+f)\big[- f' w_{xx} +w_{x\tau}\big]
+
\epsilon^2\Big[\frac{\beta''}{\beta^3}\,+\,\frac{2|\beta'|^2}{\beta^4} \,-\,\frac{\kappa^2}{\beta^2} \Big](x+f) w_{x}
\nonumber\\[2mm]
&+\epsilon^2\frac{\beta''}{\beta^3}w
+ \epsilon^2 \frac{|\beta'|^2}{\beta^4}(x+f)^2 w_{xx}
-\frac{1}{2}\, \epsilon^2\frac{\epsilon^2W_{tt}(0, \epsilon z)}{\beta^4} (x+f)^2 w
+B_2 ({\tilde v})
=0,
\label{S(w)}
\end{align}
where $(x, \tau)\in{\mathbb R}\times[0, 2\pi)$.

\medskip
For the simplification of notation in \eqref{S(w)}, we denote
\begin{align} \label{new opreator B4}
 B_4(w)
 =\epsilon \frac{\kappa}{\, \beta\, } \Big[ w_x +\frac{2}{3} (x+f)w\Big],
\end{align}
and
\begin{align} \label{new opreator B5}
 B_5(w)
 =&\epsilon^2\frac{4\pi^2}{{\tilde\ell}^2}\Big[|f'|^2 w_{xx} -f''w_x -2f' w_{x\tau}\Big]
+
\epsilon^2\frac{6\pi}{{\tilde\ell}}\frac{\beta'} {\beta^2}\Big[- f' w_x +w_\tau\Big]
\nonumber\\[2mm]
&
+ \epsilon^2 \frac{4\pi}{{\tilde\ell}} \frac{\beta'}{\beta^2} (x+f)\big[- f' w_{xx} +w_{x\tau}\big]
+
\epsilon^2\Big[
\frac{\beta''}{\beta^3}\,+\,\frac{2|\beta'|^2}{\beta^4} \,-\,\frac{\kappa^2}{\beta^2} \Big] (x+f) w_{x}
\nonumber\\[2mm]
&+\epsilon^2\frac{\beta''}{\beta^3}w
+ \epsilon^2 \frac{|\beta'|^2}{\beta^4}(x+f)^2 w_{xx}
-\frac{1}{2}\, \epsilon^2\frac{\epsilon^2W_{tt}(0, \epsilon z)}{\beta^4} (x+f)^2 w.
\end{align}
For the last term in \eqref{S(w)}, by \eqref{B2tildev} and the relation $\hat v (\tilde s, \tau)=w(x, \tau)$, we derive
\begin{align}
 B_2 ({\tilde v})
 \,=\,& - \frac{\epsilon^3}{6} \frac{\, \, \epsilon^2 W_{ttt}(0, \epsilon z)}{\beta^5} (x+f)^3 w
 -\frac{\epsilon^4}{24} \frac{\, \, \epsilon^2 W_{tttt}\big(\hbar(\epsilon z)\epsilon s, \, \epsilon z\big)}{\beta^6} (x+f)^4w
\nonumber\\[2mm]
&
 +\epsilon^3 \kappa^3(\epsilon z) a_3(\epsilon s, \epsilon z) \frac{1}{\, \beta^3\, }(x+f)^2w_{x}
\nonumber\\[2mm]
&\,+\,
\epsilon^3 \kappa(\epsilon z) a_2(\epsilon s, \epsilon z)
\Bigg\{
\frac{\beta''} {\beta^4} (x+f) w
+\frac{2|\beta'|^2}{\beta^5} (x+f)^2 w_x
+\frac{ 6\pi}{ {\tilde\ell}}\frac{\beta'} {\beta^3} (x+f)\big[-f'w_x +w_\tau\big]
\nonumber\\[2mm]
&\qquad\qquad\qquad\qquad\qquad
+\frac{ 4\pi}{ {\tilde\ell}}\frac{\beta'} {\beta^3} (x+f)^2[- f' w_{xx} +w_{x\tau}]
+ \frac{|\beta'|^2}{\beta^5} (x+f)^3w_{xx}
+ \frac{\beta''}{\beta^4} (x+f)^2 w_{x}
\nonumber\\[2mm]
&\qquad\qquad\qquad\qquad\qquad
+\frac{ 4\pi^2} {{\tilde\ell}^2}\frac{1}{\beta} (x+f)\big[ |f'|^2 w_{xx} -f''w_x +w_{\tau\tau} -2f' w_{x\tau}\big]
\Bigg\}
\nonumber\\[2mm]
&\,+\,
\epsilon^3 \kappa'(\epsilon z)a_1(\epsilon s, \epsilon z)
\Bigg\{
\frac{\beta'}{\beta^4} (x+f)w
+\frac{\beta'}{\beta^4} (x+f)^2 w_{x}
+\frac{ 2\pi}{ {\tilde\ell}}\frac{1}{ \beta^2} (x+f)\big[-f'w_x +w_\tau\big]
\Bigg\}
\nonumber\\[2mm]
: \,=\,& B_6(w).
\label{B2w-2222new}
\end{align}
Note that the coefficients in $B_4$, $B_5$ and $B_6$ are smooth bounded functions on the following domain
$$
\Big\{\, (s, z)\, :\, |\epsilon s|<M_0, \quad \epsilon z\in [0, \ell)\, \Big\},
$$
due to the assumptions in \eqref{curvature bounded} and \eqref{W_t-control}-\eqref{W_theta-control}.
We finally use \eqref{S(w)} to obtain the equation
\begin{align}
S(w):=&\, \frac{\epsilon^2 4\pi^2}{{\tilde\ell}^2} w_{\tau\tau} + w_{xx} - w +w^3 + B_4(w)+ B_5(w)+ B_6(w)=0,
\label{problem-local4}
\end{align}
where $x\in(-M_0/\epsilon,\, M_0/\epsilon),\, \tau\in [0, 2\pi)$.

\subsection{ The block solution $U$ and conventions}\label{Section3.3}\

Recall that $U$ is the unique solution to \eqref{block}, which is even in $x$, and also enjoys the asymptotic behaviours
\begin{align}
 \lim_{|x| \rightarrow +\infty} U(x) e^{|x|} = C<+\infty,
\qquad\text{and} \qquad
\lim_{|x| \rightarrow +\infty} \frac{U(x)}{U'(x)}=-1.
\label{U-asymp}
\end{align}
Moreover, $U$ satisfies the non-degeneracy condition, in the sense that the kernel of the linear operator $-\Delta + 1 -3U^2 $ in $H^1(\mathbb{R}) $ is spanned by the function $U'(x)$.
For the linear eigenvalue problem on $H^1({\mathbb R})$
\begin{align}
\label{eigenvalue}
h{''}-h+3U^{2}h\,=\, \lambda h\quad \mbox{in }\,\Bbb R, \,\quad h(\pm \infty)\,=\, 0,
\end{align}
it is also well-known that it possesses a unique positive eigenvalue $\lambda_0$,
with associated eigenfunction ${\mathcal Z}$ (even and positive) which can be normalized so that
$$
\int_{{\mathbb R}}{\mathcal Z}^2(x) \,{\mathrm d}x=1.
$$
In fact, a simple computation shows that
\begin{equation}
\label{lambda0}
\lambda_0\,=\, 4,
\qquad
\mathcal Z(x)\,=\, \frac{U^{2}(x)}{\sqrt{\displaystyle\int_{\mathbb R} U^{4} \,{\mathrm d}x}\, }.
\end{equation}
The reader can refer to \cite{delPKowWei1}.

It is easy to derive the following relations
\begin{align} \label{identities for U}
 3 \int_{{\mathbb R}} U^4 \,{\mathrm d}x = 4\int_{{\mathbb R}} U'^3 \,{\mathrm d}x= 12 \int_{{\mathbb R}} |U'|^2 \,{\mathrm d}x := 12 \varrho_1,
\end{align}
and
\begin{align} \label{identities for U2}
 \int_{{\mathbb R}} xU U' \,{\mathrm d}x = - \frac 12 \int_{{\mathbb R}} U^2 \,{\mathrm d}x= - \frac 32 \int_{{\mathbb R}} |U'|^2 \,{\mathrm d}x = - \frac 32 \varrho_1,
\end{align}
where
\begin{align}
\varrho_1
\,:=\,
\int_{\Bbb R}|U'(x)|^2\,{\mathrm d}x
=
-\,2\int_{\Bbb R}xU'(x)U''(x)\,{\mathrm d}x.
\label{varrho1}
\end{align}
For later use, we denote the following constants
\begin{align}
\varrho_0:=\int_{\Bbb R}|U(x)|^2\,{\mathrm d}x =-\,2\int_{\Bbb R}xU(x)U'(x)\,{\mathrm d}x,
\label{varrho0}
\end{align}
\begin{align}
\varrho_2
\,:=\,
\int_{\Bbb R}|xU'(x)|^2\,{\mathrm d}x
=-\,\frac{2}{3}\int_{\Bbb R}x^3U'(x)U''(x)\,{\mathrm d}x,
\label{varrho2}
\end{align}
\begin{align}
\varrho_3
\,:=\,
\int_{\Bbb R}|xU''(x)|^2\,{\mathrm d}x,
\label{varrho3}
\end{align}
\begin{align}
\varrho_4
\,:=\,
\int_{\Bbb R}|x^2U''(x)|^2\,{\mathrm d}x,
\label{varrho4}
\end{align}
\begin{align}
\varrho_p
\,:=\,
\int_{\Bbb R}|U(x)|^{p+1}\,{\mathrm d}x.
\label{varrhop}
\end{align}
Moreover, it is trivial that
\begin{align}\label{varrho-additional}
\int_{\Bbb R}x^2U(x)U''(x)\,{\mathrm d}x=\varrho_0-\varrho_2.
\end{align}

\medskip
We end this section by providing a collection of notation.
{Throughout the paper, ${\mathfrak S}$ represents the strip in $\mathbb{R}^2$ of the form
\begin{equation}
\label{huaxies}
{\mathfrak S}=\Big\{ (x, \tau): \quad x\in {\mathbb R}, \quad 0<\tau< 2\pi\Big\}.
\end{equation}
For the introduction of the parameters $f$ and $e$, the reader can refer to \eqref{definition of x 2.14}
and \eqref{first-approximation}.
In all what follows, we will make the following assumptions on the parameters $f$ and $e$
\begin{align} \label{constraint-f}
\|f\|_* :\,=\,
\left\|\beta f\right\|_{L^2(0, 2\pi)}
+\left\|\frac{\beta}{\tilde\ell} f_\tau\right\|_{L^2(0, 2\pi)}
+\left\|\frac{\beta}{{\tilde\ell}^2} f_{\tau\tau}\right\|_{L^2(0, 2\pi)}
\leq \epsilon^{1-\alpha},
\end{align}
\begin{align} \label{constraint-e}
\|e\|_{**} : \,=\,
\|e\|_{L^{\infty}(0, 2\pi)}
 + \frac{\epsilon}{{\tilde\ell}} \|e'\|_{L^2 (0, 2\pi)}
 + \frac{\epsilon^2} {{\tilde\ell}^2}\|e''\|_{L^2(0, 2\pi)}
 \leq \epsilon^{\frac 32 \varrho + \frac 12- 2\alpha},
\end{align}
where $\beta$ has been given in \eqref{beta}, and the parameters $\varrho$ and $\alpha$ have been given in \eqref{length-constraint} and \eqref{resonace-f}
with the requirement in \eqref{constraint-alpha-varrho}.
By \eqref{constraint-alpha-varrho}, we conclude that
\begin{align*}
\|e\|_{**}  \le \epsilon^{\frac 32 \varrho + \frac 12- 2\alpha} \ll \epsilon^{1-\alpha},
\end{align*}
as $\epsilon\rightarrow 0^+$.

Finally, by the norm defined in \eqref{constraint-f}, we claim that
\begin{align} \label{Linfty of f}
\left\|\beta f\right\|_{L^\infty(0, 2\pi)} \le C\,{\tilde\ell}^{\frac 12} \|f\|_*,
\qquad
\left\|\frac{\beta}{\tilde\ell} f_\tau\right\|_{L^\infty(0, 2\pi)} \le C\,{\tilde\ell}^{\frac 12} \|f\|_*.
\end{align}
In fact, we consider the following two cases.

\smallskip
\noindent $\clubsuit$\
If
$$
\sup_{\tau\in (0, 2\pi)}(\beta f)^2\leq 2\inf_{\tau\in (0, 2\pi)}(\beta f)^2,
$$
then
$$
\sup_{\tau\in (0, 2\pi)}(\beta f)^2
\leq
2\inf_{\tau\in (0, 2\pi)}(\beta f)^2
\leq
2\frac{1}{2\pi}\int_0^{2\pi}(\beta f)^2{\mathrm d}\tau
\leq C\|f\|_*^2.
$$
Note that there exists a positive constant ${\tilde\ell}_0$ independent of $\epsilon$ such that
\begin{align*}
 {\tilde\ell}\geq {\tilde\ell}_0,
\end{align*}
due to the fact in \eqref{ellzero}.
Therefore, there holds
$$
\|(\beta f)^2\|_{L^{\infty}(0, 2\pi)}
\leq C{\tilde\ell}\|f\|_*^2.
$$

\smallskip
\noindent $\clubsuit$\
We consider the case
$$
\sup_{\tau\in (0, 2\pi)}(\beta f)^2> 2\inf_{\tau\in (0, 2\pi)}(\beta f)^2.
$$
By denoting $\tau_0$ the  point  such that
\begin{equation*}
\big(\beta f\big)^2 \Big|_{\tau=\tau_0}\leq \frac{5}{4}\inf_{\tau\in (0, 2\pi)}(\beta f)^2,
\end{equation*}
and we obtain
\begin{align*}
\int^\tau_{\tau_0} \beta f (\beta f)_\tau
=\frac{1}{2}\big(\beta f\big)^2\Big|^\tau_{\tau_0}
=\frac{1}{2}\big(\beta f\big)^2-\frac{1}{2}\big(\beta f\big)^2\Big|_{\tau=\tau_0},
\end{align*}
and then
\begin{align*}
(\beta f)^2
\leq
\big(\beta f\big)^2\Big|_{\tau=\tau_0}
\,+\,2\int^\tau_{\tau_0} |\beta f (\beta f)_\tau|{\mathrm d}\tau.
\end{align*}
We get
\begin{align*}
\sup_{\tau\in (0, 2\pi)} (\beta f)^2
\,\leq\,&
\big(\beta f\big)^2\Big|_{\tau=\tau_0}
\,+\,2\int^{2\pi}_0 |\beta f (\beta f)_\tau|{\mathrm d}\tau
\\[2mm]
\,\leq\,&
\frac{5}{4}\inf_{\tau\in (0, 2\pi)}(\beta f)^2
\,+\,2\int^{2\pi}_0 |\beta f (\beta f)_\tau|{\mathrm d}\tau
\\[2mm]
\,\leq\,&
\frac{5}{8}\sup_{\tau\in (0, 2\pi)}(\beta f)^2
\,+\,2\int^{2\pi}_0 |\beta f (\beta f)_\tau|{\mathrm d}\tau.
\end{align*}
Hence,
\begin{align*}
\|(\beta f)^2\|_{L^{\infty}(0, 2\pi)}
\leq
C\int^{2\pi}_0 |\beta f (\beta f)_\tau|{\mathrm d}\tau.
\end{align*}
The term in the right hand side can be estimated as follows
\begin{align*}
\int_0^{2\pi} |\beta f| |(\beta f)_\tau|{\mathrm d}\tau
&\le C \left(\tilde\ell \int_0^{2\pi} |\beta f|^2{\mathrm d}\tau
\ +\
\frac{{\tilde \ell}^2}{\tilde \ell}\int_0^{2\pi} \Big|\frac{(\beta f)_\tau}{\tilde \ell}\Big|^2{\mathrm d}\tau \right)
\nonumber\\[2mm]
 &\le C \left(\tilde\ell \int_0^{2\pi} |\beta f|^2{\mathrm d}\tau
\ +\ \frac{{\tilde \ell}^2}{\tilde \ell}\int_0^{2\pi} \Big|\frac{\beta f_\tau}{\tilde \ell}\Big|^2{\mathrm d}\tau
\ +\ \frac1{\tilde\ell} \int_0^{2\pi} \Big|\frac{\beta'} {\beta}\Big|^2\Big|\frac{{\mathrm d}\theta} {{\mathrm d}\tau}\Big|^2\,|\beta f|^2{\mathrm d}\tau \right)
\nonumber\\[2mm]
&\le C \left(\tilde\ell \int_0^{2\pi} |\beta f|^2{\mathrm d}\tau
\ +\ \frac{{\tilde \ell}^2}{\tilde \ell}\int_0^{2\pi} \Big|\frac{\beta f_\tau}{\tilde \ell}\Big|^2{\mathrm d}\tau
\ +\ \frac1{\tilde\ell} \int_0^{2\pi} \Big|\frac{\beta'} {\beta}\Big|^2\Big|\frac{\tilde \ell} {2 \pi \beta}\Big|^2 \, |\beta f|^2{\mathrm d}\tau \right)
\nonumber\\[2mm]
&\le C \left(\tilde\ell \int_0^{2\pi} |\beta f|^2{\mathrm d}\tau
\ +\ \tilde \ell \int_0^{2\pi} \Big|\frac{\beta f_\tau}{\tilde \ell}\Big|^2{\mathrm d}\tau \right)
\nonumber\\[2mm]
 &\le C\tilde \ell \|f\|_*^2,
\end{align*}
where we have used \eqref{beta-bound}.
 Therefore,
\[
 \|\beta f\|_{L^\infty(0, 2\pi)} \le C {\tilde \ell}^{\frac12} \|f\|_*.
 \]
We can prove the second inequality in \eqref{Linfty of f} in a similar way.


\medskip
\section{Proof of Theorem \ref{necessary condition theorem}}\label{Section4}

 As stated in the assumptions of Theorem \ref{necessary condition theorem}, the solution $u_\epsilon$ of problem \eqref{eq2}-\eqref{constraint1} has
 a concentration on the curve $\Gamma_\epsilon$ in the sense of the decomposition in \eqref{u-decomposition}-\eqref{smallperturbation1}.

\subsection{ The identities of Pohozaev type}\label{Section4.1}\

 We here provide some identities by mimicking the proof of Pohozaev identity.

\begin{lemma}\label{lemmaPohozaev}
 Suppose that $u=u_\epsilon$ is a solution of \eqref{eq2}. 
 Then the following identities are valid in the local coordinates given in \eqref{fermi}
\begin{align}
 &\epsilon^2\int_{-M}^{M} \big({1+\kappa t}\big) u_t^2 \,{\mathrm d}t
 \,+\,
 \int_{-M}^{M} \big({1+\kappa t}\big) \big[1+ \epsilon^2 W (t, \theta) \big] u^2 \,{\mathrm d}t
 \,-\,
 \int_{-M}^{M} \big({1+\kappa t}\big) u^4 \,{\mathrm d}t
\nonumber\\[2mm]
 &\,=\,
 \epsilon^2\int_{-M}^{M} u
 \frac{\partial}{\partial\theta} \Big[ \frac{1}{1+\kappa t} \frac{\partial u}{\partial\theta} \Big] \,{\mathrm d}t
 \ +\
 \epsilon^2\big(1+\kappa t\big) u u_t \Bigg|_{t=-M}^{M},
 \label{first identity}
\end{align}
 and
\begin{align}
 & \epsilon^2 \int_{-M}^{M} \big(1+\kappa t\big)^2 W_t(t, \theta) u^2 \,{\mathrm d}t
 \ +\
 2\kappa \int_{-M}^{M} \big(1+\kappa t\big) \big[1+ \epsilon^2 W(t, \theta)\big] u^2 \,{\mathrm d}t
 \ - \ \kappa \int_{-M}^{M} \big(1+\kappa t\big) u^4 \,{\mathrm d}t
\nonumber\\[2mm]
 &=\,
 -2 \epsilon^2\int_{-M}^{M} \big(1+\kappa t\big) u_t \frac{\partial}{\partial\theta}
 \Big[ \frac{1}{1+\kappa t} \frac{\partial u}{\partial\theta} \Big]\,{\mathrm d}t
 \,-\,
 \epsilon^2\Big[ \big(1+\kappa t\big) u_t\Big]^2\, \Bigg|_{t=-M}^{M}
\nonumber\\[2mm]
 &\quad
 \,+\,
 \big(1+\kappa t\big)^2 \big[1+ \epsilon^2 W(t, \theta)\big] u^2\, \Bigg|_{t=-M}^{M}
 \ -\
 \frac{1}{2} \big(1+\kappa t\big)^2 |u|^4\, \Bigg|_{t=-M}^{M},
 \label{second identity}
\end{align}
 where $0<M<M_0$ is any universal constant.
 Consequently,
\begin{align}
 &2\epsilon^2 \kappa\int_{-M}^{M} \big({1+\kappa t}\big) u_t^2 \,{\mathrm d}t
 \,-\,
 \epsilon^2 \int_{-M}^{M} \big(1+\kappa t\big)^2 W_t(t, \theta) u^2 \,{\mathrm d}t
 \,-\,
 \kappa\int_{-M}^{M} \big({1+\kappa t}\big) u^4 \,{\mathrm d}t
\nonumber\\[2mm]
 &\,=\,
 2 \epsilon^2\int_{-M}^{M}
 \frac{\partial}{\partial t} \Big[\big(1+\kappa t\big) u \Big]
 \frac{\partial}{\partial\theta} \Big[ \frac{1}{1+\kappa t} \frac{\partial u}{\partial\theta} \Big]\,{\mathrm d}t
 \ +\
 2\epsilon^2 \kappa\big(1+\kappa t\big) u u_t \Bigg|_{t=-M}^{M}
 \ +\
 \epsilon^2\Big[ \big(1+\kappa t\big) u_t \Big]^2\, \Bigg|_{t=-M}^{M}
\nonumber\\[2mm]
 &\quad
 \ -\
 \big(1+\kappa t\big)^2 \big[1+ \epsilon^2 W(t, \theta)\big] u^2\, \Bigg|_{t=-M}^{M}
 \ +\
 \frac{1}{2} \big(1+\kappa t\big)^2 |u|^4\, \Bigg|_{t=-M}^{M}.
 \label{third identity}
\end{align}
\end{lemma}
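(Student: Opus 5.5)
The plan is to work at a fixed $\theta$ and integrate the equation in the normal variable $t\in(-M,M)$ against two multipliers. First I rewrite \eqref{eq2} in the Fermi coordinates, as in \eqref{eq3.1-0}, and multiply by $(1+\kappa t)$ to obtain the divergence-type form
\begin{align*}
\epsilon^2\big[(1+\kappa t)u_t\big]_t+\epsilon^2\Big[\frac{u_\theta}{1+\kappa t}\Big]_\theta-(1+\kappa t)\big[1+\epsilon^2W\big]u+(1+\kappa t)u^3=0.
\end{align*}
Since $\kappa=\kappa(\theta)$ is independent of $t$, every $t$-derivative below acts only on $u$, on $W$ and on the explicit factors $1+\kappa t$.

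For \eqref{first identity}, I multiply this equation by $u$ and integrate over $(-M,M)$. The only integration by parts is in the first term: $\epsilon^2\int u[(1+\kappa t)u_t]_t$ becomes the boundary term $\epsilon^2(1+\kappa t)uu_t\big|_{-M}^{M}$ minus $\epsilon^2\int(1+\kappa t)u_t^2$. The $\theta$-term is kept as it stands. Rearranging gives \eqref{first identity}.

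For \eqref{second identity}, the Pohozaev-type step, I multiply by $2(1+\kappa t)u_t$ and integrate, handling each term as follows.
\begin{itemize}
\item The second-order $t$-term gives $\epsilon^2\int\partial_t[((1+\kappa t)u_t)^2]$, which is exactly the boundary term.
\item The $\theta$-term is left untouched and moved to the right-hand side.
\item The potential term equals $-\int(1+\kappa t)^2[1+\epsilon^2W]\,\partial_t(u^2)$. Integrating by parts produces the boundary term $-(1+\kappa t)^2[1+\epsilon^2W]u^2\big|$, together with the bulk terms $2\kappa\int(1+\kappa t)[1+\epsilon^2W]u^2$ and $\epsilon^2\int(1+\kappa t)^2W_tu^2$.
\item The quartic term equals $\tfrac12\int(1+\kappa t)^2\partial_t(u^4)$, which gives $\tfrac12(1+\kappa t)^2u^4\big|-\kappa\int(1+\kappa t)u^4$.
\end{itemize}
Collecting the terms, with careful bookkeeping of signs, yields \eqref{second identity}.

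Finally, \eqref{third identity} is a linear combination: subtract \eqref{second identity} from $2\kappa$ times \eqref{first identity}. The $2\kappa\int(1+\kappa t)[1+\epsilon^2W]u^2$ terms cancel, and $-2\kappa\int(1+\kappa t)u^4+\kappa\int(1+\kappa t)u^4$ leaves $-\kappa\int(1+\kappa t)u^4$. On the right-hand side, the two $\theta$-terms combine through
\begin{align*}
2\kappa u+2(1+\kappa t)u_t=2\,\partial_t\big[(1+\kappa t)u\big],
\end{align*}
which gives the product form displayed in \eqref{third identity}. The boundary terms simply change sign.

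There is no real analytic obstacle, because $u$ is smooth by elliptic regularity and $M<M_0$ keeps $1+\kappa t>0$. The only delicate part is sign bookkeeping. The step most prone to error is the integration by parts of the potential term, where $\partial_t(1+\kappa t)^2=2\kappa(1+\kappa t)$ must be tracked together with the $W_t$ contribution, so I would double-check it by comparison with the radial case $\kappa=1/r$.
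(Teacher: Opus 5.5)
Your proposal is correct and follows the paper's proof: the paper also multiplies \eqref{eq3.1-0} by $(1+\kappa t)u$ and by $(1+\kappa t)^2u_t$ (your extra factor $2$ is cosmetic) and integrates in $t$ over $(-M,M)$, leaving the $\theta$-term untouched, and \eqref{third identity} is indeed $2\kappa$ times \eqref{first identity} minus \eqref{second identity}. One small wording point: only the boundary terms coming from \eqref{second identity} change sign, while $2\epsilon^2\kappa(1+\kappa t)uu_t\Big|_{t=-M}^{M}$ is simply $2\kappa$ times the boundary term of \eqref{first identity}.
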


\begin{proof}
 For the proof, we will use the local form of \eqref{eq2}, which has been given in \eqref{eq3.1-0}.

\smallskip\noindent{\bf{Step 1.}}
 Multiplying $(1+\kappa t)\, u$ on the both sides of \eqref{eq3.1-0} and integrating, we get
\begin{align} \label{identity 3.15}
 & -\ \epsilon^2\int_{-M}^{M}
 u
 \frac{\partial}{\partial t}
 \Bigg[ \big(1+\kappa t\big) \frac{\partial u}{\partial t} \Bigg] \,{\mathrm d}t
 \ -\
 \epsilon^2\int_{-M}^{M} u
 \frac{\partial}{\partial\theta}
 \Bigg[ \frac{1}{1+\kappa t} \frac{\partial u}{\partial\theta} \Bigg]\,{\mathrm d}t
\nonumber\\[2mm]
 & +\ \int_{-M}^{M} \big({1+\kappa t}\big) \big[1+\epsilon^2 W (t, \theta) \big] u^2\,{\mathrm d}t
 \ -\ \int_{-M}^{M} \big({1+\kappa t}\big) u^4 \,{\mathrm d}t
 \,=\, 0.
\end{align}
 By some trivial calculations, we can get
\begin{align} \label{identity 3.15-22}
 \epsilon^2\int_{-M}^{M}
 -u\frac{\partial}{\partial t} \Big[ \big(1+\kappa t\big) \frac{\partial u}{\partial t} \Big] \,{\mathrm d}t
 =
 \epsilon^2\int_{-M}^{M} \big({1+\kappa t}\big) u_t^2 \,{\mathrm d}t
 \ -\ \epsilon^2\big(1+\kappa t\big)u u_t \Bigg|_{t=-M}^{M}.
\end{align}
 By \eqref{identity 3.15}-\eqref{identity 3.15-22}, we have proved \eqref{first identity}.

\medskip
\noindent{\bf{Step 2.}}
 Multiplying equation \eqref{eq3.1-0} by $(1+\kappa t)^2 u_t$ and integrating in the interval $(-M, M)$, we have
\begin{align*}
 & \epsilon^2\int_{-M}^{M} \big(1+\kappa t\big)u_t\,
 \frac{\partial}{\partial t} \Big[ \big(1+\kappa t\big) \frac{\partial u}{\partial t} \Big]\,{\mathrm d}t
 \ +\
 \epsilon^2\int_{-M}^{M} \big(1+\kappa t\big)u_t\,
 \frac{\partial}{\partial\theta} \Big[ \frac{1}{1+\kappa t} \frac{\partial u}{\partial\theta} \Big]
 \,{\mathrm d}t
\nonumber\\[2mm]
 \ +\ &\int_{-M}^{M}\Big\{ -\big[1+ \epsilon^2 W(t, \theta)\big] u \,+\, |u|^2 u \Big\}
 \big(1+\kappa t\big)^2 u_t \,{\mathrm d}t
 \,=\,0.
\end{align*}
 It is trivial to derive
\begin{align*}
 \epsilon^2\int_{-M}^{M} \big(1+\kappa t\big) u_t \, \frac{\partial}{\partial t}
 \Big[ \big(1+\kappa t\big) u_t\Big]\,{\mathrm d}t
 \,=\,
 \epsilon^2\frac{1}{2}\Big[ \big(1+\kappa t\big) u_t\Big]^2\, \Bigg|_{t=-M}^{M}.
\end{align*}
 Similarly, we also have
\begin{align*}
 &-\int_{-M}^{M} \big(1+\kappa t\big)^2 \big[1+ \epsilon^2 W(t, \theta)\big] u u_t \,{\mathrm d}t
\nonumber\\[2mm]
 &\,=\,
 \frac{1}{2} \epsilon^2 \int_{-M}^{M} \big(1+\kappa t\big)^2 W_t(t, \theta) u^2 {\mathrm d}t
 \,+\,
 \kappa \int_{-M}^{M} \big(1+\kappa t\big) \big[1+ \epsilon^2 W(t, \theta)\big] u^2 {\mathrm d}t
\nonumber\\[2mm]
 &\quad\,-\, \frac{1}{2}\big(1+\kappa t\big)^2 \big[1+ \epsilon^2 W(t, \theta)\big] u^2\, \Bigg|_{t=-M}^{M},
\end{align*}
 and
\begin{align*}
 \int_{-M}^{M} \big(1+ \kappa t\big)^2 |u|^2 u u_t \,{\mathrm d}t
 &\,=\,
 -\frac{1}{2} \kappa \int_{-M}^{M} \big(1+\kappa t\big) |u|^4\,{\mathrm d}t
 \ +\
 \frac 1 4 \big(1+\kappa t\big)^2 |u|^4\, \Bigg|_{t=-M}^{M}.
\end{align*}
Whence, we obtain the validity of \eqref{second identity} by combining the above calculations.
Moreover, \eqref{third identity} is a direct consequence of \eqref{first identity} and \eqref{second identity}.
\end{proof}

\medskip
\subsection{ The estimates of the profile with concentration}\label{Section4.2}\

 For later convenience of notation, we here recall the descriptions of the solutions with concentration,
 see \eqref{u-decomposition}-\eqref{smallperturbation1}.

\medskip
 In order to give explicit forms of further estimates, some analysis will be done here.
 The expression of ${\mathbf u}$ in \eqref{mathbfu} gives that
\begin{align}
 {\mathbf u}_t =\chi'(t)\beta U\big(\beta t/\epsilon\big)
 \,+\,
 \epsilon^{-1}\chi(t)\beta\beta U'\big(\beta t/\epsilon\big),
 \label{mathbfu-t}
\end{align}
\begin{align}
 {\mathbf u}_{\theta} = \chi(t)\beta' U\big(\beta t/\epsilon\big)
 \,+\,
 \epsilon^{-1} \chi(t)\beta \beta' U'\big(\beta t/\epsilon\big)t,
 \label{mathbfu-theta}
\end{align}
 and
\begin{align}
 {\mathbf u}_{tt} = \chi''(t) \beta U\big(\beta t/\epsilon\big)
 \,+\,
 2\epsilon^{-1}\chi'(t)\beta\beta U'\big(\beta t/\epsilon\big)
 \,+\,
 \epsilon^{-2}\chi(t)\beta\beta^2 U''\big(\beta t/\epsilon\big),
 \label{mathbfu-tt}
\end{align}
\begin{align}
 {\mathbf u}_{t\theta} =&\,\chi'(t)\beta' U\big(\beta t/\epsilon\big)
 \,+\,\epsilon^{-1}\chi(t)\beta'\beta\, U'\big(\beta t/\epsilon\big)
 \,+\,\epsilon^{-1}\chi'(t)\beta \beta' U'\big(\beta t/\epsilon\big)t
\nonumber\\[2mm]
 &
 \,+\,\epsilon^{-1} \chi(t)\beta \beta' U'\big(\beta t/\epsilon\big)
 \,+\,\epsilon^{-2}\chi(t)\beta\beta \beta' U''\big(\beta t/\epsilon\big)t,
 \label{mathbfu-ttheta}
\end{align}
\begin{align}
 {\mathbf u}_{\theta\theta}
 \,=\,
 &\chi(t)\beta''U\big(\beta t/\epsilon\big)
 \,+\, 2 \epsilon^{-1}\chi(t)\beta'\beta' U'\big(\beta t/\epsilon\big)t
 \,+\, \epsilon^{-1}\chi(t)\beta\beta'' U'\big(\beta t/\epsilon\big)t
\nonumber\\[2mm]
 & \,+\, \epsilon^{-2}\chi(t)\beta|\beta'|^2 U''\big(\beta t/\epsilon\big)t^2,
 \label{mathbfu-thetatheta}
\end{align}
 where the expressions of $\beta'$, $\beta''$ have been given in \eqref{beta'}-\eqref{beta''}.
 Specially, for $|t|\leq M_1$, there hold
\begin{align}
 {\mathbf u} = \beta U\big(\beta t/\epsilon\big),
 \qquad
 {\mathbf u}_t = \epsilon^{-1}\beta\beta U'\big(\beta t/\epsilon\big),
 \qquad
  {\mathbf u}_{\theta} = \beta' U\big(\beta t/\epsilon\big)
 \,+\, \epsilon^{-1}\beta \beta' U'\big(\beta t/\epsilon\big)t,
 \label{mathbfu-t1}
\end{align}
 and
\begin{align}
 {\mathbf u}_{tt} =
 \epsilon^{-2}\beta\beta^2 U''\big(\beta t/\epsilon\big),
 \label{mathbfu-tt1}
\end{align}
\begin{align}
 {\mathbf u}_{t\theta} =&\epsilon^{-1}\beta'\beta\, U'\big(\beta t/\epsilon\big)
 \,+\,\epsilon^{-1}\beta \beta' U'\big(\beta t/\epsilon\big)
 \,+\,\epsilon^{-2}\beta\beta \beta' U''\big(\beta t/\epsilon\big)t,
 \label{mathbfu-ttheta1}
\end{align}
\begin{align}
 {\mathbf u}_{\theta\theta}
 \,=\, &\beta''U\big(\beta t/\epsilon\big)
 \,+\, \frac{2}{\epsilon}\beta'\beta' U'\big(\beta t/\epsilon\big)t
 \,+\, \frac{1}{\epsilon^2}\beta|\beta' |^2 U''\big(\beta t/\epsilon\big)t^2
 \,+\, \frac{1}{\epsilon}\beta\beta'' U'\big(\beta t/\epsilon\big)t.
 \label{mathbfu-thetatheta1}
\end{align}

\medskip
 By setting
$$
{\mathbf t}=\beta t/\epsilon,
 \quad\mbox{ i.e. }
 t=\epsilon {\mathbf t}/\beta,
$$
 we have that for $|t|\leq M_1$, there hold
\begin{align}\label{v1-t999}
 {\mathbf u} = \beta U({\mathbf t}),
 \qquad
 {\mathbf u}_t = \epsilon^{-1}\beta\beta U'({\mathbf t}),
  \qquad
   {\mathbf u}_{\theta} = \beta' U({\mathbf t})
 \,+\, \beta^{-1}\beta \beta' U'({\mathbf t}){\mathbf t},
\end{align}
 and
\begin{align}
 {\mathbf u}_{tt} =
 \epsilon^{-2}\beta\beta^2 U''({\mathbf t}),
   \qquad
 {\mathbf u}_{t\theta} =&\epsilon^{-1}\beta'\beta\, U'({\mathbf t})
 \,+\,\epsilon^{-1}\beta \beta' U'({\mathbf t})
 \,+\,\epsilon^{-1}\beta \beta' U''({\mathbf t}){\mathbf t},
 \label{v1-ttheta999}
\end{align}
\begin{align}
 {\mathbf u}_{\theta\theta}
 \,=\, &\beta''U({\mathbf t})
 \,+\, 2\frac{\beta'}{\beta}\beta' U'({\mathbf t}){\mathbf t}
 \,+\, \frac{\beta}{\beta^2} |\beta' |^2 U''({\mathbf t}){\mathbf t}^2
 \,+\, \beta^{-1}\beta\beta'' U'({\mathbf t}){\mathbf t}.
 \label{v1-thetatheta999}
\end{align}

\medskip
 Recall the formulas in \eqref{mathbfu-t1}-\eqref{mathbfu-thetatheta1} and the constants in \eqref{varrho0}-\eqref{varrho-additional}, we first compute
\begin{align}
 \int_{{\mathbb R}^2} |{\mathbf u}|^2 \,{\mathrm d} y
 &\,=\,
 \int_0^{\ell} \int_{-M_2}^{M_2}
 \Big[\,
 \chi(t)\beta\, U\big(\beta t/\epsilon\big)
 \, \Big]^2 (1+\kappa t )\,{\mathrm d}t{\mathrm d}\theta
\nonumber\\[2mm]
 &\,=\,
 \int_0^{\ell} \int_{-\frac{M_2\beta}{\epsilon}}^{\frac{M_2\beta}{\epsilon}}
 \Big[\, \chi\Big(\frac{\epsilon}{\beta}{\mathbf t}\Big)\beta\, U({\mathbf t})\, \Big]^2
 \big(1+\epsilon\kappa\beta^{-1}{\mathbf t}\big) \frac{\epsilon}{\beta} \,{\mathrm d}{\mathbf t}{\mathrm d}\theta
\nonumber\\[2mm]
 & \,=\, \big(1+o(1)\big)\epsilon\int_0^{\ell} \frac{\beta^2}{\beta} {\mathrm d}\theta \int_{\mathbb R} |U({\mathbf t})|^2 \,{\mathrm d}{\mathbf t}
\nonumber\\[2mm]
 & \,=\,
 \epsilon\, \varrho_0\big(1+o(1)\big)\int_0^{\ell} \beta {\mathrm d}\theta,
 \label{v1-L2norm-0}
\end{align}
 and similarly
\begin{align}
 \int_{{\mathbb R}^2} |\beta {\mathbf u}|^2 \,{\mathrm d} y
 & \,=\,
 \epsilon\, \varrho_0\big(1+o(1)\big)\int_0^{\ell} \beta^3 {\mathrm d}\theta,
 \label{v1-L2norm-1}
\end{align}
\begin{align}
 \int_{{\mathbb R}^2} |\beta^2 {\mathbf u}|^2 \,{\mathrm d} y
 & \,=\,
 \epsilon\, \varrho_0\big(1+o(1)\big)\int_0^{\ell} \beta^5 {\mathrm d}\theta.
 \label{v1-L2norm-2}
\end{align}
 On the other hand, there hold
\begin{align}
 \int_{{\mathbb R}^2} | \beta{\mathbf u}_t|^2 \,{\mathrm d} y
 &\,=\,
 \int_0^{\ell} \int_{-M_2}^{M_2} \beta^2
 \Big[\,
 \chi'(t)\beta U\big(\beta t/\epsilon\big)
 \,+\,
 \epsilon^{-1}\chi(t)\beta\beta U'\big(\beta t/\epsilon\big)
 \,\Big]^2 (1+\kappa t )\,{\mathrm d}t{\mathrm d}\theta
\nonumber\\[2mm]
 &\,=\,
 \big(1+o(1)\big)\epsilon^{-2}\int_0^{\ell} \int_{-\frac{M_2\beta}{\epsilon}}^{\frac{M_2\beta}{\epsilon}}
 \beta^2
 \Big[\, \beta\beta\, U'({\mathbf t})\, \Big]^2
 \big(1+\epsilon\kappa\beta^{-1}{\mathbf t}\big)\frac{\epsilon}{\beta}{\mathrm d}{\mathbf t}{\mathrm d}\theta
\nonumber\\[2mm]
 & \,=\, \epsilon^{-1}\big(1+o(1)\big)\int_0^{\ell} \beta^4\beta {\mathrm d}\theta \int_{\mathbb R} |U'({\mathbf t})|^2 \,{\mathrm d}{\mathbf t}
\nonumber\\[2mm]
 & \,=\,
 \epsilon^{-1}\varrho_1\big(1+o(1)\big) \int_0^{\ell} \beta^5 {\mathrm d}\theta,
 \label{v1-s-L2norm}
\end{align}
 and
\begin{align}
 \int_{{\mathbb R}^2} |{\mathbf u}_{\theta}|^2 \,{\mathrm d} y
 &\,=\,
 \int_0^{\ell} \int_{-M_2}^{M_2}
 \Big[\,
 \chi(t)\beta' U\big(\beta t/\epsilon\big)
 \,+\,
 \epsilon^{-1} \chi(t)\beta \beta' U'\big(\beta t/\epsilon\big)t
 \, \Big]^2 (1+\kappa t )\,{\mathrm d}t{\mathrm d}\theta
\nonumber\\[2mm]
 &\,=\,
 \int_0^{\ell} \int_{-\frac{M_2\beta}{\epsilon}}^{\frac{M_2\beta}{\epsilon}}
 \Big[\,\chi\Big(\frac{\epsilon}{\beta}{\mathbf t}\Big)\beta'U({\mathbf t})\Big]^2\,
 \big(1+\epsilon\kappa\beta^{-1}{\mathbf t}\big)\frac{\epsilon}{\beta} \,{\mathrm d}{\mathbf t}{\mathrm d}\theta
\nonumber\\[2mm]
 &\quad\,+\,\int_0^{\ell} \int_{-\frac{M_2\beta}{\epsilon}}^{\frac{M_2\beta}{\epsilon}}
 \Big[\,\chi\Big(\frac{\epsilon}{\beta}{\mathbf t}\Big)\beta\beta^{-1} \beta' U'({\mathbf t}){\mathbf t}\Big]^2\,
 \big(1+\epsilon\kappa\beta^{-1}{\mathbf t}\big)\frac{\epsilon}{\beta} \,{\mathrm d}{\mathbf t}{\mathrm d}\theta
\nonumber\\[2mm]
 &\quad\,+\,\int_0^{\ell} \int_{-\frac{M_2\beta}{\epsilon}}^{\frac{M_2\beta}{\epsilon}}
 2\Big[\,\chi\Big(\frac{\epsilon}{\beta}{\mathbf t}\Big)\,\Big]^2 \beta\beta^{-1} \beta'\beta' U({\mathbf t})U'({\mathbf t}){\mathbf t}\,
 \big(1+\epsilon\kappa\beta^{-1}{\mathbf t}\big)\frac{\epsilon}{\beta} \,{\mathrm d}{\mathbf t}{\mathrm d}\theta
\nonumber\\[2mm]
 & \,=\, \epsilon \big(1+o(1)\big)\int_0^{\ell}
 \frac{\, |\beta'|^2\, }{\beta} {\mathrm d}\theta
 \int_{\mathbb R} |U({\mathbf t})|^2 \,{\mathrm d}{\mathbf t}
 \,+\,
 \epsilon \big(1+o(1)\big)\int_0^{\ell}
 \frac{\, |\beta'|^2\, }{\beta} {\mathrm d}\theta
 \int_{\mathbb R} |U'({\mathbf t})|^2 \,{\mathrm d}{\mathbf t}
\nonumber\\[2mm]
 &\quad\,+\,
 \epsilon \big(1+o(1)\big)\int_0^{\ell}
 \frac{\, \beta\beta'\beta'\, }{\beta^2} {\mathrm d}\theta
 \int_{\mathbb R} 2{\mathbf t}U({\mathbf t})U'({\mathbf t}) \,{\mathrm d}{\mathbf t}
\nonumber\\[2mm]
 & \,=\,
 \epsilon \big(1+o(1)\big)
 \varrho_1\int_0^{\ell} \frac{\, |\beta'|^2\, }{\beta} {\mathrm d}\theta.
 \label{v1-z-L2norm-0}
\end{align}
 The estimates in \eqref{v1-z-L2norm-0} and \eqref{beta-bound} will give
\begin{align}
 \int_{{\mathbb R}^2} |\beta {\mathbf u}_{\theta}|^2 \,{\mathrm d} y
 \,\leq\, \epsilon\, C \int_0^{\ell} \beta^3 {\mathrm d}\theta.
 \label{v1-z-L2norm-1}
\end{align}
 Note that the validity of \eqref{beta-bound} relies on the assumptions in \eqref{W_theta-control}.
 Similarly, we have
\begin{align}
 \int_{{\mathbb R}^2} |{\mathbf u}_{t\theta}|^2 \,{\mathrm d} y
 \,\leq\, \epsilon\, C \int_0^{\ell} \beta^3 {\mathrm d}\theta.
 \label{v1-zt-L2norm-0}
\end{align}

\medskip
 Furthermore, we have
\begin{align}
 \int_{{\mathbb R}^2} |{\mathbf u}_{\theta\theta}|^2 \,{\mathrm d} y
 \,=\, &
 \int_0^{\ell} \int_{-M_2}^{M_2}
 \Big[\,
 \chi(t)\beta''U\big(\beta t/\epsilon\big)
 \,+\, 2 \epsilon^{-1}\chi(t)\beta'\beta' U'\big(\beta t/\epsilon\big)t
\nonumber\\[2mm]
 &\qquad
 \,+\, \epsilon^{-1}\chi(t)\beta\beta'' U'\big(\beta t/\epsilon\big)t
 \,+\, \epsilon^{-2}\chi(t)\beta|\beta'|^2 U''\big(\beta t/\epsilon\big)t^2
 \, \Big]^2 (1+\kappa t )\,{\mathrm d}t{\mathrm d}\theta
\nonumber\\[2mm]
 \,=\,& \big(1+o(1)\big)
 \int_0^{\ell} \int_{-\frac{M_2\beta}{\epsilon}}^{\frac{M_2\beta}{\epsilon}}
 \Big[\,\beta''U({\mathbf t})+\beta^{-1}(2\beta'\beta'+\beta\beta'')U'({\mathbf t}){\mathbf t}
\nonumber\\[2mm]
 &\qquad\qquad\qquad\qquad\qquad+\frac{\beta}{\beta^2} |\beta'|^2 U''({\mathbf t}){\mathbf t}^2\,\Big]^2
 \,\big(1+\epsilon\kappa\beta^{-1}{\mathbf t}\big)\frac{\epsilon}{\beta}{\mathrm d}{\mathbf t}{\mathrm d}\theta
\nonumber\\[2mm]
 \,=\, &\epsilon \big(1+o(1)\big)\int_0^{\ell}
 \frac{\, |\beta''|^2\, }{\beta} {\mathrm d}\theta\int_{\mathbb R} |U({\mathbf t})|^2 \,{\mathrm d}{\mathbf t}
\nonumber\\[2mm]
 &\,+\,
 \epsilon \big(1+o(1)\big)\int_0^{\ell}
 \frac{\, (2\beta'\beta'+\beta\beta'')^2\, }{\beta^3} {\mathrm d}\theta
 \int_{\mathbb R} |{\mathbf t}U'({\mathbf t})|^2 \,{\mathrm d}{\mathbf t}
\nonumber\\[2mm]
 &\,+\,
 \epsilon \big(1+o(1)\big)\int_0^{\ell}
 \frac{\, \beta^2|\beta'|^4\, }{\beta^5} {\mathrm d}\theta
 \int_{\mathbb R} |{\mathbf t}^2U''({\mathbf t})|^2 \,{\mathrm d}{\mathbf t}
\nonumber\\[2mm]
 &\,+\,
 \epsilon \big(1+o(1)\big)\int_0^{\ell}
 \frac{\, \beta''(2\beta'\beta'+\beta\beta'')\, }{\beta^2} {\mathrm d}\theta
 \int_{\mathbb R} 2{\mathbf t}U({\mathbf t})U'({\mathbf t}) \,{\mathrm d}{\mathbf t}
\nonumber\\[2mm]
 &\,+\,
 \epsilon \big(1+o(1)\big)\int_0^{\ell}
 \frac{\, 2\beta''\beta|\beta'|^2\, }{\beta^3} {\mathrm d}\theta
 \int_{\mathbb R} {\mathbf t}^2U({\mathbf t})U''({\mathbf t}) \,{\mathrm d}{\mathbf t}
\nonumber\\[2mm]
 &\,+\,
 \epsilon \big(1+o(1)\big)\int_0^{\ell}
 \frac{\, (2\beta'\beta'+\beta\beta'')\beta|\beta'|^2\, }{\beta^4} {\mathrm d}\theta\int_{\mathbb R} 2{\mathbf t}^3U'({\mathbf t})U''({\mathbf t}) \,{\mathrm d}{\mathbf t}
\nonumber\\[2mm]
 \,=\,
 &\epsilon \big(1+o(1)\big)
 \Bigg[
 \varrho_0\int_0^{\ell}\frac{\, |\beta''|^2\, }{\beta} {\mathrm d}\theta
 \,+\,
 \varrho_2\int_0^{\ell}\frac{\, (2\beta'\beta'+\beta\beta'')^2\, }{\beta^3} {\mathrm d}\theta
 \,+\,
 \varrho_4\int_0^{\ell}\frac{\,|\beta'|^4\, }{\beta^3} {\mathrm d}\theta
\nonumber\\[2mm]
 &\qquad\qquad\quad\
 \,-\,
 \varrho_0\int_0^{\ell}\frac{\, 2\beta''(2\beta'\beta'+\beta\beta'')\, }{\beta^2} {\mathrm d}\theta
 \,+\,
 (\varrho_0-\varrho_2)\int_0^{\ell}\frac{\, 2\beta''|\beta'|^2\, }{\beta^2} {\mathrm d}\theta
\nonumber\\[2mm]
 &\qquad\qquad\quad\
 \,-\,
 3\varrho_2\int_0^{\ell}\frac{\, 2(2\beta'\beta'+\beta\beta'')|\beta'|^2\, }{\beta^3} {\mathrm d}\theta
 \Bigg].
 \label{v1-zz-L2norm}
\end{align}
 The estimates in \eqref{v1-zz-L2norm} and \eqref{beta-bound} will give
\begin{align}
 \int_{{\mathbb R}^2} |\beta {\mathbf u}_{\theta\theta}|^2 \,{\mathrm d} y
 \,\leq\,
 \epsilon\,C \int_0^{\ell}\beta^3 {\mathrm d}\theta.
 \label{v1-zz-L2norm-1}
\end{align}


\subsection{ The estimates  of the perturbation term} \label{Section4.3}\


By recall the decomposition of $u_\epsilon={\mathbf u}+\phi$ in \eqref{u-decomposition} and the assumptions in \eqref{mathbfu}-\eqref{smallperturbation1},
we first provide the following lemma.
\begin{lemma}\label{Lemma4-3-2026}
We have
\begin{align}\label{linftyofphittheta}
\sup_{\theta \in [0, \ell],\, t\in [-M, M]}  |\phi (t, \theta)|^2
=  \frac{O\big(\epsilon^{2\varrho}\big)}{\left|\displaystyle\int_0^{\ell}\beta^5\,{\mathrm d}\theta \right|^2}
\Bigg[ \int_0^{\ell} \beta^3 {\mathrm d}\theta
\ +\
\int_0^{\ell}\beta {\mathrm d}\theta \Bigg]
=
\frac{O\big(\epsilon^{2\varrho}\big)}{\displaystyle\int_0^{\ell}\beta^5\,{\mathrm d}\theta},
\end{align}
where $M<M_2$ with $M_2$ given in \eqref{cut-off000000}, and $\varrho\in (0, 1)$ is given in \eqref{length-constraint}.
\end{lemma}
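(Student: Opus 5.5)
We will derive the estimate from a two-dimensional Sobolev-type embedding on the strip $\{|t|<M_2\}\times[0,\ell)$ in the Fermi coordinates $(t,\theta)$. The norms available through \eqref{smallperturbation0} are $\phi$, $\phi_t$, $\phi_\theta$ and $\phi_{t\theta}$. We therefore use the elementary anisotropic inequality that controls $\sup|\phi|^2$ by mixed derivatives, rather than the standard $H^2$ embedding, which would cost $\phi_{tt}$.

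For fixed $(t,\theta)$, choose a point $\theta_0$ where $\int_{-M_2}^{M_2}|\phi(\cdot,\theta_0)|^2$ is at most its average over $\theta$. Then write
\[
\phi^2(t,\theta)=\phi^2(t,\theta_0)+2\int_{\theta_0}^{\theta}\phi\,\phi_\theta\,{\mathrm d}\theta' .
\]
Apply the one-dimensional bound $\sup_t g^2\le C\big(\|g\|_{L^2}^2+\|g\|_{L^2}\|g_t\|_{L^2}\big)$ in $t$ on $[-M_2,M_2]$ to each term. This is valid because $M<M_2$ fixes the interval length, and we use the same trick of picking a point near the infimum as in the proof of \eqref{Linfty of f}. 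The result is
\[
\sup|\phi|^2\le C\Big(\|\phi\|^2+\|\phi\|\,\|\phi_t\|+\|\phi\|\,\|\phi_\theta\|+\|\phi_t\|\,\|\phi_\theta\|+\|\phi\|\,\|\phi_{t\theta}\|+\cdots\Big),
\]
where all norms are $L^2(\{|t|<M_2\})$ and the first term carries a factor $1/\ell$ from averaging, which is harmless since $\ell\ge\ell_0$ is not yet known. In practice we keep everything as products of one undifferentiated or $t$-differentiated norm with one $\theta$-differentiated norm. The Jacobian $1+\kappa t$ is bounded above and below because $C_0M_0<1$, so $L^2({\mathrm d}y)$ and $L^2({\mathrm d}t\,{\mathrm d}\theta)$ are equivalent.

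Next we insert \eqref{smallperturbation0}. Each norm of $\phi$ is replaced by $O(\epsilon^\varrho)\big(\int_0^\ell\beta^5\big)^{-1}$ times the corresponding norm of ${\mathbf u}$. These are computed in Section \ref{Section4.2}. From \eqref{v1-L2norm-0} we get $\|{\mathbf u}\|^2\sim\epsilon\int\beta$. From \eqref{v1-s-L2norm} and the fact that $\beta>1$, we get $\|{\mathbf u}_t\|^2\le C\epsilon^{-1}\int\beta^3$ (with $\beta^3$ in place of the $\beta^5$ there, since the weight $\beta^2$ is dropped). From \eqref{v1-z-L2norm-0}, \eqref{v1-z-L2norm-1} and \eqref{beta-bound}, we get $\|{\mathbf u}_\theta\|^2\le C\epsilon\int\beta$. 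From \eqref{v1-zt-L2norm-0}, we get $\|{\mathbf u}_{t\theta}\|^2\le C\epsilon\int\beta^3$. By Cauchy--Schwarz, each product of two such norms is then bounded by $C\big(\int\beta^3+\int\beta\big)$. The factors $\epsilon^{\pm1}$ cancel in the pairing $\|{\mathbf u}_t\|\,\|{\mathbf u}_\theta\|$, and in the other pairings they leave positive powers of $\epsilon$. This gives the first equality in \eqref{linftyofphittheta}, with the $O(\epsilon^{2\varrho})$ coming from squaring the $O(\epsilon^\varrho)$ factor.

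The second equality is elementary. Since $\beta>1$, we have $\int\beta+\int\beta^3\le 2\int\beta^5$, and dividing by $\big(\int\beta^5\big)^2$ gives the claim.

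The main obstacle is the embedding step. We must make sure that only derivatives listed in \eqref{smallperturbation0}--\eqref{smallperturbation1} appear, and that each product pairs factors whose powers of $\epsilon$ cancel or are favorable. In particular, the term $\|\phi\|\,\|\phi_t\|$ gives roughly $\int\beta\cdot\int\beta^3$ under a square root, with no $\epsilon$ loss, and this is still acceptable. If the ordering by averaging in $\theta$ produced an unfavorable factor such as $\ell^{-1}$, we would instead integrate in $\theta$ from a minimizing point, exactly as in the argument for \eqref{Linfty of f}.
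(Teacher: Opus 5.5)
Your strategy is the same as the paper's. You nest a one-dimensional sup bound in $\theta$ with one in $t$, then insert \eqref{smallperturbation0} and the norms of $\mathbf u$ from Section \ref{Section4.2}. The one difference is that you keep multiplicative bounds $\sup g^2\le C(\|g\|^2+\|g\|\,\|g'\|)$. The paper instead linearizes immediately with $2|\phi||\phi_t|\le\epsilon^{-1}|\phi|^2+\epsilon|\phi_t|^2$, which gives $\sup|\phi|^2\le C\epsilon^{-1}(\|\phi\|^2+\|\phi_\theta\|^2)+C\epsilon(\|\phi_t\|^2+\|\phi_{t\theta}\|^2)$. Either form works, but as written two of your steps need repair.

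\emph{The choice of $\theta_0$.} Choosing $\theta_0$ so that $\|\phi(\cdot,\theta_0)\|_{L^2(-M_2,M_2)}^2$ is below its average does not control $\|\phi_t(\cdot,\theta_0)\|_{L^2(-M_2,M_2)}$. Your $t$-bound for $\phi^2(t,\theta_0)$ needs that quantity. There are two fixes.
\begin{itemize}
\item Choose $\theta_0$ so that $G(\theta)=\|\phi(\cdot,\theta)\|^2+\|\phi(\cdot,\theta)\|\,\|\phi_t(\cdot,\theta)\|$ (norms in $t$) is at most its average. Cauchy--Schwarz then gives $G(\theta_0)\le\ell^{-1}(\|\phi\|^2+\|\phi\|\,\|\phi_t\|)$.
\item Alternatively, follow the paper's order. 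At fixed $t$, bound $\sup_\theta|\phi(t,\theta)|^2$ by $A(t)=\int_0^\ell|\phi(t,\cdot)|^2$ and $B(t)=\int_0^\ell|\phi_\theta(t,\cdot)|^2$, then take $\sup_t$ of $A$ and $B$.
\end{itemize}
The factor $\ell^{-1}$ cannot be avoided: a constant function on a short circle shows this. It is removed by Lemma \ref{Lemma4.1}, which uses only $|\kappa|\le C_0$ and the turning-tangents theorem; the paper invokes it inside this very proof. Your fallback does not remove it. The near-minimum trick of \eqref{Linfty of f} still has the case $\sup\le2\inf$, where one bounds by twice the average $2\ell^{-1}\int_0^\ell|\phi|^2\,d\theta$.

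\emph{The size of $\mathbf u_{t\theta}$.} The bound \eqref{v1-zt-L2norm-0} that you quote has the wrong power of $\epsilon$. By \eqref{v1-ttheta999}, $\mathbf u_{t\theta}=\epsilon^{-1}\beta\beta'(2U'+\mathbf t U'')$ for $|t|\le M_1$. Hence
\[
\|\mathbf u_{t\theta}\|^2\sim\epsilon^{-1}\int_0^\ell\beta|\beta'|^2\,d\theta\le C\epsilon^{-1}\int_0^\ell\beta^3\,d\theta .
\]
Under \eqref{W_theta-control}, $\beta'$ is only $O(\beta)$, not small. So $\phi_{t\theta}$ scales like $\phi_t$. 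After inserting \eqref{smallperturbation0}, a product of two norms costs $\epsilon^{-1}$ for each $t$-derivative it carries beyond the first.

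Your verbal rule ("an undifferentiated or $t$-differentiated norm times a $\theta$-differentiated norm") admits $\|\phi_t\|\,\|\phi_{t\theta}\|$. That product is of size $\epsilon^{2\varrho}(\int_0^\ell\beta^5)^{-2}\,\epsilon^{-1}\int_0^\ell\beta^3$, which would ruin the estimate. The correct bookkeeping is that every product carries at most one $t$-derivative in total.

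Your explicit splitting $\sqrt{\|\phi\|\,\|\phi_t\|\,\|\phi_\theta\|\,\|\phi_{t\theta}\|}\le\tfrac12\big(\|\phi\|\,\|\phi_{t\theta}\|+\|\phi_t\|\,\|\phi_\theta\|\big)$ obeys this rule. With it, every term is $O\big(\epsilon^{2\varrho}(\int_0^\ell\beta^5)^{-2}(\int_0^\ell\beta+\int_0^\ell\beta^3)\big)$, although $\|\phi\|\,\|\phi_{t\theta}\|$ now has net power $\epsilon^0$ rather than a positive power. The paper's term $\epsilon\|\phi_{t\theta}\|^2\lesssim\int_0^\ell\beta^3$ is likewise unaffected. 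With these repairs your argument proves the lemma.
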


\begin{proof}
We  first claim:
{\it for any $t\in [-M, M],  $ there holds
\begin{align}   \label{4-37-1-23}
       \sup_{\theta \in [0, \ell] }  |\phi (t, \theta)|^2 & \le   C \int_0^{\ell} |\phi (t, \theta)|^2 {\mathrm d} \theta + C  \int_0^{\ell} |\p_\theta\phi (t, \theta)|^2 {\mathrm d} \theta,
\end{align}
    where $C$ is independent of $t$.
}

\smallskip
\noindent In fact,    if
\begin{align*}
\sup_{\theta \in [0, \ell] }  |\phi (t, \theta)|^2 \le  2 \inf_{\theta \in [0, \ell] } |\phi (t, \theta)|^2,
\end{align*}
  then
\begin{align}  \label{1-22-2026}
  \sup_{\theta \in [0, \ell] }  |\phi (t, \theta)|^2  \le 2 \frac 1{\ell}  \int_0^{\ell} |\phi (t, \theta)|^2 {\mathrm d} \theta \le C\int_0^{\ell} |\phi (t, \theta)|^2 {\mathrm d} \theta.
\end{align}
  The last inequality  in \eqref{1-22-2026} holds due to Lemma \ref{Lemma4.1}.
We then consider the case
\begin{align*}
\sup_{\theta \in [0, \ell] }  |\phi (t, \theta)|^2 >  2 \inf_{\theta \in [0, \ell] } |\phi (t, \theta)|^2.
\end{align*}
  By denoting $\theta_0 \in [0, \ell]$ the point such that
\begin{align*}
    |\phi (t, \theta_0)|^2   \le \frac 54   \inf_{\theta \in [0, \ell] } |\phi (t, \theta)|^2,
\end{align*}
we obtain
\begin{align}  \label{4-39-1-23}
   \sup_{\theta \in [0, \ell] }  |\phi (t, \theta)|^2 & \le     |\phi (t, \theta_0)|^2   + 2 \int_0^{\ell} |\phi (t, \theta)|\, |\p_\theta\phi (t, \theta)| {\mathrm d} \theta\nonumber
    \\[2mm]
 \leq&   \frac 58   \sup_{\theta \in [0, \ell] } |\phi (t, \theta)|^2+     C \int_0^{\ell} |\phi (t, \theta)|^2 {\mathrm d} \theta +  C\int_0^{\ell} |\p_\theta\phi (t, \theta)|^2 {\mathrm d} \theta.
\end{align}
   Combining \eqref{1-22-2026} and \eqref{4-39-1-23},  we get  \eqref{4-37-1-23}.

   Next, we will prove
\begin{align} \label{4-4026}
\sup_{t\in [-M, M] }     \int_0^{\ell}|\phi(t, \theta)|^2 {\mathrm d} \theta
  \leq
C\e^{-1} \int_{-M}^{M}  \int_0^{\ell}|\phi(t, \theta)|^2 {\mathrm d} \theta{\mathrm d} t
\ +\
C\e \int_{-M}^{M}  \int_0^{\ell} |\p_t\phi(t, \theta)|^2 {\mathrm d} \theta{\mathrm d} t,
\end{align}
   and
\begin{align} \label{4-4126}
 \sup_{t\in [-M, M] }     \int_0^{\ell} |\p_\theta\phi(t, \theta)|^2 {\mathrm d} \theta
\leq
C\e^{-1}  \int_{-M}^{M} \int_0^{\ell} |\p_\theta\phi(t, \theta)|^2 {\mathrm d} \theta{\mathrm d} t
\ +\
C\e  \int_{-M}^{M}  \int_0^{\ell} |\p_{t\theta}\phi(t, \theta)|^2 {\mathrm d} \theta{\mathrm d} t.
\end{align}

\smallskip
\noindent
In fact,  if
\begin{align*}
\sup_{t\in [-M, M] }        \int_0^{\ell} |\phi(t, \theta)|^2 {\mathrm d} \theta  \le 2  \inf_{t \in [-M, M] }     \int_0^{\ell} |\phi(t, \theta)|^2 {\mathrm d} \theta,
\end{align*}
  then
\begin{align*}
\sup_{t\in [-M, M] }        \int_0^{\ell} |\phi(t, \theta)|^2 {\mathrm d} \theta  \le  C \int_{-M}^{M} \int_0^{\ell} |\phi(t, \theta)|^2 {\mathrm d} \theta\, {\mathrm d} t.
\end{align*}
  On the other hand,  we consider the case
\begin{align*}
\sup_{t\in [-M, M] }        \int_0^{\ell} |\phi(t, \theta)|^2 {\mathrm d} \theta  > 2  \inf_{t\in (- M,  M) }     \int_0^{\ell} |\phi(t, \theta)|^2 {\mathrm d} \theta.
\end{align*}
   By denoting $t_0 \in  [-M, M]$ the point such that
\begin{align*}
 \int_0^{\ell} |\phi(t_0, \theta)|^2 {\mathrm d} \theta   \le  \frac 54\inf_{t \in [-M, M] }     \int_0^{\ell} |\phi(t, \theta)|^2 {\mathrm d} \theta,
\end{align*}
we derive
\begin{align*}
  \sup_{t\in [-M, M] }     \int_0^{\ell} |\phi(t, \theta)|^2 {\mathrm d} \theta & \le     \int_0^{\ell} |\phi(t_0, \theta)|^2 {\mathrm d} z + 2 \int_{-M}^{M}   \int_0^{\ell}|\phi(t, \theta)|\, |\p_t\phi(t, \theta)| {\mathrm d} \theta{\mathrm d} t \nonumber
    \\[2mm]
 & \leq  \frac 58  \sup_{t\in [-M, M] }     \int_0^{\ell} |\phi(t, \theta)|^2 {\mathrm d} \theta
 \nonumber
    \\[2mm] & \quad
 +  C \e^{-1} \int_{-M}^{M}  \int_0^{\ell} |\phi(t, \theta)|^2 {\mathrm d} \theta{\mathrm d} t+   C\e \int_{-M}^{M}    \int_0^{\ell} |\p_t\phi(t, \theta)|^2 {\mathrm d} \theta{\mathrm d} t.
\end{align*}
   Then we get
\begin{align*}
 \sup_{t\in [-M, M] }     \int_0^{\ell} |\phi(t, \theta)|^2 {\mathrm d} \theta & \le   C \e^{-1}  \int_{-M}^{M}   \int_0^{\ell} |\phi(t, \theta)|^2 {\mathrm d} \theta {\mathrm d} t+ C\e  \int_{-M}^{M}  \int_0^{\ell} |\p_t\phi(t, \theta)|^2 {\mathrm d} \theta{\mathrm d} t
\end{align*}
 and thus \eqref{4-4026} holds.
 Similarly, we can prove \eqref{4-4126}.

Combining \eqref{4-37-1-23}, \eqref{4-4026} and \eqref{4-4126}, we finally get
\begin{align}  \label{4-33-2026}
\sup_{\theta \in [0, \ell],\, t\in [-M, M]}  |\phi (t, \theta)|^2
 \leq&  C \e^{-1}  \int_{-M}^{M}   \int_0^{\ell} |\phi(t, \theta)|^2 {\mathrm d} \theta {\mathrm d} t+ C\e  \int_{-M}^{M}  \int_0^{\ell} |\p_t\phi(t, \theta)|^2 {\mathrm d} \theta{\mathrm d} t
  \nonumber
    \\[2mm]
  & \quad +   C \e^{-1}  \int_{-M}^{M}   \int_0^{\ell} |\p_{\theta} \phi(t, \theta)|^2 {\mathrm d} \theta {\mathrm d} t+ C\e  \int_{-M}^{M}  \int_0^{\ell} |\p_{t\theta} \phi(t, \theta)|^2 {\mathrm d} \theta{\mathrm d} t.
\end{align}
By substituting \eqref{smallperturbation0}-\eqref{smallperturbation1}
as well as \eqref{v1-L2norm-1}, \eqref{v1-s-L2norm}, \eqref{v1-z-L2norm-1} and \eqref{v1-zt-L2norm-0} into \eqref{4-33-2026}, we can get
\begin{align*}
 \sup_{\theta \in [0, \ell],\, t\in [-M, M]}  |\phi (t, \theta)|^2
&   = \,
\frac{O\big(\epsilon^{2\varrho}\big)}{\left[\displaystyle\int_0^{\ell}\beta^5\,{\mathrm d}\theta \right]^2}
\Bigg[ \e^{-1}  \int_{{\mathbb R}^2} |{\mathbf u}|^2 \,{\mathrm d} y
\ +\
\e^{-1}   \int_{{\mathbb R}^2} |{\mathbf u}_{\theta}|^2 \,{\mathrm d} y
      \nonumber
    \\[2mm]
    & \qquad\qquad\qquad\qquad
\ +\  \e  \int_{{\mathbb R}^2} |{\mathbf u}_t|^2 \,{\mathrm d} y
\ +\ \e \int_{{\mathbb R}^2} |{\mathbf u}_{t\theta}|^2 \,{\mathrm d} y\Bigg]
           \nonumber
    \\[2mm]
    & =\,  \frac{O\big(\epsilon^{2\varrho}\big)}{\left[\displaystyle\int_0^{\ell}\beta^5\,{\mathrm d}\theta \right]^2} \Bigg[ \int_0^{\ell} \beta^3 {\mathrm d}\theta +   \int_0^{\ell}\beta {\mathrm d}\theta \Bigg]
    = \frac{O\big(\epsilon^{2\varrho}\big)}{\displaystyle\int_0^{\ell}\beta^5\,{\mathrm d}\theta}.
\end{align*}
Therefore, \eqref{linftyofphittheta}  holds true.
\end{proof}

We here also provide the following lemma on the decay property of the perturbation term $\phi$.

\begin{lemma}\label{Lemma4.2}
 Then there exists two positive constants $M_3$ and $M_4$ ($\e M_3<M_4<M_1$ with $M_1$ in \eqref{cut-off000000}) independent of $\epsilon$ in such a way that
 for all ${\mathbf y}$ satisfying $\e M_3<{\mathrm{dist}} ({\bf y}, \Gamma_\epsilon) <M_4$,
 the following estimate is valid
\begin{align}
 &| \phi({\bf y}) | + | \p_t \phi({\bf y})| + | \p_{tt} \phi({\bf y}) |
 \leq
 C\exp {\Big(-\frac{\sigma_0\beta}{\epsilon}{\mathrm{dist}} ({\bf y}, \, \Gamma_\epsilon)\Big)},
 \label{estimateforphi}
\end{align}
where $\sigma_0$ is a positive constant independent of $\epsilon$.
\end{lemma}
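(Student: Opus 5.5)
We establish the decay of $\phi$ in the annular region by a comparison (maximum principle) argument for the linear equation that $\phi$ satisfies. This region is away from the core of the profile but still inside the zone where $\chi\equiv1$.

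\textbf{Step 1: the equation for $\phi$.} Subtracting the equation for ${\mathbf u}$ from \eqref{eq3.1-0}, and using that ${\mathbf u}=\beta U(\beta t/\epsilon)$ for $|t|\le M_1$, we get
\begin{align*}
\epsilon^2\Delta_{\mathfrak g}\phi-\big[1+\epsilon^2W(t,\theta)\big]\phi+\big(3{\mathbf u}^2+3{\mathbf u}\phi+\phi^2\big)\phi=-E,
\end{align*}
where $E=\epsilon^2\Delta_{\mathfrak g}{\mathbf u}-[1+\epsilon^2W]{\mathbf u}+{\mathbf u}^3$ is the error of the ansatz. In the region $\epsilon M_3<|t|<M_4$, the term ${\mathbf u}$ and every term of $E$ carry a factor $U(\beta t/\epsilon)$ or one of its derivatives. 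The curvature and $\theta$-derivative terms are controlled by \eqref{curvature bounded}, \eqref{W_t-control} and \eqref{beta-bound}. We therefore expect
\begin{align*}
|E|\le C\beta^3\big(1+|\beta t/\epsilon|^2\big)e^{-\beta|t|/\epsilon}\le C e^{-\sigma_1\beta|t|/\epsilon}
\end{align*}
for any fixed $\sigma_1<1$, after absorbing polynomial factors and powers of $\beta$. The powers of $\beta$ are absorbed because $\beta\ge1$ and $e^{-(1-\sigma_1)\beta|t|/\epsilon}\beta^3$ is bounded when $|t|\ge\epsilon M_3$.

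\textbf{Step 2: the potential is coercive.} By Lemma \ref{Lemma4-3-2026}, $\|\phi\|_{L^\infty}=o(1)$. Choose $M_3$ large so that $3U^2(\beta t/\epsilon)\le 3U^2(M_3)$ is small for $|t|\ge\epsilon M_3$. Then the zero-order coefficient satisfies
\begin{align*}
1+\epsilon^2W-3{\mathbf u}^2-3{\mathbf u}\phi-\phi^2\ge \tfrac12\beta^2 .
\end{align*}
Here we use $\epsilon^2 W(t,\theta)\ge \epsilon^2W(0,\theta)-CM_4\beta^2$ from \eqref{W_t-control}, and take $M_4$ small. The extra factor $\beta^2$ on the $\phi$-terms causes no harm, since $\beta^2|{\mathbf u}|^2/\beta^2=U^2$ is small.

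\textbf{Step 3: barrier.} Take $\psi=A e^{-\sigma_0\beta(\theta)|t|/\epsilon}+Be^{-\sigma_0\beta(\theta)(M_4-|t|)/\epsilon}$, with $\sigma_0<\min(\sigma_1,1/2)$. Compute $\epsilon^2\Delta_{\mathfrak g}\psi$ using \eqref{Laplacian}. The $t$-derivatives give $\sigma_0^2\beta^2\psi$ plus an $O(\epsilon\sigma_0\beta)\psi$ curvature term. The $\theta$-derivatives hit $\beta(\theta)$ and produce terms like $(\sigma_0\beta' t)^2\psi$ and $\epsilon\sigma_0|\beta''t|\psi$; these are $\le C\sigma_0^2M_4^2\beta^2\psi$ by \eqref{beta-bound}. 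Hence $\epsilon^2\Delta_{\mathfrak g}\psi-\tfrac12\beta^2\psi\le -\tfrac14\beta^2\psi$, so $\psi$ is a supersolution of $-L\psi\ge \frac14\beta^2\psi\ge |E|$ once $A\ge C$.

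On the boundary $|t|=\epsilon M_3$ we have $|\phi|\le o(1)\le A e^{-\sigma_0\beta M_3}$; this needs care because $\beta$ may be large. On $|t|=M_4$ we take $B\ge \sup|\phi|$. The maximum principle, applied on the two thin annuli (closed in $\theta$), then gives $|\phi|\le\psi$. The $B$-term is exponentially small in the interior of the region; I would either shrink the region slightly or note that it is harmless after renaming constants.

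\textbf{Step 4: derivatives.} The bounds on $\p_t\phi$ and $\p_{tt}\phi$ follow from interior elliptic estimates in the stretched variables $(\tilde s,\tau)$ or $(s,z)$ on unit balls. In these variables the equation is uniformly elliptic with bounded coefficients after dividing by $\beta^2$, and Schauder/$W^{2,p}$ estimates convert the $L^\infty$ decay into decay of $\epsilon\p_t\phi$ and $\epsilon^2\p_{tt}\phi$. The factors of $\epsilon^{-1}$ are absorbed by slightly decreasing $\sigma_0$, since $\epsilon^{-2}e^{-\delta\beta|t|/\epsilon}\le C$ for $|t|\ge\epsilon M_3$ only if $M_3\gtrsim\log(1/\epsilon)$. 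This is the delicate point.

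\textbf{Main obstacle.} The main obstacle is uniformity in the possibly unbounded $\beta$ and in $\epsilon$, in two places. First, the inner boundary comparison requires $o(1)\lesssim e^{-\sigma_0\beta M_3}$. Second, the derivative estimates lose powers of $\epsilon^{-1}$. I would try first to exploit the quantitative smallness $\epsilon^{2\varrho}/\int\beta^5$ from Lemma \ref{Lemma4-3-2026}. Failing that, I would let the inner radius be $\epsilon M_3$ with $M_3$ a large constant, and state the estimate with constant $C$ absorbing $e^{\sigma_0\beta M_3}$-type factors. That is consistent with the statement, where $C$ is unspecified.
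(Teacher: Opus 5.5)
Your plan (equation for $\phi$, coercivity of the zero-order coefficient off the core, an exponential barrier, then interior elliptic estimates) is the same as the paper's. However, the step you leave open at the inner boundary is a genuine gap, and neither of your remedies closes it.

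\textbf{(A2)} only gives $\int_0^\ell\beta^2\,\mathrm{d}\theta\le C\epsilon^{\varrho-1}$ (with $|\beta'|\le C\beta$), so a priori $\sup\beta$ may be as large as $\epsilon^{-(1-\varrho)/2}$. Where $\beta$ is that large, $e^{-\sigma_0\beta M_3}$ is smaller than any power of $\epsilon$. Lemma~\ref{Lemma4-3-2026} only gives the polynomial bound $\|\phi\|_{L^\infty}\le C\epsilon^{\varrho}\big(\int_0^\ell\beta^5\,\mathrm{d}\theta\big)^{-1/2}$, which cannot beat this. Putting $\sup_\theta e^{\sigma_0\beta M_3}$ into $C$ makes $C$ blow up as $\epsilon\to0$, so that fallback fails as well.

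The missing idea is to fix the inner boundary in the natural variable $\mathbf t=\beta t/\epsilon$ rather than in $t/\epsilon$. The paper runs the comparison on the larger annulus $\{\epsilon M_3/\beta(\theta)\le|t|\le M_4\}$, which contains yours since $\beta\ge1$. There the coefficient is still $\ge\sigma_2\beta^2$, because it only involves $U^2(\mathbf t)$. The barrier is $A\,\beta(\theta)\,e^{-\sigma_0\beta|t|/\epsilon}$, which on the inner boundary equals $A\beta e^{-\sigma_0M_3}\ge Ae^{-\sigma_0M_3}$. So $A\ge\max\{e^{\sigma_0M_3}\|\phi\|_{L^\infty},\,C\epsilon\}$ works uniformly in $\beta$.

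On this larger set you can no longer absorb $\beta^3$ into the exponential as in your Step 1; that is why the barrier carries the prefactor $\beta$. Then $\tfrac14\beta^2\psi\sim A\beta^3e^{-\sigma_0\mathbf t}$ dominates the error. Since the $\beta^3$-terms of $\epsilon^2\partial_t^2\mathbf u-\beta^2\mathbf u+\mathbf u^3$ cancel exactly, the error is in fact $O(\epsilon\beta^2)(1+\mathbf t^2)e^{-\mathbf t}$. The $\theta$-derivatives falling on the prefactor and on the exponent are small multiples of $\beta^2\psi$ by \eqref{beta-bound}, for $\epsilon$ and $M_4$ small. Only at the end do you restrict to $|t|\ge\epsilon M_3$. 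There $\beta e^{-\sigma_0\beta|t|/(2\epsilon)}\le\sup_{b\ge1}b\,e^{-\sigma_0 bM_3/2}\le C$, so the prefactor disappears at the cost of halving $\sigma_0$. This is exactly why the lemma is stated on $\epsilon M_3<\mathrm{dist}(\mathbf y,\Gamma_\epsilon)$.

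Two further points. Your extra term $Be^{-\sigma_0\beta(M_4-|t|)/\epsilon}$, followed by shrinking $M_4$, is genuinely needed and is missing in the paper. The paper's barrier is exponentially small on $|t|=M_4$, where only $|\phi|\le\|\phi\|_{L^\infty}$ is known.

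On the derivatives your concern is also correct, and the paper does not resolve it; it only says ``by the elliptic estimate''. Interior estimates at the natural scale $\epsilon/\beta$ give $|\partial_t\phi|\lesssim\epsilon^{-1}\beta(\cdots)$ and $|\partial_{tt}\phi|\lesssim\epsilon^{-2}\beta^2(\cdots)$. The loss is not an artifact. Suppose $\phi\approx\|\phi\|_{L^\infty}\,g(\beta t/\epsilon)$ for a fixed profile $g$, as for the corrections of Section~\ref{Section5}. Then at $|t|=2\epsilon M_3$ with $\beta$ bounded, $|\partial_{tt}\phi|\sim\epsilon^{-2}\|\phi\|_{L^\infty}$, which is $\gg1$ as soon as $\|\phi\|_{L^\infty}\gg\epsilon^2$. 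The right side of \eqref{estimateforphi} is of order one there.

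So with $M_3$ fixed, the derivative part of \eqref{estimateforphi} only holds with $C$ replaced by $C\epsilon^{-2}$, or with $\sigma_0$ halved on $|t|\ge K\epsilon\log(1/\epsilon)$. That weaker form is all that is used later. In Section~\ref{Section4.4} the lemma is evaluated only at $|t|=M$ with $M$ a fixed constant, where $\epsilon^{-2}\beta^2e^{-\sigma_0\beta M/\epsilon}\le Ce^{-\sigma_0\beta M/(2\epsilon)}$.
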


\begin{proof}
To begin with, by the elliptic regularity theory \cite{Gilbarg}, we know that $u_\epsilon={\mathbf u}+\phi$ is a classic solution.
We substitute $u_\epsilon={\mathbf u}+\phi$ into \eqref{eq2} and then obtain
\begin{align}
\epsilon^2\Delta_{\bf y}\phi - \big[1+ \epsilon^2 W ({\bf y}) \big] \phi + 3 {\mathbf u}^2 \phi \,=\, - \tilde N_0 (\phi ) - \tilde E_0,
 \label{eqnphi}
\end{align}
 where
\begin{align}
 \label{eqnN0E0}
 \tilde N_0(\phi) \,=\, 3 {\mathbf u} \phi^2 + \phi^3,
 \qquad
 \tilde E_0\,=\, &\epsilon^2\Delta_{\bf y} {\mathbf u} - \big[1+ \epsilon^2 W ({\bf y}) \big] {\mathbf u} + {\mathbf u}^3.
\end{align}
Note that $\Delta_{\bf y}$ has local form in \eqref{Laplacian}, which can be written as
\begin{align}
\Delta_{\bf y}=\frac{\partial^2}{\partial t^2}+\frac{\partial^2}{\partial \theta^2}+{\mathbb B}
\label{laplacian1}
\end{align}
where
\begin{align}
{\mathbb B}=
 \frac{\kappa}{1+\kappa t} \frac{\partial }{\partial t}
\ -\
\frac{ t \kappa'}{\big(1+ \kappa t\big)^3 }\frac{\partial}{\partial \theta}
\ -\
\frac{\,-2\kappa t-\kappa^2t^2\,}{\big(1+\kappa t\big)^2 }\frac{\partial^2}{\partial \theta^2}.
\end{align}
The assumptions in \eqref{curvature bounded} imply that the coefficients in ${\mathbb B}$ are three smooth bounded functions on the following domain
$$
\Big\{\, (t, \theta)\, :\, |t|<M_0, \quad \theta\in [0, \ell)\, \Big\}
$$
provided that the constant $M_0$ in \eqref{fermi} is chosen sufficiently small.
In order to derive the estimate,
we shall use the local forms of \eqref{eqnphi} and \eqref{eqnN0E0},
 which will be formulated in the sequel.

Note that the function $\mathbf u$ has the expression in \eqref{mathbfu}.
We obtain that the term $\tilde E_0$ has the local form
\begin{align*}
 \tilde E_0
 \,=\, &- \epsilon^2\big[W (t, \theta) - W(0, \theta) \big] {\mathbf u}
 + \epsilon^2{\mathbf u}_{\theta\theta}
 + \epsilon^2{\mathbb B} ({\mathbf u}),
\quad\mbox{if } |t|<M_1,
\end{align*}
due to \eqref{mathbfu-tt1}.
More analysis will be provided to get the estimates of the error components.

\smallskip
\noindent$\clubsuit$
 The Taylor expansion will give
\begin{align}
 W (t, \theta) - W(0, \theta)=W_t\big(\hbar(\theta) t, \theta\big)\,t,
 \label{WTayloer}
\end{align}
 where $\hbar$ is a function with $|\hbar|\leq 1$.
 We then use \eqref{W_t-control} to derive that
\begin{align*}
 \Big|- \epsilon^2\big[W (t, \theta) - W(0, \theta) \big] {\mathbf u}\Big|
 &=\epsilon^2 \chi(t) \,\big|W_t\big(\hbar(\theta) t, \theta\big)\,t\big|\, \beta U\big(\beta t/\epsilon\big)
 \\[2mm]
 &\leq C \,\beta^3(\theta)\, e^{- |\beta(\theta) t/\epsilon|}, \quad \text{if }\, |t|\leq M_0,
\end{align*}
due to \eqref{U-asymp}.

\smallskip
\noindent$\clubsuit$
 On the other hand, we use \eqref{mathbfu-t1}-\eqref{mathbfu-thetatheta1} to derive that, for  $|t|<M_1$
\begin{align*}
\epsilon^2{\mathbf u}_{\theta\theta}
 + \epsilon^2{\mathbb B} ({\mathbf u})
&=\epsilon^2\Bigg\{\beta''U\big(\beta t/\epsilon\big)
 + \, \epsilon^{-1} 2|\beta'|^2 t U'\big(\beta t/\epsilon\big)
 + \, \epsilon^{-2}\beta|\beta'|^2t^2 U''\big(\beta t/\epsilon\big)
 + \, \epsilon^{-1}\beta\beta'' t U'\big(\beta t/\epsilon\big)
 \Bigg\}
 \nonumber\\[2mm]
 &\quad
\,+\,\epsilon \frac{\kappa\beta^2}{1+\kappa t}  U'\big(\beta t/\epsilon\big)
\,-\,\epsilon^2
\frac{ t \kappa'}{\big(1+ \kappa t\big)^3 }
 \Bigg[
\beta' U\big(\beta t/\epsilon\big)
 +\epsilon^{-1}\beta\beta' t U'\big(\beta t/\epsilon\big)
 \Bigg]
\nonumber\\[2mm]
 &\quad\,-\,
\epsilon^2\frac{\,2\kappa t+\kappa^2t^2\,}{\big(1+\kappa t\big)^2 }
 \Bigg\{\beta''U\big(\beta t/\epsilon\big)
 + \, \epsilon^{-1} 2|\beta'|^2 t U'\big(\beta t/\epsilon\big)
 + \, \epsilon^{-2}\beta|\beta'|^2t^2 U''\big(\beta t/\epsilon\big)
\nonumber\\[2mm]
 &\qquad\qquad\qquad \qquad \qquad
 + \, \epsilon^{-1}\beta\beta'' t U'\big(\beta t/\epsilon\big)
 \Bigg\},
\end{align*}
where $\kappa$ and $\kappa'$ are uniformly bounded due to \eqref{curvature bounded}.
By the results in \eqref{beta-bound} (which is a direct consequence of \eqref{W_theta-control}) and also \eqref{U-asymp},  we get
\begin{align*}
|\tilde E_0(t, \theta)|
\,\leq\,
 C \beta^3(\theta) e^{-|\beta(\theta)t/\epsilon|}, \quad
\frac {\e M_3} {\beta} \le  |t|\leq M_1.
\end{align*}

\smallskip
 Using \eqref{laplacian1}, we can derive from \eqref{eqnphi} that
\begin{align} \label{odefortildephi}
\epsilon^2\phi_{tt} + \epsilon^2\phi_{\theta\theta} - \Big\{\big[1+ \epsilon^2 W (t, \theta) \big] - 3 {\mathbf u}^2\Big\}\phi
\,=\, - \tilde N_0(\phi) - \tilde E_0
 - \epsilon^2{\mathbb B}(\phi),
\end{align}
where $(t, \theta) \in (-M_1, M_1)\times [0, \ell)$.
Recalling \eqref{WTayloer},
we know that there exist positive constants $\sigma_2$, $M_3$ and $M_4$ independent of $\epsilon$ with the property $\e M_3< M_4<M_1$ for $M_1$ in \eqref{cut-off000000}, in such a way that
\begin{align*}
 \big[1+ \epsilon^2 W (t, \theta) \big] - 3 {\mathbf u}^2
 &=\big[1+ \epsilon^2 W(0, \theta) \big]
 \Bigg\{1 - 3 \big|U\big(\beta(\theta)t/\epsilon\big)\big|^2 \,+\, t \frac{\ \epsilon^2W_t\big(\hbar(\theta) t, \, \theta\big)\,}{1+ \epsilon^2 W(0, \theta)}
 \Bigg\}
 \\
 &\geq \sigma_2\big[1+ \epsilon^2 W(0, \theta) \big],
 \quad \text{if }\,  \frac {\e M_3} {\beta}\leq |t|\leq M_4,
\end{align*}
due to the assumptions in \eqref{W_t-control}.
\medskip
 By using  the result provided in Lemma \ref{Lemma4-3-2026}, we have that
\begin{align} \label{linftyepsilonphi}
 \|\phi \|_{L^\infty (\{\frac {\e M_3} {\beta}\leq |t|\leq M_4,\, \theta\in [0, \ell)\}) } \le C \e^{\varrho}, \quad \text{with}~\varrho\in (0, 1).
\end{align}
 Then, a standard barrier argument with a barrier function as
\begin{align*}
 \bar \phi \,=\, \mathbf{M}  \|\phi \|_{L^\infty (\{\frac {\e M_3} {\beta}\leq |t|\leq M_4,\, \theta\in [0, \ell)\}) } \beta e^{-\sigma_3 |\beta t/\epsilon| },
\end{align*}
 where $\mathbf{M}$ is chosen large and $\sigma_3=\min\{\sigma_1, \sigma_2\}$, yields that
\begin{align}\label{55}
 |\phi |\leq \bar \phi\leq&C  \|\phi \|_{L^\infty (\{\frac {\e M_3} {\beta}\leq |t|\leq M_4,\, \theta\in [0, \ell)\}) }\beta e^{-\sigma_3 |\beta t/\epsilon| }\nonumber
 \\[2mm]
 \leq&  C \epsilon^\varrho \beta e^{-\sigma_3 |\beta t/\epsilon| }, \quad \text{when}\, \frac {\e M_3} {\beta}\leq |t|\leq M_4,
\end{align}
 where the constant $C$ is independent of $\epsilon$.   From \eqref{55}, it follows immediately that
 \begin{align}
 |\phi |\leq \bar \phi\leq&  C \epsilon^\varrho e^{-\hat{\sigma}_3 |\beta t/\epsilon| },    \quad \text{when}\, \e M_3\leq |t|\leq M_4.
 \end{align}

  By the elliptic estimate, we conclude that there exists a constant $\sigma_4>0$ such that
\begin{align*}
 |\phi_t| + |\phi_{tt} |\leq  C \,\epsilon^{\varrho}  \,e^{-\sigma_4 |\beta t/\epsilon| },
 \quad \text{when}\, \e M_3\leq |t|\leq M_4.
\end{align*}
 Therefore, we complete the proof of this lemma.
\end{proof}

\subsection{ The estimates of all terms in the identities}\label{Section4.4}\

We can take $M>0$ such that $M_3<M<M_4$ with $M_3$ and $M_4$ given in Lemma \ref{Lemma4.2}.
 It is trivial to derive from \eqref{third identity} to obtain
\begin{align}
 &\,-\,\epsilon^2\int_0^\ell \varphi \kappa\int_{-M}^{M} \frac{u_{\theta\theta} u}{(1+t\,\kappa)}\,{\mathrm d}t{\mathrm d}\theta
 \,+\, \epsilon^2 \int_0^\ell \varphi kk'\int_{-M}^{M} \frac{t\,u_\theta u}{(1+t\,\kappa)^2}\,{\mathrm d}t{\mathrm d}\theta
\nonumber\\[2mm]
 &
 \,-\,\epsilon^2\int_0^\ell \varphi \int_{-M}^{M} u_{\theta\theta} u_t\,{\mathrm d}t{\mathrm d}\theta
 \,+\, \epsilon^2 \int_0^\ell \varphi \kappa'\int_{-M}^{M} \frac{t\,u_\theta u_t}{(1+t\,\kappa)} \,{\mathrm d}t{\mathrm d}\theta
 \,-\,
 \frac{1}{2}\epsilon^2\int_0^\ell \varphi \int_{-M}^{M} W_t|u|^2
 (1+t\,\kappa)^2\,{\mathrm d}t{\mathrm d}\theta
\nonumber\\[2mm]
 &\,+\,\int_0^\ell \varphi \kappa\frac{(1-p)}{p+1}\int_{-M}^{M} |u|^{p+1} (1+t\,\kappa)\,{\mathrm d}t{\mathrm d}\theta
 \,+\,
 \epsilon^2\int_0^\ell \varphi \kappa\int_{-M}^{M} |u_t|^2 (1+t\,\kappa)\,{\mathrm d}t{\mathrm d}\theta
\nonumber\\[2mm]
 &\,=\,
 \epsilon^2 \int_0^\ell\varphi\Bigg\{\kappa\big(1+\kappa t\big) u u_t \Big|_{t=-{M}}^{M}
 \ +\
 \frac{1}{2} \Big[ \big(1+\kappa t\big) u_t \Big]^2\, \Big|_{t=-{M}}^{M}\Bigg\} \,{\mathrm d}\theta
\nonumber\\[2mm]
 &\quad
 \ -\
 \int_0^\ell\varphi\Bigg\{\frac{1}{2}\big(1+\kappa t\big)^2 \big[1+ \epsilon^2 W\big] u^2\, \Big|_{t=-{M}}^{M}
 \ -\
 \frac 1 4 \big(1+\kappa t\big)^2 |u|^4\, \Big|_{t=-{M}}^{M}\Bigg\} \,{\mathrm d}\theta,
 \label{Identity51}
\end{align}
where $p=3$ and $\varphi(\theta)$ is any smooth function with $\varphi(0)=\varphi(\ell)$.
We will use the assumptions in {\bf{(A1)}}-{\bf{(A2)}} (see \eqref{curvature bounded}-\eqref{W_theta-control})
 and also the assumptions in \eqref{u-decomposition}-\eqref{smallperturbation1}
 together with the results in \eqref{v1-L2norm-0}-\eqref{v1-zz-L2norm-1} to estimate all the terms in the above identity.
In this subsection, we used $\|\cdot\|_2$ represent the norm $\|\cdot\|_{L^2(\{|t|\leq M\}\})}$.

\medskip
\noindent {\bf{Part 1.}}
 We first use \eqref{U-asymp} and \eqref{estimateforphi} to do the following analysis for the boundary terms in \eqref{Identity51}.

\smallskip
\noindent$\clubsuit$ Here is the first boundary term
\begin{align} \label{boundaryterm2}
 &\Bigg|\epsilon^2 \int_0^\ell\varphi\Bigg\{\kappa\big(1+\kappa t\big) u u_t \Big|_{t=-{M}}^{M}
 \ +\
 \frac{1}{2} \Big[ \big(1+\kappa t\big) u_t \Big]^2\, \Big|_{t=-{M}}^{M}\Bigg\} \,{\mathrm d}\theta\Bigg|
\nonumber\\[2mm]
 &\,\leq\, \epsilon^2\,C \|\varphi\kappa\|_{L^\infty(0,\ell)}\int_0^\ell\big(1+\kappa M\big)
 \Big|\beta \, U\big( \beta M/\epsilon\big) +\phi(M, \theta) \Big|
 \times\Big|\epsilon^{-1}\beta^2\, U'\big( \beta M/\epsilon\big) +\phi_t(M, \theta) \Big|
 \,{\mathrm d}\theta
\nonumber\\[2mm]
 &\quad\,+\, \epsilon^2\,C \|\varphi\kappa\|_{L^\infty(0,\ell)}\int_0^\ell\big(1-\kappa M\big)
 \Big|\beta \, U\big( -\beta M/\epsilon\big) +\phi(-M, \theta)\Big|
 \times\Big|\epsilon^{-1}\beta^2\, U'\big( -\beta M/\epsilon\big) +\phi_t(-M, \theta) \Big|
 \,{\mathrm d}\theta
\nonumber\\[2mm]
 &\quad\,+\, \epsilon^2\,C \|\varphi\kappa\|_{L^\infty(0,\ell)}\int_0^\ell\big(1+\kappa M\big)^2
 \Big[\epsilon^{-1}\beta^2\, U'\big( \beta M/\epsilon\big) +\phi_t(M, \theta) \Big]^2
 \,{\mathrm d}\theta
\nonumber\\[2mm]
 &\quad\,+\, \epsilon^2\,C \|\varphi\kappa\|_{L^\infty(0,\ell)}\int_0^\ell\big(1+\kappa M\big)^2
 \Big[\epsilon^{-1}\beta^2\, U'\big(-\beta M/\epsilon\big) +\phi_t(-M, \theta) \Big]^2
 \,{\mathrm d}\theta
\nonumber\\[2mm]
 &\,\leq\, \epsilon^2\,C \|\varphi\|_{L^\infty(0,\ell)}\int_0^\ell
 \Big|\frac{\epsilon}{M}e^{-M/(2\epsilon)} +e^{-\sigma_0M/(2\epsilon)} \Big|
 \times\Big|\frac{\beta}{M}e^{-M/(2\epsilon)} +e^{-\sigma_0M/(2\epsilon)} \Big|
 \,{\mathrm d}\theta
\nonumber\\[2mm]
 &\quad\,+\, \epsilon^2\,C \|\varphi\|_{L^\infty(0,\ell)}\int_0^\ell
 \Big[\frac{\beta}{M}e^{-M/(2\epsilon)} +e^{-\sigma_0M/(2\epsilon)} \Big]^2
 \,{\mathrm d}\theta
\nonumber\\[2mm]
 &\,\leq\, \epsilon^2 e^{-\sigma_0 M/\epsilon}\,C \|\varphi\|_{L^\infty(0,\ell)}\displaystyle\int_0^{\ell}\beta^2 {\mathrm d}\theta
 \,\leq\, \epsilon^{1+\varrho}\,C \|\varphi\|_{L^\infty(0,\ell)},
 \qquad\qquad \mbox{if}\ e^{-\sigma_0 M/\epsilon}\epsilon^{1-\varrho}\,\int_0^\ell \beta^2\,{\mathrm d}\theta\leq C,
\end{align}
 due to $\beta>1$. Note that the assumption in \eqref{length-constraint} has been used in the proof of \eqref{boundaryterm2}.

\smallskip
\noindent$\clubsuit$
 The second boundary term will be estimated in the following
\begin{align}
 &\Bigg|\ -\
 \int_0^\ell\varphi\Bigg\{\frac{1}{2}\big(1+\kappa t\big)^2 \big[1+ \epsilon^2 W(t, \theta)\big] u^2\, \Big|_{t=-{M}}^{M}
 \ -\
 \frac 1 4 \big(1+\kappa t\big)^2 |u|^4\, \Big|_{t=-{M}}^{M}\Bigg\} \,{\mathrm d}\theta\Bigg|
\nonumber\\[2mm]
 &\,\leq\, \epsilon^2\,C \|\varphi\|_{L^\infty(0,\ell)}\int_0^\ell\big(1+\kappa M\big)^2
 \big[1+ \epsilon^2 W(0, \theta)+\epsilon^2 MW_t\big(\hbar_1(\theta)M, \theta\big)\big]
 \Big[\beta \, U\big( \beta M/\epsilon\big) +\phi(M, \theta) \Big]^2
 \,{\mathrm d}\theta
\nonumber\\[2mm]
 &\quad\,+\, \epsilon^2\,C \|\varphi\|_{L^\infty(0,\ell)}\int_0^\ell\big(1-\kappa M\big)^2
 \big[1+ \epsilon^2 W(0, \theta)-\epsilon^2 MW_t\big(\hbar_2(\theta)M, \theta\big)\big]
 \Big[\beta \, U\big( -\beta M/\epsilon\big) +\phi(-M, \theta) \Big]^2
 \,{\mathrm d}\theta
\nonumber\\[2mm]
 &\quad\,+\, \epsilon^2\,C \|\varphi\|_{L^\infty(0,\ell)}\int_0^\ell\big(1+\kappa M\big)^2
 \Big[\beta \, U\big( \beta M/\epsilon\big) +\phi(M, \theta) \Big]^4
 \,{\mathrm d}\theta
\nonumber\\[2mm]
 &\quad\,+\, \epsilon^2\,C \|\varphi\|_{L^\infty(0,\ell)}\int_0^\ell\big(1-\kappa M\big)^2
 \Big[\beta \, U\big( -\beta M/\epsilon\big) +\phi(-M, \theta) \Big]^4
 \,{\mathrm d}\theta
\nonumber\\[2mm]
 &\,\leq\, \epsilon^2\,C \|\varphi\|_{L^\infty(0,\ell)}\int_0^\ell \beta^2
 \Big[\frac{\epsilon}{M}e^{-M/(2\epsilon)} +e^{-\sigma_0M/(2\epsilon)} \Big]^2
 \,{\mathrm d}\theta
\nonumber\\[2mm]
 &\,\leq\,
 \epsilon^2 e^{-\sigma_0 M/\epsilon}\,C \|\varphi\|_{L^\infty(0,\ell)}\displaystyle\int_0^{\ell}\beta^2 {\mathrm d}\theta
 \,\leq\,
 \epsilon^{1+\varrho}\,C \|\varphi\|_{L^\infty(0,\ell)},
 \qquad\qquad \mbox{if}\ e^{-\sigma_0 M/\epsilon}\epsilon^{1-\varrho}\,\int_0^\ell \beta^2\,{\mathrm d}\theta\leq C,
 \label{boundaryterm1}
\end{align}
 where $\hbar_1(\theta)$ and $\hbar_2(\theta)$ are two smooth functions with infinity norm less than 1.
 Note that the assumptions in \eqref{W_t-control} have been used in the proof of \eqref{boundaryterm1}.

\bigskip
\noindent {\bf{Part 2.}}
 Next, some easy estimates can be given in the following way.

\smallskip
\noindent$\clubsuit$
 Using \eqref{mathbfu-t1}-\eqref{mathbfu-thetatheta1}, we can obtain that
\begin{align*}
 &\Bigg|-\,\epsilon^2\int_0^\ell \varphi \kappa\int_{-M}^{M} \frac{{\mathbf u}_{\theta\theta}
 {\mathbf u}}{(1+t\,\kappa)}
 \,{\mathrm d}t{\mathrm d}\theta\Bigg|
\nonumber\\[2mm]
 &\,=\,\epsilon^2\Bigg|\int_0^\ell \varphi \kappa \displaystyle\int_{-\frac{M\beta}{\epsilon}}^{\frac{M\beta}{\epsilon}}
 \frac{\big(\beta''U
 \,+\, 2\frac{\beta'}{\beta}\beta' U'{\mathbf t}
 \,+\, \frac{\beta}{\beta^2} |\beta' |^2 U''{\mathbf t}^2
 \,+\, \beta^{-1}\beta\beta'' U'{\mathbf t}\big)\beta U}
 {\big(1+\epsilon \kappa \beta^{-1}{\mathbf t}\big)}
 \,\frac{\epsilon}{\beta} \,{\mathrm d}{\mathbf t}{\mathrm d}\theta\Bigg|
\nonumber\\[2mm]
 &\,=\,\epsilon^2\Bigg|\int_0^\ell \varphi \kappa \displaystyle\int_{-\frac{M\beta}{\epsilon}}^{\frac{M\beta}{\epsilon}}
 \frac{\epsilon \beta\beta''\beta^{-1}U^2({\mathbf t})}{\big(1+\epsilon \kappa \beta^{-1}{\mathbf t}\big)}
 \,{\mathrm d}{\mathbf t}{\mathrm d}\theta
 \,+\,
 \int_0^\ell \varphi \kappa \displaystyle\int_{-\frac{M\beta}{\epsilon}}^{\frac{M\beta}{\epsilon}}
 \frac{ 2\epsilon\beta\beta'\beta^{-2}\beta' U({\mathbf t})U'({\mathbf t}){\mathbf t}}{\big(1+\epsilon \kappa \beta^{-1}{\mathbf t}\big)}
 \,{\mathrm d}{\mathbf t}{\mathrm d}\theta
\nonumber\\[2mm]
 &\qquad\quad\,+\,\int_0^\ell \varphi \kappa \displaystyle\int_{-\frac{M\beta}{\epsilon}}^{\frac{M\beta}{\epsilon}}
 \frac{\epsilon \beta^2\beta^{-3}|\beta'|^2{\mathbf t}^2U({\mathbf t})U''({\mathbf t})}{\big(1+\epsilon \kappa \beta^{-1}{\mathbf t}\big)}
 \,{\mathrm d}{\mathbf t}{\mathrm d}\theta
\nonumber\\[2mm]
 &\qquad\quad\,+\,\int_0^\ell \varphi \kappa \displaystyle\int_{-\frac{M\beta}{\epsilon}}^{\frac{M\beta}{\epsilon}}
 \frac{\epsilon\beta^2\beta^{-2}\beta'' {\mathbf t} U({\mathbf t})U'({\mathbf t})}{\big(1+\epsilon \kappa \beta^{-1}{\mathbf t}\big)}
 \,{\mathrm d}{\mathbf t}{\mathrm d}\theta
 \Bigg|
\nonumber\\[2mm]
 &\,=\,\epsilon^3\big(1+o(1)\big)
 \Bigg|
 -\varrho_0\int_0^\ell \varphi \kappa \beta\beta''\beta^{-1}{\mathrm d}\theta
 \,+\,
 \varrho_0\int_0^\ell \varphi \kappa \beta\beta'\beta^{-2}\beta'{\mathrm d}\theta
\nonumber\\[2mm]
 &\qquad\qquad\qquad\quad
 \,+\,(\varrho_2-\varrho_0)\int_0^\ell \varphi \kappa \beta^2\beta^{-3}|\beta'|^2{\mathrm d}\theta
 \,-\, \varrho_0\int_0^\ell \varphi \frac12 \kappa \beta^2\beta^{-2}\beta''{\mathrm d}\theta \Bigg|
\nonumber\\[2mm]
 &\,\leq\, \epsilon^3\,C \|\varphi\|_{L^\infty(0,\ell)}\int_0^\ell \beta\,{\mathrm d}\theta
 \,\leq\, \epsilon^{1+\varrho}\,C \|\varphi\|_{L^\infty(0,\ell)} ,\qquad\qquad \mbox{if}\ \epsilon^{2-\varrho}\,\int_0^\ell \beta\,{\mathrm d}\theta\leq C
\end{align*}
 due to \eqref{beta-bound}, and
\begin{align*}
 &\Bigg|-\,\epsilon^2\int_0^\ell \varphi \kappa\int_{-M}^{M}
 \frac{\big({\mathbf u}_{\theta\theta}\phi+{\mathbf u}\phi_{\theta\theta}+\phi_{\theta\theta}\phi\big)} {(1+t\,\kappa)}
 \,{\mathrm d}t{\mathrm d}\theta\Bigg|
\nonumber\\[2mm]
 &\leq
 C\epsilon^2\|\varphi\kappa\|_{L^\infty(0,\ell)}
 \Big[ \|{\mathbf u}_{\theta\theta}\|_2\|\phi\|_2 +\|{\mathbf u}\|_2\|\phi_{\theta\theta}\|_2 +\|\phi_{\theta\theta}\|_2\|\phi\|_2\,\Big]
\nonumber\\[2mm]
 &\leq
 C\epsilon^2\|\varphi\|_{L^\infty(0,\ell)}\Bigg[ \|\phi\|_2\sqrt{\epsilon\int_0^{\ell}\beta {\mathrm d}\theta\,}
 \,+\, \|\phi_{\theta\theta}\|_2 \sqrt{\epsilon\int_0^{\ell}\beta {\mathrm d}\theta\,}
 \,\Bigg]
\nonumber\\[2mm]
 &\leq
 C\epsilon^{3+\varrho}\|\varphi\|_{L^\infty(0,\ell)}\, \frac{1}{\displaystyle\int_0^{\ell}\beta^5 {\mathrm d}\theta}\sqrt{\int_0^{\ell} \beta {\mathrm d}\theta \int_0^{\ell}\beta {\mathrm d}\theta\,}
 \leq
 \epsilon^{3+\varrho}\,C \|\varphi\|_{L^\infty(0,\ell)},
\end{align*}
where we have used the assumptions in \eqref{curvature bounded}, \eqref{smallperturbation0} and \eqref{smallperturbation1}.
Therefore
\begin{align}\label{inedtitiesterm1}
 \Bigg|\,-\,\epsilon^2\int_0^\ell \varphi \kappa\int_{-M}^{M} \frac{u_{\theta\theta} u}{(1+t\,\kappa)}
 \,{\mathrm d}t{\mathrm d}\theta\Bigg|
 &\,=\,
\Bigg|-\epsilon^2\int_0^\ell \varphi \kappa\int_{-M}^{M}
 \frac{ \big({\mathbf u}_{\theta\theta}{\mathbf u} + {\mathbf u}_{\theta\theta}\phi+{\mathbf u}\phi_{\theta\theta}+\phi_{\theta\theta}\phi\big)}{(1+t\,\kappa)}
 \,{\mathrm d}t{\mathrm d}\theta\Bigg|
 \nonumber\\[2mm]
  &\,\leq\, \epsilon^{1+\varrho}\,C \|\varphi\|_{L^\infty(0,\ell)}.
\end{align}

\smallskip
\noindent$\clubsuit$
From \eqref{mathbfu-t1}-\eqref{mathbfu-thetatheta1}, we can derive that
\begin{align*}
 &\Bigg|-\, \epsilon^2 \int_0^\ell \varphi kk'\int_{-M}^{M} \frac{t\,{\mathbf u}_{\theta}
 {\mathbf u}}{(1+t\,\kappa)^2}
 \,{\mathrm d}t{\mathrm d}\theta\Bigg|
\nonumber\\[2mm]
 &\,=\,\Bigg|\epsilon^2\int_0^\ell \varphi kk' \displaystyle\int_{-\frac{M\beta}{\epsilon}}^{\frac{M\beta}{\epsilon}}
 \frac{\big[\beta' U({\mathbf t})
 \,+\, \beta^{-1}\beta \beta' U'({\mathbf t}){\mathbf t}\big]\beta U({\mathbf t})}{\big(1+\epsilon \kappa \beta^{-1}{\mathbf t}\big)^2}
 \,{\mathbf t}\epsilon^2 \beta^{-2} \,{\mathrm d}{\mathbf t}{\mathrm d}\theta\Bigg|
\nonumber\\[2mm]
 &\,=\,\Bigg|\epsilon^2\int_0^\ell \varphi kk' \displaystyle\int_{-\frac{M\beta}{\epsilon}}^{\frac{M\beta}{\epsilon}}
 \frac{\epsilon^2 \beta \beta'\beta^{-2}{\mathbf t}U^2({\mathbf t})}{\big(1+\epsilon \kappa \beta^{-1}{\mathbf t}\big)^2}
 \,{\mathrm d}{\mathbf t}{\mathrm d}\theta
 \,+\,
 \epsilon^2\int_0^\ell \varphi kk' \displaystyle\int_{-\frac{M\beta}{\epsilon}}^{\frac{M\beta}{\epsilon}}
 \frac{\epsilon^2 \beta^2 \beta^{-3}\beta'{\mathbf t}^2 U({\mathbf t})U'({\mathbf t})}{\big(1+\epsilon \kappa \beta^{-1}{\mathbf t}\big)^2}
 \,{\mathrm d}{\mathbf t}{\mathrm d}\theta\Bigg|
\nonumber\\[2mm]
 &\,=\,\epsilon^5\big(1+o(1)\big)\Bigg|\int_0^\ell \varphi 2\kappa^2\kappa'\beta \beta'\beta^{-3} {\mathrm d}\theta \int_{\mathbb R} {\mathbf t}^2U^2({\mathbf t}) \,{\mathrm d}{\mathbf t}
\nonumber\\[2mm]
 &\qquad\qquad\qquad\qquad
 -\int_0^\ell \varphi 2\kappa^2\kappa' \beta^2 \beta^{-4}\beta' {\mathrm d}\theta \int_{\mathbb R} {\mathbf t}^3 U({\mathbf t})U'({\mathbf t}) \,{\mathrm d}{\mathbf t}\Bigg|
\nonumber\\[2mm]
 &\,\leq\, \epsilon^5\,C \|\varphi\|_{L^\infty(0,\ell)} \int_0^\ell \beta\,{\mathrm d}\theta
 \,\leq\, \epsilon^{1+\varrho}\,C \|\varphi\|_{L^\infty(0,\ell)},
 \qquad\qquad \mbox{if}\ \epsilon^{4-\varrho}\,\int_0^\ell \beta\,{\mathrm d}\theta\leq C,
\end{align*}
 and
\begin{align*}
 &\Bigg|-\, \epsilon^2 \int_0^\ell \varphi kk'\int_{-M}^{M}
 \frac{t\,\big({\mathbf u}_{\theta}\phi+{\mathbf u}\phi_{\theta} +\phi_{\theta}\phi+\phi_{2,\theta}\phi_{2}\big)}
 {(1+t\,\kappa)^2}
 \,{\mathrm d}t{\mathrm d}\theta\Bigg|
\nonumber\\[2mm]
 &\leq
 C\epsilon^2\|\varphi \kappa\kappa'\|_\infty
 \Big[\|{\mathbf u}_{\theta}\|_2\|\phi\|_2+\|{\mathbf u}\|_2\|\phi_{\theta}\|_2 +\|\phi_{\theta}\|_2\|\phi\|_2\Big]
\nonumber\\[2mm]
 &\leq
 C\epsilon^{3+\sigma}\|\varphi\|_{L^\infty(0,\ell)}\, \frac{1}{\displaystyle\int_0^{\ell}\beta^5 {\mathrm d}\theta}
 \sqrt{\int_0^{\ell} \beta {\mathrm d}\theta \int_0^{\ell}\beta {\mathrm d}\theta\,}
 \,\leq\, \epsilon^{3+\varrho}\,C \|\varphi\|_{L^\infty(0,\ell)},
\end{align*}
where we have used the assumptions in \eqref{curvature bounded}, \eqref{smallperturbation0} and \eqref{smallperturbation1}.
Therefore,
\begin{align}\label{inedtitiesterm2}
\Bigg|\,-\, \epsilon^2 \int_0^\ell \varphi \kappa\kappa'\int_{-M}^{M} \frac{t\,u_\theta u}{(1+t\,\kappa)^2}
 \,{\mathrm d}t{\mathrm d}\theta\Bigg|
 &\,=\,
\Bigg| -\epsilon^2 \int_0^\ell \varphi kk'\int_{-M}^{M}
 \frac{t\,\big({\mathbf u}_{\theta}{\mathbf u}+{\mathbf u}_{\theta}\phi+{\mathbf u}\phi_{\theta}+\phi_{\theta}\phi\big) }{(1+t\,\kappa)^2}
 \,{\mathrm d}t{\mathrm d}\theta\Bigg|
  \nonumber\\[2mm]
  &\,\leq\, \epsilon^{1+\varrho}\,C \|\varphi\|_{L^\infty(0,\ell)}.
\end{align}

\smallskip
\noindent$\clubsuit$
From \eqref{mathbfu-t1}-\eqref{mathbfu-thetatheta1}, we can derive that
\begin{align*}
 \epsilon^2\int_0^\ell \varphi \int_{-M}^{M}
 {\mathbf u}_{\theta\theta}{\mathbf u}_t
 \,{\mathrm d}t{\mathrm d}\theta
 &\,=\,\epsilon^2\int_0^\ell \varphi \displaystyle\int_{-\frac{M\beta}{\epsilon}}^{\frac{M\beta}{\epsilon}}
 \Bigg[\beta''U({\mathbf t})
 \,+\, 2\frac{\beta'}{\beta}\beta' U'({\mathbf t}){\mathbf t}
 \,+\, \frac{\beta}{\beta^2} |\beta' |^2 U''({\mathbf t}){\mathbf t}^2
\nonumber\\[2mm]
 &\qquad\qquad\qquad\qquad+ \beta^{-1}\beta\beta'' U'({\mathbf t}){\mathbf t}
 \Bigg]\beta U'({\mathbf t})\,{\mathrm d}{\mathbf t}{\mathrm d}\theta
\nonumber\\[2mm]
 &\,=\,\,\epsilon^2\int_0^\ell \varphi \displaystyle\int_{-\frac{M\beta}{\epsilon}}^{\frac{M\beta}{\epsilon}}
 \beta\beta''U({\mathbf t})U'({\mathbf t})
 \,{\mathrm d}{\mathbf t}{\mathrm d}\theta+\,\epsilon^2\int_0^\ell \varphi \displaystyle\int_{-\frac{M\beta}{\epsilon}}^{\frac{M\beta}{\epsilon}}
 2\beta\beta'\beta' {\mathbf t}|U'({\mathbf t})|^2
 \,{\mathrm d}{\mathbf t}{\mathrm d}\theta
\nonumber\\[2mm]
 &\quad+\,\epsilon^2\int_0^\ell \varphi \displaystyle\int_{-\frac{M\beta}{\epsilon}}^{\frac{M\beta}{\epsilon}} \beta^2\beta^{-2}|\beta'|^2{\mathbf t}^2U'({\mathbf t})U''({\mathbf t})
 \,{\mathrm d}{\mathbf t}{\mathrm d}\theta
\nonumber\\[2mm]
 &\quad+\,\epsilon^2\int_0^\ell \varphi \displaystyle\int_{-\frac{M\beta}{\epsilon}}^{\frac{M\beta}{\epsilon}}
 \frac{\beta^2}{\beta}\beta'' {\mathbf t} |U'({\mathbf t})|^2
 \,{\mathrm d}{\mathbf t}{\mathrm d}\theta
\nonumber\\[2mm]
 &
 \,=\,0,
\end{align*}
 and
\begin{align*}
 &\Bigg|\epsilon^2\int_0^\ell \varphi \int_{-M}^{M}
 ({\mathbf u}_{\theta\theta}\phi_t+{\mathbf u}_t\phi_{\theta\theta}+\phi_{\theta\theta}\phi_t)
 \,{\mathrm d}t{\mathrm d}\theta
 \Bigg|
\nonumber\\[2mm]
 &\leq
 C\epsilon^2\|\varphi\|_{L^\infty(0,\ell)}
 \Big[\|{\mathbf u}_{\theta\theta}\|_2\|\phi_t\|_2+\|{\mathbf u}_t\|_2\|\phi_{\theta\theta}\|_2
 +\|\phi_{\theta\theta}\|_2\|\phi_t\|_2\Big]
\nonumber\\[2mm]
 &\leq
 C\epsilon^{2+\varrho}\|\varphi\|_{L^\infty(0,\ell)}
 \frac{1}{\displaystyle\int_0^{\ell}\beta^5 {\mathrm d}\theta}
 \Big[\|{\mathbf u}_{\theta\theta}\|_2\|{\mathbf u}_t\|_2+\|{\mathbf u}_t\|_2\|{\mathbf u}_{\theta\theta}\|_2
 +\epsilon^\varrho\|{\mathbf u}_{\theta\theta}\|_2\|{\mathbf u}_t\|_2\Big]
\nonumber\\[2mm]
 &\leq
 C\epsilon^{2+\sigma}\|\varphi\|_{L^\infty(0,\ell)}\, \frac{1}{\displaystyle\int_0^{\ell}\beta^5 {\mathrm d}\theta}
 \sqrt{\int_0^{\ell} \beta^3 {\mathrm d}\theta \int_0^{\ell}\beta {\mathrm d}\theta\,}\leq
 C\epsilon^{2+\varrho}\|\varphi\|_{L^\infty(0,\ell)},
\end{align*}
where we have used the assumptions in \eqref{smallperturbation0} and \eqref{smallperturbation1}.
Therefore, we have
\begin{align}\label{inedtitiesterm3}
\Bigg| \epsilon^2\int_0^\ell \varphi \int_{-M}^{M}
 u_{\theta\theta} u_t\,{\mathrm d}t{\mathrm d}\theta\Bigg|
 \,=\,&
\Bigg| \epsilon^2\int_0^\ell \varphi \int_{-M}^{M}
 \big({\mathbf u}_{\theta\theta}{\mathbf u}_t+{\mathbf u}_{\theta\theta}\phi_t+{\mathbf u}_t\phi_{\theta\theta}+\phi_{\theta\theta}\phi_t\big)
 \,{\mathrm d}t{\mathrm d}\theta\Bigg|
  \nonumber\\[2mm]
  &\,\leq\, \epsilon^{1+\varrho}\,C \|\varphi\|_{L^\infty(0,\ell)}.
\end{align}

\smallskip
\noindent$\clubsuit$
 Using \eqref{mathbfu-t1}-\eqref{mathbfu-thetatheta1}, we can obtain that
\begin{align*}
 &\Bigg|\epsilon^2\int_0^\ell \varphi\kappa' \int_{-M}^{M} \frac{t\,}{(1+t\,\kappa)}
 {\mathbf u}_{\theta}{\mathbf u}_t
 \,{\mathrm d}t{\mathrm d}\theta\Bigg|
\nonumber\\[2mm]
 &\,=\,\epsilon^2\Bigg|\int_0^\ell \varphi\kappa' \displaystyle\int_{-\frac{M\beta}{\epsilon}}^{\frac{M\beta}{\epsilon}}
 \frac{\epsilon  \beta \beta^{-1} {\mathbf t} U'({\mathbf t})}{\big(1+\epsilon \kappa \beta^{-1}{\mathbf t}\big)} \Big[\beta'U({\mathbf t})+\beta' {\mathbf t} U'({\mathbf t})
 \Big]
 \,{\mathrm d}{\mathbf t}{\mathrm d}\theta\Bigg|
\nonumber\\[2mm]
 &\,=\,\,\epsilon^2\Bigg|
 \int_0^\ell \varphi\kappa' \displaystyle\int_{-\frac{M\beta}{\epsilon}}^{\frac{M\beta}{\epsilon}}
 \epsilon  \beta \beta' \beta^{-1} \frac{{\mathbf t} U({\mathbf t})U'({\mathbf t})}{\big(1+\epsilon \kappa \beta^{-1}{\mathbf t}\big)}
 \,{\mathrm d}{\mathbf t}{\mathrm d}\theta
\nonumber\\[2mm]
 &\qquad\quad+\,\int_0^\ell \varphi \kappa' \displaystyle\int_{-\frac{M\beta}{\epsilon}}^{\frac{M\beta}{\epsilon}}
 \epsilon \beta \beta' \beta^{-1}
 \frac{|{\mathbf t} U'({\mathbf t})|^2}{\big(1+\epsilon \kappa \beta^{-1}{\mathbf t}\big)}
 \,{\mathrm d}{\mathbf t}{\mathrm d}\theta
\Bigg|
\nonumber\\[2mm]
 &\,=\,\epsilon^3\big(1+o(1)\big)
 \Bigg|
 -\varrho_0 \int_0^\ell \varphi\frac12 \kappa' \beta \beta' \beta^{-1}{\mathrm d}\theta
 \,+\,
 \varrho_2 \int_0^\ell \varphi 2\kappa' \beta \beta' \beta^{-1} {\mathrm d}\theta
\Bigg|
\nonumber\\[2mm]
 &\,\leq\, \epsilon^3\,C \|\varphi\|_{L^\infty(0,\ell)} \int_0^\ell \beta\,{\mathrm d}\theta
 \,\leq\, \epsilon^{1+\varrho}\,C \|\varphi\|_{L^\infty(0,\ell)}
 \qquad\qquad \text{if}\ \epsilon^{2-\varrho}\,\int_0^\ell \beta\,{\mathrm d}\theta\leq C,
\end{align*}
 and
\begin{align*}
 &\Bigg|\epsilon^2\int_0^\ell \varphi\kappa' \int_{-M}^{M}
 \frac{t\,}{(1+t\,\kappa)}
 ({\mathbf u}_{\theta}\phi_t +{\mathbf u}_t\phi_{\theta}+\phi_{\theta}\phi_t)
 \,{\mathrm d}t{\mathrm d}\theta
 \Bigg|
\nonumber\\[2mm]
 &\leq
 C\epsilon^2\|\varphi\kappa'\|_{L^\infty(0,\ell)}
 \Big[ \|{\mathbf u}_{\theta}\|_2\|\phi_t\|_2+\|{\mathbf u}_t\|_2\|\phi_{\theta}\|_2+\|\phi_{\theta}\|_2\|\phi_t\|_2\Big]
\nonumber\\[2mm]
 &\leq
 C\epsilon^{2+\varrho}\|\varphi\kappa'\|_{L^\infty(0,\ell)} \frac{1}{\displaystyle\int_0^{\ell}\beta^5 {\mathrm d}\theta}
 \Big[ \|{\mathbf u}_{\theta}\|_2\|{\mathbf u}_t\|_2+\|{\mathbf u}_t\|_2\|{\mathbf u}_{\theta}\|_2+\epsilon^{\varrho}\|{\mathbf u}_{\theta}\|_2\|{\mathbf u}_t\|_2\Big]
\nonumber\\[2mm]
 &\leq
 C\epsilon^{2+\sigma}\|\varphi\kappa'\|_{L^\infty(0,\ell)}\, \frac{1}{\displaystyle\int_0^{\ell}\beta^5 {\mathrm d}\theta}
 \sqrt{\int_0^{\ell} \beta^3 {\mathrm d}\theta \int_0^{\ell}\beta {\mathrm d}\theta\,}\leq
 C\epsilon^{2+\varrho}\|\varphi\|_{L^\infty(0,\ell)},
\end{align*}
where we have used the assumptions in \eqref{curvature bounded}, \eqref{smallperturbation0} and \eqref{smallperturbation1}.
Therefore
\begin{align}\label{inedtitiesterm4}
 \epsilon^2 \Bigg|\int_0^\ell \varphi\kappa' \int_{-M}^{M} \frac{t\,u_{\theta} u_t}{(1+t\,\kappa)}
 \,{\mathrm d}t{\mathrm d}\theta\Bigg|
 &\,=\,
 \epsilon^2\Bigg|\int_0^\ell \varphi\kappa' \int_{-M}^{M}
 \frac{t\, \big({\mathbf u}_{\theta}{\mathbf u}_t+{\mathbf u}_{\theta}\phi_t+{\mathbf u}_t\phi_{\theta}+\phi_{\theta}\phi_t\big) }{(1+t\,\kappa)}
 \,{\mathrm d}t{\mathrm d}\theta\Bigg|
  \nonumber\\[2mm]
  &\,\leq\, \epsilon^{1+\varrho}\,C \|\varphi\|_{L^\infty(0,\ell)}.
\end{align}

\medskip
\noindent {\bf{Part 3.}}
 We now do the following estimates for the key terms.

\smallskip
\noindent$\clubsuit$
Since $u={\mathbf u}+\phi$, then we have
\begin{align*}
 &\frac{1}{2}\epsilon^2\int_0^\ell \varphi \int_{-M}^{M} \partial_t W |u|^2
 (1+t\,\kappa)^2\,{\mathrm d}t{\mathrm d}\theta
\nonumber\\[2mm]
 &\,=\,
 \frac{1}{2}\epsilon^2\int_0^\ell \varphi \int_{-M}^{M}
 \partial_t W {\mathbf u}^2
 (1+t\,\kappa)^2\,{\mathrm d}t{\mathrm d}\theta
 \,+\,
 \frac{1}{2}\epsilon^2\int_0^\ell \varphi \int_{-M}^{M}
 \partial_t W\big(2{\mathbf u}\phi+\phi\phi\big)
 (1+t\,\kappa)^2\,{\mathrm d}t{\mathrm d}\theta
\nonumber\\[2mm]
 &\,=\,
 \frac{1}{2}\epsilon^2\int_0^\ell \varphi \int_{-M}^{M}
 \partial_t W(0, \theta) {\mathbf u}^2
 (1+t\,\kappa)^2\,{\mathrm d}t{\mathrm d}\theta
 \,+\,
 \frac{1}{2}\epsilon^2\int_0^\ell \varphi \int_{-M}^{M}
 t\partial_{tt} W\big(\hbar(\theta)t,\theta\big) {\mathbf u}^2
 (1+t\,\kappa)^2\,{\mathrm d}t{\mathrm d}\theta
\nonumber\\[2mm]
 &\quad
 \,+\,
 \frac{1}{2}\epsilon^2\int_0^\ell \varphi \int_{-M}^{M}
 \partial_t W(t, \theta)
 \big(2{\mathbf u}\phi+\phi\phi\big)
 (1+t\,\kappa)^2\,{\mathrm d}t{\mathrm d}\theta,
\end{align*}
 where $\hbar(\theta)$ is a smooth function with $|\hbar(\theta)|\leq 1$.
 It is easy to derive the following
\begin{align*}
\frac{\epsilon^2}{2}\int_0^\ell \varphi \int_{-M}^{M}
 \partial_t W(0, \theta){\mathbf u}^2
 (1+t\,\kappa)^2\,{\mathrm d}t{\mathrm d}\theta
 &\,=\,
 \frac{\epsilon^2}{2}\int_0^\ell \varphi \int_{-\frac{M\beta}{\epsilon}}^{ \frac{M\beta}{\epsilon}}
 \partial_t W(0, \theta)
 \big|\,\beta\, U({\mathbf t})\, \big|^2\Big(1+\epsilon \frac{\kappa {\mathbf t}}{\epsilon}\Big)^2
 \frac{\epsilon}{\beta}
 \,{\mathrm d}{\mathbf t}{\mathrm d}\theta
\nonumber\\[2mm]
 & \,=\,
 \epsilon^3\big(1+o(1)\big)\frac{\varrho_0}{2}
 \int_0^\ell \varphi \beta
 \partial_t W(0, \theta){\mathrm d}\theta.
\end{align*}
 On the other hand,  recall the inequality
\begin{align*}
 \sup_{\theta \in (0,  \ell) }  |\phi (t, \theta)|^2 \le C \int_0^{\ell} |\phi (t, \theta)|^2 {\mathrm d} \theta + C  \int_0^{\ell} |\p_\theta\phi (t, \theta)|^2 {\mathrm d} \theta,
\end{align*}
as in the proof of Lemma \ref{Lemma4-3-2026}.
By using \eqref{W_t-control} and \eqref{smallperturbation0}, we have
\begin{align}
 &\Bigg|\frac{1}{2}\epsilon^2\int_0^\ell \varphi \int_{-M}^{M}
 \partial_t W(t, \theta)\big(2{\mathbf u}\phi+\phi\phi\big)
 (1+t\,\kappa)^2\,{\mathrm d}t{\mathrm d}\theta
 \Bigg|
 \nonumber\\[2mm]
 &\leq C\int_0^\ell \int_{-M}^{M}
 \big|\beta^2\varphi\big|\Big| \big(2{\mathbf u}\phi+\phi\phi\big)\Big|
 (1+t\,\kappa)^2\,{\mathrm d}t{\mathrm d}\theta
 \nonumber\\[2mm]
 &\leq C \|\varphi\|_{L^\infty(0,\ell)}  \|\beta^2{\mathbf u}\|_2 \|\phi\|_2
 \ +\
 C \|\varphi\|_{L^\infty(0,\ell)}  \int_0^\ell \beta^2 {\mathrm d}\theta\cdot \int_{-M}^{M}
    \sup_{\theta \in (0,  \ell) }  |\phi (t, \theta)|^2
\,{\mathrm d}t
 \nonumber\\[2mm]
 &\leq  C  \epsilon^{1+ \varrho}   \|\varphi\|_{L^\infty(0,\ell)}\frac{1}{\displaystyle\int_0^{\ell}\beta^5 {\mathrm d}\theta} \sqrt{\int_0^{\ell} \beta^5 {\mathrm d}\theta \int_0^{\ell} \beta {\mathrm d}\theta}
 \ +\
 C \|\varphi\|_{L^\infty(0,\ell)}    \int_0^{\ell} \beta^2 {\mathrm d}\theta \Bigg[\|\phi\|_2^2+ \|\p_\theta \phi\|_2^2\Bigg]
 \nonumber\\[2mm]
 &\leq
 \epsilon^{1+\varrho}\, C \|\varphi\|_{L^\infty(0,\ell)} + C\e^{2\varrho} \|\varphi\|_{L^\infty(0,\ell)}  \frac{\displaystyle\int_0^{\ell} \beta^2 {\mathrm d}\theta}{\left| \displaystyle\int_0^{\ell}\beta^5 {\mathrm d}\theta\right|^2}  \Bigg[\|{\mathbf u}\|_2^2+ \| \p_\theta{\mathbf u}\|_2^2\Bigg]
 \nonumber\\[2mm]
 &\leq
 \epsilon^{1+\varrho}\, C \|\varphi\|_{L^\infty(0,\ell)},
 \label{FirstOrder2222}
\end{align}
and
\begin{align}
 &\Bigg|\frac{1}{2}\epsilon^2\int_0^\ell \varphi \int_{-M}^{M}
 t\partial_{tt} W\big(\hbar(\theta)t,\theta\big) {\mathbf u}^2
 (1+t\,\kappa)^2\,{\mathrm d}t{\mathrm d}\theta\Bigg|
\nonumber\\[2mm]
 &\,\leq\,
 C\Bigg|\int_0^\ell \int_{-\frac{M\beta}{\epsilon}}^{\frac{M\beta}{\epsilon}}
 \frac{\epsilon |{\mathbf t}|}{\beta}\big|\beta^2\varphi\big|
 \big[\,\beta\, U({\mathbf t})\, \big]^2\big(1+\epsilon \kappa \beta^{-1}{\mathbf t}\big)^2 \frac{\epsilon}{\beta}
 \,{\mathrm d}{\mathbf t}{\mathrm d}\theta\Bigg|
\nonumber\\[2mm]
 &\leq
 \epsilon^2\, C \|\varphi\|_{L^\infty(0,\ell)} \int_0^\ell\beta^2\,{\mathrm d}\theta
 \leq
 \epsilon^{1+\varrho}\, C \|\varphi\|_{L^\infty(0,\ell)}
 \qquad\qquad \mbox{if}\ \epsilon^{1-\varrho}\,\int_0^\ell \beta^2\,{\mathrm d}\theta\leq C.
 \label{FirstOrder3333}
\end{align}
Therefore, we can derive that
\begin{align}\label{inedtitiesterm5}
 &\frac{1}{2}\epsilon^2\int_0^\ell \varphi \int_{-M}^{M} \partial_t W |u|^2
 (1+t\,\kappa)^2\,{\mathrm d}t{\mathrm d}\theta
\,=\,
  \epsilon^3\big(1+o(1)\big)\frac{\varrho_0}{2}
 \int_0^\ell \varphi \beta
 \partial_t W(0, \theta){\mathrm d}\theta + O( \epsilon^{1+\varrho}) \|\varphi\|_{L^\infty(0,\ell)}.
\end{align}

\medskip
\noindent$\clubsuit$
 Here is a little bit complicated term
\begin{align*}
 &-\frac{1}{2}\int_0^\ell \varphi\kappa \int_{-M}^{M} |u|^4 (1+t\,\kappa)\,{\mathrm d}t{\mathrm d}\theta
\,=\,-\frac{1}{2}\int_0^\ell \varphi\kappa \int_{-M}^{M}
 \big({\mathbf u}^4 +4{\mathbf u}^3\phi +6{\mathbf u}^2\phi^2 +4{\mathbf u}\phi^3 +\phi^4\big)
 (1+t\,\kappa)\,{\mathrm d}t{\mathrm d}\theta.
\end{align*}
 We consider
\begin{align*}
 -\frac{1}{2}\int_0^\ell \varphi\kappa \int_{-M}^{M} {\mathbf u}^4 (1+t\,\kappa)\,{\mathrm d}t{\mathrm d}\theta
 &\,=\,
 -\frac{1}{2}\int_0^\ell \varphi\kappa \int_{-\frac{M\beta}{\epsilon}}^{\frac{M\beta}{\epsilon}}
 \big[\,\beta\, U({\mathbf t})\, \big]^4\big(1+\epsilon \kappa \beta^{-1}{\mathbf t}\big)
 \frac{\epsilon}{\beta}
 \,{\mathrm d}{\mathbf t}{\mathrm d}\theta
\nonumber\\[2mm]
 & \,=\,
 -\epsilon\big(1+o(1)\big)\frac{\varrho_p}{2} \int_0^\ell \varphi \kappa\beta^3 \,{\mathrm d}\theta,
\end{align*}
 where $\varrho_p$ is given in \eqref{varrho3} with $p=3$.
 On the other hand, by Lemma \ref{Lemma4-3-2026}, we have
\begin{align*}
 &\Bigg|-\frac{1}{2}\int_0^\ell \varphi\kappa \int_{-M}^{M}
 \big(4{\mathbf u}^3\phi +6{\mathbf u}^2\phi^2 +4{\mathbf u}\phi^3 +\phi^4\big)
 (1+t\,\kappa)\,{\mathrm d}t{\mathrm d}\theta
 \Bigg|
\nonumber\\[2mm]
 &\le  C\|\varphi\|_{L^\infty(0,\ell)}\int_0^\ell \int_{-M}^{M}
 \Big[\big|\beta^2{\mathbf u}\big| \big|\phi \big| +\e^{\varrho} \big|\beta{\mathbf u}\big| \big|\phi \big| + \e^{2\varrho} \big|{\mathbf u}\big| \big|\phi \big|+ \e^{2\varrho}  \big|\phi \big|^2\Big]
 (1+t\,\kappa)\,{\mathrm d}t{\mathrm d}\theta
\nonumber\\[2mm]
 &\leq
 C\|\varphi\|_{L^\infty(0,\ell)}
 \Big[\|\beta^2{\mathbf u}\|_2\|\phi\|_2+ \e^{\varrho}\|\beta{\mathbf u}\|_2\|\phi\|_2 + \e^{2\varrho}\|{\mathbf u}\|_2\|\phi\|_2 +   \e^{2\varrho} \|\phi\|_2^2 \Big]
 \nonumber\\[2mm]
  &
   \leq
 C\epsilon^{\varrho}\|\varphi\|_{L^\infty(0,\ell)} \frac{1}{\displaystyle\int_0^{\ell}\beta^5 {\mathrm d}\theta} \|\beta^2{\mathbf u}\|_2 \|{\mathbf u}\|_2 \leq
 \epsilon^{1+\varrho}\, C \|\varphi\|_{L^\infty(0,\ell)},
\end{align*}
where we have used the assumptions in \eqref{smallperturbation0}-\eqref{smallperturbation1}.
 Therefore, we can derive that
\begin{align}\label{inedtitiesterm6}
 &-\frac{1}{2}\int_0^\ell \varphi\kappa \int_{-M}^{M} |u|^4 (1+t\,\kappa)\,{\mathrm d}t{\mathrm d}\theta
\,=\, -\epsilon\big(1+o(1)\big)\frac{\varrho_p}{2} \int_0^\ell \varphi \kappa\beta^3 \,{\mathrm d}\theta  + O( \epsilon^{1+\varrho}) \|\varphi\|_\infty.
\end{align}

\medskip
\noindent$\clubsuit$
 We compute
\begin{align*}
 \epsilon^2\int_0^\ell \varphi \kappa\int_{-M}^{M} |u_t|^2 (1+t\,\kappa)\,{\mathrm d}t{\mathrm d}\theta
 &\,=\,\epsilon^2\int_0^\ell \varphi \kappa\int_{-M}^{M} {\mathbf u}_t{\mathbf u}_t (1+t\,\kappa)\,{\mathrm d}t{\mathrm d}\theta
\nonumber\\[2mm]
 &\quad+\epsilon^2\int_0^\ell \varphi \kappa\int_{-M}^{M} \big(2{\mathbf u}_t\phi_t+\phi_t\phi_t\big) (1+t\,\kappa)\,{\mathrm d}t{\mathrm d}\theta,
\end{align*}
 where
\begin{align*}
 \epsilon^2\int_0^\ell \varphi \kappa\int_{-M}^{M} {\mathbf u}_t{\mathbf u}_t (1+t\,\kappa)\,{\mathrm d}t{\mathrm d}\theta
 &\,=\,
 \epsilon^2\,\int_0^\ell \varphi \displaystyle\int_{-\frac{M\beta}{\epsilon}}^{\frac{M\beta}{\epsilon}}
 \kappa \epsilon^{-2}\beta^2\beta^2 \left|U'({\mathbf t})\right|^2\big(1+\epsilon \kappa \beta^{-1}{\mathbf t}\big)
 \,\frac{\epsilon}{\beta} \,{\mathrm d}{\mathbf t}{\mathrm d}\theta
\nonumber\\[2mm]
 &\,=\,\epsilon\,\int_0^\ell \varphi \displaystyle\int_{-\frac{M\beta}{\epsilon}}^{\frac{M\beta}{\epsilon}}
 \kappa \beta^2 \beta \left|U'({\mathbf t})\right|^2\big(1+\epsilon \kappa \beta^{-1}{\mathbf t}\big)
 \,{\mathrm d}{\mathbf t}{\mathrm d}\theta
\nonumber\\[2mm]
 &\,=\,\epsilon\big(1+o(1)\big)\varrho_1\int_0^\ell \varphi \kappa\beta^3\,{\mathrm d}\theta,
\end{align*}
 and
\begin{align*}
 \Bigg|\epsilon^2\int_0^\ell \varphi \kappa\int_{-M}^{M} \big(2{\mathbf u}_t\phi_t+\phi_t\phi_t\big) (1+t\,\kappa)\,{\mathrm d}t{\mathrm d}\theta\Bigg|
 \,\leq\,
 &C\epsilon^2\|\varphi\|_{L^\infty(0,\ell)}
 \Big[\|{\mathbf u}_t\|_2\|\phi_t\|_2+\|\phi_t\|_2^2\Big]
\nonumber\\[2mm]
 \,\leq\,
 &C\epsilon^{2+\varrho}\|\varphi\|_{L^\infty(0,\ell)}\frac{1}{\displaystyle\int_0^{\ell}\beta^5 {\mathrm d}\theta} \|{\mathbf u}_t\|_2^2
\nonumber\\[2mm]
 \,\leq\,
 &\epsilon^{1+\varrho}\, C \|\varphi\|_{L^\infty(0,\ell)}.
\end{align*}
Here we have used the assumptions in \eqref{smallperturbation0}-\eqref{smallperturbation1}.
Therefore, we have
\begin{align}\label{inedtitiesterm7}
 \epsilon^2\int_0^\ell \varphi \kappa\int_{-M}^{M} |u_t|^2 (1+t\,\kappa)\,{\mathrm d}t{\mathrm d}\theta
 &\,=\,\epsilon\big(1+o(1)\big)\varrho_1\int_0^\ell \varphi \kappa\beta^3\,{\mathrm d}\theta
 + O( \epsilon^{1+\varrho}) \|\varphi\|_{L^\infty(0,\ell)}.
\end{align}

\medskip
 Finally, by combining \eqref{Identity51}, \eqref{boundaryterm2}, \eqref{boundaryterm1}, \eqref{inedtitiesterm1}-\eqref{inedtitiesterm7} and careful rearrangements,
 a conclusion will be drawn from the above analysis
\begin{align}
 &\varrho_0
 \int_0^\ell \varphi\beta\frac{\epsilon^2}{2}\partial_t W(0, \theta) {\mathrm d}\theta
 \,+\,
 \frac{(1-p)}{p+1}\varrho_p
 \int_0^\ell \varphi\kappa\beta^p
 {\mathrm d}\theta
 \,+\,
 \varrho_1\int_0^\ell \varphi \kappa\beta^3\,{\mathrm d}\theta
 \,=\,
 O(\epsilon^\varrho)\|\varphi\|_{L^\infty(0,\ell)},
 \label{estimates-identiy3}
\end{align}
 where $p=3$ and $\varphi(\theta)$ is any smooth function with $\varphi(0)=\varphi(\ell)$.


\medskip
\subsection{ Finishing up the proof of Theorem \ref{necessary condition theorem}} \label{Section4.5}\

 Now we will use the results in Sections \ref{Section4.1}-\ref{Section4.4} to complete the proof of Theorem \ref{necessary condition theorem}.

\begin{lemma}\label{Lemma4.1}
 The assumption $-C_0\leq\kappa(\theta)\leq C_0$ in \eqref{curvature bounded} implies that
 there exists a constant $\ell_0$ independent of $\epsilon$ such that
\begin{align}
 \ell\geq\ell_0,\quad\forall\, \epsilon>0.
 \label{length-lower}
\end{align}
\end{lemma}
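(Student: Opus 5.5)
We will use the classical fact that a simple closed curve with uniformly bounded curvature cannot be short. By \eqref{curvature bounded}, $|\kappa(\theta)|\le C_0$ for all $\theta\in[0,\ell)$. Since $\Gamma_\epsilon$ is a smooth simple closed curve parameterized by arclength, its tangent $\gamma_\epsilon'(\theta)=(\cos\vartheta(\theta),\sin\vartheta(\theta))$ has a continuous angle function $\vartheta$ with $\vartheta'=\kappa$. By Hopf's Umlaufsatz (the rotation index of a simple closed curve is $\pm1$; positive orientation gives $+1$) we have
\begin{align*}
2\pi=\vartheta(\ell)-\vartheta(0)=\int_0^{\ell}\kappa(\theta)\,{\mathrm d}\theta\le \int_0^{\ell}|\kappa(\theta)|\,{\mathrm d}\theta\le C_0\,\ell .
\end{align*}
This gives \eqref{length-lower} with $\ell_0=2\pi/C_0$. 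The constant is independent of $\epsilon$ because $C_0$ in \eqref{curvature bounded} is.

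The steps, in order, are these. First, fix the tangent-angle lift $\vartheta$; it exists by continuity of $\gamma_\epsilon'$ on the compact parameter interval. Second, record $\vartheta'=\kappa$ with the sign convention compatible with the outer normal $\nu_\epsilon$ of \eqref{fermi}; only the absolute value matters, so the sign convention is harmless. Third, invoke the turning tangent theorem to get total turning $2\pi$. Fourth, apply the curvature bound.

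An alternative route, which avoids the Umlaufsatz, is Fenchel's inequality $\int_0^\ell|\kappa|\ge 2\pi$ for closed curves. A more elementary version also works: since $\int_0^\ell\gamma_\epsilon'=0$, the unit tangent cannot stay in an open half-circle. Hence $\vartheta$ must vary by at least $\pi$, so $C_0\ell\ge\pi$, and one can take $\ell_0=\pi/C_0$ if one prefers to avoid the Umlaufsatz entirely. This version only needs the closedness of $\Gamma_\epsilon$, not simplicity.

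There is no real obstacle. The only point requiring care is justifying the total-turning lower bound, and I would present the elementary half-circle argument, which needs no topological input. The argument also gives a uniform lower bound for $\tilde\ell$ as in \eqref{tildeell-lowerbound}, since $\beta>1$ implies $\tilde\ell\ge\ell$.
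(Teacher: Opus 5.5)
Your proof is correct and is essentially the paper's argument. The paper likewise invokes the Theorem of Turning Tangents to get $2\pi=\int_0^{\ell}\kappa(\theta)\,{\mathrm d}\theta\le C_0\ell$, hence $\ell_0=2\pi/C_0$; your elementary half-circle variant, giving $\ell_0=\pi/C_0$, is also valid and avoids the Umlaufsatz, though it is not needed.
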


\begin{proof}
 By using the assumption in \eqref{curvature bounded} and the formula
\begin{align*} 
 2\pi=\int_0^{\ell} \kappa(\theta)\,{\mathrm d}\theta,
\end{align*}
 (see the Theorem of Turning Tangents in \cite{docarmo}),
 we have that $\ell$ has a positive lower bound independent of the parameter $\epsilon$.
\end{proof}

\medskip
 Other results in Theorem \ref{necessary condition theorem} will be shown in the sequel.

\noindent $\clubsuit$
 For problem \eqref{eq2}-\eqref{constraint1}, $u_\epsilon({\mathbf y})$ is a solution with concentration given in \eqref{u-decomposition}-\eqref{smallperturbation1},
 then from \eqref{constraint1} we have
\begin{align}\label{constraint4-000000000}
 a\epsilon^2
 \,=\,
 \int_{{\mathbb R}^2} |u_\epsilon|^2 \,{\mathrm d}{\mathbf y}
 \geq&\int_{{\mathrm{dist}} ({\mathbf y}, \Gamma_\epsilon) <M_2} \Big({\mathbf u}^2 + 2 {\mathbf u} \phi+ \phi^2 \Big) \,{\mathrm d}{\mathbf y}
 =
\left(1 + o(1)\right) \int_{{\mathrm{dist}} ({\mathbf y}, \Gamma_\epsilon) <M_2} {\mathbf u}^2\,{\mathrm d}{\mathbf y},
\end{align}
 due to the hypothesis \eqref{smallperturbation0}.
 Then, \eqref{v1-L2norm-0} and \eqref{constraint4-000000000} will give that
\begin{align} \label{constraint4}
 a\epsilon^2
 & \,\geq\, \epsilon {\varrho_0}\big(1+o(1)\big)\int_0^{\ell} \beta(\theta) {\mathrm d}\theta.
\end{align}
 We then conclude from \eqref{length-lower} and \eqref{constraint4} that $a \to +\infty $ as $\epsilon \to 0$ (or $-\lambda\to +\infty$)
 due to the fact $\beta\geq 1$.
 Therefore ${\bf a} \,=\, + \infty$.

\medskip
\noindent $\clubsuit$
 The estimate \eqref{estimates-identiy3} can be written in the form
\begin{align*}
 &
 \int_0^{\ell} \varphi\beta^3
 \Bigg[
 \frac{\epsilon^2\partial_t W(0, \theta)}{2\beta^2}
 \,+\,
 \frac{(1-p)}{p+1}\frac{\varrho_p}{\varrho_0}\kappa
 \,+\,
 \frac{\varrho_1}{\varrho_0}\kappa
 \Bigg] {\mathrm d}\theta
 \,=\,
 O(\epsilon^\varrho)\|\varphi\|_{L^\infty(0,\ell)},
\end{align*}
 where $\varphi(\theta)$ is any smooth function with $\varphi(0)=\varphi(\ell)$.
 Recalling the definition of $\beta$ in \eqref{beta} and $p=3$, we obtain that, as $\epsilon \rightarrow 0$
\begin{align*}
 \frac{3\epsilon^2 W_t(0, \theta)}{2\big[1+\epsilon^2W(0, \theta)\big]} +\kappa (\theta)
 &\,=\,
 o(1),\quad \forall\, \theta\in [0, \ell)
\end{align*}
 due to the assumptions in \eqref{curvature bounded} and \eqref{W_t-control}-\eqref{W_theta-control}.
 \qed

\medskip
\section{The approximate solutions}\label{Section5}

\subsection{ The first approximate solution}\label{Section5.1}
\subsubsection{}
We take
\begin{align}
w_0(x, \tau)=U(x)+\epsilon \frac {e(\tau)}{\beta(\epsilon z)} \mathcal{Z}(x),
\qquad (x, \tau)\in{\mathbb R}\times[0, 2\pi)
\label{first-approximation}
\end{align}
as {\bf{the first approximate solution}}.
The functions $U$ and $\mathcal{Z}$ are given in \eqref{block} and \eqref{eigenvalue}.
The parameter $e$ with the constraint \eqref{constraint-e} will be chosen by the reduction method under the boundary conditions
$$
\quad e(0) \,=\, e(2\pi), \quad e'(0) \,=\, e'(2\pi),
$$
see Sections \ref{Section7} and \ref{Section8}.
It is easy to derive that
\begin{align*}
\Big(\frac{e(\tau)}{\beta(\epsilon z)} {\mathcal Z}(x)\Big)_x= \frac{e}{\beta}  {\mathcal Z}',
\qquad\qquad
\Big(\frac{e(\tau)}{\beta(\epsilon z)} {\mathcal Z}(x)\Big)_{xx}=\frac{e}{\beta} {\mathcal Z}'',
\end{align*}
\begin{align*}
\left(\frac{e(\tau)}{\beta(\epsilon z)} {\mathcal Z}(x)\right)_{\tau}
= \left[\frac{e'}{\beta}- \frac{\beta'} {\beta^2} \frac{\tilde \ell} {2\pi \beta} e\right]{\mathcal Z},
\qquad\qquad
 \left(\frac{e(\tau)}{\beta(\epsilon z)} {\mathcal Z}(x)\right)_{x\tau}
= \left[\frac{e'}{\beta}- \frac{\beta'} {\beta^2} \frac{\tilde \ell} {2\pi \beta} e\right]{\mathcal Z}',
\end{align*}
\begin{align*}
 \left(\frac{e(\tau)}{\beta(\epsilon z)} {\mathcal Z}(x)\right)_{\tau\tau}
 = \left[\frac{e''}{\beta}- 2\frac{\beta'} {\beta^2} \frac{\tilde \ell} {2\pi \beta} e' +\left(- \frac{\beta''} {\beta^3} + 3\frac{|\beta'|^2} {\beta^4} \right) \frac{\tilde \ell^2} {4\pi^2 \beta}e\right] {\mathcal Z}.
\end{align*}
Therefore, there holds
\begin{align}
S(w_0)=&\, w_{0,xx} - w_{0} + w_{0}^3
+ \epsilon^2\frac{4\pi^2}{{\tilde\ell}^2} w_{0,\tau\tau}
+ B_4(w_0)+B_5(w_0)+ B_6\big(w_0\big)
\nonumber\\[2mm]
=&\,
\epsilon^3\frac{4\pi^2}{{\tilde\ell}^2} \big( \frac {e}{\beta} \big)''\mathcal Z
 +\frac{\epsilon\lambda_0 e{\mathcal Z}}{\beta}
 +\frac{3U(\epsilon e{\mathcal Z} )^2}{\beta^2}\,+\, \frac{(\epsilon e{\mathcal Z} )^3}{\beta^3}
 + B_4\big(w_0\big)
+ B_5\big(w_0\big)
+ B_6\big(w_0\big)
\label{S(w_0)}
\nonumber\\[2mm]
=&\, \epsilon^3\frac{4\pi^2}{{\tilde\ell}^2} \left[e''- 2\frac{\beta'} {\beta^2} \frac{\tilde \ell} {2\pi} e' +\left(- \frac{\beta''} {\beta^3} + 3\frac{|\beta'|^2} {\beta^4} \right) \frac{\tilde \ell^2} {4\pi^2}e\right]\frac{\mathcal Z}\beta+\frac{\epsilon\lambda_0 e{\mathcal Z}}{\beta}
 +\frac{3U(\epsilon e{\mathcal Z} )^2}{\beta^2}\,+\, \frac{(\epsilon e{\mathcal Z} )^3}{\beta^3}
\nonumber\\[2mm]
& + B_4\big(w_0\big)
+ B_5\big(U\big) + B_5\left(\epsilon\frac{e}{\beta}{\mathcal Z} \right)
+ B_6\big(U\big) + B_6\left(\epsilon\frac{e}{\beta}{\mathcal Z} \right).
\end{align}
The last five terms in \eqref{S(w_0)} will be calculated on details.

\smallskip
\noindent$\clubsuit$
From the expressions of $B_4$ and $B_5$ given in \eqref{new opreator B4}-\eqref{new opreator B5}, we have
\begin{align*}
B_4(w_0)\,=\,&
\epsilon\frac{\kappa}{\, \beta\, } \Big[ U'+\frac{2}{3} (x+f) U\Big]
+\epsilon^2 \frac{\kappa}{\, \beta^2\, } \left[ e {\mathcal Z}' +\frac{2}{3} (x+f) e{\mathcal Z}\right],
\nonumber\\[2mm]
B_5(U)
\,=\,&
-\epsilon^2\frac{\epsilon^2W_{tt}(0, \epsilon z)}{\beta^4}f xU
+\epsilon^2\Big[\frac{|\beta'|^2}{\beta^4} 2f
-\frac{4\pi}{{\tilde\ell}}\frac{\beta'}{\beta^2}f'
\Big]xU''
\nonumber\\[2mm]
&
+\epsilon^2\Big[
-\frac{4\pi^2}{{\tilde\ell}^2}f''
-\frac{6\pi}{\tilde\ell} \frac{\beta'}{\beta^2} f'
+\frac{2|\beta'|^2}{\beta^4}f
+\frac{\beta''} {\beta^3}f
\,-\,\frac{\kappa^2}{\beta^2}f
\Big]U'
\nonumber\\[2mm]
&
+ \epsilon^2\Big[
-\frac{1}{2}\, \frac{\epsilon^2 W_{tt}(0, \epsilon z)}{\beta^4}f^2 + \frac{\beta''} {\beta^3}
\Big]U
+\epsilon^2\Big[\frac{4\pi^2}{{\tilde\ell}^2}|f'|^2
-\frac{4\pi}{{\tilde\ell}}\frac{\beta'}{\beta^2}ff'
+\frac{|\beta'|^2}{\beta^4} f^2
\Big]U''
\nonumber\\[2mm]
&
-\epsilon^2\frac{1}{2}\, \frac{\epsilon^2W_{tt}(0, \epsilon z)}{\beta^4} x^2U
+\epsilon^2\Big[\frac{2|\beta'|^2}{\beta^4}
+\frac{\beta''} {\beta^3}
\,-\,\frac{\kappa^2}{\beta^2}
\Big]xU'
+\epsilon^2 \frac{|\beta'|^2}{\beta^4} x^2 U'',
\end{align*}
and
\begin{align*}
 B_5\left(\epsilon\frac{e}{\beta}{\mathcal Z} \right)
\,=\,&
\epsilon^3\frac{4\pi^2}{{\tilde\ell}^2}
\left[
|f'|^2 \frac{e}{\beta} {\mathcal Z}''
-f'' \frac{e}{\beta} {\mathcal Z}'
-2f' \Big(\frac{e'}{\beta}- \frac{\beta'} {\beta^2}\frac{\tilde \ell} {2\pi \beta} e\Big){\mathcal Z}'
\right]
\nonumber\\[2mm]
&
+\epsilon^3\frac{6\pi}{ {\tilde\ell}}\frac{\beta'} {\beta^2}
\left[- f' \frac{e}{\beta} {\mathcal Z}' +\Big(\frac{e'}{\beta}- \frac{\beta'} {\beta^2}\frac{\tilde \ell} {2\pi \beta} e\Big){\mathcal Z} \right]
\nonumber\\[2mm]
&
+\epsilon^3\frac{4\pi}{{\tilde\ell}} \frac{\beta'}{\beta^2} (x+f)
\left[- f' \frac{e}{\beta} {\mathcal Z}'' +\Big(\frac{e'}{\beta}- \frac{\beta'} {\beta^2}\frac{\tilde \ell} {2\pi \beta} e\Big){\mathcal Z}' \right]
\nonumber\\[2mm]
&+
\epsilon^3\Big[ \frac{\beta''}{\beta^3} \,+\,\frac{|\beta'|^2}{\beta^4} \,-\,\frac{\kappa^2}{\beta^2} \Big] (x+f) \frac{e}{\beta} {\mathcal Z}'
+\epsilon^3\frac{\beta''}{\beta^3}\frac{e}{\beta}{\mathcal Z}
\nonumber\\[2mm]
&+\epsilon^3 \frac{|\beta'|^2}{\beta^4}(x^2+2xf+f^2) \frac{e}{\beta} {\mathcal Z}''
-\epsilon^3\frac{1}{2}\,\frac{\epsilon^2W_{tt}(0, \epsilon z)}{\beta^4} (x^2+2xf+f^2) \frac{e}{\beta}{\mathcal Z}.
\end{align*}

\smallskip
\noindent$\clubsuit$
On the other hand, the expression of $B_6$ given in \eqref{B2w-2222new} will give
\begin{align*}
B_6(U)
\,=\,& - \frac{\epsilon^3}{6} \frac{\, \epsilon^2 W_{ttt}(0, \epsilon z)}{\beta^5} (x+f)^3 U
 -\frac{\epsilon^4}{24} \frac{\,\epsilon^2 W_{tttt}\big(\hbar(\epsilon z)\epsilon s, \, \epsilon z\big)}{\beta^6} (x+f)^4U
\nonumber\\[2mm]
&
\,+\,\epsilon^3 \kappa^3(\epsilon z) a_3(\epsilon s, \epsilon z)
\frac{1}{\, \beta^3\, }(x^2U'+2fxU'+f^2U')
\nonumber\\[2mm]
&\,+\,
\epsilon^3 \kappa(\epsilon z) a_2(\epsilon s, \epsilon z)
\Bigg\{
\frac{\beta''} {\beta^4} (xU+fU)
+\frac{2|\beta'|^2}{\beta^5} (x^2U'+2fxU'+f^2U')
\nonumber\\[2mm]
&\qquad\qquad\qquad\qquad\qquad
+\frac{6\pi}{{\tilde\ell}}\frac{\beta'} {\beta^3} \big[- f' xU' - ff' U' \big]
- \frac{4\pi}{{\tilde\ell}} \frac{\beta'} {\beta^3} f' x^2U''
-\frac{ 8\pi}{ {\tilde\ell}}\frac{\beta'} {\beta^3} f' fxU''
\nonumber\\[2mm]
&\qquad\qquad\qquad\qquad\qquad
-\frac{4\pi}{{\tilde\ell}} \frac{\beta'} {\beta^3} f' f^2U''
+ \frac{|\beta'|^2}{\beta^5} \big[x^3U''+3fx^2U''+3f^2xU''+f^3U''\big]
\nonumber\\[2mm]
&\qquad\qquad\qquad\qquad\qquad
+ \frac{\beta''}{\beta^4} \big[x^2U'+2fxU'+f^2U'\big]
+\frac{4\pi^2}{{\tilde\ell}^2}\frac{1}{\beta} \big[ |f'|^2 xU'' -f''xU'\big]
\nonumber\\[2mm]
&\qquad\qquad\qquad\qquad\qquad
+\frac{4\pi^2}{{\tilde\ell}^2}\frac{1}{\beta} \big[ f|f'|^2 U'' -ff''U' \big]
\Bigg\}
\nonumber\\[2mm]
&\,+\,
\epsilon^3 \kappa'(\epsilon z)a_1(\epsilon s, \epsilon z)
\Bigg\{
\frac{\beta'}{\beta^4} (xU+fU)
+\frac{\beta'}{\beta^4} (x^2U'+2fxU'+f^2U')
\nonumber\\[2mm]
&\qquad\qquad\qquad\qquad\qquad
-\frac{2\pi}{{\tilde\ell}}\frac{1}{ \beta^2}f' xU'
-\frac{2\pi}{{\tilde\ell}}\frac{1}{ \beta^2} ff' U'
\Bigg\}.
\end{align*}
By \eqref{a1a2a3-1}, we recall that
\begin{align*}
a_3(t, \theta)=\frac{1}{1+t \kappa (\theta) },
\qquad
a_2(t, \theta)=\frac{-2-t \kappa (\theta)}{(1+t \kappa (\theta) )^2 },
\qquad
a_1(t, \theta)&=\frac{1}{(1+t \kappa (\theta) )^3 }.
\end{align*}
Then we can rewrite $a_1, a_2, a_3$ as
\begin{align}\label{a1a2a3}
a_1(t, \theta) = 1- \epsilon \frac{3\kappa}{\beta}(x+f) + \epsilon^2 {\tilde a}_1,
\qquad
&{\tilde a}_1=\frac{6\beta^{-2}\kappa^2(x+f)^2 \,+\,8\epsilon\beta^{-3}\kappa^3(x+f)^3
\,+\,3\epsilon^2\beta^{-4}\kappa^4(x+f)^4}{\big[1+\epsilon\beta^{-1}\kappa(x+f)\big]^3},
\nonumber\\[2mm]
a_2(t, \theta) = -2 + \epsilon \frac{3\kappa}{\beta}(x+f) + \epsilon^2 {\tilde a}_2,
\qquad
&{\tilde a}_2=\frac{-4\beta^{-2}\kappa^2(x+f)^2 \,-\,3\epsilon\beta^{-3}\kappa^3(x+f)^3}{\big[1+\epsilon\beta^{-1}\kappa(x+f)\big]^2},
\nonumber\\[2mm]
a_3(t, \theta) = 1- \epsilon \frac{\kappa}{\beta}(x+f) + \epsilon^2 {\tilde a}_3,
\qquad
&{\tilde a}_3=\frac{\beta^{-2}\kappa^2(x+f)^2}{1+\epsilon\beta^{-1}\kappa(x+f)}.
\end{align}
Then we can rewritten $B_6(U)$ as
\begin{align*}
B_6(U)= & - \epsilon^3 \frac{\,\epsilon^2 W_{ttt}(0, \epsilon z)}{6\beta^5}\, x^3 U
\ -\
\epsilon^3\frac{\, \epsilon^2W_{ttt}(0, \epsilon z)}{2\beta^5}f\,x^2 U
\ +\ \epsilon^3\Big[- \frac{\, \, \epsilon^2W_{ttt}(0, \epsilon z)}{2\beta^5}f^2 - \frac{2\kappa\beta''}{\beta^4} +\frac{\kappa'\beta'} {\beta^4} \Big] x U
\nonumber\\[2mm]
& + \epsilon^3\Big[-\frac{\, \, \epsilon^2 W_{ttt}(0, \epsilon z)}{6\beta^5}f^3 -\frac{2\kappa\beta''}{\beta^4} f +\frac{\kappa'\beta'} {\beta^4} f\Big] U
\ +\ \epsilon^3\Big[ \frac{\kappa^3}{\beta^3}-\frac{4\kappa|\beta'|^2}{\beta^5} -\frac{2\kappa\beta''}{\beta^4} + \frac{\kappa'\beta'} {\beta^4} \Big]x^2U'
\nonumber\\[2mm]
 & + \epsilon^3\Big[
 \frac{2\kappa^3}{\beta^3}f
 -\frac{8\kappa|\beta'|^2}{\beta^5} f
 -\frac{4\kappa\beta''}{\beta^4}f
 -\frac{8\pi^2}{{\tilde\ell}^2}\frac{\kappa}{\beta} f''
+\frac{12\pi }{ {\tilde\ell}}\frac{\kappa\beta'}{\beta^3} f'
+\frac{2\kappa'\beta'}{\beta^4}f
- \frac{2\pi}{{\tilde\ell}} \frac{\kappa'}{\beta^2} f'\Big]xU'
\nonumber\\[2mm]
 & + \epsilon^3 \Big[
 \frac{\kappa^3}{\beta^3}f^2
 -\frac{4\kappa|\beta'|^2}{\beta^5} f^2
 - \frac{2\kappa\beta''}{\beta^4}f^2
 -\frac{8\pi^2}{{\tilde\ell}^2}\frac{\kappa}{\beta} f''f
 +\frac{12\pi }{ {\tilde\ell}}\frac{\kappa\beta'}{\beta^3} f' f
 + \frac{\kappa'\beta'} {\beta^4}f^2
 - \frac{2\pi}{{\tilde\ell}} \frac{\kappa'}{\beta^2} f'f\Big]U'
\nonumber\\[2mm]
 & -\epsilon^3 \frac{2\kappa|\beta'|^2}{\beta^5} x^3 U''
 \ +\
 \epsilon^3\Big[ -\frac{6\kappa|\beta'|^2}{\beta^5}f
 +\frac{8\pi}{ {\tilde\ell}}\frac{\kappa\beta'}{\beta^3} f'\Big] x^2 U''
\nonumber\\[2mm]
 & +\epsilon^3\Big[ -\frac{6\kappa|\beta'|^2}{\beta^5}f^2
 + \frac{16\pi}{ {\tilde\ell}} \frac{\kappa\beta'}{\beta^3} f' f
-\frac{8\pi^2} {{\tilde\ell}^2}\frac{\kappa}{\beta} |f'|^2\Big] x U''
\nonumber\\[2mm]
 & +\epsilon^3\Big[ -\frac{2\kappa|\beta'|^2}{\beta^5}f^3
 +\frac{8\pi}{ {\tilde\ell}} \frac{\kappa\beta'}{\beta^3} f' f^2
 -\frac{8\pi^2} {{\tilde\ell}^2}\frac{\kappa}{\beta} |f'|^2 f \Big] U''
\nonumber\\[2mm]
 & + \frac{\epsilon^4}{\beta^2} \mathcal F_1\left[\tau, x, \beta f, \frac{\beta f'}{\tilde\ell}\right]
 + \frac{\epsilon^4}{\beta^2} \hat{\mathcal F}_1\left[\tau, x, \beta f, \frac{\beta f''}{\tilde\ell^2} \right],
\end{align*}
 where
\begin{align} \label{mathcal F1}
 &\frac{\epsilon^4}{\beta^2} \mathcal F_1\left[\theta, x, \beta f, \frac{\beta f'}{\tilde\ell} \right]
 \nonumber\\[2mm]
 =&-\frac{\epsilon^4}{24} \frac{\,\epsilon^2 W_{tttt}\big(\hbar(\epsilon z)\epsilon s, \, \epsilon z\big)}{\beta^6} (x+f)^4U
\nonumber\\[2mm]
 &+
\epsilon^4 \kappa^3(\epsilon z) \Big( -\frac{\kappa}{\beta}(x+f) + \epsilon {\tilde a}_3 \Big)
\frac{1}{\, \beta^3\, }(x^2U'+2fxU'+f^2U')
\nonumber\\[2mm]
&\,+\,
\epsilon^4 \kappa(\epsilon z) \Big( \frac{3\kappa}{\beta}(x+f) + \epsilon {\tilde a}_2 \Big)
\Bigg\{
\frac{\beta''} {\beta^4} (xU+fU)
+\frac{2|\beta'|^2}{\beta^5} (x^2U'+2fxU'+f^2U')
\nonumber\\[2mm]
&\qquad\qquad\qquad\qquad\qquad\qquad\qquad
+\frac{6\pi}{{\tilde\ell}}\frac{\beta'} {\beta^3} \big[- f' xU' - ff' U' \big]
- \frac{4\pi}{{\tilde\ell}} \frac{\beta'} {\beta^3} f' x^2U''
-\frac{ 8\pi}{ {\tilde\ell}}\frac{\beta'} {\beta^3} f' fxU''
\nonumber\\[2mm]
&\qquad\qquad\qquad\qquad\qquad\qquad\qquad
-\frac{4\pi}{{\tilde\ell}} \frac{\beta'} {\beta^3} f' f^2U''
+ \frac{|\beta'|^2}{\beta^5} \big[x^3U''+3fx^2U''+3f^2xU''+f^3U''\big]
\nonumber\\[2mm]
&\qquad\qquad\qquad\qquad\qquad\qquad\qquad
+ \frac{\beta''}{\beta^4} \big[x^2U'+2fxU'+f^2U'\big]
+\frac{4\pi^2}{{\tilde\ell}^2}\frac{1}{\beta} \big[ |f'|^2 xU'' + f|f'|^2 U'' \big]
\Bigg\}
\nonumber\\[2mm]
& \,+\,
\epsilon^4 \kappa'(\epsilon z)\Big(- \frac{3\kappa}{\beta}(x+f) + \epsilon {\tilde a}_1\Big)
\Bigg\{
\frac{\beta'}{\beta^4} (xU+fU)
+\frac{\beta'}{\beta^4} (x^2U'+2fxU'+f^2U')
\nonumber\\[2mm]
&\qquad\qquad\qquad\qquad\qquad\qquad\qquad\qquad
-\frac{2\pi}{{\tilde\ell}}\frac{1}{ \beta^2}f' xU'
-\frac{2\pi}{{\tilde\ell}}\frac{1}{ \beta^2} ff' U'
\Bigg\},
\end{align}
and
\begin{align} \label{hat mathcal F1}
 \frac{\epsilon^4}{\beta^2} \hat{\mathcal F}_1\left[\theta, x, \beta f, \frac{\beta f''}{\tilde\ell^2} \right]
 = \epsilon^4 \kappa(\epsilon z) \Big( \frac{3\kappa}{\beta}(x+f) + \epsilon {\tilde a}_2 \Big) \left[\frac{4\pi^2}{{\tilde\ell}^2}\frac{1}{\beta} \big( ff''U' +f''xU'\big)
\right].
\end{align}

\smallskip
\noindent$\clubsuit$
From the expression of $B_6$ given in \eqref{B2w-2222new},
we can select from $B_6\left(\epsilon\frac{e}{\beta}{\mathcal Z} \right) $ the terms involving $e''$
\begin{align*}
\epsilon^3 \kappa a_2 \frac{ 4\pi^2} {{\tilde\ell}^2} \frac{1}{\, \beta\, }
 (x+f) \left(\epsilon\frac{e}{\beta}{\mathcal Z} \right) _{\tau\tau}
& =\epsilon^4\frac{ 4\pi^2} {{\tilde\ell}^2} \frac{\kappa a_2}{\, \beta\, } (x+f)
\left[e''- \frac{2\beta'} {\beta^2} \frac{\tilde \ell} {2\pi} e'
-\left(\frac{\beta''} {\beta^3} - 3\frac{|\beta'|^2} {\beta^4} \right) \frac{\tilde \ell^2} {4\pi^2}e
\right]\frac{\mathcal Z}\beta,
\end{align*}
and the term from $B_6\left(\epsilon\frac{e}{\beta}{\mathcal Z} \right) $ involving $f''$ is
\begin{equation*}
-\epsilon^4 \frac{ 4\pi^2} {{\tilde\ell}^2} \frac{\kappa a_2 }{\, \beta^2\, } (x+f) f'' e \mathcal Z' .
\end{equation*}
 Then we can rewrite $B_6\left(\epsilon\frac{e}{\beta}{\mathcal Z} \right) $ as
\begin{align}\label{expression of B6mathcal Z}
B_6\left(\epsilon\frac{e}{\beta}{\mathcal Z} \right) &
 =
\epsilon^4 \frac{4\pi^2} {{\tilde\ell}^2} \frac{\kappa a_2}{\, \beta^2\, } e'' (x+f)\mathcal Z
-\epsilon^4\frac{ 4\pi^2} {{\tilde\ell}^2}  \frac{ \kappa a_2 }{\, \beta^2\, } (x+f) f'' e \mathcal Z'
+\frac{\epsilon^4} {\beta^3} \mathcal G_1\left[\theta, x, \beta f, \frac{\beta f'}{\tilde\ell}, e, \frac{e'}{\tilde\ell}\right],
\end{align}
where
\begin{align} \label{mathcal G1}
& \frac{\epsilon^4} {\beta^3} \mathcal G_1\left[\theta, x, \beta f, \frac{\beta f'}{\tilde\ell}, e, \frac{e'}{\tilde\ell}\right]
\nonumber\\[2mm]
\,=\,& - \frac{\epsilon^3}{6} \frac{\, \, \epsilon^2 W_{ttt}(0, \epsilon z)}{\beta^5} (x+f)^3 \left(\epsilon\frac{e}{\beta}{\mathcal Z} \right)
 -\frac{\epsilon^4}{24} \frac{\, \, \epsilon^2 W_{tttt}\big(\hbar(\epsilon z)\epsilon s, \, \epsilon z\big)}{\beta^6} (x+f)^4\left(\epsilon\frac{e}{\beta}{\mathcal Z} \right)
\nonumber\\[2mm]
&
 +\epsilon^3 \kappa^3(\epsilon z) a_3(\epsilon s, \epsilon z) \frac{1}{\, \beta^3\, }(x+f)^2\left(\epsilon\frac{e}{\beta}{\mathcal Z} \right)_{x}
\nonumber\\[2mm]
&
 + \epsilon^3 \kappa(\epsilon z) a_2(\epsilon s, \epsilon z)\times
\Bigg\{
\frac{\beta''} {\beta^4} (x+f) \left(\epsilon\frac{e}{\beta}{\mathcal Z} \right)
+\frac{2|\beta'|^2}{\beta^5} (x+f)^2 \left(\epsilon\frac{e}{\beta}{\mathcal Z} \right)_x
\nonumber\\[2mm]
&\qquad\qquad\qquad\qquad\qquad
+\frac{ 6\pi}{ {\tilde\ell}}\frac{\beta'} {\beta^3} (x+f)\left[-f'\left(\epsilon\frac{e}{\beta}{\mathcal Z} \right)_x +\left(\epsilon\frac{e}{\beta}{\mathcal Z} \right)_\tau\right]
\nonumber\\[2mm]
&\qquad\qquad\qquad\qquad\qquad
+\frac{ 4\pi}{ {\tilde\ell}}\frac{\beta'} {\beta^3} (x+f)^2\Bigg[- f' \left(\epsilon\frac{e}{\beta}{\mathcal Z} \right)_{xx} +\left(\epsilon\frac{e}{\beta}{\mathcal Z} \right)_{x\tau}\Bigg]
\nonumber\\[2mm]
&\qquad\qquad\qquad\qquad\qquad
+ \frac{|\beta'|^2}{\beta^5} (x+f)^3\left(\epsilon\frac{e}{\beta}{\mathcal Z} \right)_{xx}
+ \frac{\beta''}{\beta^4} (x+f)^2 \left(\epsilon\frac{e}{\beta}{\mathcal Z} \right)_{x}
\nonumber\\[2mm]
&\qquad\qquad\qquad\qquad\qquad
+\frac{ 4\pi^2} {{\tilde\ell}^2}\frac{1}{\beta} (x+f)\left[ |f'|^2 \left(\epsilon\frac{e}{\beta}{\mathcal Z} \right)_{xx} -2f' \left(\epsilon\frac{e}{\beta}{\mathcal Z} \right)_{x\tau}\right]
\Bigg\}
\nonumber\\[2mm]
&+
\epsilon^3 \kappa'(\epsilon z)a_1(\epsilon s, \epsilon z)\times
\Bigg\{
\frac{\beta'}{\beta^4} (x+f)\left(\epsilon\frac{e}{\beta}{\mathcal Z} \right)
+\frac{\beta'}{\beta^4} (x+f)^2 \left(\epsilon\frac{e}{\beta}{\mathcal Z} \right)_{x}
\nonumber\\[2mm]
&\qquad\qquad\qquad\qquad\qquad
+\frac{ 2\pi}{ {\tilde\ell}}\frac{1}{ \beta^2} (x+f)\left[-f'\left(\epsilon\frac{e}{\beta}{\mathcal Z} \right)_x +\left(\epsilon\frac{e}{\beta}{\mathcal Z} \right)_\tau\right]
\Bigg\}
\nonumber\\[2mm]
&+
\epsilon^4\frac{ 4\pi^2} {{\tilde\ell}^2} \frac{\kappa a_2}{\, \beta\, } (x+f)
\left[- \frac{2\beta'} {\beta^2} \frac{\tilde \ell} {2\pi} e'
-\left(\frac{\beta''} {\beta^3} - 3\frac{|\beta'|^2} {\beta^4} \right) \frac{\tilde \ell^2} {4\pi^2}e
\right]\frac{\mathcal Z}\beta.
\end{align}

\medskip
\subsubsection{}
For  the later use, we will rearrange all terms in $S(w_0)$ based on oddness and evenness of the terms in the variable $x$.
We denote
\begin{align}\label{S1S2}
\epsilon S_1 = \epsilon \frac{ \kappa}{\, \beta\, } \Big[ U'+\frac{2}{3} x U\Big],
\qquad
\epsilon S_2 = \epsilon\frac{2}{3} \frac{ \kappa}{\, \beta\, } f U,
\end{align}
\begin{align}\label{expression of s3}
 \epsilon^2 S_3& = \epsilon^2
 \Bigg\{\frac{\kappa}{\, \beta^2\, }\left[ e {\mathcal Z}'+\frac 23 e x\mathcal Z \right]
 -\,\frac{\epsilon^2W_{tt}(0, \epsilon z)}{\beta^4}f xU
+\Big[\frac{|\beta'|^2}{\beta^4} 2f
-\frac{4\pi}{{\tilde\ell}}\frac{\beta'}{\beta^2}f'
\Big]xU''
\nonumber\\[2mm]
&\qquad\quad
+ \Big[
-\frac{4\pi^2}{{\tilde\ell}^2}f''
-\frac{6\pi}{\tilde\ell} \frac{\beta'}{\beta^2} f'
+\frac{|\beta'|^2}{\beta^4}f
+\frac{\beta''} {\beta^3}f
\,-\,\frac{\kappa^2}{\beta^2}f
\Big]U'
\Bigg\},
\end{align}
\begin{align}\label{expression of s4}
 \epsilon^2 S_4& =\epsilon^2
 \Bigg\{ \Big[
-\frac{1}{2}\, \frac{\epsilon^2W_{tt}(0, \epsilon z)}{\beta^4}f^2 + \frac{\beta''} {\beta^3} \Big]U
+\Big[\frac{4\pi^2}{{\tilde\ell}^2}|f'|^2
-\frac{4\pi}{{\tilde\ell}}\frac{\beta'}{\beta^2}ff'
+\frac{|\beta'|^2}{\beta^4} f^2
\Big]U''
\nonumber\\[2mm]
&\qquad -\frac{1}{2}\, \frac{\epsilon^2W_{tt}(0, \epsilon z)}{\beta^4} x^2U
+\Big[\frac{2|\beta'|^2}{\beta^4}
+\frac{\beta''} {\beta^3}
\,-\,\frac{\kappa^2}{\beta^2}
\Big]xU'
\nonumber\\[2mm]
&\qquad+\frac{|\beta'|^2}{\beta^4} x^2 U''
+\frac 23 \frac{\kappa}{\, \beta^2\, } fe{\mathcal Z}
+\frac3{\beta^2}e^2 U {\mathcal Z }^2
\Bigg\} ,
\end{align}
and
\begin{align}\label{S5}
 \epsilon^3 S_5& =
 \epsilon^3 \Bigg\{-\frac{1}{6} \frac{\, \epsilon^2 W_{ttt}(0, \epsilon z)}{\beta^5} x^3 U
 + \Big[- \frac{1}{2} \frac{\epsilon^2 W_{ttt}(0, \epsilon z)}{\beta^5}f^2-2 \frac{\kappa\beta''}{\beta^4} +\frac{\kappa'\beta'} {\beta^4} \Big] x U
\nonumber\\[2mm]
 & \qquad\quad
 + \Big[ \frac{\kappa^3}{\beta^3}-\frac{4\kappa|\beta'|^2}{\beta^5}-\frac{2\kappa\beta''}{\beta^4} + \frac{\kappa'\beta'} {\beta^4} \Big]x^2U'
\nonumber\\[2mm]
 & \qquad\quad+ \Big[
 \frac{\kappa^3}{\beta^3}f^2
 -\frac{4\kappa|\beta'|^2}{\beta^5} f^2
 - \frac{2\kappa\beta''}{\beta^4}f^2
 -\frac{8\pi^2}{{\tilde\ell}^2}\frac{\kappa}{\beta} f''f
 +\frac{12\pi }{ {\tilde\ell}}\frac{\kappa\beta'}{\beta^3} f' f
 + \frac{\kappa'\beta'} {\beta^4}f^2
 - \frac{2\pi}{{\tilde\ell}} \frac{\kappa'}{\beta^2} f'f\Big]U'
\nonumber\\[2mm]
 & \qquad\quad-\frac{2\kappa|\beta'|^2}{\beta^5} x^3 U''
 + \Big[ -\frac{6\kappa|\beta'|^2}{\beta^5}f^2
 + \frac{16\pi}{ {\tilde\ell}} \frac{\kappa\beta'}{\beta^3} f' f
-\frac{8\pi^2} {{\tilde\ell}^2}\frac{\kappa}{\beta} |f'|^2\Big]x U''
\Bigg\} ,
\end{align}
\begin{align}\label{S6}
 \epsilon^3 S_6& = \epsilon^3\Bigg\{
 - \frac{1}{2} \frac{\,\epsilon^2 W_{ttt}(0, \epsilon z)}{\beta^5}f x^2 U
\ +\
\Big[- \frac{1}{6} \frac{\epsilon^2W_{ttt}(0, \epsilon z)}{\beta^5}f^3
 -\frac{2\kappa\beta''}{\beta^4} f +\frac{\kappa'\beta'} {\beta^4} f\Big] U
\nonumber\\[2mm]
 & \qquad\quad + \Big[
 \frac{2\kappa^3}{\beta^3}f
 -\frac{8\kappa|\beta'|^2}{\beta^5} f
 -\frac{4\kappa\beta''}{\beta^4}f
 -\frac{8\pi^2}{{\tilde\ell}^2}\frac{\kappa}{\beta} f''
+\frac{12\pi }{ {\tilde\ell}}\frac{\kappa\beta'}{\beta^3} f'
+\frac{2\kappa'\beta'}{\beta^4}f
- \frac{2\pi}{{\tilde\ell}} \frac{\kappa'}{\beta^2} f'\Big]xU'
\nonumber\\[2mm]
 & \qquad\quad+ \Big[ -\frac{6\kappa|\beta'|^2}{\beta^5}f
 +\frac{8\pi}{ {\tilde\ell}}\frac{\kappa\beta'}{\beta^3} f'\Big] x^2 U''
 + \Big[ -\frac{2\kappa|\beta'|^2}{\beta^5}f^3
 +\frac{8\pi}{ {\tilde\ell}} \frac{\kappa\beta'}{\beta^3} f' f^2
 -\frac{8\pi^2} {{\tilde\ell}^2}\frac{\kappa}{\beta} |f'|^2 f \Big]U''
 \Bigg\},
\end{align}
\begin{align}\label{S7}
\epsilon^3S_7
& = \frac{\epsilon^3}\beta\Bigg\{ \Big[
-\frac{4\pi^2}{{\tilde\ell}^2}f''
-\frac{6\pi}{\tilde\ell} \frac{\beta'}{\beta^2} f'
+\frac{|\beta'|^2}{\beta^4}f
+\frac{\beta''} {\beta^3}f
\,-\,\frac{\kappa^2}{\beta^2}f
\Big]e {\mathcal Z}'
+\Big[\frac{2|\beta'|^2}{\beta^4} f
-\frac{4\pi}{{\tilde\ell}}\frac{\beta'}{\beta^2}f'
\Big]e x {\mathcal Z}''
\nonumber\\[2mm]
&\qquad\quad -\,\frac{\epsilon^2W_{tt}(0, \epsilon z)}{\beta^4}f e x{\mathcal Z}
+
\Big[ -\frac{8\pi^2}{{\tilde\ell}^2}f' +\frac{4\pi}{{\tilde\ell}}\frac{\beta'}{\beta^2}f\Big]e' {\mathcal Z}'
+
\Big[ \frac{4\pi}{{\tilde\ell}}f' -\frac{2\beta'}{\beta^2}f\Big] \frac{\beta'}{\beta^2}e {\mathcal Z}'
\Bigg\},
\end{align}
\begin{align}\label{S8}
 \epsilon^3S_8
& = \frac{\epsilon^3}\beta\Bigg\{
\frac{ 2\pi}{\tilde\ell} \frac{\beta'}{\beta^2} e'{\mathcal Z}
+\frac{4\pi}{{\tilde\ell}}\frac{\beta'}{\beta^2} e' x {\mathcal Z}'
-\frac{2|\beta'|^2} {\beta^4} e x\mathcal Z'
-\frac{1}{2}\, \frac{\epsilon^2W_{tt}(0, \epsilon z)}{\beta^4}f^2 e\mathcal Z
\nonumber\\[2mm]
&\qquad\quad
+\Big[\frac{4\pi^2}{{\tilde\ell}^2}|f'|^2
-\frac{4\pi}{{\tilde\ell}}\frac{\beta'}{\beta^2}ff'
+\frac{|\beta'|^2}{\beta^4} f^2
\Big]e {\mathcal Z}''
-\frac{1}{2}\, \frac{\epsilon^2W_{tt}(0, \epsilon z)}{\beta^4} e x^2{\mathcal Z}
\nonumber\\[2mm]
&\qquad\quad
+\Big[\frac{2|\beta'|^2}{\beta^4}
+\frac{\beta''} {\beta^3}
\,-\,\frac{\kappa^2}{\beta^2}
\Big]e x {\mathcal Z}'
+
\frac{|\beta'|^2}{\beta^4} e x^2{\mathcal Z}''
+ \frac1{\beta^2}e^3 {\mathcal Z}^3
\Bigg\}.
\end{align}

Finally, we can rewrite $S(w_0)$ in \eqref{S(w_0)} as
\begin{align}
 S(w_0) \,= \,&
 \epsilon S_1 + \epsilon S_2 + \epsilon^2 S_3 + \epsilon^2 S_4 + \epsilon^3 S_5
 + \epsilon^3 S_6 + \epsilon^3 S_7+\epsilon^3 S_8 +\epsilon^3\frac{4\pi^2}{{\tilde\ell}^2} \frac{e''}{\beta}\mathcal Z
 +\frac1{\beta} \epsilon\lambda_0 e{\mathcal Z}
\nonumber\\[2mm]
 &
 \underbrace{ +\epsilon^4 \frac{4\pi^2} {{\tilde\ell}^2} \frac{\kappa a_2}{\, \beta^2\, } e'' (x+f)\mathcal Z
  -\epsilon^4  \frac{ 4\pi^2} {{\tilde\ell}^2}  \frac{\kappa a_2}{\, \beta^2\, } (x+f) f'' e \mathcal Z' + \frac{\epsilon^4}{\beta^2} \hat{\mathcal F}_1\left[\tau, x, \beta f, \frac{\beta f''}{\tilde\ell^2} \right] }_{\text{terms involving}~f'', e''}
\nonumber\\[2mm]
 &
 + \frac{\epsilon^4}{\beta^2} \mathcal F_1\left[\tau, x, \beta f, \frac{\beta f'}{\tilde\ell}\right]
 +\frac{\epsilon^4} {\beta^2} \mathcal G_1\left[\tau, x, \beta f, \frac{\beta f'}{\tilde\ell}, e, \frac{e'}{\tilde\ell}\right].
\label{error1}
\end{align}
We note that the expressions of the following terms
$$
\frac{\epsilon^4}{\beta^2} \mathcal F_1\left[\tau, x, \beta f, \frac{\beta f'}{\tilde\ell}\right],
\quad
\frac{\epsilon^4}{\beta^2} \hat{\mathcal F}_1\left[\tau, x, \beta f, \frac{\beta f''}{\tilde\ell^2} \right],
\quad
\frac{\epsilon^4} {\beta^2} \mathcal G_1\left[\tau, x, \beta f, \frac{\beta f'}{\tilde\ell}, e, \frac{e'}{\tilde\ell}\right]
$$
are given in \eqref{mathcal F1}, \eqref{hat mathcal F1}, \eqref{mathcal G1} respectively.

\medskip
\subsection{ The second approximate solution via first correction}\label{Section5.2}\

\subsubsection{}
In this part, we want to construct a correction function to eliminate the terms $\epsilon S_1$ and $\epsilon S_2$
in the error $S(w_0)$.
By the same method in \cite{delPKowWei1}, we first choose correction terms $\phi_{11}, {\phi_{12}}$ to cancel the terms
$ S_1, S_2. $
That is to say, for fixed $\tau\in [0, 2\pi)$,
we need solutions of
\begin{align*}
\phi_{11, xx} - \phi_{11} + 3 U^2 \phi_{11} \,=\, - S_1, \quad \phi_{11}(\pm \infty, \tau) \,=\, 0,
\end{align*}
\begin{align*}
\phi_{12, xx} - {\phi_{12}} + 3 U^2 {\phi_{12}} \,=\, - S_2, \quad{\phi_{12}}(\pm \infty, \tau) \,=\, 0.
\end{align*}
\medskip
As it is well known, those problems are solvable provided that
\begin{equation*}
 \int_{{\mathbb R}} S_1\, U' \,{\mathrm d}x \,=\, 0,
\qquad
 \int_{{\mathbb R}} S_2\, U' \,{\mathrm d}x \,=\, 0.
\end{equation*}
Using the identities \eqref{identities for U2},
we can get
\begin{align*}
 \int_{{\mathbb R}} S_1\, U' \,{\mathrm d}x
\,=\, & \frac{\kappa}{\beta}\int_{{\mathbb R}}\Big[ U' +\frac{2}{3}xU \Big] \, U' \,{\mathrm d}x=0.
\end{align*}
Thus, for $\phi_{11}$, we have
\begin{align*}
\phi_{11} (x, \tau) \,=\, \frac{ \kappa}{\beta}\varpi_1(x),
\end{align*}
and $\varpi_1$ is the unique solution of
\begin{align}\label{varphi1equation new}
\varpi_{1, xx} - \varpi_1 + 3 U^2 \varpi_1 \,=\, -\Big( U' + \frac 2 3 x U\Big), \quad \int_{{\mathbb R}} \varpi_1 U' \,{\mathrm d}x \,=\, 0.
\end{align}
In fact, the function $\varpi_1$ is an odd function and enjoys the exponential decay
\[ |\varpi_1 |\leq C |x| e^{-c|x|}, \quad \text{when $|x|$ is large}. \]
On the other hand, we get
\begin{align*}
{\phi_{12}} (x, \tau)
 \,=\, \frac{2}{3} \frac{ \kappa}{\, \beta\, } f \varpi_2(x),
\end{align*}
with
$\varpi_2 (x) \,=\, -\frac 12(U+ x U'), $
and $\varpi_2$ satisfies
\begin{align} \label{varphi2equation new}
\varpi_{2, xx} - \varpi_2 + 3 U^2 \varpi_2 \,=\, -U.
\end{align}
Therefore, we define the correction as
\begin{align}\label{definition of phi1}
 \epsilon\phi_1(x, \tau)
 \,=\, \epsilon{\phi_{11}}(x, \tau) + \epsilon{\phi_{12}}(x, \tau)
 = \epsilon{\mathbf b}_1(\epsilon z)\varpi_1(x)
\ +\
\epsilon\frac{2}{3} {\mathbf b}_1(\epsilon z)f(\tau) \varpi_2(x),
\end{align}
where the relation between $z$ and $\tau$ is given in \eqref{ztau}
and
\begin{equation}\label{definitionofb1}
{\mathbf b}_1(\theta)=\frac{\kappa(\theta)}{\beta(\theta)}.
\end{equation}

Then we take
\begin{align}\label{secondapproximatesolution}
w_1(x, \tau) \,=\, w_0(x, \tau)\,+\, \epsilon\phi_1(x, \tau)
\end{align}
as the {\bf{second approximate solution}}.
From \eqref{error1}, we can derive that
\begin{align}\label{sw1new}
S (w_0+ \epsilon\phi_1)
\,=\, &S(w_0) + \epsilon \Big[\phi_{1, xx} - \phi_1 +3 U^2\phi_1 \Big]
+ \frac{\epsilon^2 4\pi^2}{{\tilde\ell}^2} \epsilon\phi_{1, \tau\tau}
\nonumber\\[2mm]
& +\left[ \left( w_0+ \epsilon\phi_1\right)^3 -w_0^3 - \epsilon 3U^2 \phi_1 \right]
+ B_4(\epsilon\phi_1)+ B_5(\epsilon\phi_1)+ B_6(\epsilon\phi_1)
\nonumber\\[2mm]
=&
\epsilon^2 S_3+\epsilon^2 S_4 + \epsilon^3 S_5 + \epsilon^3 S_6
+\epsilon^3 S_7+\epsilon^3 S_8
+\frac1\beta\epsilon^3\frac{4\pi^2}{{\tilde\ell}^2} e''\mathcal Z
+ \frac1\beta \epsilon\lambda_0 e{\mathcal Z}
\nonumber\\[2mm]
&
 \underbrace{ +\epsilon^4 \frac{4\pi^2} {{\tilde\ell}^2} \frac{\kappa a_2}{\, \beta^2\, } e'' (x+f)\mathcal Z
 -\epsilon^4 \frac{ 4\pi^2} {{\tilde\ell}^2}  \frac{\kappa a_2 }{ \beta^2} (x+f) f'' e \mathcal Z' + \frac{\epsilon^4}{\beta^2} \hat{\mathcal F}_1\left[\tau, x, \beta f, \frac{\beta f''}{\tilde\ell^2} \right] }_{\text{terms involving}~f'', e''}
\nonumber\\[2mm]
 &
 + \frac{\epsilon^4}{\beta^2} \mathcal F_1\left[\tau, x, \beta f, \frac{\beta f'}{\tilde\ell}\right]
 +\frac{\epsilon^4} {\beta^2} \mathcal G_1\left[\tau, x, \beta f, \frac{\beta f'}{\tilde\ell}, e, \frac{e'}{\tilde\ell}\right]
\nonumber\\[2mm]
&+ \frac{\epsilon^2 4\pi^2}{{\tilde\ell}^2} \epsilon\phi_{1, \tau\tau}
+ \mathbb{N}_0(\epsilon\phi_1) + B_4(\epsilon\phi_1)+ B_5(\epsilon\phi_1)+ B_6(\epsilon\phi_1),
\end{align}
where the linear operators $ B_4, B_5, B_6 $ are given in \eqref{new opreator B4}, \eqref{new opreator B5}, \eqref{B2w-2222new},
and the nonlinear term is
\begin{align}\label{nonlinerarterm}
\mathbb{N}_0(\epsilon\phi_1)
=
\left( w_0+ \epsilon\phi_1\right)^3 -w_0^3 - \epsilon 3U^2 \phi_1.
\end{align}

By some calculations, we provide the following facts
\begin{align*}
\phi_{1, x}
={\mathbf b}_1\varpi_{1,x} +\frac{2}{3}{\mathbf b}_1 f \varpi_{2,x},
\qquad
\phi_{1, xx}
={\mathbf b}_1\varpi_{1,xx} +\frac{2}{3}{\mathbf b}_1 f \varpi_{2,xx},
\end{align*}
\begin{align*}
\phi_{1, \tau}
= & {\mathbf b}_1'\frac{{\tilde\ell}}{2\pi \beta} \varpi_{1}
+
\frac{2}{3}{\mathbf b}_1'\frac{{\tilde\ell}}{2\pi\beta} f\varpi_{2}
+
 \frac{2}{3}{\mathbf b}_1 f'\varpi_{2}
\nonumber\\[2mm]
= & \frac{{\tilde\ell}}{2\pi} {\mathbf b}_2 \varpi_{1}
+
\frac{2}{3}\frac{{\tilde\ell}}{2\pi}{\mathbf b}_2f \varpi_{2}
+
\frac{2}{3}{\mathbf b}_1 f'\varpi_{2},
\end{align*}
where
\begin{align*}
{\mathbf b}_2(\theta)=\frac{{\mathbf b}_1'(\theta)}{\beta(\theta)},
\end{align*}
and
\begin{align*}
\phi_{1, x\tau}
= &
\frac{{\tilde\ell}}{2\pi} {\mathbf b}_2 \varpi_{1,x}
+
\frac{2}{3}\frac{{\tilde\ell}}{2\pi}{\mathbf b}_2 f\varpi_{2,x}
+
\frac{2}{3}{\mathbf b}_1 f'\varpi_{2,x},
\end{align*}
\begin{align*}
\phi_{1, \tau\tau}
= &
\frac{{\tilde\ell}}{2\pi} {\mathbf b}_2'\frac{{\tilde\ell}}{2\pi\beta} \varpi_{1}
+
\frac{2}{3}\frac{{\tilde\ell}}{2\pi}{\mathbf b}_2'\frac{{\tilde\ell}}{2\pi\beta}f \varpi_{2}
+
\frac{2}{3}\frac{{\tilde\ell}}{2\pi}{\mathbf b}_2f' \varpi_{2}
+
\frac{2}{3}{\mathbf b}_1'\frac{{\tilde\ell}}{2\pi\beta} f'\varpi_{2}
 +
\frac{2}{3}{\mathbf b}_1 f''\varpi_{2}
\nonumber\\
= &
\frac{{\tilde\ell}^2}{4\pi^2} {\mathbf b}_3 \varpi_{1}
+
\frac{2}{3}\frac{{\tilde\ell}^2}{4\pi^2}{\mathbf b}_3 f \varpi_{2}
+
\frac{4}{3}\frac{{\tilde\ell}}{2\pi}{\mathbf b}_2f' \varpi_{2}
+
\frac{2}{3}{\mathbf b}_1 f''\varpi_{2},
\label{expression of phi1tautau}
\end{align*}
where
\begin{align}
{\mathbf b}_3(\theta)=\frac{{\mathbf b}_2'(\theta)}{\beta(\theta)}.
\end{align}

\medskip
\subsubsection{More precise expression of $S(w_1)$}

We will do the following analysis.

\smallskip\noindent $\clubsuit$
From the expression of $\epsilon^2 S_4$ as in \eqref{expression of s4}, we can further decompose the term $\epsilon^2S_4$ as follows
\begin{align} \label{decompose S3+S4}
\epsilon^2 S_4 = \epsilon^2 S_{41} + \epsilon^2 S_{42},
\end{align}
where
\begin{align}\label{barS41}
 \epsilon^2 S_{41}& =\epsilon^2\Bigg\{
\frac{\beta''}{\beta^3}U
-
\frac{1}{2}\, \frac{\epsilon^2W_{tt}(0, \epsilon z)}{\beta^4} x^2U
+\Big[\frac{2|\beta'|^2}{\beta^4}
+\frac{\beta''} {\beta^3}
\,-\,\frac{\kappa^2}{\beta^2}
\Big]xU'
+
\frac{|\beta'|^2}{\beta^4} x^2 U''
\Bigg\},
\end{align}

\begin{align}\label{barS42}
 \epsilon^2 S_{42}& =\epsilon^2\Bigg\{
-\frac{1}{2}\, \frac{ \epsilon^2W_{tt}(0, \epsilon z)}{\beta^4}|f|^2 U
+
\Big[\frac{4\pi^2}{{\tilde\ell}^2}|f'|^2
-\frac{4\pi}{{\tilde\ell}}\frac{\beta'}{\beta^2}ff'
+\frac{|\beta'|^2}{\beta^4} |f|^2
\Big]U''
\nonumber\\[2mm]
 &\qquad\quad
+\frac 23 \frac{\kappa}{\, \beta^2\, } fe{\mathcal Z}
+\frac3{\beta^2}e^2U{\mathcal Z}^2
\Bigg\}.
\end{align}

\smallskip\noindent $\clubsuit$
Moreover, by the definitions of $B_4$, $\epsilon \phi_{1}$ as  in \eqref{new opreator B4}, \eqref{definition of phi1}, we can decompose $ B_4(\epsilon\phi_1)$ as
\begin{align}\label{B4phi1}
 B_4(\epsilon\phi_1)
\,=\,& \epsilon^2 \frac{\kappa}{\, \beta\, } \Big[ \phi_{1,x} + \frac{2}{3}x\phi_1+\frac{2}{3}f\phi_1\Big]
\nonumber\\[2mm]
\,=\,&\epsilon^2 {\mathbf b}_1^2\Big[
\varpi_{1,x}
+ \frac{2}{3}f\varpi_{2,x}
+\frac{2}{3}x\varpi_1
+ \frac{4}{9} f x\varpi_2
+\frac{2}{3}f\varpi_1
 + \frac{4}{9} f^2\varpi_2
 \Big]
\nonumber\\[2mm]
\,=\,& \epsilon^2 {\bf B}_{41} +\epsilon^2{\bf B}_{42},
\end{align}
with
\begin{align} \label{B42}
{\bf B}_{41} =
 {\mathbf b}_1^2\Big[
\varpi_{1,x}
+\frac{2}{3}x\varpi_1
\Big],\qquad
{\bf B}_{42} =
 {\mathbf b}_1^2\Big[
\frac{2}{3}f\varpi_{2,x}
+ \frac{4}{9} f x\varpi_2
+\frac{2}{3}f\varpi_1
+ \frac{4}{9} f^2\varpi_2
\Big].
\end{align}

\smallskip\noindent $\clubsuit$
 From the expression of $B_5$ given in \eqref{new opreator B5}, we have
\begin{align} \label{expression of B5phi1}
B_5(\epsilon\phi_1) \, =\,&\epsilon^3
\Bigg\{\frac{2\pi}{{\tilde\ell}}\frac{\beta'} {\beta^2}\Big[- f' \phi_{1,x} +\phi_{1,\tau}\Big]
 +\frac{4\pi^2}{{\tilde\ell}^2}\Big[|f'|^2 \phi_{1,xx} -f''\phi_{1,x} -2f' \phi_{1,x\tau}\Big]
\nonumber\\[2mm]
&\qquad
+ \frac{4\pi}{{\tilde\ell}} \frac{\beta'}{\beta^2} (x+f)\big[- f' \phi_{1,xx} +\phi_{1,x\tau}\big]
+\Big[\frac{\beta''}{\beta^3} \,-\,\frac{\kappa^2}{\beta^2} \Big]
(x+f) \phi_{1,x}
\nonumber\\[2mm]
&\qquad
+ \frac{|\beta'|^2}{\beta^4}(x+f)^2 \phi_{1,xx}
-\frac{1}{2}\, \frac{\epsilon^2W_{tt}(0, \epsilon z)}{\beta^4} (x+f)^2\phi_{1}
\Bigg\}.
\end{align}

\smallskip\noindent $\clubsuit$
By the definition of $B_6$ given in \eqref{B2w-2222new}, we
 select the terms from $ B_6(\epsilon\phi_1)$ involving $f''$
\begin{align}
& \epsilon^4 \kappa(\epsilon z) a_2(\epsilon s, \epsilon z) \frac{ 4\pi^2} {{\tilde\ell}^2}\frac{1}{\beta} (x+f)\big[ -f''\phi_{1, x} +\phi_{1, \tau\tau}\big]
\nonumber\\[2mm]
& = \epsilon^4 \kappa(\epsilon z) a_2(\epsilon s, \epsilon z) \frac{ 4\pi^2} {{\tilde\ell}^2}\frac{1}{\beta} (x+f)\Bigg[ -f'' \big({\mathbf b}_1\varpi_{1,x} +\frac{2}{3}{\mathbf b}_1 f \varpi_{2,x} \big)
\nonumber\\[2mm]
& \qquad\qquad
+ \Big(\frac{{\tilde\ell}^2}{4\pi^2} {\mathbf b}_3 \varpi_{1}
+
\frac{2}{3}\frac{{\tilde\ell}^2}{4\pi^2}{\mathbf b}_3 f \varpi_{2}
+
\frac{4}{3}\frac{{\tilde\ell}}{2\pi}{\mathbf b}_2f' \varpi_{2}
+
\frac{2}{3}{\mathbf b}_1 f''\varpi_{2} \Big)\Bigg]
\nonumber\\[2mm]
& = \underbrace{\epsilon^4 \kappa(\epsilon z) a_2(\epsilon s, \epsilon z) \frac{ 4\pi^2} {{\tilde\ell}^2}\frac{1}{\beta} (x+f)\Bigg[ -f'' \big({\mathbf b}_1\varpi_{1,x} +\frac{2}{3}{\mathbf b}_1 f \varpi_{2,x} \big)
 +
\frac{2}{3}{\mathbf b}_1 f''\varpi_{2} \Bigg]}_{:= \frac{\epsilon^4}{\beta^2} \hat{\mathcal F}_2\left[\tau, x, \beta f, \frac{\beta f''}{\tilde\ell^2} \right] }
\nonumber\\[2mm]
& \quad + \epsilon^4 \kappa(\epsilon z) a_2(\epsilon s, \epsilon z) \frac{ 4\pi^2} {{\tilde\ell}^2}\frac{1}{\beta} (x+f) \Big(\frac{{\tilde\ell}^2}{4\pi^2} {\mathbf b}_3 \varpi_{1}
+
\frac{2}{3}\frac{{\tilde\ell}^2}{4\pi^2}{\mathbf b}_3 f \varpi_{2}
+
\frac{4}{3}\frac{{\tilde\ell}}{2\pi}{\mathbf b}_2f' \varpi_{2} \Big)
\nonumber\\[2mm]
& = \frac{\epsilon^4}{\beta^2} \hat{\mathcal F}_2\left[\tau, x, \beta f, \frac{\beta f''}{\tilde\ell^2} \right]
\nonumber\\[2mm]
& \quad+ \epsilon^4 \kappa(\epsilon z) a_2(\epsilon s, \epsilon z) \frac{ 4\pi^2} {{\tilde\ell}^2}\frac{1}{\beta} (x+f) \Big(\frac{{\tilde\ell}^2}{4\pi^2} {\mathbf b}_3 \varpi_{1}
+
\frac{2}{3}\frac{{\tilde\ell}^2}{4\pi^2}{\mathbf b}_3 f \varpi_{2}
+
\frac{4}{3}\frac{{\tilde\ell}}{2\pi}{\mathbf b}_2f' \varpi_{2} \Big). \label{hat mathcal F2}
\end{align}
And we can rewrite $ B_6(\epsilon\phi_1) $ as
\begin{align}\label{expression of B6phi1}
 B_6(\epsilon\phi_1) = \frac{\epsilon^4}{\beta^2} \mathcal F_2\left[\theta, x, \beta f, \frac{\beta f'}{\tilde\ell}\right]
 + \frac{\epsilon^4}{\beta^2} \hat{\mathcal F}_2\left[\tau, x, \beta f, \frac{\beta f''}{\tilde\ell^2} \right],
\end{align}
where
\begin{align} \label{mathcal F2}
& \frac{\epsilon^4}{\beta^2} \mathcal F_2\left[\theta, x, \beta f, \frac{\beta f'}{\tilde\ell}\right]
\nonumber\\[2mm]
=&
 - \frac{\epsilon^4}{6} \frac{\, \, \epsilon^2 W_{ttt}(0, \epsilon z)}{\beta^5} (x+f)^3 \phi_1
 -\frac{\epsilon^5}{24} \frac{\, \, \epsilon^2 W_{tttt}\big(\hbar(\epsilon z)\epsilon s, \, \epsilon z\big)}{\beta^6} (x+f)^4\phi_1
\nonumber\\[2mm]
&
 +\epsilon^4 \kappa^3(\epsilon z) a_3(\epsilon s, \epsilon z) \frac{1}{\, \beta^3\, }(x+f)^2\phi_{1, x}
\nonumber\\[2mm]
&\,+\,
\epsilon^4 \kappa(\epsilon z) a_2(\epsilon s, \epsilon z)
\Bigg\{
\frac{\beta''} {\beta^4} (x+f) \phi_1
+\frac{2|\beta'|^2}{\beta^5} (x+f)^2 \phi_{1, x}
+\frac{ 6\pi}{ {\tilde\ell}}\frac{\beta'} {\beta^3} (x+f)\big[-f'\phi_{1, x} +\phi_{1, \tau}\big]
\nonumber\\[2mm]
&\qquad\qquad\qquad\qquad\qquad
+\frac{ 4\pi}{ {\tilde\ell}}\frac{\beta'} {\beta^3} (x+f)^2[- f' w_{xx} +w_{x\tau}]
+ \frac{|\beta'|^2}{\beta^5} (x+f)^3\phi_{1, xx}
+ \frac{\beta''}{\beta^4} (x+f)^2 \phi_{1, x}
\nonumber\\[2mm]
&\qquad\qquad\qquad\qquad\qquad
+\frac{ 4\pi^2} {{\tilde\ell}^2}\frac{1}{\beta} (x+f)\big[ |f'|^2 \phi_{1, xx} -2f' \phi_{1, x\tau}\big]
\Bigg\}
\nonumber\\[2mm]
&\,+\,
\epsilon^4 \kappa'(\epsilon z)a_1(\epsilon s, \epsilon z)
\Bigg\{
\frac{\beta'}{\beta^4} (x+f)\phi_1
+\frac{\beta'}{\beta^4} (x+f)^2 \phi_{1, x}
+\frac{ 2\pi}{ {\tilde\ell}}\frac{1}{ \beta^2} (x+f)\big[-f'\phi_{1, x} +\phi_{1, \tau}\big]
\Bigg\}
\nonumber\\[2mm]
& + \epsilon^4 \kappa(\epsilon z) a_2(\epsilon s, \epsilon z) \frac{ 4\pi^2} {{\tilde\ell}^2}\frac{1}{\beta} (x+f) \Big(\frac{{\tilde\ell}^2}{4\pi^2} {\mathbf b}_3 \varpi_{1}
+
\frac{2}{3}\frac{{\tilde\ell}^2}{4\pi^2}{\mathbf b}_3 f \varpi_{2}
+
\frac{4}{3}\frac{{\tilde\ell}}{2\pi}{\mathbf b}_2f' \varpi_{2} \Big).
\end{align}

\smallskip\noindent $\clubsuit$
Similarly, for $\mathbb{N}_0(\epsilon\phi_1)$ as in \eqref{nonlinerarterm}, we also have the decomposition
\begin{align}\label{decompose N0}
 \mathbb{N}_0(\epsilon\phi_1)
\,=\,&
 \left[ \frac{\epsilon^26e U\mathcal Z \phi_1}{\beta}
 +\frac{\epsilon^3 3 e^2\mathcal Z^2 \phi_1}{\beta^2}
 +\epsilon^2 3 U \phi_{1}^2
 +\frac{\epsilon^3 3 e{\mathcal Z} \phi_{1}^2}{\beta}
 +\epsilon^3 \phi_{1}^3\right]
\nonumber\\[2mm]
\,=\,&
\frac{\epsilon^2 6 e U{\mathcal Z} {\mathbf b}_1}\beta\Big[\varpi_1 + \frac{2}{3}f\varpi_2\Big]
+
\epsilon^2 3 {\mathbf b}_1^2U\Big[\varpi_1 + \frac{2}{3} f\varpi_2\Big]^2
\nonumber\\[2mm]
&
+
\frac{\epsilon^3 3 e^2\mathcal Z^2 {\mathbf b}_1}{\beta^2}\Big[\varpi_1 + \frac{2}{3} f\varpi_2\Big]
+
\frac{\epsilon^3 3 e{\mathcal Z} {\mathbf b}_1^2}{\beta}\Big[\varpi_1 + \frac{2}{3} f\varpi_2\Big]^2
+
\epsilon^3 {\mathbf b}_1^3\Big[\varpi_1 + \frac{2}{3}f\varpi_2\Big]^3
\nonumber\\[2mm]
\,=\,&
\frac{\epsilon^2 6 e U{\mathcal Z} {\mathbf b}_1}\beta\Big[\varpi_1 + \frac{2}{3}f\varpi_2 \Big]
+ \epsilon^2 3 {\mathbf b}_1^2U\Big[\varpi_1^2+\frac{4}{3}f \varpi_1\varpi_2+\frac{4}{9}|f|^2 \varpi_2^2 \Big]
\nonumber\\[2mm]
&
+ \frac{\epsilon^3 3 e^2\mathcal Z^2 {\mathbf b}_1}{\beta^2}\Big[\varpi_1 + \frac{2}{3}f \varpi_2\Big]
 + \frac{\epsilon^3 3 e{\mathcal Z} {\mathbf b}_1^2}{\beta}\Big[\varpi_1 + \frac{2}{3} f \varpi_2\Big]^2
 +\epsilon^3 {\mathbf b}_1^3\Big[\varpi_1 + \frac{2}{3}f\varpi_2\Big]^3
\nonumber\\[2mm]
\,=\,&\epsilon^2 {\bf N}_{01}+\epsilon^2 {\bf N}_{02}+\epsilon^3{\bf N}_{03},
\end{align}
with
\begin{align}\label{N01}
{\bf N}_{01} = 3{\mathbf b}_1^2U\varpi_1^2,
\end{align}
\begin{align}\label{N02}
{\bf N}_{02} =
\frac{6 {\mathbf b}_1e U{\mathcal Z}\varpi_1
+4 {\mathbf b}_1fe U{\mathcal Z} \varpi_2}\beta
+\frac{4}{3}{\mathbf b}_1^2|f|^2 U\varpi_2^2
+ 4{\mathbf b}_1^2f U\varpi_1\varpi_2,
\end{align}
\begin{align}\label{N03}
{\bf N}_{03} =&
\frac{3 e^2\mathcal Z^2 {\mathbf b}_1}{\beta^2}\Big[\varpi_1 + \frac{2}{3}f \varpi_2\Big]
+ \frac{3 e{\mathcal Z} {\mathbf b}_1^2}{\beta}\Big[\varpi_1 + \frac{2}{3}f \varpi_2\Big]^2
+ {\mathbf b}_1^3\Big[\varpi_1 + \frac{2}{3}f \varpi_2\Big]^3.
\end{align}

\medskip
 Combining \eqref{sw1new}, \eqref{decompose S3+S4}, \eqref{B4phi1}, \eqref{decompose N0}, we can get
\begin{align*}
 S(w_1)
 \,=\, &\,\epsilon^2 S_{3}
 + \epsilon^2 S_{41} + \epsilon^3 S_{42}+ \epsilon^3 S_{5}
 +\epsilon^3 S_{6} + \epsilon^3 S_{7} +\epsilon^3S_{8} + \frac1\beta\frac{4\pi^2}{{\tilde\ell}^2} e''\mathcal Z
 + \frac1\beta\epsilon\lambda_0 e{\mathcal Z}
\nonumber\\[2mm]
 &
 \underbrace{ +\epsilon^4 \frac{4\pi^2} {{\tilde\ell}^2} \frac{\kappa a_2}{\, \beta^2\, } e'' (x+f)\mathcal Z
 -\epsilon^4  \frac{ 4\pi^2} {{\tilde\ell}^2} \frac{\kappa a_2}{\, \beta^2\, } (x+f) f'' e \mathcal Z' + \frac{\epsilon^4}{\beta^2} \hat{\mathcal F}_1\left[\tau, x, \beta f, \frac{\beta f''}{\tilde\ell^2} \right] }_{\text{terms involving}~f'', e''}
\nonumber\\[2mm]
 &
+\underbrace{\epsilon^2 {\bf B}_{41} +\epsilon^2{\bf B}_{42}}_{B_4(\epsilon\phi_1)}
+\underbrace{\epsilon^2 {\bf N}_{01}+\epsilon^2 {\bf N}_{02} +\epsilon^3{\bf N}_{03}}_{\mathbb{N}_0(\epsilon\phi_1)}
 + \frac{\epsilon^4}{\beta^2} \mathcal F_1\left[\tau, x, \beta f, \frac{\beta f'}{\tilde\ell}\right]
\nonumber\\[2mm]
 &
 +\frac{\epsilon^4} {\beta^2} \mathcal G_1\left[\tau, x, \beta f, \frac{\beta f'}{\tilde\ell}, e, \frac{e'}{\tilde\ell}\right]
 + \epsilon^3\frac{ 4\pi^2}{{\tilde\ell}^2} {\phi}_{1, \tau\tau}
 +B_{5}(\epsilon\phi_1)+B_{6}(\epsilon\phi_1).
\end{align*}
 Note that,  $B_{5}(\epsilon\phi_1), B_{6}(\epsilon\phi_1)$ as in \eqref{expression of B5phi1}, \eqref{expression of B6phi1}.

\subsection{ The third approximate solution via second correction}\label{Section5.3}\

In this part, we want to eliminate the terms
$\epsilon^2 S_{41},  \epsilon^2 {\bf B}_{41}, \epsilon^2 {\bf N}_{01} $
in $S(w_1)$ by finding a solution to the problem
\begin{align}
\label{new second croection}
 & \phi_{2, xx} - \phi_{2} + 3 U^2 \phi_{2}
 \,=\, - \left[ S_{41} + {\bf B}_{41} + {\bf N}_{01}\right],
\quad \phi_{2}(\pm \infty, \tau) \,=\, 0.
\end{align}
In order to solve \eqref{new second croection}, we need to consider the following linear problem
\begin{align*}
\varpi_{2, xx} - \varpi_2 + 3 U^2 \varpi_2 \,=\, -U, \qquad \varpi_2(\pm \infty) \,=\, 0,
\end{align*}
\begin{align*}
\varpi_{3, xx} - \varpi_{3} + 3 U^2 \varpi_{3} \,=\,  - x^2U,
\qquad \varpi_3(\pm \infty) \,=\, 0,
\end{align*}
\begin{align*}
\varpi_{4, xx} - \varpi_{4} + 3 U^2 \varpi_{4} \,=\,- xU',
\qquad \varpi_4(\pm \infty) \,=\, 0,
\end{align*}
\begin{align*}
\varpi_{5, xx} - \varpi_{5} + 3 U^2\varpi_{5}
\,=\, -x^2U'',
\qquad \varpi_5(\pm \infty) \,=\, 0,
\end{align*}
\begin{align*}
\varpi_{6, xx} - \varpi_{6} + 3 U^2\varpi_{6}
\,=\,-\varpi_{1,x} ,
\qquad \varpi_6(\pm \infty) \,=\, 0,
\end{align*}
\begin{align*}
\varpi_{7, xx} - \varpi_{7} + 3 U^2\varpi_{7}
\,=\, -x\varpi_1,
\qquad \varpi_7(\pm \infty) \,=\, 0,
\end{align*}
\begin{align*}
\varpi_{8, xx} - \varpi_{8} + 3 U^2\varpi_{8}
\,=\, -U\varpi_1^2,
\qquad \varpi_{8}(\pm \infty) \,=\, 0.
\end{align*}
Recall the expressions of $\epsilon^2 S_{41}$, $ \epsilon^2 {\bf B}_{41}$, $\epsilon^2 {\bf N}_{01} $ given in \eqref{barS41},  \eqref{B42}  and \eqref{N02} respectively.
Then we can give the solution of \eqref{new second croection} as
\begin{align}\label{definition of e2phi2}
\epsilon^2\phi_2 = \epsilon^2\sum_{j=1}^{7}\phi_{2j},
\end{align}
where
\begin{align}\label{phi21-phi210}
& \phi_{21} = \frac{\beta''} {\beta^3}\varpi_2,
\qquad
\phi_{22} =- \frac{1}{2}\, \frac{\epsilon^2W_{tt}(0, \epsilon z)}{\beta^4}\varpi_3,
\qquad
\phi_{23} = \Big[\frac{2|\beta'|^2}{\beta^4}
+\frac{\beta''} {\beta^3}\,-\,\frac{\kappa^2}{\beta^2}\Big]\varpi_4,
\nonumber\\[2mm]
&
\phi_{24} = \frac{|\beta'|^2}{\beta^4} \varpi_5,
\qquad \qquad
\phi_{25} = {\mathbf b}_1^2\varpi_6,
\qquad \qquad
\phi_{26} =\frac23 {\mathbf b}_1^2\varpi_7,
\qquad
\phi_{27} = 3 {\mathbf b}_1^2\varpi_{8}.
\end{align}
We observe that $\phi_2$ is an even function of $x$.

\medskip\medskip
As a conclusion, we take
\begin{align}\label{newthirdapproximatesolution}
w_2(x, \tau) \,=\, w_1(x, \tau)\,+\,\epsilon^2 \phi_2(x, \tau)
\end{align}
as the {\bf{third approximate solution}}.
Then we have the new error
\begin{align}\label{sw2new}
S (w_2)
\,=\,
&S(w_1)
+
\epsilon^2\Big[ \phi_{2, xx} - \phi_2 + 3 U^2 \phi_2 \Big]
+
\left[ \left( w_1+ \epsilon^2\phi_2\right)^3 -w_1^3- \epsilon^2 3U^2\phi_2 \right]
\nonumber\\[2mm]
& + \epsilon^4\frac{4\pi^2}{{\tilde\ell}^2}\phi_{2, \tau\tau}
+ B_4(\epsilon^2\phi_2)+ B_5(\epsilon^2\phi_2)+ B_6(\epsilon^2\phi_2)
\nonumber\\[2mm]
\,=\,&
\epsilon^2 S_{3} + \epsilon^2 S_{42} + \epsilon^3 S_{5} +\epsilon^3 S_{6}
 +\epsilon^3 S_{7} +\epsilon^3 S_{8}
 +\frac 1\beta\epsilon^3\frac{4\pi^2}{{\tilde\ell}^2} e''\mathcal Z
 +\frac 1\beta \epsilon\lambda_0 e{\mathcal Z}
 +\epsilon^2 {\bf N}_{02}
 +\epsilon^3{\bf N}_{03}
\nonumber\\[2mm]
& + \epsilon^2 {\bf B}_{42}
 \underbrace{ +\epsilon^4 \frac{4\pi^2} {{\tilde\ell}^2} \frac{\kappa a_2}{\, \beta^2\, } e'' (x+f)\mathcal Z
 -\epsilon^4  \frac{ 4\pi^2} {{\tilde\ell}^2} \frac{\kappa a_2}{\, \beta^2\, } (x+f) f'' e \mathcal Z' + \frac{\epsilon^4}{\beta^2} \hat{\mathcal F}_1\left[\tau, x, \beta f, \frac{\beta f''}{\tilde\ell^2} \right] }_{\text{terms involving}~f'', e''}
\nonumber\\[2mm]
&
 + \epsilon^3 \frac{4\pi^2}{{\tilde\ell}^2} {\phi}_{1, \tau\tau}
+ \epsilon^4 \frac{4\pi^2}{{\tilde\ell}^2}\phi_{2, \tau\tau}
+B_{5}(\epsilon\phi_1)+B_{6}(\epsilon\phi_1)
\nonumber\\[2mm]
&
 + \frac{\epsilon^4}{\beta^2} \mathcal F_1\left[\tau, x, \beta f, \frac{\beta f'}{\tilde\ell}\right]
 +\frac{\epsilon^4} {\beta^2} \mathcal G_1\left[\tau, x, \beta f, \frac{\beta f'}{\tilde\ell}, e, \frac{e'}{\tilde\ell}\right]
\nonumber\\[2mm]
&+ \mathbb{N}_1 (\epsilon^2 \phi_2)
+ B_4(\epsilon^2\phi_2)+ B_5 (\epsilon^2{\phi_2})+ B_6 (\epsilon^2{\phi_2}),
\end{align}
where the linear operators $ B_4, B_5, B_6 $ are given in \eqref{new opreator B4}, \eqref{new opreator B5}, \eqref{B2w-2222new}.
The term $\mathbb{N}_1 (\epsilon^2 \phi_2)$ is given by
\begin{align}
\mathbb{N}_1 (\epsilon^2\phi_2)
\,=\, & \left( w_1+ \epsilon^2\phi_2\right)^3 -w_1^3 - \epsilon^2 3 U^2 \phi_2
\nonumber\\[2mm]
\,=\,&\epsilon^3 \frac{6e}{\beta}U{\mathcal Z}\phi_2 + \epsilon^3 6 U\phi_1\phi_2
+\epsilon^4 \frac{3e^2}{\beta^2}{\mathcal Z}^2\phi_2 +\epsilon^4 3\phi_1^2\phi_2
+\epsilon^4 \frac{6e}{\beta}{\mathcal Z}\phi_1\phi_2
\nonumber\\[2mm]
&+ \epsilon^4 3U \phi_2^2
+\epsilon^5 \frac{3e}{\beta}{\mathcal Z}\phi_2^2 + \epsilon^5 3\phi_1 \phi_2^2
+\epsilon^6 \phi_2^3,
\label{definition of mathbbN1}
\end{align}
where $\phi_1$ and $\phi_2$ are given in \eqref{definition of phi1} and \eqref{definition of e2phi2}.

\smallskip\noindent $\clubsuit$
From the expression of $\epsilon^2\phi_2$ as in \eqref{definition of e2phi2}, \eqref{phi21-phi210}, and the definition of the operator $B_4$ as in \eqref{new opreator B4}, it is easy to notice that the terms in $B_4(\epsilon^2\phi_2)$ are independent of $f'', e''$.

\smallskip\noindent $\clubsuit$
By the definition of $B_5$ given in \eqref{new opreator B5}, we have
\begin{align} \label{decomposition of B5(phi2)}
B_5 (\epsilon^2{\phi_2})
\,=\,&\epsilon^4\frac{4\pi^2}{{\tilde\ell}^2}\Big[|f'|^2 \phi_{2, xx} -f''\phi_{2, x} -2f' \phi_{2, x\tau}\Big]
+
\epsilon^4\frac{6\pi}{{\tilde\ell}}\frac{\beta'} {\beta^2}\Big[- f' \phi_{2, x} +\phi_{2, \tau}\Big]
\nonumber\\[2mm]
&
+ \epsilon^4 \frac{4\pi}{{\tilde\ell}} \frac{\beta'}{\beta^2} (x+f)\big[- f' \phi_{2, xx} +\phi_{2, x\tau}\big]
+
\epsilon^4\Big[
\frac{\beta''}{\beta^3}\,+\,\frac{2|\beta'|^2}{\beta^4} \,-\,\frac{\kappa^2}{\beta^2} \Big] (x+f) \phi_{2, x}
\nonumber\\[2mm]
&+\epsilon^4\frac{\beta''}{\beta^3}\phi_2
+ \epsilon^4 \frac{|\beta'|^2}{\beta^4}(x+f)^2 \phi_{2, xx}
-\frac{1}{2}\, \epsilon^4\frac{\epsilon^2W_{tt}(0, \epsilon z)}{\beta^4} (x+f)^2 \phi_2.
\end{align}
Here we note that the less regular term is $$-\epsilon^4\frac{4\pi^2}{{\tilde\ell}^2} f''\phi_{2, x}. $$

\smallskip\noindent $\clubsuit$
We select the term in $B_6 (\epsilon^2{\phi_2})$ that involves $f''$, which is
\begin{align*}
-\epsilon^5 \kappa(\epsilon z) a_2(\epsilon s, \epsilon z) \frac{ 4\pi^2} {{\tilde\ell}^2}\frac{1}{\beta} (x+f)f'' \phi_{2, x}.
\end{align*}
Furthermore, we denote
\begin{align} \label{decomposition of B6phi2}
\frac{\epsilon^5}{\beta^3} \mathcal F_3\left[\tau, x, \beta f, \frac{\beta f'}{\tilde\ell}\right] \,:=\, B_6(\epsilon^2\phi_2) -\Bigg[-\epsilon^5 \kappa(\epsilon z) a_2(\epsilon s, \epsilon z) \frac{ 4\pi^2} {{\tilde\ell}^2}\frac{1}{\beta} (x+f)f'' \phi_{2, x}\Bigg].
\end{align}
Here the terms in $\frac{\epsilon^5}{\beta^3} \mathcal F_3\left[\tau, x, \beta f, \frac{\beta f'}{\tilde\ell}\right]$ are all independent of $f'', e''$ and satisfies
\begin{align} \label{estimate of mathcal F3-x}
\frac{\epsilon^5}{\beta^3} \mathcal F_3\left[\tau, x, \beta f, \frac{\beta f'}{\tilde\ell}\right] \le C e^{-c|x|} \left(1+ |\beta f| + \Big|\frac{\beta f'}{\tilde\ell}\Big| \right).
\end{align}

\medskip
\section{The gluing procedure to derive the projected form of equation  (\ref{eq3})}
\label{Section6}

\subsection{ The gluing procedure} \label{Section6.1}\

The gluing method from \cite{delPKowWei1} will be used to transform problem \eqref{eq3} in ${\mathbb R}^2$
into a projected version on the infinite strip ${\mathfrak S}$, in which we can use the local coordinates $(x, \tau)$ in \eqref{definition of x 2.14}.
We define a smooth cut-off function
$
\eta_{3\delta}^{\epsilon}(s) \,=\, \eta_{3\delta}(\epsilon |s|),
$
where $\eta_{\delta}(t)$ is given by
$$
\eta_{\delta}(t) \,=\, 1\ \mbox{ for }\ 0\leq t\leq \delta
\qquad \mbox{and}\qquad
\eta_{\delta}(t) \,=\, 0
\ \mbox{ for }\ t>2\delta,
$$
for any fixed $\delta\leq \frac{\delta_0}{100}$ with small $\delta_0=M_0$ given in (\ref{fermi}).
For problem \eqref{eq3}, the global approximation $\mathbf{U}(y)$ on the whole space ${\mathbb R}^2$ can be defined simply in the form
\begin{equation}\label{defofbfU}
\mathbf{U}(y) \,=\,
\eta_{3\delta}^\epsilon(s) \beta(\epsilon z)w_2(x, \tau), \quad \forall\,y \in {\mathbb R}^2,
\end{equation}
where the local approximate solution $w_2 (x, \tau)$ is defined in \eqref{newthirdapproximatesolution} and the relations between $y$, $(s, z)$ and $ (x,\tau)$
are given in \eqref{definitionofy}, \eqref{fermi}, \eqref{eqszfinal}, \eqref{coordinate-tildes}, \eqref{ztau}, and \eqref{definition of x 2.14}.

\medskip
For a perturbation term $\Phi(y) \,=\, \eta_{3\delta}^{\epsilon}(\epsilon |s|){\tilde\phi}(y) \,+\, {\tilde\psi}(y)$, the function $v(y) \,=\, \mathbf{U}(y)+\Phi(y)$
satisfies (\ref{eq3}) if the pair $({{\tilde\phi}}, {{ \tilde \psi}})$ satisfies the following coupled system:
\begin{align}
\eta_{3\delta}^{\epsilon}\mathbf{L}(\tilde \phi)
 \,=\,
\eta_{\delta}^{\epsilon}\Big[-{\mathcal E}
+ \mathbf{N}\big(\eta_{3\delta}^{\epsilon}{\tilde\phi}+{\tilde\psi}\big)
-3\, \mathbf{U}^2 \, {\tilde\psi}\Big],
\label{equivalent-system-1}
\end{align}
\begin{align} \label{equivalent-system-2}
 \Delta \tilde \psi- [1+ \epsilon^2 W] \tilde \psi + (1-\eta_{\delta}^{\epsilon}) 3\mathbf{U}^2 \, {\tilde\psi} & \,=\, -\epsilon^2 (\Delta \eta_{3\delta}^{\epsilon}){ \tilde \phi}-2\epsilon \nabla \eta_{3\delta}^{\epsilon} \cdot
\nabla{\tilde\phi}
-(1-\eta_{\delta}^{\epsilon}){\mathcal E}
\nonumber
\\[2mm]
&\quad +(1-\eta_{\delta}^{\epsilon})\mathbf{N}\big(\eta_{3\delta}^{\epsilon}{\tilde\phi}+{\tilde\psi}\big),
\end{align}
 with
\begin{align*}
 \mathbf{L}({\tilde\phi}) \,=\, \Delta{\tilde\phi} - [1+ \epsilon^2 W]{\tilde\phi} +3{\bf U}^2{\tilde\phi},
\end{align*}
\begin{align*}
 {\mathcal E} \,=\, \Delta {\bf U} -[1+\epsilon^2 W] {\bf U} + {\bf U}^3,
\end{align*}
 and
\begin{align*}
 \mathbf{N}\big(\eta_{3\delta}^{\epsilon}{\tilde\phi}+{\tilde\psi}\big)
 & \,=\,
 -\Big[{\bf U} + \big(\eta_{3\delta}^{\epsilon}{\tilde\phi}+{\tilde\psi}\big)\Big]^3 + {\bf U}^3 + 3{\bf U}^2\big(\eta_{3\delta}^{\epsilon}{\tilde\phi}+{\tilde\psi}\big)\nonumber
 \\[2mm]
 & =
 -3{\bf U}\big(\eta_{3\delta}^{\epsilon}{\tilde\phi}+{\tilde\psi}\big)^2 -\big(\eta_{3\delta}^{\epsilon}{\tilde\phi}+{\tilde\psi}\big)^3.
\end{align*}
The term ${ {\mathcal E}}$ can be locally recast in $(x, \tau)$ coordinate system by the relation
\begin{equation}\label{errorrelationinterior}
{{\mathcal E}} \,=\, \beta^3\, S(w_2), \quad \text{for $|\epsilon s|\leq 10\delta$},
\end{equation}
where for the explicit expression of $S(w_2)$, the reader can refer to \eqref{sw2new}.

 First, given a small $ \tilde {\phi}$ satisfying the following decay property
\begin{equation}\label{decay-assumption}
|\nabla {\tilde\phi}|\,+\, |{\tilde\phi}|\,\leq\, e^{- { \upsilon_1\, \delta}/{\epsilon}}, \, \, \, \, \mbox{for}\, \, \, |s|>\delta/\epsilon,
\end{equation}
where $\upsilon_1$ is a very small positive constant, we will solve (\ref{equivalent-system-2}).
Note that the nonlinear operator $\mathbf{N}$ has a power-like behaviour with power greater than one, and a direct application of contraction mapping principle yields
that (\ref{equivalent-system-2}) has a unique (small) solution ${\tilde\psi} \,=\, {\tilde\psi}({\tilde\phi})$ with
\begin{equation}\label{contraction-varphi}
\|{\tilde\psi}({ \tilde \phi})\|_{L^{\infty}}\,\leq \, C\, \epsilon \lf[\|{\tilde\phi}\|_{L^{\infty}(|s|>\frac{\delta}{\epsilon})}\,+\, \|\nabla {\tilde\phi}\|_{L^{\infty}(|s|>\frac{\delta}{\epsilon})}\ri]\,+\,
e^{-\frac{ \upsilon_2 \delta}{\epsilon}},
\end{equation}
where $|s|>\delta/\epsilon$ denotes the complement of $\delta/\epsilon$-neighbourhood of
$\Gamma_\epsilon$ and $\upsilon_2$ denotes a small positive constant.
Moreover, the nonlinear operator $\tilde {\psi}$ satisfies a Lipschitz condition of the form
\begin{equation}\label{varphi-lip}
\|{\tilde\psi}({\tilde\phi}_1)-{\tilde\psi}({ \tilde \phi}_2)\|_{L^{\infty}}
\leq
C\epsilon \lf[ \|{\tilde\phi}_1-{\tilde\phi}_2\|_{L^{\infty}(|s|>\delta/\epsilon)}+\|\nabla {\tilde\phi}_1-\nabla {\tilde\phi}_2\|_{L^{\infty}(|s|>
\delta/\epsilon)}\ri].
\end{equation}

Therefore, after solving (\ref{equivalent-system-2}),
we can concern (\ref{equivalent-system-1}) as a local nonlinear problem involving ${\tilde\psi} \,=\, { \tilde \psi}({\tilde\phi})$,
which can be expressed in local coordinates $(x, \tau)$.
Using Fermi coordinates \eqref{fermi}, \eqref{definitionofy} and the calculations in Section \ref{Section3}, especially \eqref{eqszfinal}, \eqref{coordinate-tildes}, \eqref{ztau} and \eqref{definition of x 2.14}, we give the following lemma.
\begin{lemma}
By the relation
\begin{align}
{\tilde\phi}(y) = \beta(\epsilon z) \phi^*(x, \tau), \quad \text{for $|\epsilon s|\leq 10\delta$},
\label{relation betwen tildephi and phi}
\end{align}
we have
\begin{align}
\mathcal L \left(\phi^*\right)
\,=\,
\frac{1}{\beta^3}\mathbf{L}({\tilde\phi}), \quad \text{for $|\epsilon s|\leq 10\delta$},
\end{align}
where
\begin{align}\label{expression of mathcal L}
\mathcal L (\phi^*) \,=\, L_2(\phi^*) \ +\ \chi(\epsilon|s|) \big[\, B_4(\phi^*)+ B_5(\phi^*)+ B_6(\phi^*)\,\big],
\end{align}
with
\begin{align}\label{definition of L2operator}
 L_2(\phi^*) = \frac{\epsilon^2 4\pi^2}{{\tilde\ell}^2} \phi^*_{\tau\tau} + \phi^*_{xx} - \phi^*+ 3w_2^2 \phi^*.
\end{align}
In the above, the smooth cut-off function $\chi (r)$ is given by
$$
\chi(t) \,=\, 1, \quad \text{if}~t<10\delta, \qquad \chi(t) \,=\, 0, \quad\text{if}\ t>20\delta,
$$
and the operators $B_4$, $B_5$ and $B_6$ have been given in \eqref{new opreator B4}-\eqref{B2w-2222new}.
On the other hand, for the local form of the nonlinear part, we denote by the notation
\begin{align}\label{Nmathcal0}
{\mathcal N}(\phi^*)
\,=\,
\frac{1}{\beta^3}\left[\, \mathbf{N}\big(\eta_{3\delta}^{\epsilon}{\tilde\phi}+{\tilde\psi}\big) -3\, \mathbf{U}^2 \, {\tilde\psi} \,\right],
 \quad \text{for $|\epsilon s|\leq 10\delta$}.
\end{align}
where ${\tilde\psi}$ is concerned as an operator of $\tilde\phi$.\qed
\end{lemma}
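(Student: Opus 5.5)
The plan is a direct change of variables, reversing the chain of transformations of Section~\ref{Section3}. Starting from $\mathbf{L}(\tilde\phi)=\Delta\tilde\phi-[1+\epsilon^2W]\tilde\phi+3\mathbf U^2\tilde\phi$ on $\{|\epsilon s|\le 10\delta\}$, I will repeat each step of Steps 1--3 with $v$ replaced by $\tilde\phi$.

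\textbf{Step 1 (stretched Fermi coordinates).} Since $\Delta$ is linear, write it in the coordinates $(s,z)$ of \eqref{eqszfinal}. As in \eqref{equationlocal}, this gives $\tilde\phi_{ss}+\tilde\phi_{zz}+B_1(\tilde\phi)$.

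\textbf{Step 2 (the potential).} Expand $1+\epsilon^2W(\epsilon s,\epsilon z)$ by \eqref{expansion of W}. This reproduces every linear term of $\mathcal D$ in \eqref{eq4}, with the cubic $v^3$ replaced by $3\mathbf U^2\tilde\phi$.

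\textbf{Step 3 (rescaling by $\beta$).} Set $\tilde\phi=\beta(\epsilon z)\,\tilde v(\tilde s,z)$ with $\tilde s=\beta s$, and divide by $\beta^3$. The derivative formulas of Step~1 of Section~\ref{Section3.2} are purely linear, so they apply verbatim. The zero-order terms become $-\tilde v+\tfrac{3\mathbf U^2}{\beta^2}\tilde v$. Where $\eta^\epsilon_{3\delta}=1$ we have $\mathbf U=\beta w_2$, so this is $-\tilde v+3w_2^2\tilde v$. The remaining terms are exactly $\tfrac{1}{\beta^2}\tilde v_{zz}+B_3(\tilde v)$.

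\textbf{Step 4 ($\tau$ and $x$).} Pass to $\tau$ by \eqref{ztau} and to $x=\tilde s-f(\tau)$ by \eqref{definition of x 2.14}. These are again linear substitutions. Using the stationarity condition \eqref{stationary assumption}, exactly as in \eqref{eqn-hatv}, the $\epsilon^3 W_t$ term combines with the $\epsilon\kappa\beta^{-1}$ term into $B_4$. Hence the operator becomes $\frac{4\pi^2\epsilon^2}{\tilde\ell^2}\phi^*_{\tau\tau}+\phi^*_{xx}-\phi^*+3w_2^2\phi^*+B_4(\phi^*)+B_5(\phi^*)+B_6(\phi^*)$, which is \eqref{problem-local4} linearized at $w_2$.

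\textbf{The point requiring care.} The check that the nonlinear identity for $S(w)$ really is linear in the relevant parts is easy, but one must verify two things. First, on the region $|\epsilon s|\le 10\delta$ the cutoffs satisfy $\eta^\epsilon_{3\delta}\equiv1$ (because $10\delta<6\delta$ fails, I would instead restrict to where both are $1$) and $\chi(\epsilon|s|)\equiv1$. Second, the derivation of \eqref{S(w)} used no property of $w$ beyond linearity in the derivative terms. The first point is the main obstacle: for $6\delta<|\epsilon s|\le10\delta$ the factor $\eta^\epsilon_{3\delta}$ is not $1$. I would therefore state the identity with $3\mathbf U^2/\beta^2$ in place of $3w_2^2$ there, or note that $w_2$ is defined via $\mathbf U/\beta$ on this region, so the formula holds with that interpretation.

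The nonlinear definition \eqref{Nmathcal0} is simply a notation, so nothing is to be proved for it.
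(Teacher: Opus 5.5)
Your argument is the paper's own: the lemma is just the linear part of the change of variables in Steps 1--3 of Section~\ref{Section3}, with the stationarity condition \eqref{stationary assumption} used to assemble $B_4$. The paper only points to that computation, and you are right that the identity with $3w_2^2$ is exact only where $\mathbf U=\beta w_2$.

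One correction to your caveat. Since $\eta^\epsilon_{3\delta}\equiv 1$ only for $|\epsilon s|\le 3\delta$, the computation gives $3(\eta^\epsilon_{3\delta})^2w_2^2$ on the whole range $3\delta<|\epsilon s|\le 10\delta$, not just for $|\epsilon s|>6\delta$. The resulting discrepancy $3\bigl(1-(\eta^\epsilon_{3\delta})^2\bigr)w_2^2\phi^*=O(e^{-c\delta/\epsilon})\,|\phi^*|$ is harmless: it is supported where $\eta^\epsilon_\delta=0$, so it can be moved into the $\tilde\psi$-equation. Redefining $w_2:=\mathbf U/\beta$ there, as you suggest, would instead clash with how $w_2$ enters $L_2$ and $S(w_2)$ on the whole strip.
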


\medskip
Note that the approximate solution $\mathbf{U}$ has unknown parameter pair $(f, e)$, see \eqref{defofbfU}.
We will deal with the following projected problem:
for given $(f, e)$ with constraints in \eqref{constraint-f}-\eqref{constraint-e}, finding
functions $\phi^*\in H^2({\mathfrak S})$, \, and $c(\tau), d(\tau) \in L^2(0, 2\pi)$,
such that
\begin{align} \label{project 6.13}
\mathcal L (\phi^*) \,=\, \eta_\delta^\epsilon(s) \Big[-\frac{1}{\beta^3}{\mathcal E} + {\mathcal N}(\phi^*)\Big]
+ c(\tau) \chi(\epsilon|s|) U'(x)+ d(\tau) \chi(\epsilon|s|){\mathcal Z}(x), \quad \text{for}\,(x, \tau)\in \mathfrak S,
\end{align}

\begin{equation}
\phi^*(x, 0) \,=\, \phi^*(x, 2\pi), \qquad \phi^*_\tau(x, 0) \,=\, \phi^*_\tau(x, 2\pi),
\quad
-\infty<x<+\infty,
\label{periodic condition}
\end{equation}

\begin{equation}
\label{projectedproblem3}
 \int_{-\infty}^{+\infty} \phi^*(x, \tau){\mathcal Z}(x) \,{\mathrm d}x
 \,=\, \int_{-\infty}^{+\infty} \phi^*(x, \tau) U'(x)\,{\mathrm d}x
 \,=\, 0, \quad 0<\tau<2\pi,
\end{equation}
where the definition of $\mathfrak S$ has been given in \eqref{huaxies}.

\medskip
\subsection{ Solvability of the linear problem}\label{Section6.2}\

We shall mention that we here basically follow the method in \cite{delPKowWei1} to show the resolution theory for the linear problem.

We first consider the linear problem
\begin{align} \label{linearphi1}
 \frac{\epsilon^2 4\pi^2}{{\tilde\ell}^2} \phi_{\tau\tau} + \phi_{xx} - \phi+3U^2 \phi = h,
\quad \text{for}\,(x, \tau)\in \mathfrak S,
\end{align}
\begin{align}
 \phi(x, 0) = \phi(x, 2\pi), \quad \phi_\tau(x, 0) = \phi_\tau(x, 2\pi), \quad \text{for}~x\in {\mathbb R},
\label{linearphi2}
\end{align}
\begin{align} \label{linearphi3}
 \int_{\mathbb R}\phi(x, \tau)\,\mathcal Z(x) \,{\mathrm d}x = \int_{\mathbb R}\phi(x, \tau)\,U'(x)\,{\mathrm d}x \,=\, 0, \quad \text{for}~\tau\in [0, 2\pi),
\end{align}
and then give the following lemma.
\begin{lemma}\label{linear lemma6-1}
 Suppose $\phi$ solves \eqref{linearphi1}-\eqref{linearphi3}. Then there exists a constant $C>0$, independent of $\epsilon$, such that
\begin{align} \label{norm h2 mathfrak S estimate}
 \|\phi\|_{H^2_*(\mathfrak S)} \leq C \|h\|_{L^2(\mathfrak S)},
\end{align}
where the  norm  $\|\cdot\|_{_{H^2_*(\mathfrak S)}}$ is given by
\begin{align} \label{norm h2 mathfrak S}
\|\phi\|_{H^2_*(\mathfrak S)}
= \left({\displaystyle\int}_{\mathfrak S} \Bigg[\, |\phi|^2+ |\phi_{x}|^2+ |\phi_{xx}|^2 +\Big|\frac{\epsilon}{\tilde\ell}\phi_{x\tau}\Big|^2
+ \Big|\frac{\epsilon}{\tilde\ell}\phi_{\tau}\Big|^2 + \Big|\frac{\epsilon^2}{\tilde\ell^2}\phi_{\tau\tau}\Big|^2 \, \Bigg] \,{\mathrm d}x {\mathrm d}\tau \right)^{\frac12},
\end{align}
with ${\tilde\ell}$ given in \eqref{lambda*}.
Moreover,
we have
\begin{align} \label{betanorm h2 mathfrak S estimate}
 \|\beta\phi\|_{H^2_*(\mathfrak S)} \leq C \|\beta h\|_{L^2(\mathfrak S)}.
\end{align}

\end{lemma}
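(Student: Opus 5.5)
The plan is to diagonalize in the periodic variable $\tau$ and reduce \eqref{linearphi1}--\eqref{linearphi3} to a family of one-dimensional problems. The key input is the spectral gap of $L_0:=\partial_{xx}-1+3U^2$ on the orthogonal complement of ${\mathcal Z}$ and $U'$. The weighted estimate \eqref{betanorm h2 mathfrak S estimate} then follows by a commutator argument.

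\emph{Step 1: Fourier reduction.} Expand
$$\phi(x,\tau)=\sum_{k\in\mathbb Z}\phi_k(x)e^{ik\tau},\qquad h(x,\tau)=\sum_{k\in\mathbb Z}h_k(x)e^{ik\tau}.$$
The periodicity \eqref{linearphi2} makes this legitimate. Since \eqref{linearphi3} holds for every $\tau$, each $\phi_k$ is orthogonal to ${\mathcal Z}$ and to $U'$ in $L^2(\mathbb R)$. Equation \eqref{linearphi1} becomes
$$L_0\phi_k-\mu_k\phi_k=h_k,\qquad \mu_k:=\frac{4\pi^2\epsilon^2k^2}{{\tilde\ell}^2}\geq 0 .$$

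\emph{Step 2: coercivity, the main point.} The spectrum of $L_0$ consists of the simple eigenvalue $\lambda_0=4$ with eigenfunction ${\mathcal Z}$, the simple eigenvalue $0$ with eigenfunction $U'$ (by non-degeneracy of $U$), and a remainder contained in $(-\infty,-c_1]$ for some $c_1>0$. This gives a coercivity estimate for every $g\in H^1(\mathbb R)$ with $g\perp{\mathcal Z},U'$:
$$\int_{\mathbb R}\big(|g'|^2+|g|^2-3U^2|g|^2\big)\,{\mathrm d}x\ \geq\ c\,\|g\|_{H^1(\mathbb R)}^2 .$$
To obtain the $H^1$ control from the $L^2$ gap, interpolate: write the quadratic form as $\theta$ times itself plus $(1-\theta)$ times itself and use that $U$ is bounded.

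Testing the $k$-th equation against $\bar\phi_k$ then gives
$$(c+\mu_k)\|\phi_k\|_{H^1}^2\leq \|h_k\|_{L^2}\|\phi_k\|_{L^2}.$$
From this we read off three bounds:
$$\|\phi_k\|_{H^1}\leq C\|h_k\|,\qquad \mu_k\|\phi_k\|_{H^1}^2\leq C\|h_k\|^2,\qquad \mu_k\|\phi_k\|\leq \|h_k\|.$$
Reading the equation as $\phi_k''=h_k+(1-3U^2+\mu_k)\phi_k$ and using the third bound, we get $\|\phi_k''\|\leq C\|h_k\|$.

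No gap condition is needed at this stage. The two resonant families \eqref{eigenfunction-problem} are exactly what the orthogonality \eqref{linearphi3} removes.

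\emph{Step 3: summation.} Sum over $k$ using Parseval. The components of \eqref{norm h2 mathfrak S} correspond as follows:
\begin{itemize}
\item $\epsilon^2{\tilde\ell}^{-2}|\phi_\tau|^2$ corresponds to $\mu_k|\phi_k|^2$;
\item $\epsilon^2{\tilde\ell}^{-2}|\phi_{x\tau}|^2$ corresponds to $\mu_k|\phi_k'|^2$;
\item $\epsilon^4{\tilde\ell}^{-4}|\phi_{\tau\tau}|^2$ corresponds to $\mu_k^2|\phi_k|^2$;
\item $|\phi|^2,|\phi_x|^2,|\phi_{xx}|^2$ correspond to $|\phi_k|^2,|\phi_k'|^2,|\phi_k''|^2$.
\end{itemize}
Each of these is controlled by $C\|h_k\|^2$ by Step 2, with constants independent of $\epsilon$ and $k$. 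This proves \eqref{norm h2 mathfrak S estimate}.

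\emph{Step 4: the weighted estimate.} Here $\beta=\beta(\theta(\tau))$ depends only on $\tau$. Set $\psi=\beta\phi$. Then $\psi$ satisfies \eqref{linearphi2}--\eqref{linearphi3}, since multiplying by a function of $\tau$ alone preserves both periodicity and the orthogonality conditions. It solves \eqref{linearphi1} with right-hand side
$$\tilde h=\beta h+\frac{4\pi^2\epsilon^2}{{\tilde\ell}^2}\big(2\beta_\tau\phi_\tau+\beta_{\tau\tau}\phi\big).$$
By \eqref{ztau}, ${\mathrm d}\theta/{\mathrm d}\tau={\tilde\ell}/(2\pi\beta)$, so $\beta_\tau=\beta'{\tilde\ell}/(2\pi\beta)$, and $\beta_{\tau\tau}$ has a similar expression. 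Combining this with \eqref{beta-bound}, $|\beta'|,|\beta''|\le C\beta$, we get
$$\Big|\frac{\epsilon^2}{{\tilde\ell}^2}\beta_{\tau\tau}\phi\Big|\leq C\epsilon^2|\beta\phi|,\qquad \Big|\frac{\epsilon^2}{{\tilde\ell}^2}\beta_\tau\phi_\tau\Big|\leq C\epsilon\,\beta\Big|\frac{\epsilon}{\tilde\ell}\phi_\tau\Big|.$$
Next rewrite $\beta\phi_\tau=(\beta\phi)_\tau-\beta_\tau\phi$. This gives
$$\Big\|\frac{\epsilon^2}{{\tilde\ell}^2}\big(2\beta_\tau\phi_\tau+\beta_{\tau\tau}\phi\big)\Big\|_{L^2(\mathfrak S)}\leq C\epsilon\,\|\beta\phi\|_{H^2_*(\mathfrak S)}.$$
Applying \eqref{norm h2 mathfrak S estimate} to $\psi$ yields
$$\|\beta\phi\|_{H^2_*}\leq C\|\beta h\|_{L^2}+C\epsilon\|\beta\phi\|_{H^2_*},$$
and for $\epsilon$ small the last term is absorbed into the left-hand side.

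The only substantive obstacle is the uniform coercivity in Step 2. The weighted step works only because the derivatives of $\beta$ in $\tau$ carry exactly the factors ${\tilde\ell}/\beta$ that the norm's $\epsilon/\tilde\ell$ weights compensate, which is what \eqref{beta-bound} provides.
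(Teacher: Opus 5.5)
Your argument is correct and follows the paper's proof. Both expand in Fourier modes in $\tau$, use mode-by-mode coercivity of $-(\partial_{xx}-1+3U^2)$ on the orthogonal complement of ${\mathcal Z}$ and $U'$, sum with Parseval, and handle $\beta\phi$ by writing its equation and absorbing the $O(\epsilon)$ commutator, which the paper leaves implicit when it says the weighted bound follows ``directly''.

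One display needs fixing. Testing against $\bar\phi_k$ gives only $c\|\phi_k\|_{H^1}^2+\mu_k\|\phi_k\|_{L^2}^2\le\|h_k\|\,\|\phi_k\|$, not $(c+\mu_k)\|\phi_k\|_{H^1}^2$ on the left; that stronger form fails for large $\mu_k$. Your three bounds still follow from the correct inequality, the middle one via $\mu_k\|\phi_k\|_{H^1}^2\le c^{-1}\mu_k\|h_k\|\,\|\phi_k\|\le c^{-1}\|h_k\|^2$.
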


\begin{proof}
 We express $h$ and $\phi$ as
\begin{align} \label{fourier-h}
 h(x, \tau) = \sum_{k=0}^\infty \Big[h_{1k} (x)\cos k\tau + h_{2k} (x)\sin k\tau\Big],
 \\[2mm]
 \label{fourier-phi}\phi(x, \tau) = \sum_{k=0}^\infty \Big[\phi_{1k} (x)\cos k\tau + \phi_{2k} (x)\sin k\tau\Big].
\end{align}
 Then we can  get
\begin{align} \label{linear philk1}
 - \frac{\epsilon^2k^2 4\pi^2}{{\tilde\ell}^2}\phi_{jk}+ \phi_{jk, xx} - \phi_{jk}+3U^2\phi_{jk} = h_{jk}\quad x\in {\mathbb R},
\end{align}
and $\phi_{jk}$ satisfies the orthogonality conditions
\begin{align} \label{linear philk2}
 \int_{\mathbb R}\phi_{jk}{\mathcal Z} \,{\mathrm d}x= \int_{\mathbb R}\phi_{jk} U' \,{\mathrm d}x \,=\, 0,
\end{align}
where $j=1,2$ and $k=0,1,2,\cdots$.

We first derive the estimates for $\phi_{jk}$ with $j=1,2$ and $k=0,1,2,\cdots$.

\smallskip\noindent$\clubsuit$\
 Consider the bilinear form
\begin{align*}
 B(\psi, \psi) = \int_{\mathbb R}\Big[|\psi_x|^2+ (1-3U^2)|\psi|^2 \Big] \,{\mathrm d}x.
\end{align*}
Since \eqref{linear philk2}, we can get
\begin{align} \label{bilinear 1}
 \|\phi_{jk}\|^2_{L^2({\mathbb R})} + \|\phi_{jk, x}\|^2_{L^2({\mathbb R})} \leq C B(\phi_{jk}, \phi_{jk}).
\end{align}

\smallskip\noindent$\clubsuit$\
Using \eqref{linear philk1} and \eqref{bilinear 1}, we can  claim that
\begin{align} \label{lin7-12}
\Bigg[1+\frac{\epsilon^4k^4}{{\tilde\ell}^4}\Bigg] \|\phi_{jk}\|^2_{L^2({\mathbb R})}
+ \Bigg[1+\frac{\epsilon^2k^2}{{\tilde\ell}^2}\Bigg] \|\phi_{jk, x}\|^2_{L^2({\mathbb R})} \leq C\|h_{jk}\|^2_{L^2({\mathbb R})}.
\end{align}
In fact, from \eqref{linear philk1}, we have
\begin{align*}
 \Bigg[1+\frac{\epsilon^2k^2}{{\tilde\ell}^2}\Bigg] \|\phi_{jk}\|^2_{L^2({\mathbb R})} + \|\phi_{jk, x}\|^2_{L^2({\mathbb R})} \leq C\|h_{jk}\|^2_{L^2({\mathbb R})}.
\end{align*}
 If $ \frac{\epsilon^2k^2}{{\tilde\ell}^2} \leq 1$, then we have $ \frac{\epsilon^4k^4}{{\tilde\ell}^4} \leq \frac{\epsilon^2k^2}{{\tilde\ell}^2}$. We can obtain \eqref{lin7-12}  directly .

\noindent If $ \frac{\epsilon^2k^2}{{\tilde\ell}^2} \geq 1$, then from \eqref{linear philk1}, we have
\begin{align*}
 \frac{\epsilon^2k^2}{{\tilde\ell}^2} B(\phi_{jk}, \phi_{jk}) + \frac{\epsilon^4k^4}{{\tilde\ell}^4} \|\phi_{jk}\|^2_{L^2({\mathbb R})} \leq C\|h_{jk}\|^2_{L^2({\mathbb R})} + (1-c)\frac{\epsilon^4k^4}{{\tilde\ell}^4} \|\phi_{jk}\|^2_{L^2({\mathbb R})},
\end{align*}
 i.e.,
\begin{align*}
 \frac{\epsilon^2k^2}{{\tilde\ell}^2} B(\phi_{jk}, \phi_{jk}) + c\frac{\epsilon^4k^4}{{\tilde\ell}^4} \|\phi_{jk}\|^2_{L^2({\mathbb R})} \leq C\|h_{jk}\|^2_{L^2({\mathbb R})},
\end{align*}
with $c\in (0, 1)$.
 Then by \eqref{bilinear 1}, we get \eqref{lin7-12}.

\smallskip\noindent$\clubsuit$\
By \eqref{lin7-12} and \eqref{linear philk1}, we can directly get
\begin{align}\label{second order estiamte}
\|\phi_{jk, xx}\|^2_{L^2({\mathbb R})} \leq C\|h_{jk}\|^2_{L^2({\mathbb R})}.
\end{align}

 From the expression of $\phi$ as in \eqref{fourier-phi}, we have
\begin{align*}
 \phi_x(x, \tau) &= \sum_{k=0}^\infty \Big[\phi_{1k, x} (x)\cos k\tau + \phi_{2k, x} (x)\sin k\tau\Big],
 \\[2mm]
 \phi_{xx}(x, \tau) &= \sum_{k=0}^\infty \Big[\phi_{1k, xx} (x)\cos k\tau + \phi_{2k, xx} (x)\sin k\tau\Big],
 \\[2mm]
 \phi_\tau(x, \tau) & = k\sum_{k=0}^\infty \Big[-\phi_{1k} (x)\sin k\tau + \phi_{2k} (x)\cos k\tau\Big],
 \\[2mm]
 \phi_{\tau\tau}(x, \tau) & = -k^2\sum_{k=0}^\infty \Big[\phi_{1k} (x)\cos k\tau + \phi_{2k} (x)\sin k\tau\Big],
 \\[2mm]
 \phi_{x\tau}(x, \tau) & = k\sum_{k=0}^\infty \Big[-\phi_{1k, x} (x)\sin k\tau + \phi_{2k,x} (x)\cos k\tau\Big].
\end{align*}
Therefore, we can derive that
\begin{align*}
 \|\phi\|_{L^2(\mathfrak S)}^2 & =\pi \sum_{k=0}^\infty \left[\|\phi_{1k}\|^2_{L^2({\mathbb R})}+ \|\phi_{2k}\|^2_{L^2({\mathbb R})}\right],
 \\[2mm]
 \|\phi_x\|_{L^2(\mathfrak S)}^2 & = \pi\sum_{k=0}^\infty \left[\|\phi_{1k, x}\|^2_{L^2({\mathbb R})}+ \|\phi_{2k, x}\|^2_{L^2({\mathbb R})}\right],
 \\[2mm]
 \|\phi_{xx}\|_{L^2(\mathfrak S)}^2 &= \pi \sum_{k=0}^\infty \left[\|\phi_{1k, xx}\|^2_{L^2({\mathbb R})}+ \|\phi_{2k, xx}\|^2_{L^2({\mathbb R})}\right],
 \\[2mm]
 \|\phi_\tau\|_{L^2(\mathfrak S)}^2 &= \pi \sum_{k=0}^\infty \left[k^2\|\phi_{1k}\|^2_{L^2({\mathbb R})}+ k^2\|\phi_{2k}\|^2_{L^2({\mathbb R})}\right],
 \\[2mm]
 \|\phi_{\tau\tau}\|_{L^2(\mathfrak S)}^2 &=\pi \sum_{k=0}^\infty \left[k^4\|\phi_{1k}\|^2_{L^2({\mathbb R})}+ k^4\|\phi_{2k}\|^2_{L^2({\mathbb R})}\right],
 \\[2mm]
 \|\phi_{x\tau}\|_{L^2(\mathfrak S)}^2 &= \pi \sum_{k=0}^\infty \left[k^2\|\phi_{1k, x}\|^2_{L^2({\mathbb R})}+ k^2\|\phi_{2k,x}\|^2_{L^2({\mathbb R})}\right].
\end{align*}
 Then from \eqref{lin7-12}, \eqref{second order estiamte}, we have
\begin{align*}
 &\|\phi\|_{L^2(\mathfrak S)}^2 + \|\phi_x\|_{L^2(\mathfrak S)}^2 + \|\phi_{xx}\|_{L^2(\mathfrak S)}^2+ \frac{\epsilon^2}{\tilde\ell^2} \|\phi_\tau\|_{L^2(\mathfrak S)}^2 + \frac{\epsilon^4}{\tilde\ell^4} \|\phi_{\tau\tau}\|_{L^2(\mathfrak S)}^2 + \frac{\epsilon^2}{\tilde\ell^2} \|\phi_{x\tau}\|_{L^2(\mathfrak S)}^2
\nonumber\\[2mm]
 & \leq C\sum_{k=0}^\infty \|h_{jk}\|^2_{L^2({\mathbb R})} \leq  C\|h\|_{L^2(\mathfrak S)}^2.
\end{align*}
This finishes the proof of \eqref{norm h2 mathfrak S estimate}.

 Next, we consider the proof of \eqref{betanorm h2 mathfrak S estimate}.
By denoting \[ \hat\phi = \beta \phi, \] we have,
for $\,(x, \tau)\in \mathfrak S$,
\begin{align*}
 \frac{\epsilon^2 4\pi^2}{{\tilde\ell}^2} \Big(\frac{\hat\phi} {\beta} \Big)_{\tau\tau} + \frac1\beta\Big[\hat\phi_{xx} - \hat\phi+3U^2 \hat\phi \Big]= h,
\end{align*}
i.e.,
\begin{align*}
\frac1\beta\Big[ \frac{\epsilon^2 4\pi^2}{{\tilde\ell}^2} \hat\phi_{\tau\tau} + \hat\phi_{xx} - \hat\phi+3U^2 \hat\phi \Big] + \frac{\epsilon^2 4\pi^2}{{\tilde\ell}^2}\Big[2\hat\phi_\tau \Big(\frac1\beta \Big)_\tau + \hat\phi \Big(\frac1\beta \Big)_{\tau\tau} \Big] = h,
\end{align*}
i.e.,
\begin{align*}
\frac1\beta\Big[ \frac{\epsilon^2 4\pi^2}{{\tilde\ell}^2} \hat\phi_{\tau\tau} + \hat\phi_{xx} - \hat\phi+3U^2 \hat\phi \Big] + \frac{\epsilon^2 4\pi^2}{{\tilde\ell}^2}\Big[2\hat\phi_\tau \Big(-\frac{\beta'}{\beta^2} \frac{\tilde \ell} {2\pi \beta}\Big) + \hat\phi \Big(-\frac{\beta''} {\beta^3} +3\frac{|\beta'|^2} {\beta^4} \Big)\frac{\tilde \ell^2} {4\pi^2\beta} \Big] = h,
\end{align*}
with
\begin{align*}
\Big(\frac1\beta \Big)_\tau = -\frac{\beta'}{\beta^2} \frac{\tilde \ell} {2\pi \beta},
\qquad
\Big(\frac1\beta \Big)_{\tau\tau} = \Big(-\frac{\beta''} {\beta^3} +3\frac{|\beta'|^2} {\beta^4} \Big)\frac{\tilde \ell^2} {4\pi^2\beta}.
\end{align*}
Then, by setting
\begin{align*}
 \tilde h
& =\beta h - \epsilon^2\Big[2\hat\phi_\tau \Big(-\frac{\beta'}{\beta^2} \frac{2\pi}{\tilde \ell} \Big) + \hat\phi \Big(-\frac{\beta''} {\beta^3} +3\frac{|\beta'|^2} {\beta^4} \Big) \Big],
\end{align*}
 we get
\begin{align*}
\frac{\epsilon^2 4\pi^2}{{\tilde\ell}^2} \hat\phi_{\tau\tau} + \hat\phi_{xx} - \hat\phi+3U^2 \hat\phi = \tilde h,
\end{align*}
with the same conditions in \eqref{linearphi2}-\eqref{linearphi3} by replacing $\phi$ with $\hat\phi$.
By \eqref{beta-bound}, we get \eqref{betanorm h2 mathfrak S estimate} directly from \eqref{norm h2 mathfrak S estimate}.
We finish the proof.
\end{proof}

Next, we consider the following projected problem
\begin{align} \label{pro-linearphi1}
 \frac{\epsilon^2 4\pi^2}{{\tilde\ell}^2} \phi_{\tau\tau} + \phi_{xx} - \phi+3U^2 \phi = h+c(\tau) U'(x)+ d(\tau){\mathcal Z}(x),
 \quad \text{for}\,(x, \tau)\in \mathfrak S,
\end{align}
\begin{align}
 \phi(x, 0) = \phi(x, 2\pi), \quad \phi_\tau(x, 0) = \phi_\tau(x, 2\pi), \quad \text{for}~x\in {\mathbb R},
\end{align}
\begin{align} \label{pro-linearphi3}
 \int_{\mathbb R}\phi(x,\tau){\mathcal Z}(x) \,{\mathrm d}x = \int_{\mathbb R}\phi(x,\tau) U'(x)\,{\mathrm d}x \,=\, 0, \quad \text{for}~\tau\in [0, 2\pi).
\end{align}

\begin{lemma} \label{pro-linear lemma6-1}
Problem \eqref{pro-linearphi1}-\eqref{pro-linearphi3} has a unique solution with the estimates
\begin{align} \label{pro estimate H2-l2}
 \|\phi\|_{H^2_*(\mathfrak S)} \leq C \|h\|_{L^2(\mathfrak S)},
\end{align}
and
\begin{align} \label{pro estimate H2-l2-beta}
 \|\beta\phi\|_{H^2_*(\mathfrak S)} \leq C \|\beta h\|_{L^2(\mathfrak S)}.
\end{align}
\end{lemma}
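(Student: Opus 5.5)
The plan is to reduce the problem to the unprojected result, Lemma \ref{linear lemma6-1}, by first identifying the multipliers $c$ and $d$ from the orthogonality conditions. Test \eqref{pro-linearphi1} against $U'(x)$ and against ${\mathcal Z}(x)$ in $x$ for each fixed $\tau$. Use \eqref{pro-linearphi3}: differentiating it twice in $\tau$ gives $\int_{\mathbb R}\phi_{\tau\tau}U'\,{\mathrm d}x=\int_{\mathbb R}\phi_{\tau\tau}{\mathcal Z}\,{\mathrm d}x=0$. The operator $\partial_{xx}-1+3U^2$ is self-adjoint on $L^2({\mathbb R})$, kills $U'$, and maps ${\mathcal Z}$ to $\lambda_0{\mathcal Z}$. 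Together with \eqref{pro-linearphi3}, the left-hand side therefore integrates to zero in both cases. Since $U'$ is odd and ${\mathcal Z}$ is even, $\int_{\mathbb R} U'{\mathcal Z}\,{\mathrm d}x=0$, so we obtain explicitly
$$
c(\tau)=-\frac{\int_{\mathbb R} h\,U'\,{\mathrm d}x}{\int_{\mathbb R}|U'|^2\,{\mathrm d}x},\qquad d(\tau)=-\int_{\mathbb R} h\,{\mathcal Z}\,{\mathrm d}x .
$$
With this choice, $\tilde h:=h+cU'+d{\mathcal Z}$ is exactly the $L^2({\mathbb R})$-orthogonal projection of $h$ onto $\{U',{\mathcal Z}\}^\perp$, taken slice by slice in $\tau$. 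Consequently $\|\tilde h\|_{L^2(\mathfrak S)}\le\|h\|_{L^2(\mathfrak S)}$, and, because $\beta$ depends only on $\tau$, also $\|\beta\tilde h\|_{L^2(\mathfrak S)}\le\|\beta h\|_{L^2(\mathfrak S)}$.

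Next comes existence, which I would build through Fourier series in $\tau$ as in the proof of Lemma \ref{linear lemma6-1}. Expand $\tilde h$ as in \eqref{fourier-h}. Each coefficient $\tilde h_{jk}$ is orthogonal to $U'$ and ${\mathcal Z}$, since the projection commutes with the Fourier decomposition. For each mode solve the ODE
$$
\phi_{jk,xx}-\Big(1+\frac{4\pi^2\epsilon^2k^2}{\tilde\ell^2}\Big)\phi_{jk}+3U^2\phi_{jk}=\tilde h_{jk}
$$
in the closed subspace $X=\{U',{\mathcal Z}\}^\perp\cap H^2({\mathbb R})$. The operator $-\partial_{xx}+1-3U^2$ leaves $X$ invariant, because it preserves the span of its eigenfunctions $U'$ and ${\mathcal Z}$ and is self-adjoint. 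On $X$ the quadratic form $B$ is coercive by \eqref{bilinear 1}, and adding $\frac{4\pi^2\epsilon^2k^2}{\tilde\ell^2}\|\psi\|^2$ keeps it coercive. Lax--Milgram in $X\cap H^1$ therefore gives a unique $\phi_{jk}$ satisfying \eqref{linear philk2}. Its $H^2$ regularity follows from the equation.

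The modewise bounds \eqref{lin7-12} and \eqref{second order estiamte} then hold with $\tilde h_{jk}$ in place of $h_{jk}$. Summing via Parseval, as in Lemma \ref{linear lemma6-1}, shows that the series converges in the $H^2_*(\mathfrak S)$ norm, that its sum $\phi$ solves the problem with the periodic conditions, and that $\|\phi\|_{H^2_*(\mathfrak S)}\le C\|\tilde h\|_{L^2}\le C\|h\|_{L^2}$, which is \eqref{pro estimate H2-l2}.

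For uniqueness, suppose $h=0$. The computation in the first paragraph forces $c=d=0$, so $\phi$ solves \eqref{linearphi1}--\eqref{linearphi3} with zero right-hand side, and \eqref{norm h2 mathfrak S estimate} gives $\phi=0$.

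For \eqref{pro estimate H2-l2-beta}, note that $\phi$ solves \eqref{linearphi1}--\eqref{linearphi3} with right-hand side $\tilde h$, so \eqref{betanorm h2 mathfrak S estimate} applies directly and gives $\|\beta\phi\|_{H^2_*}\le C\|\beta\tilde h\|_{L^2}\le C\|\beta h\|_{L^2}$.

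The main obstacle is hidden in that last step. The estimate \eqref{betanorm h2 mathfrak S estimate} was derived by treating the commutator terms $\epsilon^2\frac{\beta'}{\beta^2}\frac{2\pi}{\tilde\ell}\hat\phi_\tau$ and $\epsilon^2\big(\frac{\beta''}{\beta^3}-3\frac{|\beta'|^2}{\beta^4}\big)\hat\phi$ as perturbations. Doing this requires two things. First, $\hat\phi=\beta\phi$ must still satisfy the orthogonality conditions, which holds because $\beta$ depends only on $\tau$. Second, these terms must be absorbable; for this I would check that they are $O(\epsilon)\|\hat\phi\|_{H^2_*}$ using \eqref{beta-bound}, writing $\epsilon^2\frac{1}{\tilde\ell}\hat\phi_\tau=\epsilon\cdot\frac{\epsilon}{\tilde\ell}\hat\phi_\tau$. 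For the projected problem there is one more subtlety. The commutator terms are not themselves orthogonal to $U'$ and ${\mathcal Z}$, so in the $\hat\phi$ equation they generate modified multipliers. I would handle this by projecting those terms as well, which yields a projected problem for $\hat\phi$ with data $\beta\tilde h$ plus an $O(\epsilon)\|\hat\phi\|_{H^2_*}$ error. Applying \eqref{pro estimate H2-l2} to it and absorbing the error for $\epsilon$ small gives the result.
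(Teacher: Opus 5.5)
Your proposal is correct and is essentially the paper's argument: Fourier decomposition in $\tau$, modewise solvability with $c_{jk},d_{jk}$ fixed by orthogonality to $U'$ and ${\mathcal Z}$, and then Lemma~\ref{linear lemma6-1} applied to the data $h+cU'+d{\mathcal Z}$. You also observe that this data is the slice-wise orthogonal projection of $h$, which lets you read off \eqref{pro estimate H2-l2-beta} directly from \eqref{betanorm h2 mathfrak S estimate}, where the paper omits the argument as ``similar''. The extra subtlety in your last paragraph does not arise: the commutator terms have the form $a(\tau)\hat\phi_\tau+b(\tau)\hat\phi$, and differentiating $\int_{\mathbb R}\hat\phi\,U'\,{\mathrm d}x=\int_{\mathbb R}\hat\phi\,{\mathcal Z}\,{\mathrm d}x=0$ in $\tau$ shows they are already orthogonal to $U'$ and ${\mathcal Z}$, so no modified multipliers appear.
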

\begin{proof}
Similar to the analysis in Lemma \ref{linear lemma6-1}, by the expansions \eqref{fourier-h}-\eqref{fourier-phi}, we transfer \eqref{pro-linearphi1}-\eqref{pro-linearphi3} to the following
\begin{align} \label{pro-linear philk1}
 - \frac{\epsilon^2k^2 4\pi^2}{{\tilde\ell}^2}\phi_{jk}+ \phi_{jk, xx} - \phi_{jk}+3U^2\phi_{jk} = h_{jk} +c_{jk}U'+ d_{jk}\mathcal{Z}
 \quad \mbox{in } {\mathbb R},
\end{align}
\begin{align} \label{pro-linear philk2}
 \int_{\mathbb R}\phi_{jk}{\mathcal Z} \,{\mathrm d}x= \int_{\mathbb R}\phi_{jk} U' \,{\mathrm d}x \,=\, 0,
\end{align}
where $j=1,2$ and $k=0,1,2,\cdots$.
By Fredholm's theory, we know that problem \eqref{pro-linear philk1}-\eqref{pro-linear philk2} is solvable if
\begin{align*}
c_{jk} = - \frac{\displaystyle\int_{\mathbb R} h_{jk} U' \,{\mathrm d}x} {\displaystyle\int_{\mathbb R} |U'|^2 \,{\mathrm d}x},
\qquad
d_{jk} = - \frac{\displaystyle\int_{\mathbb R} h_{jk} {\mathcal Z} \,{\mathrm d}x} {\displaystyle\int_{\mathbb R}{\mathcal Z}^2 \,{\mathrm d}x},
\end{align*}
where $j=1,2$ and $k=0,1,2,\cdots$.
Then define
\begin{align*}
\phi(x, \tau) = \sum_{k=0}^\infty \Big[\phi_{1k} (x)\cos k\tau + \phi_{2k} (x)\sin k\tau\Big],
\end{align*}
and correspondingly
\begin{align*}
c(\tau) & = \sum_{k=0}^\infty \Big[c_{1k}\cos k\tau + c_{2k}\sin k\tau\Big],
\\
d(\tau) & = \sum_{k=0}^\infty \Big[d_{1k}\cos k\tau + d_{2k}\sin k\tau\Big].
\end{align*}
 By Lemma \ref{linear lemma6-1}, we have
\begin{align} \label{es1}
 \|\phi\|_{H^2_*(\mathfrak S)} \leq C\ \Big( \|h\|_{L^2(\mathfrak S)} + \|c(\tau) U'\|_{L^2(\mathfrak S)} +\| d(\tau){\mathcal Z}\|_{L^2(\mathfrak S)} \Big).
\end{align}
There holds
\begin{align} \label{es2}
 \|c(\tau) U'(x)\|_{L^2(\mathfrak S)}^2
 & \leq C \sum_{k=0}^\infty \Big(c_{1k}^2+ c_{2k}^2\Big)
 \leq C \sum_{j=1}^2\sum_{k=0}^\infty \int_{\mathbb R}|h_{jk}|^2 \,{\mathrm d}x \leq C\|h\|^2_{L^2(\mathfrak S)}.
\end{align}
 Similarly, we have
\begin{align} \label{es3}
 \|d(\tau){\mathcal Z}(x)\|_{L^2(\mathfrak S)}^2 \leq C\|h\|^2_{L^2(\mathfrak S)}.
\end{align}
By \eqref{es1}, \eqref{es2}, \eqref{es3}, we get \eqref{pro estimate H2-l2}.

The proof of \eqref{pro estimate H2-l2-beta} is similar to \eqref{betanorm h2 mathfrak S estimate}, and we omit it for concise writing.
\end{proof}

\medskip
 Consider the projection problem
\begin{align} \label{L2-pro-linearphi1}
 \mathcal L (\phi^*) = h+c(\tau) U'(x)+ d(\tau){\mathcal Z}(x),
 \quad \text{for}\,(x, \tau)\in \mathfrak S,
\end{align}
\begin{align}
 \phi^*(x, 0) = \phi^*(x, 2\pi), \qquad \phi^*_\tau(x, 0) = \phi^*_\tau(x, 2\pi), \quad \text{for}~x\in {\mathbb R},
\end{align}
\begin{align} \label{L2-pro-linearphi3}
 \int_{\mathbb R}\phi^*(x,\tau){\mathcal Z}(x) \,{\mathrm d}x = \int_{\mathbb R}\phi^*(x,\tau) U'(x)\,{\mathrm d}x \,=\, 0, \quad \text{for}~\tau\in [0, 2\pi).
\end{align}
The operator $\mathcal L$ is given in \eqref{expression of mathcal L}.
The invertibility of $\mathcal L$ in the projection space can be easily obtained from Lemma \ref{pro-linear lemma6-1}
and the Contraction Mapping Theorem.

\begin{lemma} \label{pro-linear lemma6-1-mathcal L}
If assumptions in {\bf{(A1)}} and {\bf{(A2)}} hold, then problem \eqref{L2-pro-linearphi1}-\eqref{L2-pro-linearphi3} has a unique solution with the estimates
\begin{align} \label{pro estimate H2-l2-mathcal}
 \|\phi^*\|_{H^2_*(\mathfrak S)} \leq C \|h\|_{L^2(\mathfrak S)},
\end{align}
and
\begin{align} \label{pro estimate H2-l2-mathcal-beta}
 \|\beta\phi\|_{H^2_*(\mathfrak S)} \leq C \|\beta h\|_{L^2(\mathfrak S)}.
\end{align}
for a universal constant $C$.
\end{lemma}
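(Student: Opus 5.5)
The plan is to treat $\mathcal L$ as a perturbation of the model operator $L_0(\phi)=\frac{\epsilon^2 4\pi^2}{{\tilde\ell}^2}\phi_{\tau\tau}+\phi_{xx}-\phi+3U^2\phi$, for which Lemma \ref{pro-linear lemma6-1} already gives unique solvability of the projected problem together with \eqref{pro estimate H2-l2}-\eqref{pro estimate H2-l2-beta}. We write
\[
\mathcal L(\phi^*)=L_0(\phi^*)+\mathcal R(\phi^*),
\qquad
\mathcal R(\phi^*)=3(w_2^2-U^2)\phi^*+\chi(\epsilon|s|)\big[B_4(\phi^*)+B_5(\phi^*)+B_6(\phi^*)\big].
\]
Then \eqref{L2-pro-linearphi1}-\eqref{L2-pro-linearphi3} is equivalent to the fixed point problem $\phi^*=T(\phi^*)$. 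Here $T(\psi)$ is defined as the unique solution given by Lemma \ref{pro-linear lemma6-1} of the projected problem with right-hand side $h-\mathcal R(\psi)$. Its multipliers $c,d$ are determined through the Fredholm formulas in that lemma. We work in the Banach space $X$ of functions in $H^2_*(\mathfrak S)$ that are $2\pi$-periodic in $\tau$ and satisfy the orthogonality conditions \eqref{L2-pro-linearphi3}.

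The key step is the smallness estimate
\[
\|\mathcal R(\psi)\|_{L^2(\mathfrak S)}\le o(1)\,\|\psi\|_{H^2_*(\mathfrak S)}
\]
as $\epsilon\to0$, uniformly for $(f,e)$ obeying \eqref{constraint-f}-\eqref{constraint-e}. We check it term by term.

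\emph{The potential term.} We have $w_2^2-U^2=O(\epsilon(|e|+|\phi_1|)+\epsilon^2|\phi_2|)$ with exponential decay in $x$. Since $\|e\|_{L^\infty}$ is small and $\kappa/\beta$, $\beta''/\beta^3$, etc.\ are bounded by \eqref{curvature bounded} and \eqref{beta-bound}, this term is $O(\epsilon)\|\psi\|_{L^2}$. The factor $f$ in $\phi_1$ is controlled through \eqref{Linfty of f} by ${\tilde\ell}^{1/2}\epsilon^{1-\alpha}\le C\epsilon^{(1+\varrho)/2-\alpha}$, which is small by \eqref{constraint-alpha-varrho}.

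\emph{The term $B_4$.} It equals $\epsilon\frac{\kappa}{\beta}[\psi_x+\frac23(x+f)\psi]$. On the support of $\chi$ we have $|x+f|=|\beta s|\le C\beta\delta/\epsilon$, so $\epsilon|x+f|\le C\delta\beta$. Dividing by $\beta$ makes this $O(\delta)$, and $\delta$ can be fixed small.

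\emph{The term $B_5$.} Every term carries $\epsilon^2$ together with either $\frac{1}{{\tilde\ell}^2}\times(f'^2,f'')$ or $\frac{1}{{\tilde\ell}}\frac{\beta'}{\beta^2}$, multiplied by at most two derivatives of $\psi$. We pair the derivatives so that $\frac{\epsilon^2}{{\tilde\ell}^2}\psi_{\tau\tau}$ and $\frac{\epsilon}{{\tilde\ell}}\psi_{x\tau}$ appear; these are exactly the quantities in the $H^2_*$ norm \eqref{norm h2 mathfrak S}. The remaining coefficients, such as $\frac{\beta}{{\tilde\ell}}f'$ in $L^\infty$ via \eqref{Linfty of f}, are small. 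The $(x+f)^2$ weights are again absorbed on the support of $\chi$ using $\epsilon|x+f|\le C\beta\delta$.

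\emph{The term $B_6$.} This is handled in the same way. Its coefficients are $O(\epsilon^3)$ times polynomial weights, which are bounded by $\delta$ powers on the support of $\chi$.

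Given the smallness estimate, $T$ is a contraction on $X$:
\[
\|T\psi_1-T\psi_2\|_{H^2_*}\le C\,o(1)\,\|\psi_1-\psi_2\|_{H^2_*}.
\]
The unique fixed point satisfies
\[
\|\phi^*\|_{H^2_*}\le C\|h\|_{L^2}+o(1)\|\phi^*\|_{H^2_*},
\]
which yields \eqref{pro estimate H2-l2-mathcal}. For the weighted estimate \eqref{pro estimate H2-l2-mathcal-beta} we repeat the argument using \eqref{pro estimate H2-l2-beta}. This requires showing that $\|\beta\mathcal R(\psi)\|_{L^2}\le o(1)\|\beta\psi\|_{H^2_*}$. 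Commuting $\beta$ through derivatives produces only $\beta'/\beta$ and $\beta''/\beta$ factors multiplied by $\tilde\ell/\beta$ from $d\theta/d\tau$, and these are bounded by \eqref{beta-bound}, exactly as in the proof of Lemma \ref{linear lemma6-1}.

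I expect the main obstacle to be the $B_5$ and $B_6$ terms containing $f''$, $|f'|^2$ and the unbounded weights $(x+f)^k$. For $f''$, only an $L^2$ bound $\|\frac{\beta}{{\tilde\ell}^2}f''\|_{L^2}\le\epsilon^{1-\alpha}$ is available, not an $L^\infty$ bound. My first attempt is to estimate the product $f''\psi_x$ by $\|\frac{\epsilon^2}{{\tilde\ell}^2}f''\|_{L^2_\tau}\,\|\psi_x\|_{L^2_x L^\infty_\tau}$. The second factor I would control by a one-dimensional Sobolev inequality in $\tau$ using $\psi_x$ and $\frac{\epsilon}{{\tilde\ell}}\psi_{x\tau}$, which costs a factor $({\tilde\ell}/\epsilon)^{1/2}$. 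This leaves $\epsilon^{2-\alpha}({\tilde\ell}/\epsilon)^{1/2}\le\epsilon^{1-\alpha+\varrho/2}$ by \eqref{boundneess of length}, which is still small. For the weights, the cutoff $\chi(\epsilon|s|)$ is essential: without it, $\epsilon|x|$ is unbounded.
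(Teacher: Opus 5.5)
Your proposal is correct and follows the paper's route. The paper's proof is only the remark that the lemma follows from Lemma \ref{pro-linear lemma6-1} and the contraction mapping theorem; your fixed-point formulation $\phi^*=T(\phi^*)$, together with the term-by-term bound on $\mathcal L-L_0$ (including the treatment of $f''\phi^*_x$ via $\sup_\tau$ in $\tau$ at the cost $(\tilde\ell/\epsilon)^{1/2}$), supplies the details the paper leaves implicit. One nuance, which you already note: the $B_4$ term and the $(x+f)^k$-weighted terms are $O(\delta)$ rather than $o(1)$ as $\epsilon\to0$, so the contraction relies on fixing $\delta$ small relative to the universal constant of Lemma \ref{pro-linear lemma6-1}.
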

\begin{proof}
By Lemma \ref{pro-linear lemma6-1}, combined with the Contraction Mapping Theorem, the proof can be completed.
\end{proof}

\subsection{ The $L^2$-estimates of the error terms}\label{Section6.3}\

We denote
\begin{align*}
\mathfrak S_\delta = \mathfrak S \cap \Big\{ |s| < \frac {10\delta}\epsilon \Big\}.
\end{align*}
Note that, if $|s|\leq {10\delta}/\epsilon$, there holds
\begin{align*}
 \frac{1}{\beta^3} {{\mathcal E}} \,=\, S(w_2).
\end{align*}
From the expression of $S(w_2)$ as in \eqref{sw2new}, we can decompose $S(w_2)$ as
\begin{align}\label{E11+E12}
S(w_2) = E_{11} + E_{12},
\end{align}
where
\begin{align} \label{E12}
 E_{12} = \epsilon^3\frac{4\pi^2}{{\tilde\ell}^2} e''\mathcal Z+ \epsilon\lambda_0 e{\mathcal Z},
\end{align}
and
\begin{align}\label{E11}
\beta E_{11} =&
\beta \Big\{\epsilon^2 S_{3} + \epsilon^2 S_{42} + \epsilon^3 S_{5} +\epsilon^3 S_{6}
 +\epsilon^3 S_{7} +\epsilon^3 S_{8}
 +\epsilon^2 {\bf N}_{02}
  +\epsilon^3 {\bf N}_{03}
 + \epsilon^2 {\bf B}_{42}\Big\}
\nonumber\\[2mm]
&
 +\epsilon^4 \frac{4\pi^2} {{\tilde\ell}^2} \frac{\kappa a_2}{\, \beta\, } e'' (x+f)\mathcal Z -\epsilon^4 \frac{ 4\pi^2} {{\tilde\ell}^2} \frac{\kappa a_2 }{\, \beta\, } (x+f) f'' e \mathcal Z' + \frac{\epsilon^4}{\beta} \hat{\mathcal F}_1\left[\tau, x, \beta f, \frac{\beta f''}{\tilde\ell^2} \right]
\nonumber\\[2mm]
&
 +\beta\Bigg\{ \epsilon^3 \frac{4\pi^2}{{\tilde\ell}^2} {\phi}_{1, \tau\tau}
+ \epsilon^4 \frac{4\pi^2}{{\tilde\ell}^2}\phi_{2, \tau\tau}
+B_{5}(\epsilon\phi_1)+B_{6}(\epsilon\phi_1)
\nonumber\\[2mm]
&
\qquad\quad
+ \frac{\epsilon^4}{\beta^2} \mathcal F_1\left[\tau, x, \beta f, \frac{\beta f'}{\tilde\ell}\right]
 +\frac{\epsilon^4} {\beta^2} \mathcal G_1\left[\tau, x, \beta f, \frac{\beta f'}{\tilde\ell}, e, \frac{e'}{\tilde\ell}\right]
\nonumber\\[2mm]
&\qquad\quad
+ \mathbb{N}_1 (\epsilon^2 \phi_2)
+ B_4(\epsilon^2\phi_2)+ B_5 (\epsilon^2{\phi_2})+ B_6 (\epsilon^2{\phi_2})\Bigg\}.
\end{align}

We give the following lemma.
\begin{lemma}
By the assumption $\bf{(B3)}$, the expression of $S(w_2)$ in \eqref{sw2new} and decomposition \eqref{E11+E12}, we easily get
\begin{align} \label{estimate of L2mathfrak S-E11}
\|\beta E_{11}\|_{L^2(\mathfrak S_\delta)} \leq C\left( \epsilon^3+ \epsilon^2\|f\|_* + \epsilon^2\|e\|_{**}\right) \le C\left( \epsilon^3+ \epsilon^{3-\alpha} + \epsilon^{\frac 32 \varrho + \frac 52- 2\alpha}\right) \le C \epsilon^{3-\alpha},
\end{align}
and
\begin{align} \label{Lipschitz of E11-E12 in mathfrak S}
\big\|\beta\left(E_{11} (f_1, e_1) - E_{11} (f_2, e_2)\right)\big\|_{L^2(\mathfrak S_\delta)} \leq C \epsilon^2 \Big[\|f_1- f_2\|_*+ \|e_1- e_2\|_{**}  \Big],
\end{align}
for any $(f_1, e_1)$ and $(f_2, e_2)$ satisfy \eqref{constraint-f}-\eqref{constraint-e}.
\end{lemma}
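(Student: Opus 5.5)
The plan is to estimate $\beta E_{11}$ term by term on $\mathfrak S_\delta$, grouping the terms of \eqref{E11} by their power of $\epsilon$ and by how they depend on $(f,e)$. Four facts will be used throughout: the bounds \eqref{beta-bound} on $\beta',\beta'',\beta'''$, the bounds \eqref{curvature bounded} and \eqref{W_t-control}--\eqref{W_theta-control}, exponential decay in $x$ of $U,\mathcal Z,\varpi_j$ (so any polynomial weight in $x$ is harmless), and the $L^\infty$ control \eqref{Linfty of f}. Integration over $\tau\in(0,2\pi)$ will be carried out in $L^2$.

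\textbf{Terms free of $(f,e)$.} These are the part of $\epsilon^3S_5$ with no $f$, together with $\epsilon^3\phi_{1,\tau\tau}$, $\epsilon^4\phi_{2,\tau\tau}$ and the $f$-independent parts of $B_5(\epsilon\phi_1)$, $B_6(\epsilon\phi_1)$, $B_j(\epsilon^2\phi_2)$, $\mathcal F_1$ and $\mathcal F_2$. Each is $\epsilon^3$ times a bounded coefficient times a decaying profile. For $\epsilon^3\frac{4\pi^2}{\tilde\ell^2}\phi_{1,\tau\tau}$ the factor $\tilde\ell^2/(4\pi^2)$ in $\phi_{1,\tau\tau}$ cancels, leaving ${\mathbf b}_3\varpi_1$ with ${\mathbf b}_3$ bounded by \eqref{beta-bound} and \eqref{curvature bounded}. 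The presence of $\beta$ in the weight requires care, since $\beta$ is unbounded. The coefficients combine as $\beta\cdot\kappa/\beta$ or $\beta\cdot\epsilon^2W_{tt}/\beta^4$, which are bounded because $\epsilon^2|W_{tt}|\le C\beta^2$. These terms therefore contribute $O(\epsilon^3)$ in $L^2(\mathfrak S_\delta)$.

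\textbf{Terms linear in $f$ or $e$.} These are $\epsilon^2S_3$, the linear parts of ${\bf N}_{02}$ and ${\bf B}_{42}$, $S_6$, $S_7$, $S_8$, and the $e''$, $f''$ terms carrying prefactor $\epsilon^4$. Each is $\epsilon^2$ times a combination of $\beta f$, $\beta f'/\tilde\ell$, $\beta f''/\tilde\ell^2$, $e$, $\epsilon e'/\tilde\ell$ and $\epsilon^2 e''/\tilde\ell^2$ with bounded coefficients. For example, in $\epsilon^2S_3$ the term $\epsilon^2\frac{4\pi^2}{\tilde\ell^2}f''U'$ multiplied by $\beta$ is exactly $\epsilon^2\cdot 4\pi^2\cdot\beta f''/\tilde\ell^2$. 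Its $L^2$ norm is therefore $\le C\epsilon^2(\|f\|_*+\|e\|_{**})$.

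\textbf{Terms quadratic or higher.} These are $f^2$, $ff'$, $|f'|^2$, $e^2$, $fe$ and similar. For these I place one factor in $L^\infty$ using \eqref{Linfty of f} and the other in $L^2$. This costs $\tilde\ell^{1/2}\|f\|_*\le C\epsilon^{(\varrho-1)/2}\epsilon^{1-\alpha}$, which is $\ll 1$ precisely because of \eqref{boundneess of length} and \textbf{(B3)}. For $e$ the $L^\infty$ norm is already part of $\|e\|_{**}$. Hence these terms are bounded by $\epsilon^2\|f\|_*$ or $\epsilon^2\|e\|_{**}$. The main obstacle is exactly this balance between the growth $\tilde\ell^{1/2}$ and the smallness coming from the powers of $\epsilon$. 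It must be checked term by term, most delicately in the $|f'|^2U''$ and $f|f'|^2$ terms. There I write $\frac{\epsilon^2}{\tilde\ell^2}|f'|^2\beta=\epsilon^2\bigl(\frac{\beta f'}{\tilde\ell}\bigr)\frac{f'}{\tilde\ell}$ and use $\beta\ge1$.

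\textbf{Conclusion.} Inserting \eqref{constraint-f}--\eqref{constraint-e} gives the chain $\epsilon^3+\epsilon^{3-\alpha}+\epsilon^{\frac32\varrho+\frac52-2\alpha}$. The last exponent exceeds $3-\alpha$ by \eqref{constraint-alpha-varrho}, which yields \eqref{estimate of L2mathfrak S-E11}. For \eqref{Lipschitz of E11-E12 in mathfrak S} the plan is to subtract the two expressions. The $(f,e)$-free terms cancel, and the linear terms give $\epsilon^2(\|f_1-f_2\|_*+\|e_1-e_2\|_{**})$ directly. For the multilinear terms I factor $a_1b_1-a_2b_2=(a_1-a_2)b_1+a_2(b_1-b_2)$ and bound the remaining factors by the smallness just established. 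The coefficients $a_i(\epsilon s,\epsilon z)$ depend on $f$ through $s=(x+f)/\beta$; their differences are handled by the mean value theorem, giving a factor $\epsilon|f_1-f_2|$ times a bounded quantity.
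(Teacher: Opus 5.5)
Your proposal is correct and follows the paper's own proof. Both bound $\beta E_{11}$ term by term in $L^2(\mathfrak S_\delta)$ and control the multilinear pieces by putting one factor in $L^\infty$ through \eqref{Linfty of f}, whose cost $\tilde\ell^{1/2}\|f\|_*\le C\epsilon^{(1+\varrho-2\alpha)/2}\ll 1$ is exactly the paper's \eqref{f infty and f' infty}; your sketch of the Lipschitz bound also fills in what the paper omits as ``similar''.

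One caveat you share with the paper. ${\mathbf b}_3$ contains $\kappa''$, which is bounded by \eqref{stationary assumption} together with \eqref{W_t-control}--\eqref{W_theta-control} (the $W_{t\theta}$, $W_{t\theta\theta}$ bounds), not by \eqref{curvature bounded} as you state. Moreover, the $\epsilon^2W_{ttt}$, $\epsilon^2W_{tttt}$ coefficients coming from $B_6$, and the term $\frac{4\pi^2}{\tilde\ell^2}\phi_{2,\tau\tau}$ (which involves $W_{\theta\theta\theta\theta}$ and $W_{tt\theta\theta}$), tacitly require derivative bounds on $W$ that \textbf{(A2)} does not list.
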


\begin{proof}
Here we only give the proof of \eqref{estimate of L2mathfrak S-E11}.
The proof of \eqref{Lipschitz of E11-E12 in mathfrak S} is similar to \eqref{estimate of L2mathfrak S-E11} and we omit it for concise.
We note that $(f, e)$ satisfies the constraints \eqref{constraint-f}-\eqref{constraint-e}. Moreover, recall \eqref{Linfty of f}, we have
\begin{align} \label{f infty and f' infty}
\|\beta f(\tau)\|_{L^\infty(0, 2\pi)} \le C\tilde\ell^{\frac12} \|f\|_* \le \epsilon^{\frac{1+\varrho-2\alpha} 2} \ll 1,
\qquad
\Big\|\frac{\beta f'(\tau)}{\tilde \ell}\Big\|_{L^\infty(0, 2\pi)} \le C\tilde\ell^{\frac12} \|f\|_*\le C\epsilon^{\frac{1+\varrho-2\alpha} 2} \ll 1,
\end{align}
 and
\begin{align} \label{e Linfty}
 \|e\|_{L^\infty(0, 2\pi) } \le \|e\|_{**} \le C\epsilon^{\frac 32 \varrho + \frac 12- 2\alpha}.
\end{align}
Next, we will give the estimates of $\|\beta E_{11}\|_{L^2(\mathfrak S_\delta)} $.

\smallskip\noindent$\clubsuit$
 Noting the expressions of $\epsilon^2 S_{3}, \epsilon^2 S_{42}, \epsilon^3 S_{5}, \epsilon^3 S_{6}, \epsilon^3 S_{7}, \epsilon^3 S_{8} $ are given in \eqref{expression of s3}, \eqref{barS42}, \eqref{S5}, \eqref{S6}, \eqref{S7}, \eqref{S8}, then we have
\begin{align} \label{estimate in mathcal S-L2-1}
 & \int_0^{2\pi} \int_{|s| \le \frac{10\delta} \epsilon} \Big|\beta \big(\epsilon^2 S_{3} + \epsilon^2 S_{42} + \epsilon^3 S_{5} +\epsilon^3 S_{6}
 +\epsilon^3 S_{7} +\epsilon^3 S_{8} \big)\Big|^2 \,{\mathrm d}x \,{\mathrm d}\tau
\nonumber\\[2mm]
& \le \int_0^{2\pi} \int_{{\mathbb R}} \Big[ \epsilon^4 |\beta\, S_{3}|^2 + \epsilon^4 |\beta\, S_{42}|^2 + \epsilon^6 |\beta\, S_{5}|^2 + \epsilon^6 |\beta\, S_{6}|^2 + \epsilon^6 |\beta\, S_{7}|^2 + \epsilon^6 |\beta\, S_{8}|^2 \Big] \,{\mathrm d}x \,{\mathrm d}\tau
\nonumber\\[2mm]
& \le C \big(\epsilon^6+ \epsilon^4 \|f\|_*^2+ \epsilon^4\|e\|_{**}^2 \big).
\end{align}

\smallskip\noindent$\clubsuit$
Next, by \eqref{N02}, we have
\begin{align} \label{estimate in mathcal S-L2-2}
 \int_0^{2\pi} \int_{|s| \le \frac{10\delta} \epsilon} \epsilon^4\beta^2| {\bf N}_{02}|^2 \,{\mathrm d}x \,{\mathrm d}\tau
& \le C\left( \epsilon^4 \|e\|^2_{L^2(0, 2\pi)} + \epsilon^4 \|\beta fe\|^2_{L^2(0, 2\pi)} + \epsilon^4 \|(\beta f)^2\|^2_{L^2(0, 2\pi)} + \epsilon^4 \|\beta f\|^2_{L^2(0, 2\pi)} \right)
\nonumber\\[2mm]
& \le C \big(\epsilon^4 \|f\|_*^2+ \epsilon^4\|e\|_{**}^2 \big).
\end{align}
The last inequality holds due to \eqref{f infty and f' infty}-\eqref{e Linfty}.

\smallskip\noindent$\clubsuit$
By \eqref{B42}, we have
\begin{align} \label{estimate in mathcal S-L2-3}
 \int_0^{2\pi} \int_{|s| \le \frac{10\delta} \epsilon} \epsilon^4\beta^2|{\bf B}_{42}|^2 \,{\mathrm d}x \,{\mathrm d}\tau
& \le C\left( \epsilon^4 \|(\beta f)^2\|^2_{L^2(0, 2\pi)} + \epsilon^4 \|\beta  f\|^2_{L^2(0, 2\pi)} \right)
 \le C \left( \epsilon^4 \|f\|_*^2+ \epsilon^4\|e\|_{**}^2 \right).
\end{align}

\smallskip\noindent$\clubsuit$
By the expression of $a_2(\epsilon s, \epsilon z)$ given in \eqref{a1a2a3}, we know that
\[|a_2(\epsilon s, \epsilon z)| \le 2, \quad \text{if}~|\epsilon s| \le 10\delta. \]
Then we have
\begin{align} \label{estimate in mathcal S-L2-3-1}
& \int_0^{2\pi} \int_{|s| \le \frac{10\delta} \epsilon}
\beta^2\left|\epsilon^4 \frac{4\pi^2} {{\tilde\ell}^2} a_2\frac{\kappa}{\, \beta\, } e'' (x+f)\mathcal Z \right|^2 \,{\mathrm d}x \,{\mathrm d}\tau
\nonumber\\[2mm]
&+ \int_0^{2\pi} \int_{|s| \le \frac{10\delta} \epsilon} \beta^2\left|\epsilon^4 \kappa a_2 \frac{ 4\pi^2} {{\tilde\ell}^2} \frac{1}{\, \beta\, } (x+f) f'' e \mathcal Z'\right|^2 \,{\mathrm d}x \,{\mathrm d}\tau
\nonumber\\[2mm]
& \le C \left(\epsilon^4 \|f\|_*^2+ \epsilon^4\|e\|_{**}^2 \right).
\end{align}

\smallskip\noindent$\clubsuit$
By the expression of $\hat{\mathcal F}_1\left[\tau, x, \beta f, \frac{\beta f''}{\tilde\ell^2} \right]$ given in \eqref{hat mathcal F1}, we directly get
\begin{align} \label{estimate in mathcal S-L2-4}
 \int_0^{2\pi} \int_{|s| \le \frac{10\delta} \epsilon} \beta^2\left| \frac{\epsilon^4}{\beta} \hat{\mathcal F}_1\left[\tau, x, \beta f, \frac{\beta f''}{\tilde\ell^2} \right] \right|^2\,{\mathrm d}x \,{\mathrm d}\tau \le
 C \left(\epsilon^6+ \epsilon^4 \|f\|_*^2\right).
\end{align}

\smallskip\noindent$\clubsuit$
By \eqref{expression of phi1tautau}, we have
\begin{align} \label{estimate in mathcal S-L2-5}
 & \epsilon^6\int_0^{2\pi} \int_{|s| \le \frac{10\delta} \epsilon}
 \beta^2 \Big|\frac{4\pi^2}{{\tilde\ell}^2} {\phi}_{1, \tau\tau} \Big|^2 \,{\mathrm d}x \,{\mathrm d}\tau
\nonumber\\[2mm]
& \le C \epsilon^6\int_0^{2\pi} \int_{|s| \le \frac{10\delta} \epsilon} \left[ \beta^2 |{\mathbf b}_3|^2\left( \varpi_{1}^2 + f^2\varpi_2^2\right)
+ \beta^2 |{\mathbf b}_2|^2\Big|\frac{f'} {\tilde\ell}\Big|^2\varpi_2^2
+ \beta^2 |{\mathbf b}_1|^2\Big|\frac{f''} {\tilde\ell^2}\Big|^2\varpi_2^2 \right]\,{\mathrm d}x \,{\mathrm d}\tau
\nonumber\\[2mm]
& \le C \big(\epsilon^6+ \epsilon^4 \|f\|_*^2\big).
\end{align}
 Here we note that we used the estimates
$
 |{\mathbf b}_\ell| \le C \frac1{\beta}, \quad \text{for}~\ell= 1, 2, 3.
$

\smallskip\noindent$\clubsuit$
We recall the condition \eqref{W_theta-control} and the definition of $\epsilon^2 \phi_2$ given in \eqref{definition of e2phi2}. By some tedious but straight calculations, we know that
\begin{align}
\Big|\frac{4\pi^2}{{\tilde\ell}^2} \phi_{2, \tau\tau}\Big| \le C \frac{1} {\beta^2} e^{-|x|}.
\end{align}
Then
\begin{align} \label{estimate in mathcal S-L2-6}
 & \epsilon^8\int_0^{2\pi} \int_{|s| \le \frac{10\delta} \epsilon}
 \beta^2 \Big|\frac{4\pi^2}{{\tilde\ell}^2} {\phi}_{2, \tau\tau} \Big|^2 \,{\mathrm d}x \,{\mathrm d}\tau \le C\epsilon^8.
\end{align}

\smallskip\noindent$\clubsuit$
By \eqref{expression of B5phi1}, we have
\begin{align} \label{estimate in mathcal S-L2-7}
\int_0^{2\pi} \int_{|s| \le \frac{10\delta} \epsilon}
\beta^2\left| B_{5}(\epsilon\phi_1) \right|^2 \,{\mathrm d}x \,{\mathrm d}\tau \le C \left(\epsilon^6+ \epsilon^4 \|f\|_*^2+ \epsilon^4\|e\|_{**}^2 \right).
\end{align}
 Moreover, we recall the decomposition
\begin{align*}
 B_6(\epsilon\phi_1) = \frac{\epsilon^4}{\beta^2} \mathcal F_2\left[\theta, x, f, \frac{\beta f'}{\tilde\ell}\right]+ \frac{\epsilon^4}{\beta^2} \hat{\mathcal F}_2\left[\tau, x, \beta f, \frac{\beta f''}{\tilde\ell^2} \right],
\end{align*}
where $\frac{\epsilon^4}{\beta^2} \mathcal F_2\left[\theta, x, f, \frac{\beta f'}{\tilde\ell}\right], \frac{\epsilon^4}{\beta^2} \hat{\mathcal F}_2\left[\tau, x, \beta f, \frac{\beta f''}{\tilde\ell^2} \right]$ are defined in \eqref{mathcal F2}, \eqref{hat mathcal F2}
and satisfies
\begin{align*}
& \left|\frac{\epsilon^4}{\beta^2} \mathcal F_2\left[\theta, x, f, \frac{\beta f'}{\tilde\ell}\right]\right| \le \frac{\epsilon^4}{\beta^2} e^{-c|x|} \left( 1+ \beta |f| +\Big|\frac{\beta f'}{\tilde\ell}\Big| \right),
\\[2mm]
 &\left| \frac{\epsilon^4}{\beta^2} \hat{\mathcal F}_2\left[\tau, x, \beta f, \frac{\beta f''}{\tilde\ell^2} \right]\right| \le \frac{\epsilon^4}{\beta^2} e^{-c|x|} \left( 1+ \beta |f| + \Big|\frac{\beta f''}{\tilde\ell^2}\Big| \right),
\end{align*}
if $|\epsilon s| \le 10\delta$.
Then we can directly get
\begin{align} \label{estimate in mathcal S-L2-8}
\int_0^{2\pi} \int_{|s| \le \frac{10\delta} \epsilon} \beta^2\left| B_{6}(\epsilon\phi_1) \right|^2 \,{\mathrm d}x \,{\mathrm d}\tau \le C \left(\epsilon^6+ \epsilon^4 \|f\|_*^2 \right).
\end{align}

 \smallskip\noindent$\clubsuit$
 By \eqref{mathcal F1}, \eqref{hat mathcal F1},  if $ |\epsilon s| \le 10\delta$, we have
\begin{align*}
 &\left|\frac{\epsilon^4}{\beta^2} \mathcal F_1\left[\tau, x, \beta f, \frac{\beta f'}{\tilde\ell}\right] \right| \le \frac{\epsilon^4}{\beta^2} e^{-c|x|} \left( 1+ \beta |f| +\Big|\frac{\beta f'}{\tilde\ell}\Big| \right),
 \\[2mm]
 &\left| \frac{\epsilon^4} {\beta^2} \mathcal G_1\left[\tau, x, \beta f, \frac{\beta f'}{\tilde\ell}, e, \frac{e'}{\tilde\ell}\right]\right| \le \frac{\epsilon^4}{\beta^2} e^{-c|x|} \left( 1+ \beta |f| +\Big|\frac{\beta f'}{\tilde\ell}\Big| + |e| + \Big|\frac{e'}{\tilde\ell}\Big| \right).
\end{align*}
Then
\begin{align} \label{estimate in mathcal S-L2-9}
& \int_0^{2\pi} \int_{|s| \le \frac{10\delta} \epsilon}\Bigg\{ \beta^2\left|\frac{\epsilon^4}{\beta^2} \mathcal F_1\left[\tau, x, \beta f, \frac{\beta f'}{\tilde\ell}\right] \right|^2 + \beta^2\left| \frac{\epsilon^4} {\beta^2} \mathcal G_1\left[\tau, x, \beta f, \frac{\beta f'}{\tilde\ell}, e, \frac{e'}{\tilde\ell}\right]\right|^2\Bigg\} \,{\mathrm d}x \,{\mathrm d}\tau
\nonumber\\[2mm]
&
\le C \left(\epsilon^6+ \epsilon^4 \|f\|_*^2+ \epsilon^4\|e\|_{**}^2 \right).
\end{align}

\smallskip\noindent$\clubsuit$
At last, by \eqref{definition of e2phi2} and recalling the definition of $B_4, B_5, B_6$ given in \eqref{new opreator B4}, \eqref{new opreator B5}, \eqref{B2w-2222new}, we have
\begin{align} \label{estimate in mathcal S-L2-10}
 & \int_0^{2\pi} \int_{|s| \le \frac{10\delta} \epsilon} \beta^2
 \left| \mathbb{N}_1 (\epsilon^2 \phi_2)
+ B_4(\epsilon^2\phi_2)+ B_5 (\epsilon^2{\phi_2})+ B_6 (\epsilon^2{\phi_2}) \right|^2 \,{\mathrm d}x \,{\mathrm d}\tau
\nonumber\\[2mm]
&
\le C \left(\epsilon^6+ \epsilon^4 \|f\|_*^2+ \epsilon^4\|e\|_{**}^2 \right).
\end{align}

 Combining all the estimates in \eqref{estimate in mathcal S-L2-1}-\eqref{estimate in mathcal S-L2-10},
we can get \eqref{estimate of L2mathfrak S-E11}.
\end{proof}

\subsection{ Solving the nonlinear intermediate problem}\label{Section6.4}\

In this part, by using the facts given in the above, we will solve problem \eqref{project 6.13}-\eqref{projectedproblem3}.
The term
\begin{equation*}
 E_{12} = \epsilon^3\frac{4\pi^2}{{\tilde\ell}^2} e''\mathcal Z+ \epsilon\lambda_0 e{\mathcal Z}
\end{equation*}
has precisely the form $d \chi{\mathcal Z}$ and can be absorbed in the term.
Therefore, problem \eqref{project 6.13}-\eqref{projectedproblem3} can be replaced by the following
\begin{align} \label{projectedproblem1-refined-e11}
\mathcal L (\phi^*) \,=\, \eta_\delta^\epsilon(s) \Big[- E_{11}+ {\mathcal N}(\phi^*)\Big]
+ c(\tau) \chi(\epsilon|x|) U'(x)
+ d(\tau) \chi(\epsilon|x|){\mathcal Z}(x), \quad \text{for}\,(x, \tau)\in \mathfrak S,
\end{align}

\begin{equation}
{\phi^* }(x, 0) \,=\, {\phi^* }(x, 2\pi), \quad \phi^*_\tau(x, 0) \,=\, \phi^* _\tau(x, 2\pi),
\quad
-\infty<x<+\infty,
\end{equation}

\begin{equation}
\label{projectedproblem3-refined-aa}
 \int_{-\infty}^{+\infty} \phi^*(x, \tau){\mathcal Z}(x) \,{\mathrm d}x \,=\, \int_{-\infty}^{+\infty} \phi^*(x, \tau) U'(x)\,{\mathrm d}x \,=\, 0, \quad 0<\tau<2\pi.
\end{equation}

\medskip
Let ${\mathcal T}$ be the operator defined by Lemma \ref{pro-linear lemma6-1-mathcal L},
then problem \eqref{projectedproblem1-refined-e11}-\eqref{projectedproblem3-refined-aa} is equivalent to the fixed-point problem
\begin{align} \label{mathbb-A}
 \phi^* ={\mathcal T}\big(-E_{11} +{\mathcal N}(\phi^*)\big) : =\mathbb A (\phi^*).
\end{align}
 By the effect of cut-off function $\chi(\epsilon|x|)$, using the maximum principle and Sobolev embedding, we can get
\begin{align*}
 |\left(\beta\phi^*\right)(x, \tau)| \,+\, |\nabla\left(\beta\phi^*\right)(x, \tau)| \,\leq\, \|\beta\phi^*\|_{H^2_*(\mathfrak S)} e^{-\frac{2\delta} \epsilon}, \quad \text{for}~|s|\geq \frac{40\delta}{\varepsilon},
\end{align*}
 where the norm $\|\cdot\|_{H^2_*(\mathfrak S)} $ is defined in \eqref{norm h2 mathfrak S}.
 In fact, from the expression of $\mathcal L (\phi^*)$ given in \eqref{expression of mathcal L}, we know that, if $|\epsilon s|>20 \delta$, $\phi^*$ satisfies an equation of the form
\begin{align} \label{asymptotic of phi equation}
\frac{\epsilon^2 4\pi^2}{{\tilde\ell}^2} \phi^*_{\tau\tau} + \phi^*_{xx} - \big(1+ o(1)\big)\phi^* = 0.
\end{align}
Then
we get
\begin{align*}
\frac1\beta\left[\frac{\epsilon^2 4\pi^2}{{\tilde\ell}^2} \left(\beta \phi^*\right)_{\tau\tau} +2\epsilon^2\Big(-\frac{\beta'}{\beta^2} \frac{2\pi}{\tilde \ell} \Big)\left(\beta \phi^*\right)_\tau+ \left(\beta \phi^*\right)_{xx} - \big(1+ o(1)\big) \left(\beta \phi^*\right) \right] = 0.
\end{align*}
For $|s|\geq \frac{20 \delta} {\epsilon}$, we use a barrier function of the form
\begin{align*}
\varphi(x, \tau) = C \|\beta \phi^*\|_{\infty} e^{-\frac12\big(x- \frac{20\beta\delta}{\epsilon}\big)}.
\end{align*}
It's easy to verify that $\varphi(x, \tau) $ is a sup-solution of \eqref{asymptotic of phi equation}
and satisfies
\begin{align*}
\varphi(x, \tau) > \left|\left(\beta \phi^*\right)(x, \tau)\right|,
\quad
\mbox{at}\ s= \frac{20\delta}{\epsilon},
\end{align*}
where the relation between $s$ and $x$ is given by \eqref{coordinate-tildes} and \eqref{definition of x 2.14}.
Then we have
\begin{align*}
 |\left(\beta\phi^*\right)(x, \tau)| \leq C \|\beta \phi^*\|_\infty e^{-\frac12\big(x- \frac{20\beta\delta}{\epsilon}\big)}
 \leq C \|\beta \phi^*\|_{H^2_*(\mathfrak S)} e^{-\frac{5 \beta \delta} \epsilon},
 \quad \text{for}~|s|\geq \frac{40\delta}{\varepsilon}.
\end{align*}
The estimate of $\nabla \phi^*$ follows by local elliptic estimate and we omit the proof for brevity.

\medskip
By defining the following closed and bounded set
\begin{align*}
 \mathfrak B = &\Bigg\{\phi(x, \tau) \, :\, \phi~\text{satisfies}~\eqref{periodic condition},
\qquad
\|\beta\phi\|_{H^2_*(\mathfrak S)}\leq D \left( \epsilon^3+ \epsilon^2\|f\|_* + \epsilon^2\|e\|_{**}\right),
\nonumber \\[1mm]
 & \qquad \qquad \qquad
 \Big\|\, |\left(\beta \phi\right)(x, \tau) |+ | \nabla \left(\beta \phi\right)(x, \tau)|\, \Big\|_{L^\infty(|s|\geq \frac{40\delta}{\varepsilon})}
\leq
\|\beta \phi\|_{H^2_*(\mathfrak S)} e^{-\frac{2\delta} \epsilon}\Bigg\},
\end{align*}
we claim that:
{\em the map $\mathbb A $ defined in \eqref{mathbb-A} is a contraction map from $ \mathfrak B$ to itself.
}

In fact, recall
\begin{align*}
{\mathcal N}(\phi^*) & \,=\,
\frac{1}{\beta^3}
\left[\mathbf{N}( \eta_{3\delta}^{\epsilon}{\tilde\phi} +{\tilde\psi}) -3\, \mathbf{U}^2 \, {\tilde\psi}(\tilde \phi) \right]
\nonumber
 \\[2mm]
 &= \frac{1}{\beta^3}
\left[
-3\mathbf{U} \left(\eta_{3\delta}^{\epsilon}{\tilde\phi} +{\tilde\psi} \right)^2
- \left(\eta_{3\delta}^{\epsilon}{\tilde\phi} +{\tilde\psi} \right)^3
-3\, \mathbf{U}^2 \, {\tilde\psi}(\tilde \phi)
\right],
\end{align*}
with $\mathbf{U}(y)$ and $\tilde\phi(y)$ as in \eqref{defofbfU} and \eqref{relation betwen tildephi and phi}.

 For any $\phi_1^*, \phi_2^*\in \mathfrak B$, we have
\begin{align}\label{absolute vale N1-N2}
 &\Big|{\mathcal N}(\phi_1^*) - {\mathcal N}(\phi_2^*) \Big|
\nonumber\\[2mm]
 &= \frac{C}{\beta^3}
\Bigg|
 -3 \mathbf{U} \left(\eta_{3\delta}^{\epsilon}{\tilde\phi}_1 +{\tilde\psi}_1 \right)^2
 +3\mathbf{U} \left(\eta_{3\delta}^{\epsilon}{\tilde\phi}_2 +{\tilde\psi}_2 \right)^2
 -\left(\eta_{3\delta}^{\epsilon}{\tilde\phi}_1 +{\tilde\psi}_1 \right)^3
 +\left(\eta_{3\delta}^{\epsilon}{\tilde\phi}_2 +{\tilde\psi}_2 \right)^3
 -3\, \mathbf{U}^2 (\tilde\psi_1-\tilde\psi_2)
\Bigg|
\nonumber\\[2mm]
 & \leq \frac{C}{\beta^3}
\Bigg[\,
 \eta_{3\delta}^{\epsilon} \mathbf{U}\big|{\tilde\phi}_1^2- {\tilde\phi}_2^2\big|
 +\mathbf{U}\big|{\tilde\psi}_1^2- {\tilde\psi}_2^2\big|
 +\mathbf{U} \big(|\tilde{\psi}_1|+ |\tilde{\psi}_2|\big) \big|{\tilde\phi}_1- {\tilde\phi}_2\big|
 +\mathbf{U} \big(|\tilde{\phi}_1|+ |\tilde{\phi}_2|\big) \big|{\tilde\psi}_1- {\tilde\psi}_2\big|
\Bigg]
\nonumber\\[2mm]
 & \quad
 \ +\ \frac{C}{\beta^3} \big(|\tilde{\phi}_1|^2+ |\tilde{\phi}_2|^2 + |\tilde{\psi}_1|^2+ |\tilde{\psi}_2|^2 \big)
 \big( |\tilde{\phi}_1-\tilde{\phi}_2| +|\tilde{\psi}_1-\tilde{\psi}_2| \big) \ +\ \frac{C}{\beta^3} \mathbf{U}^2 |\tilde\psi_1-\tilde\psi_2|
\nonumber\\[2mm]
& \leq
 C |w_2|\big(|{\phi}_1^*|+| {\phi}_2^*|\big) \, |\phi^*_1-\phi^*_2|
\ +\
\frac{C}{\beta^2} |w_2| \big(|{\tilde\psi}_1|+| {\tilde\psi}_2|\big) \, |\tilde\psi_1-\tilde\psi_2|
\nonumber\\[2mm]
&\quad
 \ +\ \frac{C} {\beta}
\Big[\, |w_2|\, \big(|{\phi}_1^*|+| {\phi}_2^*|\big) + |{\phi}_1^*|^2+| {\phi}_2^*|^2\,\Big] \,|\tilde\psi_1-\tilde\psi_2|
\ +\ \frac{C} {\beta} |w_2|\,\big(|\tilde\psi_1|+\tilde\psi_2| \big) |{\phi}_1^*- {\phi}_2^*|
\nonumber\\[2mm]
 &\quad
 \ +\ C \big(|{\phi}_1^*|^2+| {\phi}_2^*|^2\big) \, |\phi^*_1-\phi^*_2|
 \ +\ \frac{C}{\beta^2} \big(|{\tilde\psi}_1|^2+| {\tilde\psi}_2|^2\big) \, |\phi^*_1-\phi^*_2|
\nonumber\\[2mm]
 &\quad
 \ +\ \frac{1}{\beta^3} \big(|{\tilde\psi}_1|^2+| {\tilde\psi}_2|^2\big)\,|\tilde\psi_1-\tilde\psi_2| \ +\ \frac{C}{\beta}| w_2|^2 |\tilde\psi_1-\tilde\psi_2|.
\end{align}
 Here we have used the notation
\[ \tilde{\psi}_j = {\tilde\psi}({ \tilde \phi}_j),\quad j =1,2.\]
 By Lemma \ref{pro-linear lemma6-1-mathcal L}, we can get
\begin{align} \label{mathbb A1-mathbb A2}
\|\mathbb A (\phi^*_1) - \mathbb A (\phi^*_2)\|_{H^2_*(\mathfrak S)}&= \|{\mathcal T}\left( {\mathcal N}(\phi^*_1)- {\mathcal N}(\phi^*_2) \right) \|_{H^2_*(\mathfrak S)}
\leq
\|{\mathcal T}\|\, \|{\mathcal N}(\phi_1^*) - {\mathcal N}(\phi_2^*)\|_{L^2(\mathfrak S)},
\end{align} where $ {\mathcal T}$ is the linear operator given in Lemma \ref{pro-linear lemma6-1-mathcal L}.
Then from \eqref{absolute vale N1-N2}, we can get
\begin{align} \label{mathcal N1-mathcal N2}
& \|{\mathcal N}(\phi_1^*) - {\mathcal N}(\phi_2^*)\|_{L^2(\mathfrak S)}
\nonumber\\[2mm]
 & \leq \Big(\|\phi_1^*\|_{L^4(\mathfrak S)}+ \|\phi_2^*\|_{L^4(\mathfrak S)}
 + \Big\|\frac{{\tilde\psi}_1}{\beta} \Big\|_{L^4(\mathfrak S_\delta)}
 + \Big\|\frac{{\tilde\psi}_2}{\beta} \Big\|_{L^4(\mathfrak S_\delta)}\Big) \|\phi_1^*- \phi_2^*\|_{L^4(\mathfrak S)}
\nonumber
 \\[2mm]
 & \quad + \Big(\|\phi_1^*\|_{L^4(\mathfrak S)}+ \|\phi_2^*\|_{L^4(\mathfrak S)}
 + \Big\|\frac{{\tilde\psi}_1}{\beta} \Big\|_{L^4(\mathfrak S_\delta)}
 +  \Big\|\frac{{\tilde\psi}_2}{\beta} \Big\|_{L^4(\mathfrak S_\delta)} +1\Big)
  \Big\|\frac{{\tilde\psi}_1- {\tilde\psi}_2}{\beta} \Big\|_{L^4(\mathfrak S_\delta)}.
\end{align}
 We recall the estimate and Lipschitz property of $\tilde\psi$
\begin{equation} \label{estimate of psi-star}
\|{\tilde\psi}({\tilde\phi})\|_{L^{\infty}}
\,\leq \,
C\, \epsilon \lf[\|\tilde\phi\|_{L^{\infty}(|s|>\frac{20\delta}{\epsilon})}
\,+\, \|\nabla \tilde\phi\|_{L^{\infty}(|s|>\frac{20\delta}{\epsilon})}\ri]
\,+\,
e^{-\frac{ \upsilon_2 \delta}{\epsilon}},
\end{equation}
and
\begin{equation}\label{estimeta of psistar12}
\|{\tilde\psi}(\tilde\phi_1)-{\tilde\psi}(\tilde\phi_2)\|_{L^{\infty}}
\leq
C\epsilon \lf[ \|\tilde\phi_1-\tilde\phi_2\|_{L^{\infty}(|s|>{20\delta}/\epsilon)}+\|\nabla\tilde\phi_1-\nabla\tilde\phi_2\|_{L^{\infty}(|s|>
{20\delta}/\epsilon)}\ri].
\end{equation}
Then by \eqref{estimate of psi-star} and the relation \eqref{relation betwen tildephi and phi}}, we get
\begin{align}\label{estimate tilde psi1 tilde psi 2 L4}
\Big\|\frac{{\tilde\psi}_1}{\beta}\Big\|_{L^4(\mathfrak S_\delta)}
+\Big\|\frac{{\tilde\psi}_2}{\beta}\Big\|_{L^4(\mathfrak S_\delta)}
 & \leq
\left\{ \,\lf[ \sum_{j =1}^2\left(\| \tilde\phi_j\|_{L^{\infty}(|s|>\frac{20\delta}{\epsilon})}\,+\, \|\nabla \tilde\phi_j \|_{L^{\infty}(|s|>\frac{20\delta}{\epsilon})} \right)\ri]\,+\,
e^{-\frac{ \upsilon_2 \delta}{\epsilon}} \,\right\}
\left( \int_{\mathfrak S_\delta} \,{\mathrm d}x {\mathrm d}\tau\right)^{\frac14}
\nonumber
 \\[2mm]
 & \leq C\left\{ \,\lf[ \sum_{j =1}^2\left(\| \tilde\phi_j\|_{L^{\infty}(|s|>\frac{20\delta}{\epsilon})}\,+\, \|\nabla \tilde\phi_j \|_{L^{\infty}(|s|>\frac{20\delta}{\epsilon})} \right)\ri]\,+\,
e^{-\frac{ \upsilon_2 \delta}{\epsilon}} \,\right\}\left( \frac{\int_0^{2\pi} \beta {\mathrm d}\tau} {\epsilon} \right)^{\frac14}
\nonumber
 \\[2mm]
 & \leq C\left( \frac{\int_0^{2\pi} \beta {\mathrm d}\tau} {\epsilon} \right)^{\frac14}
 \,\sum_{j=1}^2\|\left(\beta\phi_j^*\right)\|_{H^2_*(\mathfrak S)} e^{-\frac{2\delta} \epsilon} + O(e^{-\frac{\delta}{\epsilon}})
 \leq C e^{-\frac{\delta}{\epsilon}}.
\end{align}
 In fact, by \eqref{length-constraint}, we have
\begin{align*}
 \int_0^{2\pi} \beta {\mathrm d}\tau = \frac{2\pi} {\tilde\ell}\int_0^{\ell} \beta^2 {\mathrm d}\theta \leq C \epsilon^{\varrho-1}.
\end{align*}
Therefore, we can obtain that
\begin{align*}
\left(  {\epsilon}^{-1}{\int_0^{2\pi} \beta {\mathrm d}\tau} \right)^{\frac14} e^{-\frac{2\delta} \epsilon} \leq e^{-\frac{\delta} \epsilon},
\end{align*}
 for $\epsilon$ small enough.

On the other hand, by \eqref{estimeta of psistar12}, we have
\begin{align}\label{tilde psi 1-tilde psi 2}
\|\tilde\psi_1- \tilde\psi_2\|_{L^4(\mathfrak S_\delta)}
 & \leq \|\tilde\psi_1- \tilde\psi_2\|_{L^{\infty} (\mathfrak S_\delta)} \left( \int_{\mathfrak S_\delta} \,{\mathrm d}x {\mathrm d}\tau\right)^{\frac14}
\nonumber
 \\[2mm]
 & \leq C
 \Big[\, \|\beta\left({\phi}^*_1-{ \phi}^*_2\right)\|_{L^{\infty}(|s|>{20\delta}/\epsilon)}
 \ +\
 \|\nabla \left(\beta{\phi}^*_1-\beta{\phi}^*_2 \right) \|_{L^{\infty}(|s|>{20\delta}/\epsilon)}\Big]  \left( \frac{\int_0^{2\pi} \beta {\mathrm d}\tau} {\epsilon} \right)^{\frac14}
\nonumber
 \\[2mm]
 & \leq o(1) \|\beta\left({\phi}^*_1-{ \phi}^*_2\right)\|_{H^2_*(\mathfrak S)}.
\end{align}
 The last inequality holds due to
\begin{align*}
 \big[\, \|\beta\left({\phi}^*_1-{ \phi}^*_2\right)\|_{L^{\infty}(|s|>{20\delta}/\epsilon)}+\|\nabla \left(\beta{\phi}^*_1-\beta{\phi}^*_2 \right) \|_{L^{\infty}(|s|>{20\delta}/\epsilon)}\big] \leq \|\beta\left({\phi}^*_1-{ \phi}^*_2\right)\|_{H^2_*(\mathfrak S)} e^{-\frac{2\delta} \epsilon},
\end{align*}
 for any ${\phi}^*_1, {\phi}^*_2 \in \mathfrak B.$
 By \eqref{mathbb A1-mathbb A2}, \eqref{mathcal N1-mathcal N2}, \eqref{estimate tilde psi1 tilde psi 2 L4}, \eqref{tilde psi 1-tilde psi 2} and Sobolev embedding, we prove that $\mathbb{A}$ is a contraction map.

\medskip
Next, we will show that $\mathbb{A}$ maps $\mathfrak B$ into itself.
For any $\phi^*\in \mathfrak B$, by Lemma \ref{pro-linear lemma6-1-mathcal L}, then
\begin{align} \label{estimate of mathbbAphi1}
\|\beta\mathbb A (\phi^*)\|_{H^2_*(\mathfrak S)}
 =\,& \|\beta{\mathcal T}\left( - E_{11}+ {\mathcal N}(\phi^*)\right) \|_{H^2_*(\mathfrak S)}
\nonumber
 \\[2mm]
\leq\, & \|{\mathcal T}\| \left\{\|\beta E_{11}\|_{L^2(\mathfrak S)}
\ +\
\Bigg\|\frac{\beta}{\beta^3} \left[-3\eta_{\delta}^{\epsilon}\mathbf{U} \left(\eta_{3\delta}^{\epsilon}
{\tilde\phi} +{\tilde\psi} \right)^2+ \left(\eta_{3\delta}^{\epsilon}
{\tilde\phi} +{\tilde\psi} \right)^3 -3\eta_{\delta}^{\epsilon}\, \mathbf{U}^2 \, {\tilde\psi}(\tilde \phi)\right] \Bigg\|_{L^2(\mathfrak S)} \right\}
\nonumber
 \\[2mm]
\leq\, & \|{\mathcal T}\|\Bigg\{\|\beta E_{11}\|_{L^2(\mathfrak S)}
 +C\|\beta\phi^*\|^2_{L^4(\mathfrak S)}
 +C \|\tilde\psi\|^2_{L^4(\mathfrak S_\delta)}
\nonumber
 \\[2mm] & \qquad \quad
 +C \|\beta\phi^*\|^3_{L^6(\mathfrak S)}
 + C \|\tilde\psi\|^3_{L^6(\mathfrak S_\delta)}
 +C \|\tilde\psi\|_{L^2(\mathfrak S_\delta)} \Bigg\}.
\end{align}
Using \eqref{estimate of psi-star} and similar to \eqref{estimate tilde psi1 tilde psi 2 L4}, we have
\begin{align} \label{estimate of tildepsi}
 \|\tilde\psi\|_{L^p(\mathfrak S_\delta)}
& \leq C\left( \frac{\int_0^{2\pi} \beta {\mathrm d}\tau} {\epsilon} \right)^{\frac1p}
\, \|\beta\phi^*\|_{H^2_*(\mathfrak S)} e^{-\frac{2\delta} \epsilon} + O(e^{-\frac{\delta}{\epsilon}})
\nonumber
 \\[2mm]
 &
 \leq Ce^{-\frac{\delta}{\epsilon}} \Big(\|\beta\phi^*\|_{H^2_*(\mathfrak S)}+1\Big),
\end{align}
 provided with
\begin{align*}
\left( \frac{\int_0^{2\pi} \beta {\mathrm d}\tau} {\epsilon} \right)^{\frac1p} e^{-\frac{2\delta} \epsilon} \leq e^{-\frac{\delta} \epsilon}.
\end{align*}
By \eqref{estimate of mathbbAphi1}, \eqref{estimate of tildepsi} and Sobolev embedding, we get
\begin{align}\label{estimate of mathbbAphi2}
\|\mathbb A (\phi^*)\|_{H^2_*(\mathfrak S)}
& \leq C\Big\{\|\beta E_{11}\|_{L^2(\mathfrak S)}
+ C\|\beta\phi^*\|^2_{H^2_*(\mathfrak S)}
+ C e^{-\frac{\delta}{\epsilon}} \Big(\|\beta\phi^*\|_{H^2_*(\mathfrak S)}+1\Big) \Big\}
\nonumber
 \\[2mm]
 & \leq C \left( \epsilon^3+ \epsilon^2\|f\|_* + \epsilon^2\|e\|_{**}\right).
\end{align}
Next, for any $\phi^*$, since $\varphi^* :=\mathbb A (\phi^*) $ satisfies the equation
\begin{align*}
\frac{\epsilon^2 4\pi^2}{{\tilde\ell}^2} \varphi^*_{\tau\tau} + \varphi^*_{xx} - \big(1+ o(1)\big)\varphi^* = h
\end{align*}
with $h$ compactly supported. Hence $ \varphi^* $ satisfies the estimates required for functions in the $\mathcal B$.
Then we know that $\mathbb{A}$ map $\mathfrak B$ to itself if $D$ large enough.

\medskip
By summarizing the above results, we present the following resolution theory.
\begin{proposition}\label{proposition6.5}
There is a number $D>0$ such that for all $\epsilon$ small enough and all parameters $(f, e)$ in $\hat{\mathcal F}$,
problem \eqref{project 6.13}-\eqref{projectedproblem3} has a unique solution $\phi^* \,=\, \phi^*(f, e)$, which satisfies the estimates
\begin{align*}
 \|\beta \phi^*\|_{H_*^2(\mathfrak S)} + \| \phi^*\|_{H_*^2(\mathfrak S)} & \,\leq\, D \left( \epsilon^3+ \epsilon^2\|f\|_* + \epsilon^2\|e\|_{**}\right)
\nonumber
 \\[2mm]
 &\le D \left( \epsilon^3+ \epsilon^{3-\alpha} + \epsilon^{\frac 32 \varrho + \frac 52- 2\alpha} \right) \le ( D+1) \epsilon^{3-\alpha},
\end{align*}
and
\begin{align*}
\big\| |\beta\phi^*|+|\nabla \left(\beta \phi^*\right)| \big\|_{L^{\infty}(|x|>40\delta/\epsilon)}\,\leq\, D \|\beta\phi^*\|_{H_*^2(\mathfrak S)}e^{-\delta/\epsilon}.
\end{align*}
 Moreover, $\phi^*$ depends Lipschitz-continuously on the parameters $f$ and $e$ in the sense of the estimate
\begin{equation}
\label{characteriztion}
\|\phi^*(f_1, e_1)\,-\, \phi^*(f_2, e_2)\|_{H_*^2(\mathfrak S)}
\,\leq\, D\epsilon^2\big [\, \|f_1-f_2\|_{*}\,+\, \|e_1-e_2\|_{**}\, \big].
\end{equation}
\qed
\end{proposition}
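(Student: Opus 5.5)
We treat problem \eqref{project 6.13}--\eqref{projectedproblem3} as a fixed-point problem, with the linear theory of Lemma \ref{pro-linear lemma6-1-mathcal L} supplying the inverse. The first step is to remove the term $E_{12}$ of the error decomposition \eqref{E11+E12}. Since $E_{12}=\epsilon^3\frac{4\pi^2}{\tilde\ell^2}e''\mathcal Z+\epsilon\lambda_0 e\mathcal Z$ has the form $d(\tau)\mathcal Z(x)$, it can be absorbed into the Lagrange multiplier $d(\tau)$; the cut-off $\eta^\epsilon_\delta$ introduces only exponentially small discrepancies, which are absorbed the same way. The problem then becomes \eqref{projectedproblem1-refined-e11}--\eqref{projectedproblem3-refined-aa}. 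With $\mathcal T$ the solution operator of Lemma \ref{pro-linear lemma6-1-mathcal L}, this is equivalent to the fixed-point equation $\phi^*=\mathbb A(\phi^*)=\mathcal T(-E_{11}+\mathcal N(\phi^*))$, where $\mathcal N$ is built from $\tilde\psi=\tilde\psi(\tilde\phi)$, the solution of the outer problem \eqref{equivalent-system-2} obtained through \eqref{contraction-varphi}--\eqref{varphi-lip}.

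The second step is to work in the set $\mathfrak B$, which is defined by two conditions. The first is the weighted bound $\|\beta\phi\|_{H^2_*}\le D(\epsilon^3+\epsilon^2\|f\|_*+\epsilon^2\|e\|_{**})$. The second is exponential smallness of $\beta\phi$ and its gradient for $|s|\ge 40\delta/\epsilon$. To show $\mathbb A(\mathfrak B)\subset\mathfrak B$ we proceed in three parts:
\begin{itemize}
\item Apply \eqref{pro estimate H2-l2-mathcal-beta} to get $\|\beta\mathbb A\phi^*\|_{H^2_*}\le C(\|\beta E_{11}\|_{L^2}+\|\beta\mathcal N(\phi^*)\|_{L^2})$.
\item Bound $E_{11}$ using \eqref{estimate of L2mathfrak S-E11}.
\item Bound the nonlinear term by $C\|\beta\phi^*\|_{H^2_*}^2$ plus exponentially small terms, using Sobolev embedding in $L^4$ and $L^6$ together with the estimates on $\tilde\psi$.
\end{itemize}

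The exponential decay condition of $\mathfrak B$ is checked separately. Outside $|\epsilon s|>20\delta$ the right-hand side vanishes, so $\phi^*$ solves \eqref{asymptotic of phi equation}. A barrier $C\|\beta\phi^*\|_\infty e^{-\frac12(x-20\beta\delta/\epsilon)}$ together with local elliptic estimates then gives the bound $\|\beta\phi^*\|_{H^2_*}e^{-2\delta/\epsilon}$. Choosing $D$ larger than the constant from the two bounds above closes the invariance.

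For the contraction property we use the pointwise difference bound \eqref{absolute vale N1-N2}. Hölder's inequality reduces this to \eqref{mathcal N1-mathcal N2}. The $\tilde\psi$ contributions are controlled by \eqref{estimate tilde psi1 tilde psi 2 L4} and \eqref{tilde psi 1-tilde psi 2}.

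The step I expect to be the main obstacle is this control of $\tilde\psi$, because the domain is large. The measure of $\mathfrak S_\delta$ is of order $\epsilon^{-1}\int_0^{2\pi}\beta\,d\tau$, and the assumption \eqref{length-constraint} only bounds this by $C\epsilon^{\varrho-2}$. We must therefore check that this polynomial loss is beaten by the factor $e^{-2\delta/\epsilon}$, which comes from the outer decay condition of $\mathfrak B$ combined with \eqref{contraction-varphi}. This is exactly what yields the $o(1)$ Lipschitz constant in \eqref{tilde psi 1-tilde psi 2}. Since the $\phi^*$-terms carry the small factor $\|\phi^*\|_{H^2_*}\le C\epsilon^{3-\alpha}$, $\mathbb A$ is a contraction, and the Banach fixed-point theorem gives a unique solution. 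The numerical bound $(D+1)\epsilon^{3-\alpha}$ then follows from \eqref{constraint-f}--\eqref{constraint-e} and {\bf (B3)}. The unweighted norm is controlled because $\beta\ge 1$.

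Finally, Lipschitz dependence on $(f,e)$ comes from subtracting the fixed-point equations for $(f_1,e_1)$ and $(f_2,e_2)$. The difference $\mathcal N(\phi_1^*)-\mathcal N(\phi_2^*)$ is bounded by $o(1)\|\phi_1^*-\phi_2^*\|$, which can be absorbed on the left-hand side. The remaining terms, namely the $(f,e)$-dependence of $E_{11}$ and of the coefficients of $\mathcal L$ and $\mathcal N$ through $w_2$, are bounded by \eqref{Lipschitz of E11-E12 in mathfrak S}. This gives \eqref{characteriztion}.
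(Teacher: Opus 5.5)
Your proposal is correct and follows the paper's proof in Section~\ref{Section6.4} essentially step for step. The shared steps are: absorbing $E_{12}$ into the multiplier $d$; the map $\mathbb A=\mathcal T\big(-E_{11}+\mathcal N(\cdot)\big)$ on the set $\mathfrak B$; the barrier argument for the outer decay; and beating the $O(\epsilon^{\varrho-2})$ measure of $\mathfrak S_\delta$ with the factor $e^{-2\delta/\epsilon}$ when controlling $\tilde\psi$. Two details, which the paper also glosses over, need correcting. First, on this long strip the Sobolev constants for $\|\cdot\|_{H^2_*(\mathfrak S)}$ are not uniform (e.g.\ $\|\phi\|_{L^4(\mathfrak S)}^2\le C(\tilde\ell/\epsilon)^{1/2}\|\phi\|_{H^2_*(\mathfrak S)}^2$); the resulting loss of at most $C\epsilon^{\varrho/2-1}$ is harmless against $\|\phi^*\|_{H^2_*(\mathfrak S)}\le C\epsilon^{3-\alpha}$. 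Second, in your Lipschitz step the $(f,e)$-dependence of $\mathcal L$ is not covered by \eqref{Lipschitz of E11-E12 in mathfrak S}; it must be bounded directly via \eqref{Linfty of f} and \eqref{7-89}, which shows it is $o(\epsilon^2)\big[\|f_1-f_2\|_*+\|e_1-e_2\|_{**}\big]$.
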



\medskip
\section{Deriving the reduced system of differential equations}\label{Section7}

After solving problem \eqref{project 6.13}-\eqref{projectedproblem3} in Proposition \ref{proposition6.5},
we shall choose the unknown parameters $f$ and $e$ to make the Lagrange multipliers $c$ and $d$ vanish.
In fact, $c\equiv 0$ and $d\equiv 0$ are equivalent to
\begin{align} \label{project 6.13-1}
\int_{\mathbb R}\Big\{\,\mathcal L (\phi^*) +\eta_\delta^\epsilon(s)\frac{1}{\beta^3}{\mathcal E} - \eta_\delta^\epsilon(s){\mathcal N}(\phi^*)\Big\}
U'(x) \,{\mathrm d}x\,=\, 0,
\end{align}
and
\begin{align} \label{project 6.13-2}
\int_{\mathbb R}\Big\{\,\mathcal L (\phi^*) +\eta_\delta^\epsilon(s)\frac{1}{\beta^3}{\mathcal E} - \eta_\delta^\epsilon(s){\mathcal N}(\phi^*)\Big\}
{\mathcal Z}(x) \,{\mathrm d}x\,=\, 0.
\end{align}
The calculations of the above terms will be given in Sections \ref{Section7.1}-\ref{Section7.2}.
These will consequently derive the reduced system with differential equations involving the
parameters $f$ and $e$, see Section \ref{Section7.3}.

\subsection{ Projections of the error terms}\label{Section7.1}\

We will denote by $\tilde{\mathcal D}_j(\theta)$ with $ j\in \mathbb{N}^+, $ the smooth functions that are uniformly bounded and independent of $f, f', f'', e, e', e''$.

\subsubsection{Computation for $\beta^{-3}\int_{{\mathbb R}}\, \eta_\delta^\epsilon(s)\, {\mathcal E}\, U' \,{\mathrm d}x$}\

We first concern
\begin{align*}
& \frac{1}{\beta^3}\int_{\mathbb R}\eta_\delta^\epsilon(s){\mathcal E}U'(x) \,{\mathrm d}x
\nonumber\\[2mm]
=&\frac{1}{\beta^3}\int_{\mathbb R} {\mathcal E}U'(x) \,{\mathrm d}x
\ +\
\frac{1}{\beta^3}\int_{\mathbb R} \big[\eta_\delta^\epsilon(s)-1\big]{\mathcal E}U'(x) \,{\mathrm d}x
\nonumber\\[2mm]
=& \int_{{\mathbb R}}\, S\big(w_2\big)\, U'(x) \,{\mathrm d}x
\ +\
\int_{\mathbb R} \big[\eta_\delta^\epsilon(s)-1\big]S\big(w_2\big)U'(x) \,{\mathrm d}x
\nonumber\\[2mm]
=&
\int_{\mathbb R} \Bigg[\epsilon^2 S_{3} + \epsilon^2 S_{42} + \epsilon^3 S_{5} +\epsilon^3 S_{6}
 +\epsilon^3 S_{7} +\epsilon^3 S_{8}
 +\frac 1\beta\epsilon^3\frac{4\pi^2}{{\tilde\ell}^2} e''\mathcal Z
 +\frac 1\beta \epsilon\lambda_0 e{\mathcal Z}
 +\epsilon^2 {\bf N}_{02}
  +\epsilon^2 {\bf N}_{03}
 + \epsilon^2 {\bf B}_{42}
\nonumber\\[2mm]
&\qquad
 + \epsilon^3 \frac{4\pi^2}{{\tilde\ell}^2} {\phi}_{1, \tau\tau}
+ \epsilon^4 \frac{4\pi^2}{{\tilde\ell}^2}\phi_{2, \tau\tau}
+B_{5}(\epsilon\phi_1) + \mathbb{N}_1 (\epsilon^2 \phi_2)
+ B_4(\epsilon^2\phi_2)+ B_5 (\epsilon^2{\phi_2}) \Bigg]U'(x) \,{\mathrm d}x + \mathfrak G(\theta)
\nonumber\\[2mm]
&
 +\int_{\mathbb R} \eta_\delta^\epsilon(s) \Bigg\{\epsilon^4 \frac{4\pi^2} {{\tilde\ell}^2} a_2\frac{\kappa}{\, \beta^2\, } e'' (x+f)\mathcal Z -\epsilon^4 \kappa a_2 \frac{ 4\pi^2} {{\tilde\ell}^2} \frac{1}{\, \beta^2\, } (x+f) f'' e \mathcal Z' + \frac{\epsilon^4}{\beta^2} \hat{\mathcal F}_1\left[\tau, x, \beta f, \frac{\beta f''}{\tilde\ell^2} \right]
\nonumber\\[2mm]
&\qquad\qquad\quad
+B_{6}(\epsilon\phi_1)
 + \frac{\epsilon^4}{\beta^2} \mathcal F_1\left[\tau, x, \beta f, \frac{\beta f'}{\tilde\ell}\right]
 +\frac{\epsilon^4} {\beta^2} \mathcal G_1\left[\tau, x, \beta f, \frac{\beta f'}{\tilde\ell}, e, \frac{e'}{\tilde\ell}\right]
 + B_6 (\epsilon^2{\phi_2}) \Bigg\}  U'(x) \,{\mathrm d}x,
\end{align*}
due to the expression of $\mathcal{E}$ given in \eqref{errorrelationinterior},
where
\begin{align*}
 \mathfrak G(\tau) & = \int_{\mathbb R}\big[\eta_\delta^\epsilon(s)-1\big] \Bigg[\epsilon^2 S_{3} + \epsilon^2 S_{42} + \epsilon^3 S_{5} +\epsilon^3 S_{6}
 +\epsilon^3 S_{7} +\epsilon^3 S_{8}
 +\frac 1\beta\epsilon^3\frac{4\pi^2}{{\tilde\ell}^2} e''\mathcal Z
 \nonumber\\[2mm]
&\qquad \qquad \qquad \qquad
 +\frac 1\beta \epsilon\lambda_0 e{\mathcal Z}
 +\epsilon^2 {\bf N}_{02}
 +\epsilon^3 {\bf N}_{03}
 + \epsilon^2 {\bf B}_{42}
 + \epsilon^3 \frac{4\pi^2}{{\tilde\ell}^2} {\phi}_{1, \tau\tau}
 + \epsilon^4 \frac{4\pi^2}{{\tilde\ell}^2}\phi_{2, \tau\tau}
\nonumber\\[2mm]
&\qquad \qquad \qquad \qquad
+B_{5}(\epsilon\phi_1) + \mathbb{N}_1 (\epsilon^2 \phi_2)
+ B_4(\epsilon^2\phi_2)+ B_5 (\epsilon^2{\phi_2}) \Bigg] U'(x) \,{\mathrm d}x.
\end{align*}
Note that the terms from
\begin{align*}
 & \epsilon^4 \frac{4\pi^2} {{\tilde\ell}^2} a_2\frac{\kappa}{\, \beta^2\, } e'' (x+f)\mathcal Z -\epsilon^4 \kappa a_2 \frac{ 4\pi^2} {{\tilde\ell}^2} \frac{1}{\, \beta^2\, } (x+f) f'' e \mathcal Z' + \frac{\epsilon^4}{\beta^2} \hat{\mathcal F}_1\left[\tau, x, \beta f, \frac{\beta f''}{\tilde\ell^2} \right]
\nonumber\\[2mm]
&
+B_{6}(\epsilon\phi_1)
 + \frac{\epsilon^4}{\beta^2} \mathcal F_1\left[\tau, x, \beta f, \frac{\beta f'}{\tilde\ell}\right]
 +\frac{\epsilon^4} {\beta^2} \mathcal G_1\left[\tau, x, \beta f, \frac{\beta f'}{\tilde\ell}, e, \frac{e'}{\tilde\ell}\right]
 + B_6 (\epsilon^2{\phi_2})
\end{align*}
 are involving $a_j(t, \theta), j =1,2,3$ given in \eqref{a1a2a3-1}.
By the exponential decay of the term in $S\big(w_2\big)$ with respect to $x$,
 we neglect the calculations of the terms in $ \mathfrak G(\theta)$.

Since the following terms in $S\big(w_2\big)$
$$
\epsilon^2 S_{42},\quad \epsilon^3 S_6,\quad \epsilon^3 S_8,\quad
\frac 1\beta\epsilon^3\frac{4\pi^2}{{\tilde\ell}^2} e''\mathcal Z +\frac 1\beta \epsilon\lambda_0 e{\mathcal Z},
\quad \epsilon^4 \frac{4\pi^2}{{\tilde\ell}^2}\phi_{2, \tau\tau}
$$
are even functions of $x$, then integration against $U'(x)$ just vanishes.
 And there holds
\begin{align*}
 \epsilon^4 \frac{4\pi^2}{{\tilde\ell}^2}\int_{\mathbb R}\phi_{1, \tau\tau} U'(x) \,{\mathrm d}x=0,
\end{align*}
 due to the fact
$$
\int_{\mathbb R}\phi_{1}(x,\tau) U'(x)\,{\mathrm d}x=0.
$$

Therefore, we can obtain that
\begin{align}
& \frac{1}{\beta^3}\int_{\mathbb R}\eta_\delta^\epsilon(s){\mathcal E}U'(x) \,{\mathrm d}x
\nonumber\\[2mm]
& = \int_{{\mathbb R}}\epsilon^2 S_{3} U'(x)\,{\mathrm d}x
+\int_{{\mathbb R}}\epsilon^3 S_{5}U'(x)\,{\mathrm d}x
+\int_{{\mathbb R}}\epsilon^3 S_{7}U'(x)\,{\mathrm d}x
+\int_{{\mathbb R}}\epsilon^2 {\bf N}_{02}U'(x)\,{\mathrm d}x
+\int_{{\mathbb R}}\epsilon^3 {\bf N}_{03}U'(x)\,{\mathrm d}x
\nonumber\\[2mm]
& \quad
+\int_{{\mathbb R}}\epsilon^2 {\bf B}_{42} U'(x)\,{\mathrm d}x
+\int_{{\mathbb R}}B_{5}(\epsilon\phi_1)U'(x)\,{\mathrm d}x
+\int_{{\mathbb R}}\mathbb{N}_1 (\epsilon^2 \phi_2)U'(x)\,{\mathrm d}x
+\int_{{\mathbb R}}B_4(\epsilon^2\phi_2)U'(x)\,{\mathrm d}x
\nonumber\\[2mm]
& \quad
+\int_{{\mathbb R}}B_5 (\epsilon^2{\phi_2}) U'(x)\,{\mathrm d}x
 + \int_{\mathbb R} \eta_\delta^\epsilon(s) \Bigg[ \epsilon^4 \frac{4\pi^2} {{\tilde\ell}^2} a_2\frac{\kappa}{\, \beta^2\, } e'' (x+f)\mathcal Z
 -\epsilon^4 \kappa a_2 \frac{ 4\pi^2} {{\tilde\ell}^2} \frac{1}{\, \beta^2\, } (x+f) f'' e \mathcal Z' \Bigg]U'(x)\,{\mathrm d}x
\nonumber\\[2mm]
& \quad
  + \int_{\mathbb R} \eta_\delta^\epsilon(s)\Bigg\{\frac{\epsilon^4}{\beta^2} \hat{\mathcal F}_1\left[\tau, x, \beta f, \frac{\beta f''}{\tilde\ell^2} \right]
  + \frac{\epsilon^4}{\beta^2} \mathcal F_1\left[\tau, x, \beta f, \frac{\beta f'}{\tilde\ell}\right]
 +\frac{\epsilon^4} {\beta^2} \mathcal G_1\left[\tau, x, \beta f, \frac{\beta f'}{\tilde\ell}, e, \frac{e'}{\tilde\ell}\right]
\Bigg\} U'(x)\,{\mathrm d}x
 \nonumber\\[2mm]
& \quad
 + \int_{\mathbb R} \eta_\delta^\epsilon(s)\, \Big[B_{6}(\epsilon\phi_1)+B_6 (\epsilon^2{\phi_2}) \Big]\,U'(x)\,{\mathrm d}x
\nonumber\\[2mm]
& = \sum_{j=0}^{13}\,I_j .
\end{align}
We calculate these terms as follows.

\smallskip\noindent$\clubsuit$ By the expression of $\epsilon^2 S_{3}$ as in \eqref{expression of s3}, we have
\begin{align} \label{bar S3intergralUx}
I_1
& = -\, \epsilon^2 \frac{\epsilon^2W_{tt}(0, \theta)}{\beta^4}f \int_{{\mathbb R}} xU U'(x)\,{\mathrm d}x
+\epsilon^2\Big[\frac{|\beta'|^2}{\beta^4} 2f
-\frac{4\pi}{ {\tilde\ell}}\frac{\beta'}{\beta^2}f'
\Big]\int_{{\mathbb R}} xU'' U'(x)\,{\mathrm d}x
\nonumber\\[2mm]
&\quad
+\epsilon^2 \Big[
-\frac{\pi^2}{{\tilde\ell}^2}f''
- \frac{6\pi}{\tilde\ell} \frac{\beta'}{\beta^2} f'
+\frac{2|\beta'|^2}{\beta^4}f
+\frac{\beta''} {\beta^3}f
\,-\,\frac{\kappa^2}{\beta^2}f
\Big]\int_{{\mathbb R}} |U'|^2 \,{\mathrm d}x
\nonumber\\[2mm]
& \quad +\epsilon^2\frac{\kappa}{\, \beta^2\, }\int_{{\mathbb R}}\left[ e{\mathcal Z}'+\frac 23 e x\mathcal Z \right]U'(x) \,{\mathrm d}x
\nonumber\\[2mm]
& = \epsilon^2 \varrho_1\Bigg\{ - \frac{4\pi^2}{{\tilde\ell}^2}f'' - \frac{4\pi}{\tilde\ell} \frac{\beta'}{\beta^2} f'
+ \Big[\frac{|\beta'|^2}{\beta^4}
+\frac{\beta''} {\beta^3}
\,-\,\frac{\kappa^2}{\beta^2} \Big] f + \, \frac 32\frac{\epsilon^2 W_{tt}(0, \theta)}{\beta^4} f\Bigg\}
+ \frac {\epsilon^2 {\mathbf b}_1d_1e}\beta,
\end{align}
where
\[\varrho_1= \int_{\mathbb R} |U'|^2{\mathrm d}x, \quad d_1 = \int_{\mathbb R}\left[ {\mathcal Z}'+\frac 23 x\mathcal Z \right]U'(x) \,{\mathrm d}x. \]

On the other hand, from the expressions of $\epsilon^2 {\bf B}_{42}$, $\epsilon^2{\bf N}_{02}$ as in \eqref{B42} and \eqref{N02}, we can get
\begin{align*}
I_6\,=\, \epsilon^2 {\mathbf b}_1^2\int_{\mathbb R} \Big[
\frac{2}{3}f\varpi_{2,x}
+ \frac{4}{9} f x\varpi_2
+\frac{2}{3}f\varpi_1
\Big]U'(x) \,{\mathrm d}x,
\end{align*}
and
\begin{align*}
I_4
& = \epsilon^2\int_{\mathbb R}\left[\frac{6 {\mathbf b}_1e}\beta U{\mathcal Z}\varpi_1
+\frac{4 {\mathbf b}_1fe}\beta U{\mathcal Z} \varpi_2
+\frac{4}{3}{\mathbf b}_1^2|f|^2 U\varpi_2^2
+ 4{\mathbf b}_1^2f U\varpi_1\varpi_2 \right]U'(x) \,{\mathrm d}x
\nonumber\\[2mm]
& = \epsilon^2\int_{\mathbb R}\left[\frac{6 {\mathbf b}_1e}\beta U{\mathcal Z}\varpi_1
+ 4{\mathbf b}_1^2f U\varpi_1\varpi_2 \right]U'(x) \,{\mathrm d}x
\nonumber\\[2mm]
& = \frac{\epsilon^2 6 {\mathbf b}_1e}\beta\int_{\mathbb R} U{\mathcal Z}\varpi_1 U'(x) \,{\mathrm d}x + 4\epsilon^2{\mathbf b}_1^2f \int_{\mathbb R} U\varpi_1\varpi_2 U'(x) \,{\mathrm d}x.
\end{align*}

By the identities
\begin{align*}
 \int_{{\mathbb R}} \varpi_1 \varpi_2 U U'(x)\,{\mathrm d}x =\frac1 6 \int_{\mathbb R}\left[U'(x) + \frac 23 x U \right] \varpi_{2, x} \,{\mathrm d}x
 - \frac 16\int_{\mathbb R} \varpi_1 U'(x)\,{\mathrm d}x,
\end{align*}
\begin{align*}
 & 2\int_{{\mathbb R}} \varpi_{2, x} U'(x) \,{\mathrm d}x = -\frac 32 \int_{\mathbb R} |U'|^2 \,{\mathrm d}x,
\qquad
\int_{{\mathbb R}} \varpi_{2} U \,{\mathrm d}x = - \frac 34\int_{\mathbb R} |U'|^2 \,{\mathrm d}x,
 \\[2mm]
&
 \int_{{\mathbb R}} x \varpi_{2} U'(x) \,{\mathrm d}x = - \int_{{\mathbb R}} \varpi_{2} U \,{\mathrm d}x - \int_{{\mathbb R}} x \varpi_{2, x} U \,{\mathrm d}x,
\end{align*}
we can get
\begin{align}
I_4+I_6
& =
\frac{\epsilon^2 6 {\mathbf b}_1d_2e}\beta
+\epsilon^2{\mathbf b}_1^2f 4 \int_{\mathbb R} U\varpi_1\varpi_2 U'(x) \,{\mathrm d}x\
+\epsilon^2{\mathbf b}_1^2f\int_{\mathbb R} \Big[\frac{2}{3}\varpi_{2,x} + \frac{4}{9} x\varpi_2 +\frac{2}{3}\varpi_1 \Big]U'(x) \,{\mathrm d}x\nonumber\\[2mm]
& = \frac{\epsilon^2 {\mathbf b}_1d_2e}\beta -\frac23\varrho_1\epsilon^2{\mathbf b}_1^2f,
\end{align}
where
\[d_2 =6 \int_{\mathbb R} U{\mathcal Z}\varpi_1 U'(x) \,{\mathrm d}x. \]
Therefore, we have
\begin{align} \label{S32+N03+B43intUx}
I_1+I_4+I_6
& = \epsilon^2 \varrho_1\Bigg\{ - \frac{4\pi^2}{{\tilde\ell}^2}f'' - \frac{4\pi}{\tilde\ell} \frac{\beta'}{\beta^2} f'
+ \Big[\frac{|\beta'|^2}{\beta^4}
+\frac{\beta''} {\beta^3}
\,-\,\frac53\frac{\kappa^2}{\beta^2} \Big] f + \, \frac 32\frac{\epsilon^2 W_{tt}(0, \theta)}{\beta^4} f\Bigg\}
\nonumber\\[2mm]
 & \quad+\frac {\epsilon^2 {\mathbf b}_1(d_1+d_2)e}\beta.
\end{align}

\smallskip\noindent$\clubsuit$
 On the other hand, recall the expression of $\epsilon^3 S_{5}$ given in \eqref{S5}, we can easily get
\begin{align} \label{bar S31+bar S51intergralUx}
I_2\,=\,
\epsilon^3\Bigg\{ \frac1{\beta}\tilde{\mathcal D}_1(\theta)
+\frac1{\beta}\tilde{\mathcal F}_1\left[\tau, \beta f, \frac{\beta {f}'}{\tilde\ell}\right]
- \frac{8\pi^2}{{\tilde\ell}^2}\frac{\kappa}{\beta} f''f \varrho_1 \Bigg\},
\end{align}
 where
\begin{align*}
\frac1{\beta}\tilde{\mathcal D}_1(\theta)
& = - \frac{1}{6} \frac{ \epsilon^2W_{ttt}(0, \theta)}{\beta^5}d_3 + \Big[ -2 \frac{\beta''\kappa}{\beta^4} -4\frac{|\beta'|^2\kappa}{\beta^5} +\frac{\beta' \kappa'} {\beta^4} \Big](-\frac 32)\varrho_1
\nonumber\\[2mm]
 & \quad
 + \Big[ \frac{\kappa^3}{\beta^3}-4\frac{|\beta'|^2\kappa}{\beta^5}-2\frac{\beta''\kappa}{\beta^4} + \frac{\beta' \kappa'} {\beta^4} \Big]d_4 -2 \frac{|\beta'|^2\kappa}{\beta^5}d_5,
\end{align*}
with
\[ d_3 = \int_{\mathbb R} x^3 U U'(x) \,{\mathrm d}x,\qquad d_4 = \int_{\mathbb R} x^2|U'|^2 \,{\mathrm d}x, \qquad d_5= \int_{\mathbb R} x^3 U'' U'(x) \,{\mathrm d}x, \]
and
\begin{align*}
 \frac{ \epsilon^3}{\beta}\tilde{\mathcal F}_1\left[\tau, \beta f, \frac{\beta {f}'}{\tilde\ell}\right]
 & = - \frac{1}{2} \frac{\epsilon^2 W_{ttt}(0, \theta)}{\beta^5}f^2 \int_{\mathbb R} x UU' \,{\mathrm d}x
\nonumber\\[2mm]
 & \quad +
 \Big[
 \frac{\kappa^3}{\beta^3}f^2
 -\frac{4\kappa|\beta'|^2}{\beta^5} f^2
 - \frac{2\kappa\beta''}{\beta^4}f^2
 +\frac{12\pi }{ {\tilde\ell}}\frac{\kappa\beta'}{\beta^3} f' f
 + \frac{\kappa'\beta'} {\beta^4}f^2
 - \frac{2\pi}{{\tilde\ell}} \frac{\kappa'}{\beta^2} f'f\Big]\int_{\mathbb R}|U'|^2 {\mathrm d}x\nonumber\\[2mm]
 & \quad +
 \Big[ -\frac{6\kappa|\beta'|^2}{\beta^5}f^2
 + \frac{16\pi}{ {\tilde\ell}} \frac{\kappa\beta'}{\beta^3} f' f
-\frac{8\pi^2} {{\tilde\ell}^2}\frac{\kappa}{\beta} |f'|^2\Big]\int_{\mathbb R} x U'' U'{\mathrm d}x.
\end{align*}

\smallskip\noindent $\clubsuit$
 From the expression of $\epsilon^3 S_7$ as in \eqref{S7}, we have
\begin{align} \label{intB4-new}
I_3
 = & \frac{\epsilon^3}\beta\Bigg\{
\Big[
-\frac{4\pi^2}{{\tilde\ell}^2}f''
-\frac{6\pi}{\tilde\ell} \frac{\beta'}{\beta^2} f'
+\frac{2|\beta'|^2}{\beta^4}f
+\frac{\beta''} {\beta^3}f
\,-\,\frac{\kappa^2}{\beta^2}f
\Big]e\int_{{\mathbb R}}{\mathcal Z}' U'(x)\,{\mathrm d}x
\nonumber\\[2mm]
&\qquad\quad+ \Big[2\frac{|\beta'|^2}{\beta^4} f
-\frac{4\pi}{ {\tilde\ell}}\frac{\beta'}{\beta^2}f'
\Big]e\int_{{\mathbb R}} x{\mathcal Z}''U'(x)\,{\mathrm d}x
\nonumber\\[2mm]
&\qquad\quad -\, \frac{\epsilon^2W_{tt}(0, \theta)}{\beta^4}f e \int_{{\mathbb R}}x{\mathcal Z}U'(x)\,{\mathrm d}x
+
\Big[ -\frac{8\pi^2}{{\tilde\ell}^2}f' +\frac{4\pi}{ {\tilde\ell}}\frac{\beta'}{\beta^2} f\Big]e' \int_{{\mathbb R}}{\mathcal Z}' U'(x)\,{\mathrm d}x
\Bigg\}
\nonumber\\[2mm]
 = &\frac{\epsilon^3}{\beta^3}\tilde{\mathcal F}_2\left[\tau, \beta f, \frac{\beta f'}{\tilde\ell} \right] e
 +\frac{\epsilon^3}{\beta^3}\Big[ -\frac{8\pi^2}{{\tilde\ell}^2}f' +\frac{4\pi}{ {\tilde\ell}}\frac{\beta'}{\beta^2} f\Big]e'd_8 - \frac{\epsilon^3}\beta\frac{4\pi^2}{{\tilde\ell}^2}f'' e d_8,
\end{align}
where
\begin{align*}
\frac{\epsilon^3}{\beta^3}\tilde{\mathcal F}_2\left[\tau, \beta f, \frac{\beta f'}{\tilde\ell} \right] e &=\frac{\epsilon^3}{\beta} \Bigg\{\Big[
-\frac{6\pi}{\tilde\ell} \frac{\beta'}{\beta^2} f'
+\frac{2|\beta'|^2}{\beta^4}f
+\frac{\beta''} {\beta^3}f
\,-\,\frac{\kappa^2}{\beta^2}f
\Big] \int_{{\mathbb R}}{\mathcal Z}' U'(x)\,{\mathrm d}x
\nonumber\\[2mm]
& \qquad \qquad
+\Big[2\frac{|\beta'|^2}{\beta^4} f
-\frac{4\pi}{ {\tilde\ell}}\frac{\beta'}{\beta^2}f'
\Big]\int_{{\mathbb R}} x{\mathcal Z}''U'(x)\,{\mathrm d}x
\nonumber\\[2mm]
& \qquad \qquad- \frac{\epsilon^2W_{tt}(0, \theta)}{\beta^4}f \int_{{\mathbb R}}x{\mathcal Z}U'(x)\,{\mathrm d}x \Bigg\}e,
\end{align*}
 with $$d_8 = \int_{{\mathbb R}}{\mathcal Z}' U'(x)\,{\mathrm d}x. $$

From the expression of ${\bf N}_{03}$ as in \eqref{N03}, we have
\begin{align}\label{I5}
I_5\,=\,&\int_{{\mathbb R}}\epsilon^3 {\bf N}_{03}U'(x)\,{\mathrm d}x
\nonumber\\[2mm]
 \, =\,&\epsilon^3
\int_{{\mathbb R}}\Bigg\{\frac{3 e^2\mathcal Z^2 {\mathbf b}_1}{\beta^2}\Big[\varpi_1 + \frac{2}{3}f \varpi_2\Big]
+ \frac{3 e{\mathcal Z} {\mathbf b}_1^2}{\beta}\Big[\varpi_1 + \frac{2}{3}f \varpi_2\Big]^2
+ {\mathbf b}_1^3\Big[\varpi_1 + \frac{2}{3}f \varpi_2\Big]^3
\Bigg\} \, U'(x) \,{\mathrm d}x
\nonumber\\[2mm]
 \, =\,&\epsilon^3
\int_{{\mathbb R}}\Bigg\{\frac{3 e^2 {\mathbf b}_1}{\beta^2}\mathcal Z^2\varpi_1
+ \frac{4 {\mathbf b}_1^2}{\beta}f e{\mathcal Z}\varpi_1  \varpi_2
+ {\mathbf b}_1^3\Big[\varpi_1^3+ 2f \varpi_1 \varpi_2\Big]
\Bigg\} \, U'(x) \,{\mathrm d}x
\nonumber\\[2mm]
 \, =\,&\epsilon^3
\Bigg\{\frac{3 e^2 {\mathbf b}_1}{\beta^2}\int_{{\mathbb R}}\mathcal Z^2\varpi_1 U' \,{\mathrm d}x
+\frac{4 {\mathbf b}_1^2}{\beta}f e\int_{{\mathbb R}} {\mathcal Z}\varpi_1  \varpi_2 U'\,{\mathrm d}x
+{\mathbf b}_1^3\int_{{\mathbb R}} \varpi_1^3U'{\mathrm d}x
+ 2{\mathbf b}_1^3f \int_{{\mathbb R}}\varpi_1 \varpi_2 U'{\mathrm d}x\,
\Bigg\}
\nonumber\\[2mm]
 \, =\,&\frac{\epsilon^3}{\beta}\tilde{\mathcal D}_2(\theta)
+\frac{\epsilon^3}{\beta}\tilde{\mathcal F}_{3}\left[\tau, \beta f, \frac{\beta {f}'}{\tilde\ell}, e\right],
\end{align}
where
\begin{equation*}
\tilde{\mathcal D}_2(\theta) \,=\, \frac{\kappa^3}{\beta^2}\,\int_{{\mathbb R}} \varpi_1^3U'{\mathrm d}x.
\end{equation*}

\smallskip\noindent$\clubsuit$
By the expression of $B_5(\epsilon \phi_1)$ given in \eqref{expression of B5phi1}, we have
\begin{align}\label{I7}
I_7
 \, =\,&\epsilon^3
\int_{{\mathbb R}}\Bigg\{\frac{2\pi}{{\tilde\ell}}\frac{\beta'} {\beta^2}\Big[- f' \phi_{1,x} +\phi_{1,\tau}\Big]
 +\frac{4\pi^2}{{\tilde\ell}^2}\Big[|f'|^2 \phi_{1,xx} -f''\phi_{1,x} -2f' \phi_{1,x\tau}\Big]
\nonumber\\[2mm]
&\qquad\qquad
+ \frac{4\pi}{{\tilde\ell}} \frac{\beta'}{\beta^2} (x+f)\big[- f' \phi_{1,xx} +\phi_{1,x\tau}\big]
+\Big[\frac{\beta''}{\beta^3} \,-\,\frac{\kappa^2}{\beta^2} \Big]
(x+f) \phi_{1,x}
\nonumber\\[2mm]
&\qquad\qquad
+ \frac{|\beta'|^2}{\beta^4}(x+f)^2 \phi_{1,xx}
-\frac{1}{2}\, \frac{\epsilon^2W_{tt}(0, \theta)}{\beta^4} (x+f)^2\phi_{1}
\Bigg\} \, U'(x) \,{\mathrm d}x
\nonumber\\[2mm]
=& - \epsilon^3 \frac{4\pi^2}{{\tilde\ell}^2} f'' \int_{{\mathbb R}} \phi_{1,x}\, U'(x) \,{\mathrm d}x
+\frac{\epsilon^3}{\beta}\tilde{\mathcal F}_4\left[\tau, \beta f, \frac{\beta {f}'}{\tilde\ell} \right]
\nonumber\\[2mm]
=& - \epsilon^3 \frac{4\pi^2}{{\tilde\ell}^2}  \frac{\kappa}{\beta}\,f''\,f\, d_9
+\frac{\epsilon^3}{\beta}\tilde{\mathcal F}_4\left[\tau, \beta f, \frac{\beta {f}'}{\tilde\ell} \right],
\end{align}
 where
\begin{equation*}
d_9\,=\,\frac{2}{3}\int_{{\mathbb R}}\varpi_2'(x)\,U'(x)\,{\mathrm d}x,
\end{equation*}
and
\begin{align}\label{B5phi1 intergral Ux}
 \frac{\epsilon^3}{\beta}\tilde{\mathcal F}_4\left[\tau, \beta f, \frac{\beta {f}'}{\tilde\ell} \right]
 &=
\epsilon^3
\int_{{\mathbb R}}\Bigg\{\frac{2\pi}{{\tilde\ell}}\frac{\beta'} {\beta^2}\Big[- f' \phi_{1,x} +\phi_{1,\tau}\Big]
 +\frac{4\pi^2}{{\tilde\ell}^2}\Big[|f'|^2 \phi_{1,xx} -2f' \phi_{1,x\tau}\Big]
\nonumber\\[2mm]
&\qquad\qquad\quad
+ \frac{4\pi}{{\tilde\ell}} \frac{\beta'}{\beta^2} (x+f)\big[- f' \phi_{1,xx} +\phi_{1,x\tau}\big]
+\Big[\frac{\beta''}{\beta^3} \,-\,\frac{\kappa^2}{\beta^2} \Big]
(x+f) \phi_{1,x}
\nonumber\\[2mm]
&\qquad\qquad\quad
+ \frac{|\beta'|^2}{\beta^4}(x+f)^2 \phi_{1,xx}
-\frac{1}{2}\, \frac{\epsilon^2W_{tt}(0, \theta)}{\beta^4} (x+f)^2\phi_{1}
\Bigg\} \, U'(x) \,{\mathrm d}x.
\end{align}

\smallskip\noindent$\clubsuit$
By the definition of $ \mathbb{N}_1(\epsilon^2 \phi_2)$ given in \eqref{definition of mathbbN1}, we have
\begin{align}\label{I8}
I_8=\int_{{\mathbb R}}\mathbb{N}_1 (\epsilon^2 \phi_2)U'(x)\,{\mathrm d}x= \frac{\epsilon^3}{\beta} \tilde{\mathcal D}_3 (\theta) e+\frac{\epsilon^4}\beta \tilde{\mathcal D}_4 (\theta).
\end{align}

\smallskip\noindent$\clubsuit$
Since $\phi_2$ is even of $x$, from the expression of $ B_4$ as in \eqref{new opreator B4}, then we have
\begin{align}\label{B4phi2 intergral Ux}
I_9
 \,=\,& \epsilon^3 \frac{\kappa}{\, \beta\, }\int_{{\mathbb R}} \Big[ \phi_{2,x} +\frac{2}{3} (x+f)\phi_2\Big] U'(x) \,{\mathrm d}x
\nonumber\\[2mm]
 \,=\,&\epsilon^3 \frac{\kappa}{\, \beta\, }\int_{{\mathbb R}} \Big[ \phi_{2,x} +\frac{2}{3} x\phi_2\Big] U'(x) \,{\mathrm d}x
 = \frac{\epsilon^3 }{\beta} \tilde{\mathcal D}_5(\theta).
\end{align}

\smallskip\noindent$\clubsuit$
By the expression of $ B_{5} (\epsilon^2 \phi_2)$ given in \eqref{decomposition of B5(phi2)}, we get
\begin{align}\label{B5phi2 intergral Ux}
I_{10}
 \,=\, & \frac{\epsilon^4}{\beta}\tilde{\mathcal F}_5\left[\tau, \beta f, \frac{\beta {f}'}{\tilde\ell} \right]-\epsilon^4\frac{4\pi^2}{{\tilde\ell}^2} f'' \int_{\mathbb R}\phi_{2, x}\, U'(x) \,{\mathrm d}x
\nonumber\\[2mm]
 \,=\,& \frac{\epsilon^4}{\beta}\tilde{\mathcal F}_5\left[\tau, \beta f, \frac{\beta {f}'}{\tilde\ell} \right]-\epsilon^4\frac{4\pi^2}{{\tilde\ell}^2} f'' \tilde{\mathcal D}_6(\theta),
\end{align}
where
\begin{align*}
\tilde{\mathcal D}_6 (\theta)= \int_{\mathbb R}\phi_{2, x}\, U'(x) \,{\mathrm d}x.
\end{align*}

\smallskip\noindent $\clubsuit$
By the expression of $a_2(\epsilon s, \theta) $ given in \eqref{a1a2a3}, then we have
\begin{align}\label{e-second order higher terms}
 I_{11}\,=\,& \int_{\mathbb R} \eta_\delta^\epsilon(s) \Bigg[ \epsilon^4 \frac{4\pi^2} {{\tilde\ell}^2} a_2\frac{\kappa}{\, \beta^2\, } e'' (x+f)\mathcal Z
 -\epsilon^4  \frac{ 4\pi^2} {{\tilde\ell}^2}  a_2\frac{\kappa}{\, \beta^2\, } (x+f) f'' e \mathcal Z' \Bigg]U'(x)\,{\mathrm d}x
\nonumber\\[2mm]
 \,=\,& -\epsilon^4 \frac{8\pi^2} {{\tilde\ell}^2} \frac{{\mathbf b}_1}{\, \beta\, } e''\int_{{\mathbb R}} x\mathcal Z U'(x) \,{\mathrm d}x
 \nonumber\\[2mm]
&+ \underbrace{ \epsilon^5 \int_{{\mathbb R}} \eta_\delta^\epsilon(s)\frac{4\pi^2} {{\tilde\ell}^2} \left[ \frac{3\kappa}{\beta}(x+f) + \epsilon {\tilde a}_2\right] \frac{\kappa}{\, \beta^2\, } (x+f)\mathcal Z U'(x) \,{\mathrm d}x\, e'' + O(e^{\frac{- \delta} \epsilon} ) \frac{\epsilon^2 e''}{\tilde\ell^2}}_{:= \frac{\epsilon^3}\beta \tilde{\mathcal F}_6\left[\tau, \beta f \right] \frac{\epsilon^2 e''}{\tilde\ell^2}}
\nonumber\\[2mm]
&-\epsilon^4 \frac{8\pi^2} {{\tilde\ell}^2} \frac{{\mathbf b}_1}{\, \beta\, } f f'' e\int_{{\mathbb R}} \mathcal Z' U'(x) \,{\mathrm d}x
 \nonumber\\[2mm]
&+ \underbrace{\epsilon^5 \int_{{\mathbb R}} \eta_\delta^\epsilon(s)\frac{4\pi^2} {{\tilde\ell}^2} \left[ \frac{3\kappa}{\beta}(x+f) + \epsilon {\tilde a}_2\right] \frac{\kappa}{\, \beta^2\, } (x+f) e\mathcal Z' U'(x) \,{\mathrm d}x\,  f''
 + O(e^{\frac{- \delta} \epsilon} ) \frac{f''}{\tilde\ell^2}}_{:= \frac{\epsilon^5}\beta \tilde{\mathcal F}_7\left[\theta, \beta f, e\right] \frac{\beta f''}{\tilde\ell^2}}
\nonumber\\[2mm]
 \,=\,&\epsilon^4 \frac{d_6} {\beta}{\mathbf b}_1 \frac{e''}{\tilde\ell^2} + \frac{\epsilon^3}\beta \tilde{\mathcal F}_6\left[\tau, \beta f \right] \frac{\epsilon^2 e''}{\tilde\ell^2}
 +\epsilon^4 \frac{d_7} {\beta}{\mathbf b}_1 f e\frac{f''}{\tilde\ell^2}+\frac{\epsilon^5}\beta \tilde{\mathcal F}_7\left[\tau, \beta f, e \right] \frac{\beta f''}{\tilde\ell^2},
\end{align}
where
\begin{align*}
d_6 = -8 \pi^2\int_{{\mathbb R}} x\mathcal Z U'(x) \,{\mathrm d}x,
\quad
d_7 = -8 \pi^2\int_{{\mathbb R}} \mathcal Z' U'(x) \,{\mathrm d}x.
\end{align*}

\smallskip
\noindent $\clubsuit$
We recall the expressions of
$$
\frac{\epsilon^4}{\beta^2} \mathcal F_1\left[\tau, x, \beta f, \frac{\beta f'}{\tilde\ell} \right],
\quad\,\frac{\epsilon^4}{\beta^2} \hat{\mathcal F}_1\left[\tau, x, \beta f, \frac{\beta f''}{\tilde\ell^2} \right],
\quad\,\frac{\epsilon^4} {\beta^2} \mathcal G_1\left[\tau, x, \beta f, \frac{\beta f'}{\tilde\ell}, e, \frac{e'}{\tilde\ell}\right]
$$
given in \eqref{mathcal F1}, \eqref{hat mathcal F1}, \eqref{mathcal G1}.
 Then we denote
\begin{align} \label{intergral Ux higher order}
I_{12}\,=\,
 \frac{\epsilon^3}{\beta^2}\tilde{\mathcal F}_{8}\left[\tau, \beta f, \frac{\beta {f}'}{\tilde\ell}, e, \frac{\epsilon e'}{\tilde\ell}\right]
 + \frac{\epsilon^4}{\beta}\tilde{\mathcal F}_{9}\left[\tau, \beta f \right] \frac{\beta f''} {\tilde\ell^2}.
\end{align}

 \smallskip\noindent $\clubsuit$
By recalling the decomposition of $B_6 (\epsilon \phi_1)$ given in \eqref{expression of B6phi1}, we have
\begin{align}\label{B6phi1 intergral Ux}
I_{13}
 &\,=\,
 \int_{{\mathbb R}} \eta_\delta^\epsilon(s) \frac{\epsilon^4}{\beta^2} \mathcal F_2\left[\tau, x, \beta f, \frac{\beta f'}{\tilde\ell}\right] \, U'(x) \,{\mathrm d}x
 + \int_{{\mathbb R}} \eta_\delta^\epsilon(s) \frac{\epsilon^4}{\beta^2} \hat{\mathcal F}_2\left[\tau, x, \beta f, \frac{\beta f''}{\tilde\ell^2} \right] \, U'(x) \,{\mathrm d}x
\nonumber\\[2mm]
 &=
 \frac{\epsilon^4}{\beta}\tilde{\mathcal F}_{10}\left[\tau, \beta f, \frac{\beta {f}'}{\tilde\ell}\right]
 + \frac{\epsilon^4}{\beta}\tilde{\mathcal F}_{11}\left[\tau, \beta f \right] \frac{\beta f''} {\tilde\ell^2}.
\end{align}
Here we note that the expressions of
$\frac{\epsilon^4}{\beta^2} \mathcal F_2\left[\tau, x, \beta f, \frac{\beta f'}{\tilde\ell}\right], \frac{\epsilon^4}{\beta^2} \hat{\mathcal F}_2\left[\tau, x, \beta f, \frac{\beta f''}{\tilde\ell^2} \right] $ are given in \eqref{mathcal F2}, \eqref{hat mathcal F2}.

\medskip
At last, combining the estimates in \eqref{S32+N03+B43intUx}-\eqref{B6phi1 intergral Ux},
we conclude that
\begin{align*}
\frac{1}{\beta^3}\int_{\mathbb R}\eta_\delta^\epsilon(s){\mathcal E}U'(x) \,{\mathrm d}x
= &
 \epsilon^2\varrho_1\Bigg[- \frac{4\pi^2}{{\tilde\ell}^2}{f}'' - \frac{4\pi}{\tilde\ell} \frac{\beta'}{\beta^2} f'
+ \Big(\frac{|\beta'|^2}{\beta^4}
+\frac{\beta''} {\beta^3}
\,-\,\frac{5\kappa^2}{3\beta^2} \Big) f
+ \, \frac 32\frac{\epsilon^2W_{tt}(0, \theta)}{\beta^4} f \Bigg]
\nonumber\\[2mm]
&\qquad + \frac {\epsilon^2 {\mathbf b}_1(d_1+d_2)e}\beta
+\frac{\epsilon^3}{\beta}{\mathcal D}_1(\theta)
+ \epsilon^4 \frac{d_6} {\beta}{\mathbf b}_1 \frac{e''}{\tilde\ell^2}
+ \frac{\epsilon^3}{\beta^3}\Big[ -\frac{8\pi^2}{{\tilde\ell}^2}f' +\frac{4\pi}{ {\tilde\ell}}\frac{\beta'}{\beta^2} f\Big]e'd_8
\nonumber\\[2mm]
&\qquad
+\frac{\epsilon^3}{\beta}\tilde{\mathcal F}^{*} \left[\tau, \beta f, \frac{\beta {f}'}{\tilde\ell}, e, \frac{\epsilon e'}{\tilde\ell}\right]
+ \frac{\epsilon^3}{\beta}\tilde{\mathcal F}^{**} \left[\tau, \beta f, \frac{\beta f''}{\tilde\ell^2} , e, \frac{\epsilon^2 e''}{\tilde\ell^2}\right],
\end{align*}
where
\begin{align*}
{\mathcal D}_1(\theta)\,=\,\tilde{\mathcal D}_1(\theta)\,+\,\tilde{\mathcal D}_2(\theta)\,+\,\epsilon \tilde{\mathcal D}_4 (\theta)
\,+\,\tilde{\mathcal D}_5(\theta),
\end{align*}
\begin{align*}
\frac{\epsilon^3}{\beta}\tilde{\mathcal F}^* \left[\tau, \beta f, \frac{\beta {f}'}{\tilde\ell}, e, \frac{\epsilon e'}{\tilde\ell}\right]
&= \frac{\epsilon^3}{\beta}
\Bigg\{\tilde{\mathcal F}_1\left[\tau, \beta f, \frac{\beta {f}'}{\tilde\ell}\right]
+\tilde{\mathcal F}_{3}\left[\tau, \beta f, \frac{\beta {f}'}{\tilde\ell}, e\right]
+\tilde{\mathcal F}_{4}\left[\tau, \beta f, \frac{\beta {f}'}{\tilde\ell} \right]
 \nonumber\\[2mm]
&\qquad \quad
+ \epsilon\tilde{\mathcal F}_{5}\left[\tau, \beta f, \frac{\beta {f}'}{\tilde\ell}\right]
 + \epsilon\tilde{\mathcal F}_{10}\left[\tau, \beta f, \frac{\beta {f}'}{\tilde\ell}\right]
\nonumber\\[2mm]
&\qquad \quad
+ \tilde{\mathcal D}_3 (\theta) e
+ \tilde{\mathcal F}_2\left[\tau, \beta f, \frac{\beta f'}{\tilde\ell} \right] e
+ \tilde{\mathcal F}_{8}\left[\tau, \beta f, \frac{\beta {f}'}{\tilde\ell}, e, \frac{\epsilon e'}{\tilde\ell}\right]
\Bigg\},
\end{align*}
and
\begin{align*}
 \frac{\epsilon^3}{\beta}\tilde{\mathcal F}^{**} \left[\tau, \beta f, \frac{\beta f''}{\tilde\ell^2}, e, \frac{\epsilon^2 e''}{\tilde\ell^2}\right]
& =\frac{\epsilon^3}{\beta}
\Bigg\{ - \frac{8\pi^2}{{\tilde\ell}^2} \kappa f''f \varrho_1
- \frac{4\pi^2}{{\tilde\ell}^2}f'' e d_8 + \epsilon d_7{\mathbf b}_1 f \beta e\frac{\beta f''}{\tilde\ell^2}
\nonumber\\[2mm]
& \qquad \quad
- \frac{4\pi^2}{{\tilde\ell}^2} \,\beta f''\,\beta f\,  d_9
+ \epsilon^2 \tilde{\mathcal F}_{7}\left[\tau, \beta f, e \right] \frac{\beta f''}{\tilde\ell^2}
-\epsilon \frac{4\pi^2}{{\tilde\ell}^2} \beta f'' \tilde{\mathcal D}_6(\theta)
 \nonumber\\[2mm]
&  \qquad \quad
 + \epsilon\tilde{\mathcal F}_{9}\left[\tau, \beta f \right] \frac{\beta f''} {\tilde\ell^2}
 + \epsilon\tilde{\mathcal F}_{11}\left[\tau, \beta f \right] \frac{\beta f''} {\tilde\ell^2}
 + \tilde{\mathcal F}_{6}\left[\tau, \beta f \right] \frac{\epsilon^2 e''}{\tilde\ell^2}
 \Bigg\}.
\end{align*}
Moreover, by the expressions of the following terms
$$
\frac{\epsilon^3}{\beta}\tilde{\mathcal F}^{*} \left[\tau, f, \frac{\beta {f}'}{\tilde\ell}, e, \frac{\epsilon e'}{\tilde\ell}\right],
\qquad
\frac{\epsilon^3}{\beta}\tilde{\mathcal F}^{**} \left[\tau, \beta f, \frac{\beta f''}{\tilde\ell^2}, e, \frac{\epsilon^2 e''}{\tilde\ell^2}\right],
$$
we can verify that
\begin{align*}
\left\|\beta \frac{\epsilon^3}{\beta}\tilde{\mathcal F}^{*} \left[\tau, \beta f, \frac{\beta {f}'}{\tilde\ell}, e, \frac{\epsilon e'}{\tilde\ell}\right] \right\|_{L^2(0, 2\pi)}
\le C \big(\epsilon^3+ \epsilon^3 \|f\|_* +\epsilon^3 \|e\|_{**} \big),
\end{align*}
\begin{align*}
\left\| \beta \frac{\epsilon^3}{\beta}\tilde{\mathcal F}^{**} \left[\tau, \beta f, \frac{\beta f''}{\tilde\ell^2}, e, \frac{\epsilon^2 e''}{\tilde\ell^2}\right] \right\|_{L^2(0, 2\pi)} \le C \big(\epsilon^3+ \epsilon^3 \|f\|_* + \epsilon^3 \|e\|_{**} \big),
\end{align*}
\begin{align*}
& \left\| \beta \frac{\epsilon^3}{\beta}\left(\tilde{\mathcal F}^{*} \left[\tau, \beta f_1, \frac{\beta {f_1}'}{\tilde\ell}, e_1, \frac{\epsilon e_1'}{\tilde\ell}\right]
- \tilde{\mathcal F}^{*} \left[\tau, \beta f_2, \frac{\beta {f_2}'}{\tilde\ell}, e_2, \frac{\epsilon e_2'}{\tilde\ell}\right] \right) \right\|_{L^2(0, 2\pi)}
\nonumber\\[2mm]
&
\le C \epsilon^3 \big(\|f_1- f_2\|_*+ \|e_1- e_2\|_{**}\big),
\end{align*}
and
\begin{align*}
& \left\| \beta \frac{\epsilon^3}{\beta}\left(\, \tilde{\mathcal F}^{**} \left[\tau, \beta f_1,\frac{\beta f_1''}{\tilde\ell^2} ,  e_1, \frac{\epsilon^2 e_1''}{\tilde\ell^2}\right]
-
\tilde{\mathcal F}^{**} \left[\tau, \beta f_2, \frac{\beta f_2''}{\tilde\ell^2}, e_2, \frac{\epsilon^2 e_2''}{\tilde\ell^2}\right] \right) \right\|_{L^2(0, 2\pi)}
\nonumber\\[2mm]
&
\le C \epsilon^3 \left(\|f_1- f_2\|_*+ \|e_1- e_2\|_{**}\right).
\end{align*}

\medskip
\medskip
\subsubsection{Computation for $\beta^{-3}\int_{{\mathbb R}}\, \eta_\delta^\epsilon(s)\, {\mathcal E}\,{\mathcal Z} \,{\mathrm d}x$}\

By \eqref{errorrelationinterior}, we have
\begin{align*}
& \frac{1}{\beta^3}\int_{\mathbb R}\eta_\delta^\epsilon(s){\mathcal E}\,{\mathcal Z} \,{\mathrm d}x
\nonumber\\[2mm]
=& \int_{{\mathbb R}}\, S\big(w_2\big)\,{\mathcal Z} \,{\mathrm d}x
\ +\
\int_{\mathbb R} \big[\eta_\delta^\epsilon(s)-1\big]S\big(w_2\big)\,{\mathcal Z} \,{\mathrm d}x
\nonumber\\[2mm]
=&
\int_{\mathbb R} \Bigg[\epsilon^2 S_{3} + \epsilon^2 S_{42} + \epsilon^3 S_{5} +\epsilon^3 S_{6}
 +\epsilon^3 S_{7} +\epsilon^3 S_{8}
 +\frac 1\beta\epsilon^3\frac{4\pi^2}{{\tilde\ell}^2} e''\mathcal Z
 +\frac 1\beta \epsilon\lambda_0 e{\mathcal Z}
 +\epsilon^2 {\bf N}_{02}
  +\epsilon^3 {\bf N}_{03}
\nonumber\\[2mm]
&\qquad+ \epsilon^2 {\bf B}_{42}
 + \epsilon^3 \frac{4\pi^2}{{\tilde\ell}^2} {\phi}_{1, \tau\tau}
+ \epsilon^4 \frac{4\pi^2}{{\tilde\ell}^2}\phi_{2, \tau\tau}
+B_{5}(\epsilon\phi_1) + \mathbb{N}_1 (\epsilon^2 \phi_2)
+ B_4(\epsilon^2\phi_2)+ B_5 (\epsilon^2{\phi_2}) \Bigg] \,{\mathcal Z} \,{\mathrm d}x + \mathfrak P(\theta)
\nonumber\\[2mm]
&
 +\int_{\mathbb R} \eta_\delta^\epsilon(s) \Bigg\{\epsilon^4 \frac{4\pi^2} {{\tilde\ell}^2} a_2\frac{\kappa}{\, \beta^2\, } e'' (x+f)\mathcal Z -\epsilon^4 \kappa a_2 \frac{ 4\pi^2} {{\tilde\ell}^2} \frac{1}{\, \beta^2\, } (x+f) f'' e \mathcal Z' + \frac{\epsilon^4}{\beta^2} \hat{\mathcal F}_1\left[\tau, x, \beta f, \frac{\beta f''}{\tilde\ell^2} \right]
\nonumber\\[2mm]
&\qquad\qquad\qquad
+B_{6}(\epsilon\phi_1)
 + \frac{\epsilon^4}{\beta^2} \mathcal F_1\left[\tau, x, \beta f, \frac{\beta f'}{\tilde\ell}\right]
 +\frac{\epsilon^4} {\beta^2} \mathcal G_1\left[\tau, x, \beta f, \frac{\beta f'}{\tilde\ell}, e, \frac{e'}{\tilde\ell}\right]
 + B_6 (\epsilon^2{\phi_2}) \Bigg\} \,{\mathcal Z} \,{\mathrm d}x,
\end{align*}
 where
\begin{align*}
 \mathfrak P(\theta) & = \int_{\mathbb R}\big[\eta_\delta^\epsilon(s)-1\big] \Bigg[\epsilon^2 S_{3} + \epsilon^2 S_{42} + \epsilon^3 S_{5} +\epsilon^3 S_{6}
 +\epsilon^3 S_{7} +\epsilon^3 S_{8}
 +\frac 1\beta\epsilon^3\frac{4\pi^2}{{\tilde\ell}^2} e''\mathcal Z
\nonumber\\[2mm]
&\qquad \qquad \qquad \qquad
 +\frac 1\beta \epsilon\lambda_0 e{\mathcal Z}
 +\epsilon^2 {\bf N}_{02}
  +\epsilon^3 {\bf N}_{03}+ \epsilon^2 {\bf B}_{42}
 + \epsilon^3 \frac{4\pi^2}{{\tilde\ell}^2} {\phi}_{1, \tau\tau}
\nonumber\\[2mm]
&\qquad \qquad \qquad \qquad
+ \epsilon^4 \frac{4\pi^2}{{\tilde\ell}^2}\phi_{2, \tau\tau}
 +B_{5}(\epsilon\phi_1) + \mathbb{N}_1 (\epsilon^2 \phi_2)
+ B_4(\epsilon^2\phi_2)+ B_5 (\epsilon^2{\phi_2}) \Bigg] \,{\mathcal Z} \,{\mathrm d}x.
\end{align*}
By the exponential decay of the terms in $S\big(w_2\big)$ with respect to $x$,
 we neglect the calculations of the terms in $ \mathfrak P(\theta). $ Since $S_{3}, S_5, S_7$ are odd functions of $x$, then integration against $\mathcal Z$ just vanishes.
This gives that
\begin{align}\label{projected eqs for enew}
& \frac{1}{\beta^3}\int_{\mathbb R}\eta_\delta^\epsilon(s){\mathcal E}\,{\mathcal Z} \,{\mathrm d}x
\nonumber\\[2mm]
=&
 \int_{\mathbb R} \epsilon^2 S_{42} \,{\mathcal Z} \,{\mathrm d}x
+\int_{\mathbb R} \epsilon^3 S_{6}\,{\mathcal Z} \,{\mathrm d}x
 +\int_{\mathbb R} \epsilon^3 S_{8}\,{\mathcal Z} \,{\mathrm d}x
 +\int_{\mathbb R} \frac 1\beta\epsilon^3\frac{4\pi^2}{{\tilde\ell}^2} e''\mathcal Z^2 \,{\mathrm d}x
 +\int_{\mathbb R} \frac 1\beta \epsilon\lambda_0 e{\mathcal Z}^2 \,{\mathrm d}x
\nonumber\\[2mm]
& +\int_{\mathbb R} \epsilon^2 {\bf N}_{02}\,{\mathcal Z} \,{\mathrm d}x
+\int_{\mathbb R} \epsilon^3 {\bf N}_{03}\,{\mathcal Z} \,{\mathrm d}x
+ \int_{\mathbb R} \epsilon^2 {\bf B}_{42}\,{\mathcal Z} \,{\mathrm d}x
+\int_{\mathbb R}  \epsilon^3 \frac{4\pi^2}{{\tilde\ell}^2} {\phi}_{1, \tau\tau}\,{\mathcal Z} \,{\mathrm d}x
+\int_{\mathbb R} \epsilon^4 \frac{4\pi^2}{{\tilde\ell}^2}\phi_{2, \tau\tau}\,{\mathcal Z} \,{\mathrm d}x
\nonumber\\[2mm]
&
+\int_{\mathbb R} B_{5}(\epsilon\phi_1) \,{\mathcal Z} \,{\mathrm d}x
+\int_{\mathbb R} \mathbb{N}_1 (\epsilon^2 \phi_2)\,{\mathcal Z} \,{\mathrm d}x
+\int_{\mathbb R} B_4(\epsilon^2\phi_2)\,{\mathcal Z} \,{\mathrm d}x
+\int_{\mathbb R} B_5 (\epsilon^2{\phi_2})\,{\mathcal Z} \,{\mathrm d}x
\nonumber\\[2mm]
&
 +\int_{\mathbb R} \eta_\delta^\epsilon(s) \Bigg\{ \epsilon^4 \frac{4\pi^2} {{\tilde\ell}^2} a_2\frac{\kappa}{\, \beta^2\, } e'' (x+f)\mathcal Z -\epsilon^4 \kappa a_2 \frac{ 4\pi^2} {{\tilde\ell}^2} \frac{1}{\, \beta^2\, } (x+f) f'' e \mathcal Z'  \Bigg\} \,{\mathcal Z} \,{\mathrm d}x
 \nonumber\\[2mm]
&
 +\int_{\mathbb R} \eta_\delta^\epsilon(s) \Bigg\{  \frac{\epsilon^4}{\beta^2} \hat{\mathcal F}_1\left[\tau, x, \beta f, \frac{\beta f''}{\tilde\ell^2} \right]
 + \frac{\epsilon^4}{\beta^2} \mathcal F_1\left[\tau, x, \beta f, \frac{\beta f'}{\tilde\ell}\right]
 +\frac{\epsilon^4} {\beta^2} \mathcal G_1\left[\tau, x, \beta f, \frac{\beta f'}{\tilde\ell}, e, \frac{e'}{\tilde\ell}\right] \Bigg\} \,{\mathcal Z} \,{\mathrm d}x
  \nonumber\\[2mm]
&
 +\int_{\mathbb R} \eta_\delta^\epsilon(s) \Bigg\{ B_{6}(\epsilon\phi_1)
 + B_6 (\epsilon^2{\phi_2}) \Bigg\} \,{\mathcal Z} \,{\mathrm d}x
 \nonumber\\[2mm]
& = \sum_{j=0}^{17}\,II_j.
\end{align}
We calculate these terms as follows.

\smallskip\noindent$\clubsuit$
 From the expression of $\epsilon^3 S_{42}$ as in \eqref{barS41}, we have
\begin{align}\label{ESTIMATEII1}
II_1
\,=\,&\epsilon^2 \int_{{\mathbb R}} \Bigg\{
-\frac{1}{2}\, \frac{ \epsilon^2W_{tt}(0, \theta)}{\beta^4}|f|^2 U
+
\Big[\frac{4\pi^2}{{\tilde\ell}^2}|f'|^2
-\frac{4\pi}{{\tilde\ell}}\frac{\beta'}{\beta^2}ff'
+\frac{|\beta'|^2}{\beta^4} |f|^2
\Big]U''
\nonumber\\[2mm]
 & \qquad\qquad
+\frac 23 \frac{\kappa}{\, \beta^2\, } fe{\mathcal Z}
+\frac3{\beta^2}e^2U{\mathcal Z}^2
\Bigg\}\mathcal Z \,{\mathrm d}x
\nonumber\\[2mm]
\,=\,& -\frac{1}{2}\, \epsilon^2 \frac{ \epsilon^2W_{tt}(0, \theta)}{\beta^4}|f|^2 b_5 + \epsilon^2\Big[\frac{4\pi^2}{{\tilde\ell}^2}|f'|^2
-\frac{4\pi}{{\tilde\ell}}\frac{\beta'}{\beta^2}ff'
+\frac{|\beta'|^2}{\beta^4} |f|^2
\Big] b_6
+\frac 23\epsilon^2 \frac{\kappa}{\, \beta^2\, } fe + 3\epsilon^2\frac{1}{\beta^2}e^2b_1 ,
\end{align}
where
\[
b_0 = \int_{{\mathbb R}}{\mathcal Z}^2 \,{\mathrm d}x=1,
\qquad
b_1= \int_{{\mathbb R}}U{\mathcal Z}^3 \,{\mathrm d}x,
\qquad
b_5 = \int_{{\mathbb R}} U\mathcal Z \,{\mathrm d}x,
\qquad
b_6 = \int_{{\mathbb R}} U''{\mathcal Z} \,{\mathrm d}x.
\]

\smallskip\noindent $\clubsuit$
 From the expression of $\epsilon^3 S_{6}$ as in \eqref{S6}, we have
\begin{align}\label{ESTIMATEII2}
II_{2}
\,=\,&\frac{\epsilon^3}{\beta}\tilde{\mathcal K}_1\left[\tau, {\beta f}, \frac{\beta{ f}'}{\tilde\ell} \right] -\epsilon^3\frac{8\pi^2}{{\tilde\ell}^2}\frac{\kappa}{\beta} f''b_4,
\end{align}
where
$$
b_4 = \int_{\mathbb R} xU' \mathcal Z \,{\mathrm d}x,
$$
and
\begin{align*}
 \frac{\epsilon^3}{\beta}\tilde{\mathcal K}_1\left[\tau, {\beta f}, \frac{\beta{ f}'}{\tilde\ell} \right]
& = \epsilon^3\Bigg\{
 - \frac{1}{2} \frac{\,\epsilon^2 W_{ttt}(0, \theta)}{\beta^5}f \int_{\mathbb R} x^2 U \mathcal Z\,{\mathrm d}x
 \nonumber\\[2mm]
 & \qquad \quad +
\Big[- \frac{1}{6} \frac{\epsilon^2W_{ttt}(0, \theta)}{\beta^5}f^3
 -\frac{2\kappa\beta''}{\beta^4} f +\frac{\kappa'\beta'} {\beta^4} f\Big]\int_{\mathbb R} U\mathcal Z\,{\mathrm d}x
\nonumber\\[2mm]
 &\qquad \quad + \Big[
 \frac{2\kappa^3}{\beta^3}f
 -\frac{8\kappa|\beta'|^2}{\beta^5} f
 -\frac{4\kappa\beta''}{\beta^4}f
+\frac{12\pi }{ {\tilde\ell}}\frac{\kappa\beta'}{\beta^3} f'
+\frac{2\kappa'\beta'}{\beta^4}f
- \frac{2\pi}{{\tilde\ell}} \frac{\kappa'}{\beta^2} f'\Big] \int_{\mathbb R} xU'\mathcal Z\,{\mathrm d}x
\nonumber\\[2mm]
 & \qquad \quad + \Big[ -\frac{6\kappa|\beta'|^2}{\beta^5}f
 +\frac{8\pi}{ {\tilde\ell}}\frac{\kappa\beta'}{\beta^3} f'\Big] \int_{\mathbb R} x^2 U''\mathcal Z\,{\mathrm d}x
\nonumber\\[2mm]
&\qquad \quad+ \Big[ -\frac{2\kappa|\beta'|^2}{\beta^5}f^3
 +\frac{8\pi}{ {\tilde\ell}} \frac{\kappa\beta'}{\beta^3} f' f^2
 -\frac{8\pi^2} {{\tilde\ell}^2}\frac{\kappa}{\beta} |f'|^2 f \Big]\int_{\mathbb R} U''\mathcal Z\,{\mathrm d}x
 \Bigg\}.
\end{align*}

\smallskip\noindent $\clubsuit$
 From the expression of $\epsilon^3 S_8$ as in \eqref{S8}, we have
\begin{align}\label{ESTIMATEII3}
II_{3}
\,=\,&\frac{\epsilon^3}\beta\int_{{\mathbb R}} \Bigg\{
 \frac{ 2\pi}{\tilde\ell} \frac{\beta'}{\beta^2} e'{\mathcal Z}
+\frac{4\pi}{{\tilde\ell}}\frac{\beta'}{\beta^2} e' x {\mathcal Z}'
+\Big[
-\frac{1}{2}\, \frac{\epsilon^2W_{tt}(0, \theta)}{\beta^4}f^2 \Big]e {\mathcal Z}
\nonumber\\[2mm]
&\qquad\qquad
+\Big[\frac{4\pi^2}{{\tilde\ell}^2}|f'|^2
-\frac{4\pi}{ {\tilde\ell}}\frac{\beta'}{\beta^2}ff'
+\frac{|\beta'|^2}{\beta^4} f^2
\Big]e{\mathcal Z}''
-\frac{1}{2}\, \frac{\epsilon^2W_{tt}(0, \theta)}{\beta^4} e x^2{\mathcal Z}
\nonumber\\[2mm]
&\qquad\qquad
+\Big[
\frac{\beta''} {\beta^3}
\,-\,\frac{\kappa^2}{\beta^2}
\Big]e x{\mathcal Z}'
+\frac{|\beta'|^2}{\beta^4} e x^2{\mathcal Z}''
+ \frac{e^3}{\beta^2} {\mathcal Z}^3
\Bigg\} {\mathcal Z} \,{\mathrm d}x
\nonumber\\[2mm]
\,=\,& \epsilon^3
\frac{1}{\beta} \tilde{\mathcal K}_{2}\left[\tau, {\beta f}, \frac{\beta { f}'}{\tilde\ell}\right] e + \epsilon^3\frac{e^3}{\beta^3}b_2 ,
\end{align}
where
\begin{align*}
b_2\,=\, \int_{\mathbb R} |\mathcal Z|^4 \,{\mathrm d}x,
\end{align*}
and
\begin{align*}
\frac{\epsilon^3}{\beta} \tilde{\mathcal K}_2\left[\tau, {\beta f}, \frac{\beta { f}'}{\tilde\ell}\right]e
\,=\,&\frac{\epsilon^3}\beta e\int_{{\mathbb R}} \Bigg\{
 \Big[
-\frac{1}{2}\, \frac{\epsilon^2W_{tt}(0, \theta)}{\beta^4}f^2 \Big] {\mathcal Z}
+\Big[\frac{4\pi^2}{{\tilde\ell}^2}|f'|^2
-\frac{4\pi}{ {\tilde\ell}}\frac{\beta'}{\beta^2}ff'
+\frac{|\beta'|^2}{\beta^4} f^2
\Big]{\mathcal Z}''
\nonumber\\[2mm]
&\qquad \qquad-\frac{1}{2}\, \frac{\epsilon^2W_{tt}(0, \theta)}{\beta^4} x^2{\mathcal Z}
+\Big[
\frac{\beta''} {\beta^3}
\,-\,\frac{\kappa^2}{\beta^2}
\Big] x{\mathcal Z}'
+\frac{|\beta'|^2}{\beta^4} x^2{\mathcal Z}''
\Bigg\} {\mathcal Z} \,{\mathrm d}x.
\end{align*}
Here we have used the identity
\begin{align*}
\int_{\mathbb R} x{\mathcal Z}'{\mathcal Z} \,{\mathrm d}x= - \frac12\int_{\mathbb R} |\mathcal Z|^2 \,{\mathrm d}x.
\end{align*}

\smallskip\noindent $\clubsuit$
Next, we have
\begin{align} \label{main term mathccal Z}
 II_{4}+II_{5}\,=\,& \int_{{\mathbb R}} \Bigg[\frac 1\beta\epsilon^3\frac{4\pi^2}{{\tilde\ell}^2} e''\mathcal Z
 +\frac 1\beta \epsilon\lambda_0 e{\mathcal Z} \Bigg] {\mathcal Z} \,{\mathrm d}x
 = \frac {1}\beta \Bigg[\epsilon^3\frac{4\pi^2}{{\tilde\ell}^2} e'' + \epsilon\lambda_0 e \Bigg].
\end{align}

\smallskip\noindent $\clubsuit$
 From the expression of ${\bf N}_{02}$ as in \eqref{N02}, we have
\begin{align}\label{ESTIMATEII6}
II_{6}
 \,=\,
&\epsilon^2\int_{{\mathbb R}}\Bigg\{\frac{6 {\mathbf b}_1e U{\mathcal Z}\varpi_1
+4 {\mathbf b}_1fe U{\mathcal Z} \varpi_2}\beta
+\frac{4}{3}{\mathbf b}_1^2|f|^2 U\varpi_2^2
+ 4{\mathbf b}_1^2f U\varpi_1\varpi_2\Bigg\}{\mathcal Z} \,{\mathrm d}x
\nonumber\\[2mm]
 \,=\,& \epsilon^2\int_{{\mathbb R}} \left\{ \frac{4 {\mathbf b}_1fe U{\mathcal Z} \varpi_2}\beta +\frac{4}{3}{\mathbf b}_1^2|f|^2 U\varpi_2^2\right\} {\mathcal Z} \,{\mathrm d}x
\nonumber\\[2mm]
 : =\,&
\frac{\epsilon^2}{\beta}\tilde{\mathcal K}_3\left[\tau, e, {\beta f}\right].
\end{align}

\smallskip\noindent $\clubsuit$
 From the expression of ${\bf N}_{03}$ as in \eqref{N03}, we have
\begin{align}\label{nonlinear term w1 emathcal Z}
II_{7}
 \,=\,
&\epsilon^3\int_{{\mathbb R}}\Bigg\{\frac{3 e^2\mathcal Z^2 {\mathbf b}_1}{\beta^2}\Big[\varpi_1 + \frac{2}{3}f \varpi_2\Big]
+ \frac{3 e{\mathcal Z} {\mathbf b}_1^2}{\beta}\Big[\varpi_1 + \frac{2}{3}f \varpi_2\Big]^2
+ {\mathbf b}_1^3\Big[\varpi_1 + \frac{2}{3}f \varpi_2\Big]^3 \Bigg\}{\mathcal Z} \,{\mathrm d}x
\nonumber\\[2mm]
 \,=\,& \epsilon^3\int_{{\mathbb R}} \left\{ \frac{2 e^2\mathcal Z^2 {\mathbf b}_1}{\beta^2} f \varpi_2 + \frac{3 e{\mathcal Z} {\mathbf b}_1^2}{\beta}\Big[\varpi_1^2 + \frac{4}{9}f^2 \varpi_2^2\Big] + {\mathbf b}_1^3\Big[ 2 \varpi_1^2 \varpi_2 f + \frac8{27} f^3 \varpi_2^3\Big] \right\} {\mathcal Z} \,{\mathrm d}x
\nonumber\\[2mm]
:=\,&
\frac{\epsilon^3}{\beta}\tilde{\mathcal K}_4\left[\tau, e, {\beta f}\right].
\end{align}

\smallskip\noindent $\clubsuit$
 From the expression of $ \epsilon^2 {\bf B}_{42}$ as in \eqref{B42}, we have
\begin{align}\label{B4emathcal Z2}
II_8
 =\epsilon^2\int_{{\mathbb R}} {\mathbf b}_1^2\Big[
\frac{2}{3}f\varpi_{2,x}
+ \frac{4}{9} f x\varpi_2
+\frac{2}{3}f\varpi_1
+ \frac{4}{9} f^2\varpi_2
\Big]\mathcal Z \,{\mathrm d}x
= \epsilon^2 \frac{4}{9}  {\mathbf b}_1^2 b_3\,f^2
=\epsilon^2 \frac{4}{9} \frac{\kappa^2}{\beta^2} b_3\,f^2 ,
\end{align}
where
$$
b_3= \int_{\mathbb R} \varpi_2\mathcal Z \,{\mathrm d}x,
\qquad
 {\mathbf b}_1\,=\,\frac{\kappa}{\beta}.
$$

\smallskip\noindent $\clubsuit$
 Let's review the expression for $\phi_{1, \tau\tau} $ in \eqref{expression of phi1tautau}, then we have
\begin{align} \label{phitautau mathcal Z}
II_9
 \,=\, &\epsilon^3 \int_{\mathbb R} \Big[{\mathbf b}_3 \varpi_{1}
+\frac{2}{3}{\mathbf b}_3 f \varpi_{2}
+\frac{4}{3}\frac{2\pi}{{\tilde\ell}}{\mathbf b}_2f' \varpi_{2}
+\frac{2}{3}\frac{4\pi^2}{{\tilde\ell}^2} {\mathbf b}_1 f''\varpi_{2} \Big]
\mathcal Z \,{\mathrm d}x
\nonumber\\[2mm]
:  \,=\,&\frac{\epsilon^3}{\beta}\tilde{\mathcal K}_5\left[\tau, \beta{ f}, \frac{\beta{f}'}{\tilde\ell}\right]
+ \frac{\epsilon^3}{\beta}\frac{8\pi^2}{3} \kappa\frac{ f''}{\tilde \ell^2}b_3,
\end{align}
where
\begin{align*}
\frac{\epsilon^3}{\beta}\tilde{\mathcal K}_5\left[\tau, \beta{ f}, \frac{\beta{f}'}{\tilde\ell}\right] = \epsilon^3 \int_{\mathbb R} \Big[
\frac{2}{3}{\mathbf b}_3 f \varpi_{2}
+
\frac{4}{3}\frac{2\pi}{{\tilde\ell}}{\mathbf b}_2f' \varpi_{2}
 \Big]
\mathcal Z \,{\mathrm d}x.
\end{align*}

\smallskip\noindent $\clubsuit$
By the definition of $\epsilon^2\phi_2$ given in \eqref{definition of e2phi2}, we directly get
\begin{align} \label{phi2tautau mathcal Z}
II_{10}\,=\,\epsilon^4 \frac{4\pi^2}{{\tilde\ell}^2}\int_{\mathbb R}\phi_{2, \tau\tau} {\mathcal Z} \,{\mathrm d}x =\frac{\epsilon^4}{\beta} \tilde{\mathcal D}_7(\theta).
\end{align}

\smallskip\noindent
$\clubsuit$ From the expression of $B_{5}(\epsilon \phi_1)$ as in \eqref{expression of B5phi1}, we have
\begin{align}\label{B5phi1 intergral Z}
II_{11}
& =
\epsilon^3
\int_{\mathbb R}
\Bigg\{\frac{2\pi}{{\tilde\ell}}\frac{\beta'} {\beta^2}\Big[- f' \phi_{1,x} +\phi_{1,\tau}\Big]
 +\frac{4\pi^2}{{\tilde\ell}^2}\Big[|f'|^2 \phi_{1,xx} -f''\phi_{1,x} -2f' \phi_{1,x\tau}\Big]
\nonumber\\[2mm]
&
\qquad\qquad + \frac{4\pi}{{\tilde\ell}} \frac{\beta'}{\beta^2} (x+f)\big[- f' \phi_{1,xx} +\phi_{1,x\tau}\big]
+\Big[\frac{\beta''}{\beta^3} \,-\,\frac{\kappa^2}{\beta^2} \Big]
(x+f) \phi_{1,x}
\nonumber\\[2mm]
&
\qquad\qquad + \frac{|\beta'|^2}{\beta^4}(x+f)^2 \phi_{1,xx}
-\frac{1}{2}\, \frac{\epsilon^2W_{tt}(0, \theta)}{\beta^4} (x+f)^2\phi_{1}
\Bigg\} {\mathcal Z} \,{\mathrm d}x
\nonumber\\[2mm]
& = - \epsilon^3 \frac{4\pi^2}{{\tilde\ell}^2} f'' \int_{{\mathbb R}} \left[{\mathbf b}_1\varpi_{1,x} +\frac{2}{3}{\mathbf b}_1 f \varpi_{2,x} \right]\, \mathcal Z \,{\mathrm d}x + \frac{\epsilon^3}{\beta}\tilde{\mathcal K}_6\left[\tau, \beta{ f}, \frac{\beta{f}'}{\tilde\ell}\right]
\nonumber\\[2mm]
& = - \epsilon^3 {\mathbf b}_1\frac{4\pi^2}{{\tilde\ell}^2} f'' \int_{{\mathbb R}} \varpi_{1,x} \, \mathcal Z \,{\mathrm d}x
+ \frac{\epsilon^3}{\beta}\tilde{\mathcal K}_6\left[\tau, \beta{ f}, \frac{\beta{f}'}{\tilde\ell}\right]
\nonumber\\[2mm]
& = - \epsilon^3 \frac{\kappa }{\beta}\,\frac{4\pi^2}{{\tilde\ell}^2} b_7\,f''
+ \frac{\epsilon^3}{\beta}\tilde{\mathcal K}_6\left[\tau, \beta{ f}, \frac{\beta{f}'}{\tilde\ell}\right],
\end{align}
where
\begin{equation*}
b_7\,=\, \int_{{\mathbb R}} \varpi_{1,x} \, \mathcal Z \,{\mathrm d}x,
\end{equation*}
and
\begin{align*}
\frac{\epsilon^3}{\beta}\tilde{\mathcal K}_6\left[\tau, \beta{ f}, \frac{\beta{f}'}{\tilde\ell}\right] & =
\epsilon^3
\int_{\mathbb R}
\Bigg\{\frac{2\pi}{{\tilde\ell}}\frac{\beta'} {\beta^2}\Big[- f' \phi_{1,x} +\phi_{1,\tau}\Big]
 +\frac{4\pi^2}{{\tilde\ell}^2}\Big[|f'|^2 \phi_{1,xx} -2f' \phi_{1,x\tau}\Big]
\nonumber\\[2mm]
&
\qquad\qquad + \frac{4\pi}{{\tilde\ell}} \frac{\beta'}{\beta^2} (x+f)\big[- f' \phi_{1,xx} +\phi_{1,x\tau}\big]
+\Big[\frac{\beta''}{\beta^3} \,-\,\frac{\kappa^2}{\beta^2} \Big]
(x+f) \phi_{1,x}
\nonumber\\[2mm]
&
\qquad\qquad + \frac{|\beta'|^2}{\beta^4}(x+f)^2 \phi_{1,xx}
-\frac{1}{2}\, \frac{\epsilon^2W_{tt}(0, \theta)}{\beta^4} (x+f)^2\phi_{1}
\Bigg\} {\mathcal Z} \,{\mathrm d}x.
\end{align*}
\smallskip\noindent
$\clubsuit$
By \eqref{definition of mathbbN1}, we have
\begin{align} \label{N1phi2 intergral Z}
II_{12}
=\int_{\mathbb R}\, \mathbb{N}_1 (\epsilon^2 \phi_2) \mathcal Z \,{\mathrm d}x
= & \int_{\mathbb R} \Bigg\{
 \epsilon^3 \frac{6e}{\beta}U{\mathcal Z}\phi_2 + \epsilon^3 6 U\phi_1\phi_2
+\epsilon^4 \frac{3e^2}{\beta^2}{\mathcal Z}^2\phi_2 +\epsilon^4 3\phi_1^2\phi_2
\nonumber\\[2mm]
&\qquad
+\epsilon^4 \frac{6e}{\beta}{\mathcal Z}\phi_1\phi_2 + \epsilon^4 3U \phi_2^2
+\epsilon^5 \frac{3e}{\beta}{\mathcal Z}\phi_2^2 + \epsilon^5 3\phi_1 \phi_2^2
+\epsilon^6 \phi_2^3 \Bigg\} \mathcal Z \,{\mathrm d}x
\nonumber\\[2mm]
 =& \int_{\mathbb R} \Bigg\{
 \epsilon^3 \frac{6e}{\beta}U{\mathcal Z}\phi_2 + \epsilon^3 4 {\mathbf b}_1 f\varpi_2\phi_2 U
+\epsilon^4 \frac{3e^2}{\beta^2}{\mathcal Z}^2\phi_2 +\epsilon^4 3 \big({\mathbf b}_1^2 \varpi_1^2 + \frac 49{\mathbf b}_1^2 f^2 \varpi_2^2 \big) \phi_2
\nonumber\\[2mm]
&\quad
\qquad +\epsilon^4 \frac{4e}{\beta}{\mathcal Z}{\mathbf b}_1 f \varpi_2\phi_2 + \epsilon^4 3U \phi_2^2 +\epsilon^5 \frac{3e}{\beta}{\mathcal Z}\phi_2^2 + \epsilon^5 2 {\mathbf b}_1 f \varpi_2 \phi_2^2
+\epsilon^6 \phi_2^3 \Bigg\} \mathcal Z \,{\mathrm d}x
\nonumber\\[2mm]
: = & \frac{\epsilon^3}{\beta}\tilde{\mathcal K}_8\left[\tau, e, {\beta f}\right] +  \frac{\epsilon^4}{\beta}\tilde {D}_8(\theta),
\end{align}
where
\begin{align*}
\tilde {D}_8 (\theta) = \int_{\mathbb R} \Big( 3 {\mathbf b}_1^2 \varpi_1^2 +  3U \phi_2^2 +\epsilon^2 \phi_2^3 \Big) \mathcal Z \,{\mathrm d}x,
\end{align*}
and
\begin{align*}
\frac{\epsilon^3}{\beta}\tilde{\mathcal K}_8\left[\tau, e, { f}\right] & = \int_{\mathbb R} \Bigg\{
 \epsilon^3 \frac{6e}{\beta}U{\mathcal Z}\phi_2 + \epsilon^3 4 {\mathbf b}_1 f\varpi_2\phi_2 U
+\epsilon^4 \frac{3e^2}{\beta^2}{\mathcal Z}^2\phi_2 + \frac 43 \epsilon^4 {\mathbf b}_1^2 f^2 \varpi_2^2 \phi_2
\nonumber\\[2mm]
&\qquad
\qquad +\epsilon^4 \frac{4e}{\beta}{\mathcal Z}{\mathbf b}_1 f \varpi_2\phi_2 +\epsilon^5 \frac{3e}{\beta}{\mathcal Z}\phi_2^2 + \epsilon^5 2 {\mathbf b}_1 f \varpi_2 \phi_2^2
 \Bigg\} \mathcal Z \,{\mathrm d}x.
\end{align*}

\smallskip\noindent
$\clubsuit$
Since $\phi_2$ is even of $x$, from the expression of $ B_4$ as in \eqref{new opreator B4}, then we have
\begin{align}\label{B4phi2 intergral Z}
 II_{13}
 =\int_{{\mathbb R}} B_4(\epsilon^2\phi_2)\,{\mathcal Z} \,{\mathrm d}x
 \,=\,& \epsilon^3 \frac{\kappa}{\, \beta\, }\int_{{\mathbb R}} \Big[ \phi_{2,x} +\frac{2}{3} (x+f)\phi_2\Big]\mathcal Z \,{\mathrm d}x
\nonumber\\[2mm]
 \,=\,&\epsilon^3 \frac{\kappa}{\, \beta\, }\int_{{\mathbb R}} \frac{2}{3} f\phi_2\mathcal Z \,{\mathrm d}x=
 \frac{2}{3} \epsilon^3 \frac{\kappa}{\, \beta\, } \tilde {D}_9 (\theta) f,
\end{align}
 where
\begin{align*}
 \tilde {D}_9 (\theta) = \int_{{\mathbb R}} \phi_2\mathcal Z \,{\mathrm d}x.
\end{align*}

\smallskip\noindent
$\clubsuit$
By the expression of the operator $B_5$ given in \eqref{new opreator B5} and $\phi_2$ is even of $x$, we have
\begin{align} \label{B5phi2 intergral Z}
II_{14}
\,=\,&\epsilon^4\int_{\mathbb R}\Bigg\{\frac{4\pi^2}{{\tilde\ell}^2}\Big[|f'|^2 \phi_{2, xx} -f''\phi_{2, x} -2f' \phi_{2, x\tau}\Big]
+\frac{6\pi}{{\tilde\ell}}\frac{\beta'} {\beta^2}\Big[- f' \phi_{2, x} +\phi_{2, \tau}\Big]
\nonumber\\[2mm]
&\qquad
+ \frac{4\pi}{{\tilde\ell}} \frac{\beta'}{\beta^2} (x+f)\big[- f' w_{xx} +w_{x\tau}\big]
+\Big[
\frac{\beta''}{\beta^3}\,+\,\frac{2|\beta'|^2}{\beta^4} \,-\,\frac{\kappa^2}{\beta^2} \Big] (x+f) \phi_{2, x}
\nonumber\\[2mm]
&\qquad
+\frac{\beta''}{\beta^3}\phi_2
+ \frac{|\beta'|^2}{\beta^4}(x+f)^2 \phi_{2, xx}
-\frac{1}{2}\, \frac{\epsilon^2 W_{tt}(0, \theta)}{\beta^4} (x+f)^2\phi_2 \Bigg\} \mathcal Z \,{\mathrm d}x
\nonumber\\[2mm]
\,=\,&\epsilon^4 \int_{\mathbb R}\Bigg\{\frac{4\pi^2}{{\tilde\ell}^2}\Big[|f'|^2 \phi_{2, xx} \Big]
+\frac{6\pi}{{\tilde\ell}}\frac{\beta'} {\beta^2} \phi_{2, \tau}
- \frac{4\pi}{{\tilde\ell}} \frac{\beta'}{\beta^2} (x+f) f' \phi_{2, xx}
+\Big[
\frac{\beta''}{\beta^3}\,+\,\frac{2|\beta'|^2}{\beta^4} \,-\,\frac{\kappa^2}{\beta^2} \Big] x\phi_{2, x}
\nonumber\\[2mm]
&\qquad\quad
+ \frac{\beta''}{\beta^3}\phi_2
+ \frac{|\beta'|^2}{\beta^4} (x^2+f^2) \phi_{2, xx}
-\frac{1}{2}\,\frac{ \epsilon^2W_{tt}(0, \theta)}{\beta^4} (x^2+f^2)\phi_2 \Bigg\} \mathcal Z \,{\mathrm d}x
\nonumber\\[2mm]
 : =\,& \frac{\epsilon^4}{\beta}\tilde{\mathcal K}_7\left[\tau, \beta{ f}, \frac{\beta{f}'}{\tilde\ell}\right].
\end{align}

\smallskip\noindent
$\clubsuit$
By the expression of $a_2(\epsilon s, \theta) $ given in \eqref{a1a2a3}, then we have
\begin{align} \label{second term of e intergral Z}
II_{15}
=\,& \int_{\mathbb R} \eta_\delta^\epsilon(s) \Bigg\{ \epsilon^4 \frac{4\pi^2} {{\tilde\ell}^2} a_2\frac{\kappa}{\, \beta^2\, } e'' (x+f)\mathcal Z -\epsilon^4 \kappa a_2 \frac{ 4\pi^2} {{\tilde\ell}^2} \frac{1}{\, \beta^2\, } (x+f) f'' e \mathcal Z'  \Bigg\} \,{\mathcal Z} \,{\mathrm d}x
\nonumber\\[2mm]
 =& -\int_{\mathbb R} \ \epsilon^4 \frac{8\pi^2} {{\tilde\ell}^2} \frac{\kappa}{\, \beta^2\, } e'' f\mathcal Z \mathcal Z \,{\mathrm d}x
\nonumber\\[2mm]
&+\underbrace{ \int_{\mathbb R} \eta_\delta^\epsilon(s) \Bigg[ \epsilon^4 \frac{4\pi^2} {{\tilde\ell}^2} \Big(\epsilon \frac{3\kappa}{\beta}(x+f) + \epsilon^2 {\tilde a}_2 \Big) \frac{\kappa}{\, \beta^2\, } e'' (x+f)\mathcal Z \Bigg] \mathcal Z \,{\mathrm d}x +O(e^{-c\frac{|\beta \delta|} \epsilon} )e''}_{:= \frac{\epsilon^3}{\beta}\tilde{\mathcal K}_9 \left[\tau, \beta f, \frac{\epsilon^2 e''}{\tilde \ell^2}\right]}
\nonumber\\[2mm]
&+\int_{\mathbb R}  \epsilon^4 \frac{ 8\pi^2} {{\tilde\ell}^2} \frac{\kappa }{\, \beta^2\, } f'' e x \mathcal Z'  \mathcal Z \,{\mathrm d}x
 \nonumber\\[2mm]
 &+ \underbrace{\int_{\mathbb R} \eta_\delta^\epsilon(s) \Bigg[ -\epsilon^4 \Big(\epsilon \frac{3\kappa}{\beta}(x+f) + \epsilon^2 {\tilde a}_2 \Big) \frac{ 4\pi^2} {{\tilde\ell}^2} \frac{\kappa }{ \beta^2} (x+f) f'' e \mathcal Z' \Bigg] \mathcal Z \,{\mathrm d}x + O(e^{-c\frac{|\beta \delta|} \epsilon} )f''}_{: = \frac{\epsilon^5}{\beta}\tilde{\mathcal K}_{10} \left[\tau, \beta f, \frac{f''}{\tilde \ell^2}\right]}
\nonumber\\[2mm]
 =&
- \epsilon^4 \frac{8\pi^2} {{\tilde\ell}^2} \frac{\kappa}{\, \beta^2\, } e'' f + \frac{\epsilon^3}{\beta}\tilde{\mathcal K}_9 \left[\tau, \beta f, \frac{\epsilon^2 e''}{\tilde \ell^2}\right]
- \epsilon^4\frac{4\pi^2} {{\tilde\ell}^2} \frac{\kappa }{\, \beta^2\, } f'' e+ \frac{\epsilon^5}{\beta}\tilde{\mathcal K}_{10} \left[\tau, \beta f, \frac{\beta f''}{\tilde \ell^2}\right],
\end{align}
where we have used the fact
\begin{equation*}
\int_{\mathbb R}{\mathcal Z}^2 \,{\mathrm d}x\,=\,1,
\qquad
\int_{\mathbb R} x{\mathcal Z}\,{\mathcal Z}'\,{\mathrm d}x\,=\,-\frac{1}{2}.
\end{equation*}

\smallskip\noindent$\clubsuit$
 We note that the expressions of the following terms
$$
 \frac{\epsilon^4}{\beta^2} \mathcal F_1\left[\tau, x, \beta f, \frac{\beta f'}{\tilde\ell} \right],
  \qquad
\frac{\epsilon^4}{\beta^2} \hat{\mathcal F}_1\left[\tau, x, \beta f, \frac{\beta f''}{\tilde\ell^2} \right],
 \qquad
 \frac{\epsilon^4} {\beta^2} \mathcal G_1\left[\tau, x, \beta f, \frac{\beta f'}{\tilde\ell}, e, \frac{e'}{\tilde\ell}\right],
$$
have been given in \eqref{mathcal F1}, \eqref{hat mathcal F1}, \eqref{mathcal G1}. We denote
\begin{align*}
& \int_{\mathbb R} \eta_\delta^\epsilon(s) \frac{\epsilon^4} {\beta^2} \mathcal G_1\left[\tau, x, \beta f, \frac{\beta f'}{\tilde\ell}, e, \frac{e'}{\tilde\ell}\right] \mathcal Z \,{\mathrm d}x
 = \frac{\epsilon^3}{\beta}\tilde{\mathcal K}_{11}\left[\tau, \beta f, \frac{\beta {f}'}{\tilde\ell}\right],
\end{align*}
\begin{align*}
 \int_{\mathbb R} \eta_\delta^\epsilon(s) \frac{\epsilon^4}{\beta^2} \mathcal F_1\left[\tau, x, \beta f, \frac{\beta f'}{\tilde\ell} \right] \mathcal Z \,{\mathrm d}x
 = \frac{\epsilon^4}{\beta}\tilde{\mathcal K}_{15}\left[\tau, \beta f, \frac{\beta {f}'}{\tilde\ell}\right],
\end{align*}
 and
\begin{align*}
 & \int_{\mathbb R} \eta_\delta^\epsilon(s) \Bigg\{ \frac{\epsilon^4}{\beta^2} \hat{\mathcal F}_1\left[\tau, x, \beta f, \frac{\beta f''}{\tilde\ell^2} \right] \Bigg\} \mathcal Z \,{\mathrm d}x
 = \frac{\epsilon^4}{\beta}\tilde{\mathcal K}_{12}\left[\tau, \beta f, \frac{\beta f''}{\tilde\ell^2}\right].
\end{align*}
Therefore, we can derive that
\begin{align}\label{ESTIMATEII16}
II_{16}\,=\,\frac{\epsilon^3}{\beta}\tilde{\mathcal K}_{11}\left[\tau, \beta f, \frac{\beta {f}'}{\tilde\ell}\right]\,+\, \frac{\epsilon^4}{\beta}\tilde{\mathcal K}_{15}\left[\tau, \beta f, \frac{\beta {f}'}{\tilde\ell}\right]
\,+\,\frac{\epsilon^4}{\beta}\tilde{\mathcal K}_{12}\left[\tau, \beta f, \frac{\beta f''}{\tilde\ell^2}\right].
\end{align}

\smallskip\noindent$\clubsuit$
At last, by the decompositions \eqref{expression of B6phi1}, \eqref{decomposition of B6phi2} and the estimate \eqref{estimate of mathcal F3-x}, we have
\begin{align} \label{B6phi1 + B6phi2-intergral Z}
II_{17}
=&\int_{\mathbb R} \eta_\delta^\epsilon(s) \Bigg\{ B_{6}(\epsilon\phi_1)
 + B_6 (\epsilon^2{\phi_2}) \Bigg\} \,{\mathcal Z} \,{\mathrm d}x
 \nonumber\\[2mm]
& = \int_{\mathbb R} \eta_\delta^\epsilon(s) \Bigg\{ \frac{\epsilon^4}{\beta^2} \mathcal F_2\left[\tau, x, f, \frac{\beta f'}{\tilde\ell}\right]+ \frac{\epsilon^4}{\beta^2} \hat{\mathcal F}_2\left[\tau, x, \beta f, \frac{\beta f''}{\tilde\ell^2} \right] \Bigg\} \mathcal Z \,{\mathrm d}x
\nonumber\\[2mm]
& \quad+ \int_{\mathbb R} \eta_\delta^\epsilon(s) \Bigg\{ -\epsilon^5 \kappa a_2(\epsilon s, \theta) \frac{ 4\pi^2} {{\tilde\ell}^2}\frac{1}{\beta} (x+f)f'' \phi_{2, x}+ \frac{\epsilon^5}{\beta^3} \mathcal F_3\left[\tau, x, \beta f, \frac{\beta f'}{\tilde\ell}\right] \Bigg\} \mathcal Z \,{\mathrm d}x
\nonumber\\[2mm]
& = \frac{\epsilon^4}{\beta}\tilde{\mathcal K}_{13}\left[\tau, \beta f, \frac{\beta f''}{\tilde\ell^2}\right] + \frac{\epsilon^4}{\beta}\tilde{\mathcal K}_{14}\left[\tau, \beta f, \frac{\beta f'}{\tilde\ell}\right],
\end{align}
 where
\begin{align*}
\frac{\epsilon^4}{\beta}\tilde{\mathcal K}_{13}\left[\tau, \beta f, \frac{\beta f''}{\tilde\ell^2}\right]
= \int_{\mathbb R} \eta_\delta^\epsilon(s) \Bigg\{ -\epsilon^5 \kappa a_2(\epsilon s, \theta) \frac{ 4\pi^2} {{\tilde\ell}^2}\frac{1}{\beta} (x+f)f'' \phi_{2, x}+ \frac{\epsilon^4}{\beta^2} \hat{\mathcal F}_2\left[\tau, x, \beta f, \frac{\beta f''}{\tilde\ell^2} \right] \Bigg\} \mathcal Z \,{\mathrm d}x,
\end{align*}
 and
\begin{align*}
\frac{\epsilon^4}{\beta}\tilde{\mathcal K}_{14}\left[\tau, \beta f, \frac{\beta f'}{\tilde\ell}\right]
= \int_{\mathbb R} \eta_\delta^\epsilon(s) \Bigg\{ \frac{\epsilon^4}{\beta^2} \mathcal F_2\left[\tau, x, f, \frac{\beta f'}{\tilde\ell}\right] + \frac{\epsilon^5}{\beta^3} \mathcal F_3\left[\tau, x, \beta f, \frac{\beta f'}{\tilde\ell}\right] \Bigg\} \mathcal Z \,{\mathrm d}x.
\end{align*}

\medskip
Finally, we combine the estimates in \eqref{projected eqs for enew}-\eqref{B6phi1 + B6phi2-intergral Z}
to conclude that
\begin{align} \label{Sw2 integral 2}
 \frac{1}{\beta^3}\int_{\mathbb R}\eta_\delta^\epsilon(s){\mathcal E}\,{\mathcal Z} \,{\mathrm d}x
 & = \frac {1}\beta \Bigg\{\epsilon^3\frac{4\pi^2}{{\tilde\ell}^2} e'' + \epsilon\lambda_0 e
 -\frac{1}{2}\, \epsilon^2 \frac{ \epsilon^2W_{tt}(0, \theta)}{\beta^3}|f|^2 b_5
\nonumber\\[2mm]
 & \qquad \quad+ \epsilon^2\beta\Bigg[\frac{4\pi^2}{{\tilde\ell}^2}|f'|^2
-\frac{4\pi}{{\tilde\ell}}\frac{\beta'}{\beta^2}ff'
+\frac{|\beta'|^2}{\beta^4} |f|^2
\Bigg] b_6
  +\frac 23\epsilon^2 \frac{\kappa}{\beta} fe
 + 3\epsilon^2 \frac{1}{\beta}e^2b_1
 \nonumber\\[2mm]
 & \qquad \quad
  + \beta\frac{4}{9} \epsilon^2 f^2 \frac{\kappa^2}{\beta^2} b_3
 - \epsilon^4 \frac{8\pi^2} {{\tilde\ell}^2} \frac{\kappa}{\beta} e'' f
 + \epsilon^3 \tilde{\mathcal K}^* \left[\tau, \beta f, \frac{\beta {f}'}{\tilde\ell}, e, \frac{\epsilon e'}{\tilde\ell}\right]
  \nonumber\\[2mm]
 & \qquad \quad
 + \epsilon^3 \tilde{\mathcal K}^{**} \left[\tau, \beta f, \frac{\beta f''}{\tilde\ell^2}, \frac{\epsilon^2 e''}{\tilde \ell^2}\right]
 + \epsilon^4 \tilde{\mathcal K}^{***} \left[\tau, \beta f, \frac{\beta {f}'}{\tilde\ell}, e, \frac{\epsilon e'}{\tilde\ell}\right] \Bigg\},
\end{align}
where
\begin{align*}
\epsilon^3 \tilde{\mathcal K}^* \left[\tau, \beta f, \frac{\beta {f}'}{\tilde\ell}, e, \frac{\epsilon e'}{\tilde\ell}\right]
\,=\, & \epsilon^3\Bigg\{\tilde{\mathcal K}_1\left[\tau, {\beta f}, \frac{\beta{ f}'}{\tilde\ell} \right]
+ \tilde{\mathcal K}_2\left[\tau, {\beta f}, \frac{\beta { f}'}{\tilde\ell}\right] e
\nonumber\\[2mm]
 & \qquad
 + \frac{e^3}{\beta^2}b_2
 + \tilde{\mathcal K}_3\left[\tau, e, {\beta f}\right]
 + \tilde{\mathcal K}_4\left[\tau, e, {\beta f}\right]
\nonumber\\[2mm]
 &  \qquad
 + \tilde{\mathcal K}_5\left[\tau, \beta{ f}, \frac{\beta{f}'}{\tilde\ell}\right]
 + \tilde{\mathcal K}_6\left[\tau, \beta{ f}, \frac{\beta{f}'}{\tilde\ell}\right]
 + \frac{2}{3} \kappa \tilde {D}_9 (\theta) f
\nonumber\\[2mm]
 &  \qquad
 + \tilde{\mathcal K}_8\left[\tau, e, {\beta f}\right]
 +\tilde{\mathcal K}_{11}\left[\tau, \beta f, \frac{\beta {f}'}{\tilde\ell}, e, \frac{\epsilon e'}{\tilde\ell}\right]
 \Bigg\},
\end{align*}
\begin{align*}
\epsilon^3 \tilde{\mathcal K}^{**} \left[\tau, \beta f, \frac{\beta f''}{\tilde\ell^2}, e,  \frac{\epsilon^2 e''}{\tilde \ell^2}\right]
\,=\, &
\epsilon^3\Bigg\{
  -\frac{8\pi^2}{{\tilde\ell}^2} \kappa b_4  f''
+ \frac{8\pi^2}{3\tilde \ell^2} \kappa b_3f''
 - \,\frac{4\pi^2}{{\tilde\ell}^2}\kappa b_7\,f''
- \epsilon \frac{ 4\pi^2} {{\tilde\ell}^2} \kappa  f'' e
\nonumber\\[2mm]
 & \qquad
  + {\epsilon^3} \tilde{\mathcal K}_9 \left[\tau, \beta f, \frac{\epsilon^2 e''}{\tilde \ell^2}\right]+ \epsilon^5 \tilde{\mathcal K}_{10} \left[\tau, \beta f, \frac{f''}{\tilde \ell^2}\right]
  \nonumber\\[2mm]
 & \qquad + \epsilon^3\tilde{\mathcal K}_{12}\left[\tau, \beta f, \frac{\beta f''}{\tilde\ell^2}\right]
 +\epsilon^4\tilde{\mathcal K}_{13}\left[\tau, \beta f, \frac{\beta f''}{\tilde\ell^2}\right]
 \Bigg\},
\end{align*}
and
\begin{align*}
\epsilon^4 \tilde{\mathcal K}^{***} \left[\tau, \beta f, \frac{\beta {f}'}{\tilde\ell}, e, \frac{\epsilon e'}{\tilde\ell}\right]
 = \epsilon^4 \left\{\tilde{\mathcal D}_7(\theta)
 +\tilde {D}_8 (\theta)
 + \tilde{\mathcal K}_7\left[\tau, \beta{ f}, \frac{\beta{f}'}{\tilde\ell}\right]
 + \tilde{\mathcal K}_{14}\left[\tau, \beta f, \frac{\beta f'}{\tilde\ell}\right]
 + \tilde{\mathcal K}_{15}\left[\tau, \beta f, \frac{\beta {f}'}{\tilde\ell}\right] \right\}.
\end{align*}

Moreover, we have
\begin{align*}
\left\|\epsilon^3 \tilde{\mathcal K}^* \left[\tau, \beta f, \frac{\beta {f}'}{\tilde\ell}, e, \frac{\epsilon e'}{\tilde\ell}\right] \right\|_{L^2(0, 2\pi)} \le C \epsilon^3 \big( \|f\|_* + \|e\|_{**} \big),
\end{align*}
\begin{align*}
\left\| \epsilon^3 \tilde{\mathcal K}^{**} \left[\tau, \beta f, \frac{\beta f''}{\tilde\ell^2}, e, \frac{\epsilon^2 e''}{\tilde \ell^2}\right] \right\|_{L^2(0, 2\pi)} \le C \epsilon^3 \big( \|f\|_* + \|e\|_{**} \big),
\end{align*}
\begin{align*}
\left\|\epsilon^4\tilde{\mathcal K}^{***} \left[\tau, \beta f, \frac{\beta {f}'}{\tilde\ell}, e, \frac{\epsilon e'}{\tilde\ell}\right] \right\|_{L^2(0, 2\pi)} \le C \big( \epsilon^4+ \epsilon^4\|f\|_* + \epsilon^4 \|e\|_{**} \big),
\end{align*}
\begin{align*}
& \left\|\epsilon^3 \tilde{\mathcal K}^* \left[\tau, \beta f_1, \frac{\beta {f_1}'}{\tilde\ell}, e_1, \frac{\epsilon e_1'}{\tilde\ell}\right]
\ -\ \epsilon^3\tilde{\mathcal K}^* \left[\tau, \beta f_2, \frac{\beta {f_2}'}{\tilde\ell}, e_2, \frac{\epsilon e_2'}{\tilde\ell}\right]\right\|_{L^2(0, 2\pi)}
\nonumber\\[2mm]
 &
 \le C \epsilon^3 \big(\|f_1- f_2\|_* + \|e_1- e_2\|_{**} \big),
\end{align*}
and
\begin{align*}
\left\|
\epsilon^3 \tilde{\mathcal K}^{**} \left[\tau, \beta f_1, \frac{\beta f_1''}{\tilde\ell^2}, e_1, \frac{\epsilon^2 e_1''}{\tilde \ell^2}\right]
\ -\
\epsilon^3 \tilde{\mathcal K}^{**} \left[\tau, \beta f_2, \frac{\beta f_2''}{\tilde\ell^2}, e_2, \frac{\epsilon^2 e_2''}{\tilde \ell^2}\right]
\right\|_{L^2(0, 2\pi)}
\le
C \epsilon^3 \big(\|f_1- f_2\|_* + \|e_1- e_2\|_{**} \big).
\end{align*}

\medskip
\subsection{ Projections of terms involving the perturbation}\label{Section7.2}\

%
%

\medskip
Recall the norm $\|\cdot\|_{H^2_*(\mathfrak S)} $ given in \eqref{norm h2 mathfrak S}.
Let
\begin{align} \label{Pi1}
\Pi_1(\tau, f, e) = \int_{{\mathbb R}} L_2(\phi^*) U'(x)\,{\mathrm d}x,
\end{align}
\begin{align} \label{Pi2}
\Pi_2(\tau, f, e) = \int_{{\mathbb R}} \chi(\epsilon|s|) \Big[B_4(\phi^*)+ B_5(\phi^*)+ B_6(\phi^*)\Big] U'(x)\,{\mathrm d}x,
\end{align}
and
\begin{align} \label{Pi3}
\Pi_3(\tau, f, e) = -\int_{{\mathbb R}}\chi(\epsilon|s|) {\mathcal N}(\phi^*) U'(x)\,{\mathrm d}x.
\end{align}

 We recall that
\begin{align*}
\epsilon\phi_1(x, \tau)
\,=\, \epsilon{\phi_{11}}(x, \tau) \ +\ \epsilon{\phi_{12}}(x, \tau)
= \epsilon{\mathbf b}_1(\theta)\varpi_1(x)
\ +\
\epsilon\frac{2}{3} {\mathbf b}_1(\theta)f(\tau) \varpi_2(x),
\end{align*}
the expression of $\epsilon^2\phi_2$ given in \eqref{definition of e2phi2}-\eqref{phi21-phi210},
 and there hold
\begin{align*}
 |\epsilon\phi_1| \leq \epsilon \frac{C}{\beta} e^{-c|x|}, \quad |\epsilon^2\phi_2| \leq \epsilon^2 \frac{C}{\beta^2} e^{-c|x|}.
\end{align*}
 For $\Pi_1(\tau, f, e) $, we have
\begin{align*}
 \Pi_1 (\tau, f, e) & = \int_{{\mathbb R}} L_2(\phi^*) U'(x)\,{\mathrm d}x
\nonumber\\[2mm]
 & =
\int_{{\mathbb R}}\left[ \frac{\epsilon^2 4\pi^2}{{\tilde\ell}^2} \phi^*_{\tau\tau} + \phi^*_{xx} - \phi+ 3w_2^2 \phi^* \right] U'(x)\,{\mathrm d}x
\nonumber\\[2mm]
 & =\int_{{\mathbb R}}\left[\phi^*_{xx} - \phi+ 3U^2 \phi^* + 3\left(w_2^2 - U^2 \right) \phi^* \right] U'(x)\,{\mathrm d}x
\nonumber\\[2mm]
 & =\int_{{\mathbb R}} 3\left(w_2^2 - U^2 \right) \phi^* U'(x)\,{\mathrm d}x
\nonumber\\[2mm]
 & =\int_{{\mathbb R}} 3\left[2\, U \Big( \frac1\beta\epsilon e \mathcal{Z}+\epsilon \phi_1+ \epsilon^2\phi_2 \Big)
\ +\
\Big(\frac1\beta\epsilon e \mathcal{Z}+\epsilon \phi_1+ \epsilon^2\phi_2 \Big)^2\right] \phi^* U'(x)\,{\mathrm d}x
\nonumber\\[2mm]
 & = \underbrace{ \frac6\beta \epsilon e \int_{{\mathbb R}} U\mathcal Z \phi^* U'(x)\,{\mathrm d}x + \frac3{\beta^2}\epsilon^2 e^2 \int_{{\mathbb R}}{\mathcal Z}^2 \phi^* U'(x)\,{\mathrm d}x}_{\le C\frac\epsilon\beta \left(|e| + |f| +1\right) \|\phi^*(\cdot, \tau)\|_{L^2({\mathbb R})} }
\nonumber\\[2mm]
 & \quad+\underbrace{ \int_{{\mathbb R}} \left[\,2\, U \Big(\epsilon \phi_1+ \epsilon^2\phi_2 \Big)
 \ +\ \frac{2 \epsilon e \mathcal{Z}} \beta\Big(\epsilon \phi_1+ \epsilon^2\phi_2 \Big)
 \ +\ \Big(\epsilon \phi_1+ \epsilon^2\phi_2 \Big)^2 \,\right] \phi^* U'(x)\,{\mathrm d}x}_{\le C\frac{\epsilon}{\beta} \left( |e| + |f| + 1 \right) \|\phi^*(\cdot, \tau)\|_{L^2({\mathbb R})}}.
\end{align*}

 Next, we consider $\Pi_2(\tau, f, e)$.
 By the expression of operators $B_4, B_5, B_6$ given in \eqref{new opreator B4}, \eqref{new opreator B5}, \eqref{B2w-2222new},
 we have the following formula
\begin{align*}
&\Pi_2(\tau, f, e)
\nonumber\\[2mm]
&\,=\, \int_{{\mathbb R}} \chi(\epsilon|s|) \big[B_4(\phi^*)+ B_5(\phi^*)+ B_6(\phi^*)\big] U'(x)\,{\mathrm d}x
\nonumber\\[2mm]
&\,=\,\epsilon \frac{\kappa}{\, \beta\, }\int_{\mathbb R} \chi(\epsilon|s|)\Big[ \phi^*_x +\frac{2}{3} (x+f)\phi^*\Big] U'(x)\,{\mathrm d}x
\nonumber\\[2mm]
&\quad
+ \epsilon^2\frac{4\pi^2}{{\tilde\ell}^2}\int_{\mathbb R} \chi(\epsilon|s|)\Big[|f'|^2 \phi^*_{xx} -f''\phi^*_x -2f' \phi^*_{x\tau}\Big]U'(x)\,{\mathrm d}x
\ +\
\epsilon^2\frac{6\pi}{{\tilde\ell}}\frac{\beta'} {\beta^2} \int_{\mathbb R} \chi(\epsilon|s|)\Big[- f' \phi^*_x +\phi^*_\tau\Big]U'(x)\,{\mathrm d}x
\nonumber\\[2mm]
&\quad
+ \epsilon^2 \frac{4\pi}{{\tilde\ell}} \frac{\beta'}{\beta^2} \int_{\mathbb R} \chi(\epsilon|s|)(x+f)\big[- f' \phi^*_{xx} +\phi^*_{x\tau}\big]U'(x)\,{\mathrm d}x
\nonumber\\[2mm]
&\quad\ +\
\epsilon^2\Big[\frac{\beta''}{\beta^3} +2\frac{|\beta'|^2}{\beta^4}\,-\,\frac{\kappa^2}{\beta^2} \Big]
\int_{\mathbb R} \chi(\epsilon|s|)(x+f) \phi^*_{x} U'(x)\,{\mathrm d}x
\nonumber\\[2mm]
&\quad
+ \epsilon^2 \frac{|\beta'|^2}{\beta^4} \int_{\mathbb R} \chi(\epsilon|s|)(x+f)^2 \phi^*_{xx} U'(x)\,{\mathrm d}x
\ -\
\frac{1}{2}\, \epsilon^2\frac{\epsilon^2W_{tt}(0, \theta)}{\beta^4} \int_{\mathbb R} \chi(\epsilon|s|)(x+f)^2 \phi^* U'(x)\,{\mathrm d}x
\nonumber\\[2mm]
&\quad
- \frac{\epsilon^3}{6} \frac{\, \, \epsilon^2 W_{ttt}(0, \theta)}{\beta^5} \int_{\mathbb R} \chi(\epsilon|s|)(x+f)^3 \phi^*U'(x)\,{\mathrm d}x
\ -\
\frac{\epsilon^4}{24} \frac{1}{\beta^6}\, \int_{\mathbb R} \chi(\epsilon|s|) \epsilon^2 W_{tttt}\big(\hbar(\theta)\epsilon s, \, \theta\big)(x+f)^4\phi^* U'(x)\,{\mathrm d}x
\nonumber\\[2mm]
&\quad
 +\epsilon^3 \kappa^3  \frac{1}{\, \beta^3\, }\int_{\mathbb R} \chi(\epsilon|s|)a_3(\epsilon s, \theta)(x+f)^2\phi^*_{x} U'(x)\,{\mathrm d}x
\nonumber\\[2mm]
&\quad
\,+\,
\epsilon^3 \kappa
\Bigg\{
\frac{\beta''} {\beta^4} \int_{\mathbb R} \chi(\epsilon|s|)a_2(\epsilon s, \theta)(x+f) \phi^*U'(x)\,{\mathrm d}x
+\frac{2|\beta'|^2}{\beta^5} \int_{\mathbb R} \chi(\epsilon|s|)a_2(\epsilon s, \theta)(x+f)^2 \phi^*_xU'(x)\,{\mathrm d}x
\nonumber\\[2mm]
& \qquad\qquad
+\frac{ 6\pi}{ {\tilde\ell}}\frac{\beta'} {\beta^3}
\int_{\mathbb R} \chi(\epsilon|s|)a_2(\epsilon s, \theta)(x+f)\big[-f'\phi^*_x +\phi^*_\tau\big]U'(x)\,{\mathrm d}x
\nonumber\\[2mm]
&\qquad\qquad
+\frac{ 4\pi}{ {\tilde\ell}}\frac{\beta'} {\beta^3}
\int_{\mathbb R} \chi(\epsilon|s|)a_2(\epsilon s, \theta)(x+f)^2[- f' \phi^*_{xx} + \phi^*_{x\tau}]U'(x)\,{\mathrm d}x
\nonumber\\[2mm]
& \qquad\qquad
+ \frac{|\beta'|^2}{\beta^5} \int_{\mathbb R} \chi(\epsilon|s|)a_2(\epsilon s, \theta)(x+f)^3 \phi^*_{xx}U'(x)\,{\mathrm d}x
+ \frac{\beta''}{\beta^4}\int_{\mathbb R} \chi(\epsilon|s|)a_2(\epsilon s, \theta)(x+f)^2 \phi^*_{x}U'(x)\,{\mathrm d}x
\nonumber\\[2mm]
&\qquad\qquad
+\frac{ 4\pi^2} {{\tilde\ell}^2}\frac{1}{\beta}
\int_{\mathbb R} \chi(\epsilon|s|)a_2(\epsilon s, \theta)(x+f)\big[ |f'|^2 \phi^*_{xx} -f''\phi^*_x +\phi^*_{\tau\tau} -2f' \phi^*_{x\tau}\big]U'(x)\,{\mathrm d}x
\Bigg\}
\nonumber\\[2mm]
&\quad
\,+\,
\epsilon^3 \kappa'
\Bigg\{
\frac{\beta'}{\beta^4} \int_{\mathbb R} \chi(\epsilon|s|)a_1(\epsilon s, \theta)(x+f)\phi^*U'(x)\,{\mathrm d}x
+\frac{\beta'}{\beta^4} \int_{\mathbb R} \chi(\epsilon|s|)a_1(\epsilon s, \theta)(x+f)^2 \phi^*_{x}U'(x)\,{\mathrm d}x
\nonumber\\[2mm]
&\qquad\qquad
+\frac{ 2\pi}{ {\tilde\ell}}\frac{1}{ \beta^2}\int_{\mathbb R} \chi(\epsilon|s|)a_1(\epsilon s, \theta)(x+f)\big[-f'\phi^*_x +\phi^*_\tau \big]U'(x)\,{\mathrm d}x
\Bigg\}.
\end{align*}
We select the less regular terms from $\Pi_{2}(\tau, f, e) $
 as $\Pi_{2*}(\tau, f, e) $, where
\begin{align}\label{Pi2star}
\Pi_{2*}(\tau, f, e)
=& -f''\epsilon^2\frac{4\pi^2}{{\tilde\ell}^2}\int_{\mathbb R}\, \phi^*_x U'(x)\,{\mathrm d}x
+ \epsilon^3 \kappa  \frac{4\pi^2}{{\tilde\ell}^2}\frac{1}{\beta} \int_{\mathbb R} a_2(\epsilon s, \theta)(x+f)\Big[ -f''\phi^*_x +\phi^*_{\tau\tau} \Big] U'(x)\,{\mathrm d}x.
\end{align}
Moreover, we have
\begin{align*}
\Big|\Pi_{2}(\tau, f, e) - \Pi_{2*}(\tau, f, e)\Big|
& \le \epsilon \frac1{\beta} \left\{ 1+ |\beta f| + \Big|\frac{\beta  f'} {\tilde \ell}\Big| \right\}
 \times
\Bigg[\|\phi^*(\cdot, \tau)\|_{L^2({\mathbb R})} + \|\phi^*_x(\cdot, \tau)\|_{L^2({\mathbb R})}
+ \|\phi^*_{xx}(\cdot, \tau)\|_{L^2({\mathbb R})}
\nonumber\\[2mm]
& \qquad\qquad \qquad\qquad \qquad\qquad \qquad
+ \frac{\epsilon}{\tilde\ell}\|\phi^*_\tau(\cdot, \tau)\|_{L^2({\mathbb R})}+ \frac{\epsilon}{\tilde\ell}\|\phi^*_{x\tau}(\cdot, \tau)\|_{L^2({\mathbb R})} \Bigg],
\end{align*}
and
\begin{align} \label{minus of Pi2star}
&\Pi_{2*} (\tau, f_1, e_1) - \Pi_{2*} (\tau, f_2, e_2)
\nonumber\\[2mm]
& =
 - \epsilon^2 f_1''\frac{4\pi^2}{{\tilde\ell}^2}\int_{\mathbb R}\,\chi(\epsilon|s|) \phi^*_x(f_1, e_1) U'(x)\,{\mathrm d}x
\ +\
\epsilon^2 f_2''\frac{4\pi^2}{{\tilde\ell}^2}\int_{\mathbb R}\, \chi(\epsilon|s|) \phi^*_x(f_2, e_2) U'(x)\,{\mathrm d}x
\nonumber\\[2mm]
&\quad + \epsilon^3 \kappa  \frac{4\pi^2}{{\tilde\ell}^2}\frac{1}{\beta} \int_{\mathbb R}\, \chi(\epsilon|s|) a_2(\epsilon s, \theta)(x+f_1)\Big[ -f_1''\phi^*_x(f_1, e_1) +\phi^*_{\tau\tau}(f_1, e_1) \Big] U'(x)\,{\mathrm d}x
\nonumber\\[2mm]
&\quad  -\epsilon^3 \kappa  \frac{4\pi^2}{{\tilde\ell}^2}\frac{1}{\beta} \int_{\mathbb R}\, \chi(\epsilon|s|) a_2(\epsilon s, \theta)(x+f_2)\Big[ -f_2''\phi^*_x(f_2, e_2)+\phi^*_{\tau\tau}(f_2, e_2) \Big] U'(x)\,{\mathrm d}x,
\end{align}
which implies
\begin{align}\label{minus of Pi2star1}
&\Big|\Pi_{2*} (\tau, f_1, e_1) - \Pi_{2*} (\tau, f_2, e_2)\Big|
\nonumber\\[2mm]
& \le \epsilon^2\big|f_1''-f_2''\big|\frac{4\pi^2}{{\tilde\ell}^2}
\int_{\mathbb R}\,  \big|\phi^*_x(f_1, e_1)\big| |U'(x)|\,{\mathrm d}x
\ +\
\epsilon^2|f_2''| \frac{4\pi^2}{{\tilde\ell}^2}
\int_{\mathbb R}\, \Big|\phi^*_x(f_1, e_1) - \phi^*_x(f_2, e_2)\Big| \big|U'(x)\big|\,{\mathrm d}x
\nonumber\\[2mm]
&\quad + \epsilon^3 \frac{4\pi^2}{{\tilde\ell}^2}\frac{1}{\beta}
\big|f_1''-f_2''\big|
\int_{\mathbb R}\, x\big|\phi^*_x(f_1, e_1)\big|U'(x)\,{\mathrm d}x
\ +\
\epsilon^3 \frac{4\pi^2}{{\tilde\ell}^2}\frac{1}{\beta}
\big|f_2''\big| \int_{\mathbb R}\, x\big|\phi^*_x(f_1, e_1)- \phi^*_x(f_2, e_2)\big||U'(x)|\,{\mathrm d}x
\nonumber\\[2mm]
&\quad + \epsilon^3 \frac{4\pi^2}{{\tilde\ell}^2}\frac{1}{\beta}
 \int_{\mathbb R}\, x\big|\phi^*_{\tau\tau}(f_1, e_1)- \phi^*_{\tau\tau}(f_2, e_2)\big| |U'(x)|\,{\mathrm d}x
\nonumber\\[2mm]
&\quad + \epsilon^3 \frac{4\pi^2}{{\tilde\ell}^2}\frac{1}{\beta} \Big[\big|f_1- f_2\big|| f_1''| +|f_2| |f_1''- f_2''|\Big]\int_{\mathbb R}\, \big|\phi^*_x(f_1, e_1)\big||U'(x)|\,{\mathrm d}x
\nonumber\\[2mm]
&\quad + \epsilon^3 \frac{4\pi^2}{{\tilde\ell}^2}\frac{1}{\beta}  |f_2||f_2''| \int_{\mathbb R}\, \Big(\big|\phi^*_x(f_1, e_1)-\phi^*_x(f_2, e_2)\big| \Big)|U'(x)|\,{\mathrm d}x
\nonumber\\[2mm]
&\quad + \epsilon^3 \frac{4\pi^2}{{\tilde\ell}^2}\frac{1}{\beta}\big|f_1-f_2\big|
 \int_{\mathbb R}\,\big|\phi^*_{\tau\tau}(f_1, e_1)\big| |U'(x)|\,{\mathrm d}x
\nonumber\\[2mm]
&\quad + \epsilon^3 \frac{4\pi^2}{{\tilde\ell}^2}\frac{1}{\beta}\big|f_2\big|
 \int_{\mathbb R}\, \big|\phi^*_{\tau\tau}(f_1, e_1)- \phi^*_{\tau\tau}(f_2, e_2)\big| |U'(x)|\,{\mathrm d}x.
\end{align}

\medskip
 For  $\Pi_3(\tau, f, e)$, we have
\begin{align}
\Big|\Pi_3(\tau, f, e) \Big|\,=\,&\Big|-\int_{{\mathbb R}}\chi(\epsilon|s|){\mathcal N}(\phi^*) U'(x)\,{\mathrm d}x\Big|
\nonumber\\[2mm]
\,\leq\,& \frac{C}{\beta^3} \displaystyle\int_{{\mathbb R}}\chi(\epsilon|s|)
\left[3\,|\mathbf{U}| \left(
|{\tilde\phi}|^2+|{\tilde\psi}|^2\right)+\left(
|{\tilde\phi}|^3 +|{\tilde\psi}|^3 \right)
+3\, \mathbf{U}^2 \,| {\tilde\psi}(\tilde \phi)|\right]\,|U'(x)| \,{\mathrm d}x,
\end{align}
with $\mathbf{U}(y)$ and $\tilde\phi(y)$ as in \eqref{defofbfU} and \eqref{relation betwen tildephi and phi}.

\medskip
We will derive the estimates of $\Pi_1$, $\Pi_2$ and $\Pi_3$.
 We note
\begin{align} \label{inequalities for f}
\|\beta f\|_{L^\infty(0, 2\pi)} \le\, C\, \tilde\ell^{\frac12} \|f\|_*,
\qquad
\Big\|\frac{\beta f'}{\tilde \ell}\Big\|_{L^\infty(0, 2\pi)} \le \, C\,\tilde\ell^{\frac12} \|f\|_*,
\end{align}
and
\begin{align} \label{7-89}
\sup_{\tau \in (0, 2\pi)} \|\phi^*(\cdot, \tau)\|^2_{L^2({\mathbb R})} \le \frac{\tilde \ell} {\epsilon} \|\phi^*\|_{H^2_*(\mathfrak S)}^2, \quad
\sup_{\tau \in (0, 2\pi)} \|\phi_x^*(\cdot, \tau)\|^2_{L^2({\mathbb R})} \le \frac{\tilde \ell} {\epsilon} \|\phi^*\|_{H^2_*(\mathfrak S)}^2.
\end{align}
The proof of \eqref{7-89} can be found in Appendix \ref{appendixa1}.

\smallskip\noindent$\clubsuit$
We have
\begin{align*}
\int_0^{2\pi} \beta^2\Pi_1^2 \,{\mathrm d}\tau
& \leq C \epsilon^2 \int_0^{2\pi} \left(\frac{1}{\beta^2} \left|\beta f\right|^2+|e|^2 + 1\right) \|\phi^*(\cdot, \tau)\|^2_{L^2({\mathbb R})}{\mathrm d}\tau
\nonumber\\[2mm]
& \leq
C \epsilon^2\left(1+\|e\|_{**}^2+ {\tilde\ell} \|f\|_*^2\right)\|\phi^*\|_{H^2_*(\mathfrak S)}^2.
\end{align*}

\smallskip\noindent$\clubsuit$
Similarly, we have
\begin{align*}
 \int_0^{2\pi} \beta^2\Pi_2^2 \,{\mathrm d}\tau
& \leq C \epsilon^2 \int_0^{2\pi}
 \Bigg[\|\phi^*(\cdot, \tau)\|^2_{L^2({\mathbb R})} + \|\phi_x^*(\cdot, \tau)\|^2_{L^2({\mathbb R})}+\|\phi^*_{xx}(\cdot, \tau)\|^2_{L^2({\mathbb R})}
\nonumber\\[2mm]
& \qquad\qquad\qquad
 + \frac{\epsilon^2}{\tilde\ell^2}\|\phi^*_{\tau}(\cdot, \tau)\|^2_{L^2({\mathbb R})}+ \frac{\epsilon^2}{\tilde\ell^2}\|\phi^*_{x\tau}(\cdot, \tau)\|^2_{L^2({\mathbb R})} \Bigg] \left(\left|f\right|^2 + \left|\frac{{\beta f}'}{\tilde\ell}\right|^2 + 1\right) {\mathrm d}\tau
\nonumber\\[2mm]
& \quad+ \int_0^{2\pi} \beta^2\Pi_{2*}^2 \,{\mathrm d}\tau
\nonumber\\[2mm]
& \leq C
 \epsilon^2\left(1 + {\tilde\ell} \|f\|_*^2\right)\|\phi^*\|_{H^2_*(\mathfrak S)}^2+ \int_0^{2\pi} \beta^2\Pi_{2*}^2 \,{\mathrm d}\tau.
\end{align*}
On the other hand, by \eqref{Pi2star} and \eqref{7-89}, there holds
\begin{align*}
\int_0^{2\pi} \beta^2\Pi_{2*}^2 \,{\mathrm d}\tau
&
\leq \epsilon^4 \int_0^{2\pi} \left|\frac{{\beta f}''}{\tilde\ell^2}\right|^2 \|\phi^*_x(\cdot, \tau)\|^2_{L^2({\mathbb R})}\,{\mathrm d}\tau
\ +\
\epsilon^2 \left(1 + {\tilde\ell} \|f\|_*^2\right) \int_0^{2\pi} \frac{\epsilon^4}{\tilde\ell^4}\|\phi^*_{\tau\tau}\|^2_{L^2({\mathbb R})} \,{\mathrm d}\tau
\nonumber\\[2mm]
&
\leq \epsilon^2\left(1 + {\tilde\ell} \|f\|_*^2\right)\|\phi^*\|_{H^2_*(\mathfrak S)}^2.
\end{align*}
Therefore, we have
\begin{align*}
& \int_0^{2\pi} \beta^2\Pi_2^2 \,{\mathrm d}\tau \leq \epsilon^2\left(1 + {\tilde\ell} \|f\|_*^2\right)\|\phi^*\|_{H^2_*(\mathfrak S)}^2.
\end{align*}
Moreover, by \eqref{minus of Pi2star1} and using the results of Proposition \ref{proposition6.5}, \eqref{inequalities for f} and \eqref{7-89}, we can verify that
\begin{align} \label{minus of Pi2star-in-L2-norm}
&\Big\|\beta\Big(\Pi_{2*} (\tau, f_1, e_1) - \Pi_{2*} (\tau, f_2, e_2)\Big)\Big\|_{L^2(0, 2\pi)}
\nonumber\\[2mm]
& \le C \epsilon^2 \sqrt{{\tilde\ell}/\epsilon\,}\, \big\|\phi^*_{f_1, e_1}\big\|_{H^2_*(\mathfrak S)}\big\|f_1- f_2\big\|_*
+ \epsilon^2\sqrt{{\tilde\ell}/\epsilon\,}\, \big\|f_2\big\|_* \,\big\|\phi^*_{f_1, e_1}- \phi^*_{f_2, e_2} \big\|_{H^2_*(\mathfrak S)}
\nonumber\\[2mm]
& \quad
+\epsilon^3 \sqrt{{\tilde\ell}/\epsilon\,}\, \big\|\phi^*_{f_1, e_1}\big\|_{H^2_*(\mathfrak S)}\big\|f_1- f_2\big\|_*
+ \epsilon^3\sqrt{{\tilde\ell}/\epsilon\,}\, \big\|f_2\big\|_* \,\big\|\phi^*_{f_1, e_1}- \phi^*_{f_2, e_2} \big\|_{H^2_*(\mathfrak S)}
\nonumber\\[2mm]
& \quad
+
\epsilon^3 \sqrt{{\tilde\ell}/\epsilon\,}\, {\tilde \ell}^{\frac 12}\big\|f_1\big\|_* \big\|f_1- f_2\big\|_* \big\|\phi^*_{f_1, e_1}\big\|_{H^2_*(\mathfrak S)}
+
\epsilon^3 \sqrt{{\tilde\ell}/\epsilon\,}\, {\tilde \ell}^{\frac 12}\big\|f_2\big\|_* \big\|f_1- f_2\big\|_* \big\|\phi^*_{f_1, e_1}\big\|_{H^2_*(\mathfrak S)}
\nonumber\\[2mm]
& \quad+ \epsilon^3\sqrt{{\tilde\ell}/\epsilon\,}\, {\tilde \ell}^{\frac 12} \big\|f_2\big\|_*^2\,\big\|\phi^*_{f_1, e_1}- \phi^*_{f_2, e_2} \big\|_{H^2_*(\mathfrak S)}
+\epsilon\big\|\phi^*_{f_1, e_1}- \phi^*_{f_2, e_2} \big\|_{H^2_*(\mathfrak S)}
\nonumber\\[2mm]
&\quad+ \epsilon \tilde\ell^{\frac{1}{2}} \, \big\|\phi^*_{f_1, e_1}\big\|_{H^2_*(\mathfrak S)}\big\|f_1- f_2\big\|_*
+\epsilon \tilde\ell^{\frac{1}{2}}  \big\|f_2\big\|_*\big\|\phi^*_{f_1, e_1}- \phi^*_{f_2, e_2} \big\|_{H^2_*(\mathfrak S)}
\nonumber\\[2mm]
& \le C \epsilon^{3} \left(\big\|f_1- f_2\big\|_*+ \big\|e_1- e_2\big\|_* \right),
\end{align}
where we have used the following facts
\begin{align*}
 {\tilde\ell}^{\frac 12} \|f_j\|_* \ll 1,  \quad j=1,2.
\end{align*}

\smallskip\noindent$\clubsuit$
Next, from the expressions $\mathbf{U}(y)$ and $\tilde\phi(y)$ as in \eqref{defofbfU} and \eqref{relation betwen tildephi and phi}, we have
\begin{align*}
 \int_0^{2\pi} \beta^2\Pi_3^2 \,{\mathrm d}\tau
& \leq
\int_0^{2\pi} \Bigg[\int_{{\mathbb R}} U |\beta\phi^*|^2 |U'(x)| \,{\mathrm d}x\Bigg]^2 \,{\mathrm d}\tau
\ +\
\int_0^{2\pi} \Bigg[\int_{{\mathbb R}} |\beta\phi^*|^3 |U'(x)| \,{\mathrm d}x \Bigg]^2 \,{\mathrm d}\tau
\nonumber\\[2mm]
& \quad + \int_0^{2\pi}\left|\frac{\beta}{\beta^3}\right|^2\left\{\int_{{\mathbb R}}\chi(\epsilon|s|)\left[3\mathbf{U} |{\tilde\psi}|^2+|{\tilde\psi}|^3 +3\, \mathbf{U}^2 \, \big|{\tilde\psi}\big|\right] |U'(x)| \,{\mathrm d}x \right\}^2\,{\mathrm d}\tau.
\end{align*}
By the estimate
\begin{align*}
\|{\tilde\psi}({\tilde\phi})\|_{L^{\infty}}\, & \leq \, C\, \epsilon \lf[\|\tilde\phi\|_{L^{\infty}(|s|>\frac{20\delta}{\epsilon})}\,+\, \|\nabla \tilde\phi\|_{L^{\infty}(|s|>\frac{20\delta}{\epsilon})}\ri]\,+\,
e^{-\frac{ \upsilon_2 \delta}{\epsilon}}
\nonumber\\[2mm]
& \leq
C\left( 1 + \|\beta \phi^*\|_{H^2_*(\mathfrak S)}^2 \right) e^{-\frac{ \upsilon_3 \delta}{\epsilon}},
\end{align*}
we get
\begin{align*}
 \int_0^{2\pi} \left|\frac{\beta}{\beta^3}\right|^2 \left\{\int_{{\mathbb R}}\chi(\epsilon|s|)\left[3\mathbf{U} |{\tilde\psi}|^2+|{\tilde\psi}|^3 +3\, \mathbf{U}^2 \, \big|{\tilde\psi}\big|\right] |U'(x)| \,{\mathrm d}x \right\}^2\,{\mathrm d}\tau
 \leq
 C\left( 1 + \|\beta \phi^*\|_{H^2_*(\mathfrak S)}^2 \right) e^{-\frac{2\upsilon_3 \delta}{\epsilon}}.
\end{align*}
By \eqref{7-89} and  Sobolev inequality, we can directly get
\begin{align} \label{phisuqareestimate}
\int_0^{2\pi} \left|\int_{{\mathbb R}} U |\beta\phi^*|^2 |U'(x)| \,{\mathrm d}x\right|^2 \,{\mathrm d}\tau
&\leq \int_0^{2\pi} \|\beta\phi^* \|_{L^{\infty}(\mathbb R)}^4 \,{\mathrm d}\tau
\nonumber\\[2mm]
&\leq \int_0^{2\pi} \left(\|\beta\phi^* \|_{L^{2}(\mathbb R)}^4+\|(\beta\phi^*)_{x} \|_{L^{2}(\mathbb R)}^4\right) \,{\mathrm d}\tau
\nonumber\\[2mm]
&\leq C \frac{{\tilde\ell}^2}{\epsilon^2}\|\beta\phi^*\|^4_{H^2_*(\mathfrak S)}.
\end{align}
 Similarly, we can prove
\begin{align*}
\int_0^{2\pi} \Bigg[\int_{{\mathbb R}} \left(\beta\phi^* \right)^3U'(x)\,{\mathrm d}x\Bigg]^2 \,{\mathrm d}\tau
&\leq \int_0^{2\pi} \|\beta\phi^* \|_{L^{\infty}(\mathbb R)}^6 \,{\mathrm d}\tau
\nonumber\\[2mm]
&\leq \int_0^{2\pi} \left(\|\beta\phi^* \|_{L^{2}(\mathbb R)}^6+\|(\beta\phi^*)_{x} \|_{L^{2}(\mathbb R)}^6\right) \,{\mathrm d}\tau
\nonumber\\[2mm]
&\leq C \frac{{\tilde\ell}^3}{\epsilon^3} \|\beta\phi^*\|^6_{H^2_*(\mathfrak S)}.
\end{align*}
 Therefore, we have
\begin{align*}
 \int_0^{2\pi} \beta^2\Pi_3^2 \,{\mathrm d}\tau \leq C \frac{{\tilde\ell}^2}{\epsilon^2} \|\beta\phi^* \|^4_{H^2_*(\mathfrak S)} + \frac{{\tilde\ell}^3}{\epsilon^3} \|\beta\phi^*\|^6_{H^2_*(\mathfrak S)}.
\end{align*}

\medskip
Finally, by denoting
\begin{align} \label{hatPi1}
\hat\Pi_1(\tau, f, e) = \int_{{\mathbb R}} L_2(\phi^*) {\mathcal Z} \,{\mathrm d}x,
\end{align}
\begin{align} \label{hatPi2}
\hat\Pi_2(\tau, f, e) = \int_{{\mathbb R}} \chi(\epsilon|s|) \big[B_4(\phi^*)+ B_5(\phi^*)+ B_6(\phi^*)\big]{\mathcal Z} \,{\mathrm d}x,
\end{align}
\begin{align} \label{hatPi3}
\hat\Pi_3(\tau, f, e) = -\int_{{\mathbb R}}\chi(\epsilon|x|) {\mathcal N}(\phi^*)\mathcal Z \,{\mathrm d}x,
\end{align}
we have
\begin{align*}
 \int_0^{2\pi} \beta^2 \hat\Pi_1^2 \,{\mathrm d}\tau \leq \epsilon^2\left(1+\|e\|_{**}^2+ {\tilde\ell} \|f\|_*^2\right)\|\phi^*\|_{H^2_*(\mathfrak S)}^2,
\end{align*}
\begin{align*}
 \int_0^{2\pi} \beta^2 \hat\Pi_2^2 \,{\mathrm d}\tau \leq \epsilon^2\left(1 + {\tilde\ell} \|f\|_*^2\right)\|\phi^*\|_{H^2_*(\mathfrak S)}^2,
\end{align*}
\begin{align*}
 \int_0^{2\pi} \beta^2 \hat\Pi_3^2 \,{\mathrm d}\tau
 \leq
C \frac{{\tilde\ell}^2}{\epsilon^2} \|\beta\phi^*\|^4_{H^2_*(\mathfrak S)}
+ \frac{{\tilde\ell}^3}{\epsilon^3} \|\beta\phi^*\|^6_{H^2_*(\mathfrak S)}.
\end{align*}
Moreover, we select the less regular terms from $\hat\Pi_1, \hat\Pi_2, \hat\Pi_3$ as $\hat\Pi_{2*}$, where
\begin{align}\label{hat-Pi2star}
\hat \Pi_{2*}(\tau, f, e)
=& -f''\epsilon^2\frac{4\pi^2}{{\tilde\ell}^2}\int_{\mathbb R}\, \phi^*_x \mathcal Z\,{\mathrm d}x
+ \epsilon^3 \kappa \frac{4\pi^2}{{\tilde\ell}^2}\frac{1}{\beta} \int_{\mathbb R} a_2(\epsilon s, \theta)(x+f)\big[ -f''\phi^*_x +\phi^*_{\tau\tau} \big] \mathcal Z\,{\mathrm d}x.
\end{align}

We note that for $f$ and $e$ with constraints in \eqref{constraint-f}-\eqref{constraint-e} and satisfy
\begin{align*}
 {\tilde\ell}^{\frac 12} \|f\|_* \ll 1, \quad \|e\|_{**} \ll \epsilon^{1-\alpha}.
\end{align*}
Now we summarize the above calculations and present the following lemma.
\begin{lemma}
 Recall the definitions of $\Pi_j (\tau, f, e)$ and $\hat\Pi_j (\tau, f, e)$ with $j =1,2,3$ given in \eqref{Pi1}-\eqref{Pi3}, \eqref{hatPi1}-\eqref{hatPi3}. We have
\begin{align*}
 \int_0^{2\pi} \beta^2 \Pi_1^2 \,{\mathrm d}\tau &\leq \epsilon^2\left(1+\|e\|_{**}^2+ {\tilde\ell} \|f\|_*^2\right)\|\phi^*\|_{H^2_*(\mathfrak S)}^2
 \le C \epsilon^{8-2\alpha},
\end{align*}
\begin{align*}
 \int_0^{2\pi} \beta^2\Pi_2^2 \,{\mathrm d}\tau & \leq C\epsilon^2\left(1 + {\tilde\ell} \|f\|_*\right)\|\phi^*\|_{H^2_*(\mathfrak S)}^2 \le C \epsilon^{8-2\alpha},
\end{align*}
\begin{align*}
 \int_0^{2\pi} \beta^2 \Pi_3^2 \,{\mathrm d}\tau & \leq C \frac{{\tilde\ell}^2}{\epsilon^2} \|\beta\phi^*\|^4_{H^2_*(\mathfrak S)} + \frac{{\tilde\ell}^3}{\epsilon^3} \|\beta\phi^*\|^6_{H^2_*(\mathfrak S)}
\nonumber\\[2mm]
&
 \le C \epsilon^{2\varrho -4} \epsilon^{12-4\alpha} +C\epsilon^{3\varrho -6} \epsilon^{18-6\alpha}
 \le C \epsilon^{8+2\varrho-4\alpha},
\end{align*}
\begin{align*}
 \int_0^{2\pi} \beta^2 \hat\Pi_1^2 \,{\mathrm d}\tau &\leq \epsilon^2\left(1+\|e\|_{**}^2+ {\tilde\ell} \|f\|_*^2\right)\|\phi^*\|_{H^2_*(\mathfrak S)}^2
 \le C \epsilon^{8-2\alpha},
\end{align*}
\begin{align*}
 \int_0^{2\pi} \beta^2 \hat\Pi_2^2 \,{\mathrm d}\tau &\leq C\epsilon^2\left(1 + {\tilde\ell} \|f\|_*\right)\|\phi^*\|_{H^2_*(\mathfrak S)}^2
 \le \epsilon^{8-2\alpha},
\end{align*}
\begin{align*}
 \int_0^{2\pi} \beta^2 \hat\Pi_3^2 \,{\mathrm d}\tau
\leq
C \frac{{\tilde\ell}^2}{\epsilon^2} \|\beta\phi^*\|^4_{H^2_*(\mathfrak S)}
+ \frac{{\tilde\ell}^3}{\epsilon^3} \|\beta\phi^*\|^6_{H^2_*(\mathfrak S)}
 \le \epsilon^{8+2\varrho-4\alpha}.
\end{align*}
Moreover, the terms $ \Pi(\tau, f, e) = \sum_{j =1}^3\Pi_j(\tau, f, e)$ and $\hat\Pi(\tau, f, e) = \sum_{j=1}^3\hat \Pi_j (\tau, f, e)$ can be decomposed in the following way
\begin{align*}
 \Pi(\tau, f, e) = \Pi^*(\tau, f, e) + \Pi_{2*}(\tau, f, e), \qquad \hat\Pi(\tau, f, e) = \hat\Pi^*(\tau, f, e) + \hat\Pi_{2*}(\tau, f, e).
\end{align*}
In the above, the terms $\Pi^*(\tau, f, e)$ and $\hat\Pi^*(\tau, f, e)$ define compact operators of pair $(f,e)$ into $L^2(0, 2\pi)$, while the terms $\Pi_{2*}(\tau, f, e) $ and $ \hat\Pi_{2*}(\tau, f, e)$ satisfy
\begin{align*}
\Big\|\beta\Big(\Pi_{2*} (\tau, f_1, e_1) - \Pi_{2*} (\tau, f_2, e_2)\Big)\Big\|_{L^2(0, 2\pi)}
&\leq C \epsilon^{3} \big[\|f_1- f_2\|_*+ \|e_1- e_2\|_{**}\big],
\end{align*}
\begin{align*}
\Big\|\beta\left(\hat\Pi_{2*} (\tau, f_1, e_1) - \hat\Pi_{2*} (\tau, f_2, e_2) \right)\|_{L^2(0, 2\pi)}
&
\leq C \epsilon^{3} \big[\|f_1- f_2\|_*+ \|e_1- e_2\|_{**}\big].
\end{align*}
where $\beta$ has been given in \eqref{beta}.
\qed
\end{lemma}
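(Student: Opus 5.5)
The statement collects bounds on the projected terms $\Pi_j$, $\hat\Pi_j$. The plan is to assemble the pointwise-in-$\tau$ computations already carried out in Section \ref{Section7.2}, integrate them in $\tau$ with weight $\beta^2$, and then insert the size of $\phi^*$ from Proposition \ref{proposition6.5}. The latter gives $\|\phi^*\|_{H^2_*(\mathfrak S)}+\|\beta\phi^*\|_{H^2_*(\mathfrak S)}\le C\epsilon^{3-\alpha}$.

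\textbf{Step 1: $\Pi_1$ and $\hat\Pi_1$.} Because $\phi^*$ is orthogonal to $U'$ and $\mathcal Z$, and $U',\mathcal Z$ lie in the kernel, respectively eigenspace, of $\partial_{xx}-1+3U^2$, only the term $3(w_2^2-U^2)\phi^*$ survives. Its coefficient is bounded by $C\epsilon\beta^{-1}(1+|e|+|\beta f|)e^{-c|x|}$. By Cauchy--Schwarz in $x$, then integration in $\tau$, using \eqref{inequalities for f}, we get
\[
\int_0^{2\pi}\beta^2\Pi_1^2\,{\mathrm d}\tau\le C\epsilon^2\bigl(1+\|e\|_{**}^2+\tilde\ell\|f\|_*^2\bigr)\|\phi^*\|^2_{H^2_*}.
\]
Since $\tilde\ell\|f\|_*^2\le C\epsilon^{\varrho-1}\epsilon^{2-2\alpha}\ll1$, this is at most $C\epsilon^{2}\epsilon^{6-2\alpha}=C\epsilon^{8-2\alpha}$.

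\textbf{Step 2: $\Pi_2$ and $\hat\Pi_2$.} Split $\Pi_2=\Pi_{2*}+(\Pi_2-\Pi_{2*})$, with $\Pi_{2*}$ as in \eqref{Pi2star}. The remainder involves only first and second $x$-derivatives, together with $\frac{\epsilon}{\tilde\ell}\phi^*_\tau$ and $\frac\epsilon{\tilde\ell}\phi^*_{x\tau}$, multiplied by coefficients $\epsilon\beta^{-1}(1+|\beta f|+|\beta f'/\tilde\ell|)$. It is therefore controlled exactly as in Step 1.

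For $\Pi_{2*}$ there are two kinds of terms. In the terms carrying $f''$, put $\beta f''/\tilde\ell^2$ in $L^2(0,2\pi)$ and $\|\phi^*_x(\cdot,\tau)\|_{L^2}$ in $L^\infty_\tau$ via \eqref{7-89}. This costs a factor $\sqrt{\tilde\ell/\epsilon}$, which is absorbed by the extra powers of $\epsilon$ in front. The terms carrying $\phi^*_{\tau\tau}$ are absorbed directly into the $\frac{\epsilon^2}{\tilde\ell^2}\phi_{\tau\tau}$ component of the norm. The Lipschitz bound for $\Pi_{2*}$ is \eqref{minus of Pi2star-in-L2-norm}, obtained from \eqref{minus of Pi2star1} and \eqref{characteriztion}. $\hat\Pi_2$ and $\hat\Pi_{2*}$ are treated identically with $\mathcal Z$ in place of $U'$.

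\textbf{Step 3: $\Pi_3$ and $\hat\Pi_3$.} Expand $\mathcal N$ into its quadratic, cubic and $\tilde\psi$ parts. The $\tilde\psi$ parts are exponentially small by \eqref{estimate of psi-star}. For the quadratic and cubic parts, use the one-dimensional Sobolev bound $\|\beta\phi^*\|_{L^\infty_x}\le C\|\beta\phi^*\|_{H^1_x}$ and \eqref{7-89}. This gives the bounds $\tilde\ell^2\epsilon^{-2}\|\beta\phi^*\|^4$ and $\tilde\ell^3\epsilon^{-3}\|\beta\phi^*\|^6$. Inserting $\tilde\ell\le C\epsilon^{\varrho-1}$ from \eqref{boundneess of length} and $\|\beta\phi^*\|\le C\epsilon^{3-\alpha}$ gives $C\epsilon^{8+2\varrho-4\alpha}$.

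\textbf{Step 4: compactness and the main obstacle.} The delicate point is the claim that $\Pi^*=\Pi-\Pi_{2*}$ is a compact operator of $(f,e)$ into $L^2(0,2\pi)$. Every term of $\Pi^*$ involves at most $f'$, $e'$ and $\phi^*$ integrated against fixed decaying profiles. By the $H^2_*$ bound and the Lipschitz dependence \eqref{characteriztion}, these terms are Lipschitz from the $\|\cdot\|_*\times\|\cdot\|_{**}$ ball into $H^1(0,2\pi)$. I would argue this by differentiating in $\tau$ and using the $\epsilon\phi^*_{x\tau}/\tilde\ell$ control. Compactness then follows from the Rellich embedding $H^1(0,2\pi)\hookrightarrow L^2(0,2\pi)$.

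The hardest part is tracking the $\tilde\ell$ factors so that each loss $\sqrt{\tilde\ell/\epsilon}$ is compensated. Here I would repeatedly use ${\tilde\ell}^{1/2}\|f\|_*\ll1$ and assumption \textbf{(B3)}.
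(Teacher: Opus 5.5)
Your Steps 1--3 reproduce the paper's own argument from Section 7.2:
\begin{itemize}
\item orthogonality to $U'$ and $\mathcal Z$ leaves only $3(w_2^2-U^2)\phi^*$ in $\Pi_1$, $\hat\Pi_1$;
\item the remaining linear terms are handled by Cauchy--Schwarz in $x$ together with \eqref{inequalities for f};
\item the $f''$-terms of $\Pi_{2*}$ and the quadratic and cubic parts of $\Pi_3$ are handled with the $\sup_\tau$ bound \eqref{7-89};
\item the Lipschitz estimate comes from \eqref{minus of Pi2star1} combined with \eqref{characteriztion};
\item the final powers of $\epsilon$ come from Proposition \ref{proposition6.5} and \eqref{boundneess of length}.
\end{itemize}
So the quantitative part is correct and essentially identical to the paper.

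The one place you go beyond the paper is Step 4. The paper simply asserts that $\Pi^*$ and $\hat\Pi^*$ are compact. You sketch a proof: a fixed-$\epsilon$ bound into $H^1(0,2\pi)$, then Rellich. The $\epsilon$-dependent constants in that bound are harmless. The sketch is sound, but it needs three adjustments.

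\emph{Where the cutoff goes.} Put the cutoff $\chi(\epsilon|s|)$ inside $\Pi_{2*}$, as in \eqref{minus of Pi2star}, rather than using \eqref{Pi2star}, which omits it. Otherwise $\Pi^*$ still contains the exponentially small term $f''\int(\chi-1)\phi^*_xU'\,{\mathrm d}x$ and its $\epsilon^3$ analogue. These terms are not smoothing: their $\tau$-derivative involves $f'''$. In that case your claim that every term of $\Pi^*$ involves at most $f'$ is false.

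\emph{How the $\tau$-derivatives are controlled.} Differentiating in $\tau$ produces $\phi^*_{x\tau\tau}$ and $\phi^*_{xx\tau}$. These must be moved onto the decaying profiles by integrating by parts in $x$. Products such as $f''\int\chi\phi^*_{x\tau}U'\,{\mathrm d}x$ also require a bound on $\sup_\tau\|\phi^*_\tau(\cdot,\tau)\|_{L^2({\mathbb R})}$. This follows exactly as in Appendix \ref{appendixa1}, using the $\frac{\epsilon^2}{\tilde\ell^2}\phi^*_{\tau\tau}$ component of the $H^2_*$ norm.

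\emph{What compactness actually needs.} Boundedness into $H^1(0,2\pi)$ plus continuity into $L^2(0,2\pi)$, the latter from \eqref{characteriztion}, already gives compactness. Your claim that the map is Lipschitz into $H^1$ is stronger than you need.
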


\medskip
\subsection{ The reduced system of differential equations}\label{Section7.3}\

By the calculations in the above, we conclude that \eqref{project 6.13-1} and \eqref{project 6.13-2} will imply the following system
\begin{align*}
0\,=\,&
 \epsilon^2\varrho_1\Bigg[- \frac{4\pi^2}{{\tilde\ell}^2}{f}'' - \frac{4\pi}{\tilde\ell} \frac{\beta'}{\beta^2} f'
+ \Big(\frac{|\beta'|^2}{\beta^4}
+\frac{\beta''} {\beta^3}
\,-\,\frac{5\kappa^2}{3\beta^2} \Big) f
+ \, \frac 32\frac{\epsilon^2W_{tt}(0, \theta)}{\beta^4} f \Bigg]
\nonumber\\[2mm]
&\qquad + \frac {\epsilon^2 {\mathbf b}_1(d_1+d_2)e}\beta
+\frac{\epsilon^3}{|\beta|^3}{\mathcal D}_1(\theta)
+ \epsilon^4 \frac{d_6} {\beta}{\mathbf b}_1 \frac{e''}{\tilde\ell^2}
+ \frac{\epsilon^3}{\beta^3}\Big[ -\frac{8\pi^2}{{\tilde\ell}^2}f' +\frac{4\pi}{ {\tilde\ell}}\frac{\beta'}{\beta^2} f\Big]e'd_8
\nonumber\\[2mm]
&\qquad
+\frac{\epsilon^3}{\beta}\tilde{\mathcal F}^{*} \left[\tau, f, \frac{\beta {f}'}{\tilde\ell}, e, \frac{e'}{\tilde\ell}\right]+ \Pi(\tau, f, e) + \tilde M(f, e),
\end{align*}
and
\begin{align*}
 0&\,=\,\epsilon^3\frac{4\pi^2}{{\tilde\ell}^2} e'' + \epsilon\lambda_0 e
 -\frac{1}{2}\, \epsilon^2 \frac{ \epsilon^2W_{tt}(0, \theta)}{\beta^3}|f|^2 b_5 +\frac{4}{9} \beta\epsilon^2 f^2 {\mathbf b}_1^2 b_3
\nonumber\\[2mm]
 & \quad
 + \epsilon^2\beta\Big[\frac{4\pi^2}{{\tilde\ell}^2}|f'|^2
-\frac{4\pi}{{\tilde\ell}}\frac{\beta'}{\beta^2}ff'
+\frac{|\beta'|^2}{\beta^4} |f|^2
\Big] b_6
 +\frac 23\epsilon^2 {\mathbf b}_1 fe + \frac3{\beta}e^2b_1 \epsilon^2 - \epsilon^4 \frac{8\pi^2} {{\tilde\ell}^2} {\mathbf b}_1 e'' f
\nonumber\\[2mm]
 & \quad
  + \epsilon^3 \tilde{\mathcal K}^* \left[\tau, \beta f, \frac{\beta {f}'}{\tilde\ell}, e, \frac{\epsilon e'}{\tilde\ell}\right] + \epsilon^4 \tilde{\mathcal K}^{***} \left[\tau, \beta f, \frac{\beta {f}'}{\tilde\ell}, e, \frac{\epsilon e'}{\tilde\ell}\right]
+ \beta \hat\Pi(\tau, f, e)+ \tilde N(f, e),
\end{align*}
 where
\begin{align*}
 \tilde M(f, e) = \frac{\epsilon^3}{\beta}\tilde{\mathcal F}^{**} \left[\tau, \beta f, e, \frac{\epsilon^2 e''}{\tilde\ell^2}, \frac{\beta f''}{\tilde\ell^2}\right] +\Pi_{2*}(\tau, f, e),
\end{align*}
\begin{align*}
 \tilde N(f, e) = \epsilon^3 \tilde{\mathcal K}^{**} \left[\tau, \beta f, \frac{\beta f''}{\tilde\ell^2}, \frac{\epsilon^2 e''}{\tilde \ell^2}\right]+\beta \hat\Pi_{2*}(\tau, f, e).
\end{align*}
After some trivial algebras, we then obtain
\begin{align} \label{last reduced equation on U'}
 & - \frac{4\pi^2}{{\tilde\ell}^2}{f}'' - \frac{4\pi}{\tilde\ell} \frac{\beta'}{\beta^2} f'
+ \Big(\frac{|\beta'|^2}{\beta^4}
+\frac{\beta''} {\beta^3}
\,-\,\frac{5\kappa^2}{3\beta^2} \Big) f
+ \, \frac 32\frac{\epsilon^2W_{tt}(0, \theta)}{\beta^4} f
\nonumber\\[2mm]
 &
= -\left\{ \frac {{\mathbf b}_1(d_1+d_2)e}{\varrho_1\beta}
+\frac{\epsilon}{|\beta|^3 \varrho_1}{\mathcal D}_1(\theta)
+ \frac{d_6} {\beta\, \varrho_1}{\mathbf b}_1 \frac{\epsilon^2 e''}{\tilde\ell^2}
+ \frac{\epsilon^3}{\beta^3}\Big[ -\frac{8\pi^2}{{\tilde\ell}^2}f' +\frac{4\pi}{ {\tilde\ell}}\frac{\beta'}{\beta^2} f\Big]e'd_8 \right\}
\nonumber\\[2mm]
 &\quad -\mathcal A_{1\epsilon} (f, e) - \mathcal K_{1\epsilon} (f, e),
\end{align}
and
\begin{align} \label{last reduced equation on mathcal Z}
 \epsilon^2\frac{4\pi^2}{{\tilde\ell}^2} e'' + \lambda_0 e & =
 - \Bigg\{-\frac{1}{2}\, \epsilon \frac{ \epsilon^2W_{tt}(0, \theta)}{\beta^3}|f|^2 b_5+\frac{4}{9} \beta\epsilon f^2 {\mathbf b}_1^2 b_3 + \epsilon\beta\Big[\frac{4\pi^2}{{\tilde\ell}^2}|f'|^2
-\frac{4\pi}{{\tilde\ell}}\frac{\beta'}{\beta^2}ff'
+\frac{|\beta'|^2}{\beta^4} |f|^2
\Big] b_6
\nonumber\\[2mm]
 & \qquad\quad
 +\frac 23\epsilon {\mathbf b}_1 fe + \epsilon \frac3{\beta}e^2b_1 - \epsilon^3 \frac{8\pi^2} {{\tilde\ell}^2} {\mathbf b}_1 e'' f \Bigg\}- \mathcal A_{2\epsilon} (f, e) - \mathcal K_{2\epsilon} (f, e),
\end{align}
where
\begin{align*}
\mathcal K_{1\epsilon} (f, e)
& =
\frac{\epsilon}{\beta\, \varrho_1}\tilde{\mathcal F}^{*} \left[\tau, f, \frac{\beta {f}'}{\tilde\ell}, e, \frac{e'}{\tilde\ell}\right]
\ +\
\frac{ \Pi(\tau, f, e) }{\epsilon^2\varrho_1},
\end{align*}
\begin{align*}
\mathcal K_{2\epsilon} (f, e)
&= \epsilon^2 \tilde{\mathcal K}^* \left[\tau, \beta f, \frac{\beta {f}'}{\tilde\ell}, e, \frac{\epsilon e'}{\tilde\ell}\right]
\ +\
\epsilon^3 \tilde{\mathcal K}^{***} \left[\tau, \beta f, \frac{\beta {f}'}{\tilde\ell}, e, \frac{\epsilon e'}{\tilde\ell}\right]
\ +\
\frac{\beta  \hat\Pi(\tau, f, e) }\epsilon,
\end{align*}
 and
\begin{align*}
 \mathcal A_{1\epsilon} (f, e) = \frac{\tilde M(f, e)} {\epsilon^2\, \varrho_1}, \quad \mathcal A_{2\epsilon} (f, e) =\frac{\tilde N(f, e)} {\epsilon}.
\end{align*}

 \medskip
Based on the above results, we state the following important lemma.
\begin{lemma} \label{Lemma7-2}
To achieve the validity of the relations in \eqref{project 6.13-1}-\eqref{project 6.13-2}, we need to find $f$ and $e$ with constraints \eqref{constraint-f}-\eqref{constraint-e} in such a way that \eqref{last reduced equation on U'}- \eqref{last reduced equation on mathcal Z} hold.
Here we know that
 $\mathcal K_{1\epsilon} (f, e)$ and $\mathcal K_{2\epsilon} (f, e)$ define compact operators of pair $(f,e)$ into $L^2(0, 2\pi)$, while $\mathcal A_{1\epsilon} (f, e)$ and $\mathcal A_{2\epsilon} (f, e)$
 satisfy the following Lipschitz continuity
\begin{align*}
 \| \mathcal A_{1\epsilon} (f, e) - \mathcal A_{1\epsilon} (f, e) \|_{L^2(0, 2\pi)} \le \epsilon (\|f_1- f_2\|_* + \|e_1- e_2\|_{**} ),
\end{align*}
\begin{align*}
 \| \mathcal A_{2\epsilon} (f, e) - \mathcal A_{2\epsilon} (f, e) \|_{L^2(0, 2\pi)} \le \epsilon^2 (\|f_1- f_2\|_* + \|e_1- e_2\|_{**} ).
\end{align*}
 Moreover, we have
\begin{align*}
 \|\mathcal K_{1 \epsilon} (f, e) \|_{L^2(0, 2\pi)} + \|\mathcal A_{1 \epsilon} (f, e) \|_{L^2(0, 2\pi)} \le C \left(\epsilon + \epsilon \|f\|_* + \epsilon \|e\|_{**} \right),
\end{align*}
\begin{align*}
 \|\mathcal K_{2 \epsilon} (f, e) \|_{L^2(0, 2\pi)} + \|\mathcal A_{2 \epsilon} (f, e) \|_{L^2(0, 2\pi)} \le C \left(\epsilon^3 + \epsilon^2 \|f\|_* + \epsilon^2 \|e\|_{**} \right),
\end{align*}
where the norms $\|\cdot\|_*$ and $ \|\cdot\|_{**}$ have been given in \eqref{constraint-f}-\eqref{constraint-e}.
\qed
\end{lemma}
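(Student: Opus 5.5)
This lemma is a bookkeeping summary of Sections \ref{Section7.1}--\ref{Section7.2}, so the plan is to assemble those computations rather than prove anything new. First, I would observe that, by Proposition \ref{proposition6.5}, $\phi^*=\phi^*(f,e)$ solves \eqref{project 6.13}. Multiplying that equation by $U'$ and by ${\mathcal Z}$ and integrating in $x$ gives $c\equiv0$ and $d\equiv 0$ if and only if \eqref{project 6.13-1}--\eqref{project 6.13-2} hold. Next, I would insert the expansion of $\beta^{-3}\int\eta_\delta^\epsilon{\mathcal E}U'$ obtained from $I_1,\dots,I_{13}$, and of $\beta^{-3}\int\eta_\delta^\epsilon{\mathcal E}{\mathcal Z}$ in \eqref{Sw2 integral 2}, together with the decompositions $\Pi=\Pi^*+\Pi_{2*}$ and $\hat\Pi=\hat\Pi^*+\hat\Pi_{2*}$. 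Dividing the first identity by $\epsilon^2\varrho_1$ and the second by $\epsilon/\beta$ produces exactly \eqref{last reduced equation on U'}--\eqref{last reduced equation on mathcal Z}, with $\mathcal K_{j\epsilon}$ and $\mathcal A_{j\epsilon}$ as defined. This proves the equivalence part of the statement.

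Second, I would establish the size bounds. For $\mathcal K_{1\epsilon}$, the $L^2$ bound $C\epsilon^3(1+\|f\|_*+\|e\|_{**})$ on $\beta\cdot\epsilon^3\beta^{-1}\tilde{\mathcal F}^*$, divided by $\epsilon^2$, gives a contribution of order $\epsilon(1+\|f\|_*+\|e\|_{**})$. The preceding lemma gives $\|\beta\Pi\|_{L^2}\le C\epsilon^{4-\alpha}$, so $\Pi/\epsilon^2=O(\epsilon^{2-\alpha})\le C\epsilon$. The same division applies to $\tilde M$, using the $\tilde{\mathcal F}^{**}$ bound and $\Pi_{2*}$, which bounds $\mathcal A_{1\epsilon}$. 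For the $e$-equation, I would divide the $\tilde{\mathcal K}^*$, $\tilde{\mathcal K}^{**}$ and $\tilde{\mathcal K}^{***}$ bounds and $\beta\hat\Pi$ by $\epsilon$, which gives $\epsilon^2(\|f\|_*+\|e\|_{**})+\epsilon^3$. Some care is needed with the $\epsilon^{8+2\varrho-4\alpha}$ term; this is handled using (B3).

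Third, I would establish the Lipschitz and compactness properties. The Lipschitz bounds come from the difference estimates for $\tilde{\mathcal F}^{**}$ and $\tilde{\mathcal K}^{**}$, which are of order $\epsilon^3$, and from \eqref{minus of Pi2star-in-L2-norm}, again of order $\epsilon^3$. Dividing by $\epsilon^2$, respectively by $\epsilon$, yields the constants $\epsilon$ and $\epsilon^2$. Compactness of $\mathcal K_{j\epsilon}$ follows because these terms depend only on $f$, $f'$, $e$, $e'$, and on $\phi^*$ through integrals against the exponentially decaying $U'$ and ${\mathcal Z}$. The map $(f,e)\mapsto(f,f',e,e')$ is compact from the $\|\cdot\|_*\times\|\cdot\|_{**}$ ball into $L^2$ by Rellich, since the norms control second derivatives. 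By \eqref{characteriztion}, $\phi^*$ depends continuously on $(f,e)$.

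The main obstacle is this last step: showing that the projections of $\phi^*$ in $\Pi^*$ and $\hat\Pi^*$ really are compact, rather than merely Lipschitz. Here $\phi^*$ is only $H^2_*$-controlled, and the norms carry the weights $\epsilon/\tilde\ell$ with $\tilde\ell$ unbounded. I would first isolate every term containing $\phi^*_{\tau\tau}$ or $f''$ into $\Pi_{2*}$ and $\hat\Pi_{2*}$, which is exactly what the decomposition does. For the remaining terms, I would argue via the $H^1$-in-$\tau$ regularity of $\int\phi^*U'$ and $\int\phi^*{\mathcal Z}$, namely $\int\phi^*_\tau U'\,{\mathrm d}x\in L^2$, which is enough for compact embedding for each fixed $\epsilon$.
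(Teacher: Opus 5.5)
Your assembly of Sections~7.1--7.2 follows the paper, which gives no argument for this lemma beyond those computations. Your bounds for $\mathcal K_{1\epsilon},\mathcal A_{1\epsilon}$ and the Lipschitz constants come out as you say. Two steps, however, do not work as written.

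\textbf{Compactness.} Your compactness argument is aimed at the wrong objects. The integrals $\int_{\mathbb R}\phi^*U'\,{\mathrm d}x$ and $\int_{\mathbb R}\phi^*{\mathcal Z}\,{\mathrm d}x$ vanish identically by \eqref{projectedproblem3}; this is what removes the $\phi^*_{\tau\tau}$ contribution from $\Pi_1$ and $\hat\Pi_1$. So their $H^1$-regularity in $\tau$ says nothing.

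What you actually need is this: the nonzero pieces of $\Pi^*,\hat\Pi^*$ are bounded in $H^1(0,2\pi)$ for fixed $\epsilon$, uniformly on the $(f,e)$-ball. These pieces are integrals of $\phi^*,\phi^*_x,\phi^*_{xx},\phi^*_\tau,\phi^*_{x\tau}$ against $(f,e)$-dependent kernels such as $3(w_2^2-U^2)U'$ or $\chi(\epsilon|s|)a_j(x+f)^kU'$. To prove the $H^1$ bound:
\begin{itemize}
\item First integrate by parts in $x$. Otherwise $\partial_\tau\int\phi^*_{x\tau}g\,{\mathrm d}x$ requires $\phi^*_{x\tau\tau}$, which the $H^2_*$ norm does not control.
\item The $\tau$-derivative then involves only $\phi^*_\tau,\phi^*_{\tau\tau}\in L^2(\mathfrak S)$, together with $f'',e''\in L^2(0,2\pi)$ multiplied by $\sup_\tau\|\phi^*(\cdot,\tau)\|_{L^2(\mathbb R)}$, which is finite by \eqref{7-89}.
\end{itemize}
Rellich plus the continuity from \eqref{characteriztion} then gives compactness.

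Relatedly, $\mathcal K_{1\epsilon},\mathcal K_{2\epsilon}$ must be built from $\Pi^*,\hat\Pi^*$, not from $\Pi,\hat\Pi$ as literally written and as you reproduce. Otherwise $\Pi_{2*}$ (resp.\ $\beta\hat\Pi_{2*}$) is counted twice, since it also sits in $\tilde M$ (resp.\ $\tilde N$). It would also make $\mathcal K_{1\epsilon}$ non-compact, because it contains $f''$ and $\phi^*_{\tau\tau}$.

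\textbf{The $e$-equation bounds.} In the bounds for $\mathcal K_{2\epsilon}$ and $\mathcal A_{2\epsilon}$, the delicate terms are $\hat\Pi_1,\hat\Pi_2$ (including $\hat\Pi_{2*}$), not $\hat\Pi_3$. Suppose you use, as you did for $\Pi$, the absolute estimates $\int_0^{2\pi}\beta^2\hat\Pi_j^2\,{\mathrm d}\tau\le C\epsilon^{8-2\alpha}$. Then you only get $\|\beta\hat\Pi\|_{L^2(0,2\pi)}/\epsilon\le C\epsilon^{3-\alpha}$. That is not $\le C(\epsilon^3+\epsilon^2\|f\|_*+\epsilon^2\|e\|_{**})$ near $(f,e)=(0,0)$.

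The fix is to use the unabsolutized form $\epsilon^2(1+\|e\|_{**}^2+\tilde\ell\|f\|_*^2)\|\phi^*\|^2_{H^2_*(\mathfrak S)}$. Combine it with the linear estimate $\|\phi^*\|_{H^2_*(\mathfrak S)}\le D(\epsilon^3+\epsilon^2\|f\|_*+\epsilon^2\|e\|_{**})$ from Proposition~\ref{proposition6.5}.

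The $\hat\Pi_3$ term you singled out is actually harmless. Condition \eqref{constraint-alpha-varrho} together with $\varrho<1$ forces $\alpha<\frac12$. Hence $\varrho>2\alpha$, and the absolute bound $\epsilon^{3+\varrho-2\alpha}\le\epsilon^3$ already suffices.
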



\medskip
\section{Solving the reduced system} \label{Section8}

 \smallskip
\subsection{ Auxiliary linear problems} \label{Section8.1}\

For the sake of later use, the resolution results for two linear problems will be provided
in Proposition \ref{reduced tilde f boundarycondition-lemma} and Proposition \ref{linear theory for e}.

\subsubsection{}
Consider
\begin{align}
- \frac{4\pi^2}{{\tilde\ell}^2}{\mathfrak f}_{\tau\tau} - \frac{4\pi}{\tilde\ell} \frac{\beta'}{\beta^2}{\mathfrak f}_{\tau}
+ \Big[\frac{|\beta'|^2}{\beta^4}
+\frac{\beta''} {\beta^3}
\,-\,\frac{5\kappa^2}{3\beta^2} \Big]{\mathfrak f} + \, \frac 32\frac{\epsilon^2 W_{tt}(0, \theta)}{\beta^4} {\mathfrak f} = {\mathfrak h}(\tau),
\label{problem-auxil}
\end{align}
with
\begin{align}
{\mathfrak f}(0) = {\mathfrak f}(2\pi), \quad {\mathfrak f}'(0) = {\mathfrak f}'(2\pi),
\label{problem-auxil-boundary}
\end{align}
where the relation between $\tau$ and $\theta=\epsilon z$ is given in \eqref{ztau}, $\beta$ and $\kappa$ are smooth functions on the variable $\theta$, see \eqref{beta} and \eqref{curvature bounded}.

\begin{proposition} \label{reduced tilde f boundarycondition-lemma}
For any given $\epsilon$ small with the validity of \eqref{resonace-f}, we have a constant $C>0$ such that
\begin{align} \label{reduced mathfrak f boundarycondition}
\left\|\beta {\mathfrak f}\right\|_{L^2(0, 2\pi)}^2
+\left\|\frac{\beta}{\tilde\ell} {\mathfrak f}_\tau\right\|_{L^2(0, 2\pi)}^2
+\left\|\frac{\beta}{{\tilde\ell}^2} {\mathfrak f}_{\tau\tau}\right\|_{L^2(0, 2\pi)}^2
\leq
\frac{C}{{\mathrm d}_\epsilon^2}\left\|\beta {\mathfrak h}\right\|_{L^2(0, 2\pi)}^2,
\end{align}
where ${\mathrm d}_\epsilon$ is given in \eqref{resonace-f}.
\qed
\end{proposition}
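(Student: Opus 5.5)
The plan is to undo the Liouville change of variables \eqref{ztau} and recognize \eqref{problem-auxil} as the operator $\mathbb F_\epsilon$ of \eqref{mathbbF}, up to a weight. With $\mathrm d\tau/\mathrm d\theta = 2\pi\beta/\tilde\ell$ we have
\[
\frac{2\pi}{\tilde\ell}\,\partial_\tau=\frac1\beta\,\partial_\theta,
\qquad
\frac{4\pi^2}{\tilde\ell^2}\,\partial_{\tau\tau}=\frac1{\beta^2}\,\partial_{\theta\theta}-\frac{\beta'}{\beta^3}\,\partial_\theta .
\]
The first two terms of \eqref{problem-auxil} therefore become $-\beta^{-2}\mathfrak f_{\theta\theta}-\beta'\beta^{-3}\mathfrak f_\theta$. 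Next I would write $\mathfrak f=\beta^{a}g$ for a suitable power $a$ (I expect $a=-3/2$ or similar, so that the zeroth-order terms $|\beta'|^2/\beta^4+\beta''/\beta^3$ and $\tfrac32\epsilon^2W_{tt}/\beta^4$ combine into $(\beta^3)_{tt}$ and $\kappa^2$ terms). After multiplying by a power of $\beta$, the left side should become a Sturm--Liouville operator in $\theta$ of the form $-c\,\beta^{-m}\mathbb F_\epsilon(g)$, possibly up to lower-order terms.

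Those lower-order terms are $O(\epsilon^2)$ corrections, and they are controlled via \eqref{W_theta-control}, \eqref{beta-bound} and the stationarity relation \eqref{stationary assumption}. The latter is used to rewrite $\epsilon^2W_t=-\tfrac23\kappa\beta^2$ and to turn $\tfrac32\epsilon^2W_{tt}$ together with the $\kappa^2$ pieces into the precise coefficient of $\mathbb F_\epsilon$, namely $(\beta^3)_{tt}-2\beta^3\kappa^2$. The constant $-\tfrac53\kappa^2$ should be exactly what is produced by expanding $(\beta^3)_{tt}=\tfrac32\beta\,\epsilon^2W_{tt}+\tfrac34\beta^{-1}(\epsilon^2W_t)^2$ and using stationarity. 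I would verify this algebra first, since it is the only real computation.

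Once the problem reads $\mathbb F_\epsilon(g)=\beta\,\tilde{\mathfrak h}$ (with $\tilde{\mathfrak h}$ equal to $\mathfrak h$ times a power of $\beta$) on the periodic interval $[0,\ell)$, I would use the spectral decomposition of \eqref{mathbbF-boundary}. This is a self-adjoint problem in the weighted space $L^2(\beta\,\mathrm d\theta)$, so its eigenfunctions $h_j$ form an orthonormal basis there. Expanding $g=\sum g_jh_j$ and $\tilde{\mathfrak h}=\sum \tilde{\mathfrak h}_jh_j$ gives $\Lambda_{\epsilon,j}g_j=\tilde{\mathfrak h}_j$, and \eqref{resonace-f} then yields
\[
\|g\|_{L^2(\beta\,\mathrm d\theta)}\le {\mathrm d}_\epsilon^{-1}\,\|\tilde{\mathfrak h}\|_{L^2(\beta\,\mathrm d\theta)} .
\]
Converting back, $\mathrm d\tau=(2\pi/\tilde\ell)\beta\,\mathrm d\theta$, so the weighted $\theta$-norms correspond to the $\tau$-norms $\|\beta\,\cdot\|_{L^2(0,2\pi)}$ up to the common factor $\tilde\ell/(2\pi)$, which cancels on both sides. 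This gives the zeroth-order part of \eqref{reduced mathfrak f boundarycondition}.

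For the derivative terms, I would multiply the equation by $\beta^2\mathfrak f$ and integrate over $\tau$ to get $\|\beta\mathfrak f_\tau/\tilde\ell\|^2\lesssim\|\beta\mathfrak h\|\,\|\beta\mathfrak f\|+\|\beta\mathfrak f\|^2$. Here the coefficients are bounded, since $\kappa$, $\beta'/\beta$, $\beta''/\beta$ and $\epsilon^2W_{tt}/\beta^2$ are bounded by \eqref{curvature bounded}, \eqref{W_t-control} and \eqref{beta-bound}. The cross term $\beta'\beta^{-2}\mathfrak f_\tau/\tilde\ell$ is absorbed by Cauchy--Schwarz. Solving the equation for $\mathfrak f_{\tau\tau}$ then bounds $\|\beta\mathfrak f_{\tau\tau}/\tilde\ell^2\|$ by the previous quantities plus $\|\beta\mathfrak h\|$. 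Since ${\mathrm d}_\epsilon\le C$, all these contributions are dominated by $C{\mathrm d}_\epsilon^{-2}\|\beta\mathfrak h\|^2$.

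The main obstacle is the exact identification of the transformed operator with $\mathbb F_\epsilon$, including the correct weight and the power of $\beta$ in the substitution, so that \eqref{resonace-f} applies verbatim. If a residual zeroth-order discrepancy of size $O(1)$ remains, the spectral gap argument fails. In that case my fallback would be to regard \eqref{mathbbF-boundary} as already defined in this transformed form, following the remark after \eqref{mathbbF} that refers to \eqref{weighted-reduced tilde f}.
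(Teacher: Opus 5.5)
Your proof has a genuine gap in the final conversion back to $\tau$. It is a real mismatch of weights, not a slip.

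First, the power in your substitution is forced. For $-\beta^{-3}(\beta\mathfrak f_\theta)_\theta$ to become a multiple of $(\beta^3 g_\theta)_\theta$, you need $\mathfrak f=\beta g$, so $a=1$, not $-3/2$. With this choice the $|\beta'|^2$ and $\beta''$ terms cancel exactly. Your stationarity computation then produces exactly the coefficient $\tfrac32\beta\epsilon^2W_{tt}-\tfrac53\kappa^2\beta^3$ of $\mathbb F_\epsilon$, and the equation becomes exactly $\mathbb F_\epsilon(g)=-\beta^4\mathfrak h$. There is no residual. (A residual could not have been treated perturbatively anyway: by \eqref{beta-bound}, $\beta'/\beta$ and $\beta''/\beta$ are only $O(1)$, not $O(\epsilon^2)$.) So in your notation $\tilde{\mathfrak h}=-\beta^3\mathfrak h$.

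Now the conversion. Since $\mathrm d\theta=\frac{\tilde\ell}{2\pi\beta}\,\mathrm d\tau$, you get $\int_0^\ell u^2\beta\,\mathrm d\theta=\frac{\tilde\ell}{2\pi}\int_0^{2\pi}u^2\,\mathrm d\tau$. So $L^2(\beta\,\mathrm d\theta)$ is the unweighted $L^2(0,2\pi)$, not $\|\beta\,\cdot\|_{L^2(0,2\pi)}$. Expanding in the eigenbasis of \eqref{mathbbF-boundary} therefore only gives
\[
\big\|\beta^{-1}\mathfrak f\big\|_{L^2(0,2\pi)}\le {\mathrm d}_\epsilon^{-1}\big\|\beta^{3}\mathfrak h\big\|_{L^2(0,2\pi)},
\]
which is off from \eqref{reduced mathfrak f boundarycondition} by a factor $\beta^4$. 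Because $a$ is forced, no other substitution can rebalance the weights. Also, $\beta$ is not bounded uniformly in $\epsilon$ under \textbf{(A1)}--\textbf{(A2)}; \eqref{length-constraint} only controls $\int_0^\ell\beta^2\,\mathrm d\theta$.

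The weight that matches the target norms is $\beta^4\,\mathrm d\tau$, and this is what the paper uses. With $\mathfrak f=\beta\tilde f$ and $\mathfrak h=\beta\tilde h$, the equation reads $-\frac{4\pi^2}{\tilde\ell^2}\beta^{-4}\partial_\tau(\beta^4\tilde f_\tau)+q\tilde f=\tilde h$, where $q=\tfrac32\epsilon^2W_{tt}(0,\theta)\beta^{-4}-\tfrac53\kappa^2\beta^{-2}$. One then expands in the eigenfunctions of \eqref{weighted-reduced tilde f}, orthonormal in $L^2(\beta^4\,\mathrm d\tau)$. In that space $\|\tilde f\|_{L^2(\beta^4\mathrm d\tau)}=\|\beta\mathfrak f\|_{L^2(0,2\pi)}$, and likewise for $\tilde h$, which is exactly the target norm.

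In $\theta$, this eigenproblem is $\mathbb F_\epsilon(h)=-\Lambda\beta^5h$. It coincides with \eqref{mathbbF-boundary} only after replacing the weight $\beta$ by $\beta^5$. The paper asserts the equivalence and applies \eqref{resonace-f} to these eigenvalues, so it effectively reads the gap condition for the $\beta^5$-weighted problem. Taken literally with weight $\beta$, \eqref{resonace-f} only yields a lower bound ${\mathrm d}_\epsilon(\sup\beta)^{-4}$ for the eigenvalues of the $\beta^5$-weighted problem. Your fallback of reading \eqref{mathbbF-boundary} as \eqref{weighted-reduced tilde f} is therefore the right move. It must be applied to the weight, however, not to a zeroth-order discrepancy.

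Once the zeroth-order bound holds in the correct norm, your derivative estimates go through as in the paper's Steps 3--4. These are testing with $\beta^2\mathfrak f$, reading $\mathfrak f_{\tau\tau}$ off the equation, and using ${\mathrm d}_\epsilon\le C$. The last bound holds here because $|q|\le C$.
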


\medskip
\begin{proof} The proof will be divided into several steps.

{\bf{Step 1.}}
By denoting
\begin{align}
{\mathfrak f}(\tau) = \beta(\theta) {\tilde f}(\tau), \qquad {\mathfrak h}(\tau) = \beta(\theta){\tilde h}(\tau),
\label{setting1}
\end{align}
we have
\begin{align}
{\mathfrak f}_\tau = \beta {\tilde f}_\tau + \frac{\tilde\ell}{2\pi}\frac{\beta'}{\beta} {\tilde f},
\label{setting2}
\end{align}
\begin{align}
{\mathfrak f}_{\tau\tau}
& = \beta {\tilde f}_{\tau\tau}
+ \frac{\tilde\ell}{\pi}\frac{\beta'}{\beta} {\tilde f}_\tau
+ \frac{\tilde\ell^2} {4\pi^2} \frac{\beta\beta''- |\beta'|^2}{\beta^3} {\tilde f}.
\label{setting3}
\end{align}
The above setting will give a new form of \eqref{problem-auxil}
\begin{align*}
- \frac{4\pi^2}{{\tilde\ell}^2}\Big[ {\tilde f}_{\tau\tau} + \beta' {\tilde f}_\tau \frac{\tilde\ell} {\pi \beta^2} \Big]
- \frac{4\pi}{\tilde\ell} \frac{\beta'}{\beta^2} \Big[{\tilde f}_\tau + \beta' {\tilde f} \frac{\tilde\ell} {2\pi \beta^2}\Big]
+\Big[2\frac{|\beta'|^2}{\beta^4}\,-\,\frac{5\kappa^2}{3\beta^2} \Big]{\tilde f} + \, \frac 32\frac{\epsilon^2 W_{tt}(0, \theta)}{\beta^4}{\tilde f}
= {\tilde h},
\end{align*}
i.e.,
\begin{align*}
- \frac{4\pi^2}{{\tilde\ell}^2}\Big[ {\tilde f}_{\tau\tau} +2 \beta' {\tilde f}_\tau \frac{\tilde\ell} {\pi \beta^2} \Big]
+
\Big[\frac 32\frac{\epsilon^2 W_{tt}(0, \theta)}{\beta^4}\,-\,\frac{5\kappa^2}{3\beta^2} \Big]{\tilde f}
= {\tilde h},
\end{align*}
i.e.,
\begin{align*}
 - \frac{4\pi^2}{{\tilde\ell}^2} \frac 1{ \beta^4} \p_\tau \left( \beta^4 {\tilde f}_\tau \right)
+
\Big[\frac 32\frac{\epsilon^2 W_{tt}(0, \theta)}{\beta^4}\,-\,\frac{5\kappa^2}{3\beta^2} \Big]{\tilde f}
= {\tilde h}.
\end{align*}

\medskip
{\bf{Step 2.}}
We consider the weighted eigenvalue problem
\begin{align} \label{weighted-reduced tilde f}
- \frac{4\pi^2}{{\tilde\ell}^2} \p_\tau \left( \beta^4 {\tilde f}_\tau \right)
+
\Big[\frac 32\frac{\epsilon^2 W_{tt}(0, \theta)}{\beta^4}\,-\,\frac{5\kappa^2}{3\beta^2} \Big] \beta^4{\tilde f} = \Lambda_\epsilon \beta^4 {\tilde f},
\end{align}
subjecting to periodic boundary conditions
\begin{align} \label{newreduced tilde f boundarycondition}
\tilde f(0) = \tilde f(2\pi), \quad \tilde f'(0) = \tilde f'(2\pi).
\end{align}
Simple calculations will show that problem \eqref{weighted-reduced tilde f}-\eqref{newreduced tilde f boundarycondition} is
equivalent to the eigenvalue problem \eqref{mathbbF-boundary}.
By introducing the weighted inner product $\langle \cdot, \cdot\rangle$ as
\begin{align*}
 \langle {\tilde f}, {\tilde h}\rangle_{W} = \int_0^{2\pi} \beta^4 {\tilde f} {\tilde h}{\mathrm d}\tau,
\end{align*}
we first give the following claim, which is well-known.

\smallskip
\noindent{\bf{Claim 1.}} \label{weighted eigenvalue problem lemma}
{\em
There exists a sequence $\{\Lambda_{\epsilon, k}\}_{k=1}^\infty$ of eigenvalues to the weighted eigenvalue problem \eqref{weighted-reduced tilde f}-\eqref{newreduced tilde f boundarycondition}.
Moreover, the corresponding eigenfunctions $\{\omega_{\epsilon, k}\}_{k=1}^\infty$ can be selected to form an orthonormal basis of $L^2(0, 2\pi)$ with respect to the weighted inner product $\langle \cdot, \cdot\rangle_W$.
}

\medskip
{\bf{Step 3.}}
We then concern the problem
\begin{align}
- \frac{4\pi^2}{{\tilde\ell}^2} \p_\tau \left( \beta^4 {\tilde f}_\tau \right)
+
\Big[\frac 32\frac{\epsilon^2 W_{tt}(0, \theta)}{\beta^4}\,-\,\frac{5\kappa^2}{3\beta^2} \Big] \beta^4{\tilde f} = \beta^4{\tilde h},
\label{weighted-reduced tilde f222222}
\end{align}
with
\begin{align}
\tilde f(0) = \tilde f(2\pi), \quad \tilde f'(0) = \tilde f'(2\pi),
\label{newreduced tilde f boundarycondition222222}
\end{align}
and obtain the following claim.

\smallskip
\noindent
{\bf{Claim 2.}} \label{reduced tilde f boundarycondition-lemma22222}
{\em
Consider \eqref{weighted-reduced tilde f222222}-\eqref{newreduced tilde f boundarycondition222222},
and for any given $\epsilon$ small, we have a constant $C>0$ such that
\begin{align} \label{reduced tilde f boundarycondition}
\left\| \beta^2{\tilde f} \right\|_{L^2(0, 2\pi)}^2
+\left\| \frac{\beta^2}{\tilde\ell}{\tilde f}_\tau \right\|_{L^2(0, 2\pi)}^2
+\left\| \frac{\beta^2}{{\tilde\ell}^2}{\tilde f}_{\tau\tau} \right\|_{L^2(0, 2\pi)}^2
\leq
\frac{C}{{\mathrm d}_\epsilon^2}\left\| \beta^2{\tilde h} \right\|_{L^2(0, 2\pi)}^2,
\end{align}
where ${\mathrm d}_\epsilon$ is defined in \eqref{resonace-f}.
}

\noindent
Indeed, by Claim 1, 
we can express $\tilde f$ as
\[
\tilde f= \sum_{k=1}^\infty {\tilde f}_k \omega_{\epsilon, k}, \quad \text{where}~ {\tilde f}_k =\int_0^{2\pi} \beta^4 \tilde f \omega_{\epsilon, k}{\mathrm d}\tau,
\]
 and then substitute it into \eqref{weighted-reduced tilde f222222} to derive
\[
 \sum_{k=1}^\infty {\tilde f}_k\, \Lambda_{\epsilon, k}\,\beta^2\omega_{\epsilon, k}\,=\,\beta^2{\tilde h}.
\]
Then by Claim 1,
and the assumption in \eqref{resonace-f}, for any $\tilde f\in L^2(0, 2\pi)$, there holds
\begin{align*}
\left\| \beta^2{\tilde f} \right\|_{L^2(0, 2\pi)}^2
=\sum_{k=1}^\infty |{\tilde f}_k|^2
\leq \frac1{{\mathrm d}_\epsilon^2} \sum_{k=1}^\infty \Lambda_{\epsilon, k}^2 {\tilde f}_k^2
= \frac1{{\mathrm d}_\epsilon^2}\left\| \beta^2{\tilde h} \right\|_{L^2(0, 2\pi)}^2.
\end{align*}
Moreover, by equations in \eqref{weighted-reduced tilde f222222}-\eqref{newreduced tilde f boundarycondition222222}
and the above estimate as well as the assumptions in \eqref{curvature bounded} and \eqref{W_t-control},
it is easy to derive \eqref{reduced tilde f boundarycondition} due to \eqref{beta-bound}.

\medskip
{\bf{Step 4.}}
A conclusive remark is that the proof of Proposition \ref{reduced tilde f boundarycondition-lemma} will be easily derived from Claim 2
and the identities \eqref{setting1}-\eqref{setting3}.
In fact, it is easy to obvious that
\begin{align*}
\left\| \beta{\mathfrak f} \right\|_{L^2(0, 2\pi)}^2
= \big\| \beta^2{\tilde f} \big\|_{L^2(0, 2\pi)}^2,
\end{align*}
\begin{align*}
\left\| \frac{\beta}{\tilde\ell} {\mathfrak f}_\tau \right\|_{L^2(0, 2\pi)}^2
\leq
C\left\| \frac{\beta^2}{\tilde\ell}{\tilde f}_\tau \right\|_{L^2(0, 2\pi)}^2
+
C\big\| \beta{\tilde f} \big\|_{L^2(0, 2\pi)}^2,
\end{align*}
and
\begin{align*}
\left\| \frac{\beta}{{\tilde\ell}^2} {\mathfrak f}_{\tau\tau} \right\|_{L^2(0, 2\pi)}^2
\leq
C\left\| \frac{\beta^2}{{\tilde\ell}^2}{\tilde f}_{\tau\tau} \right\|_{L^2(0, 2\pi)}^2
+
C\left\| \frac{\beta}{{\tilde\ell}}{\tilde f}_\tau \right\|_{L^2(0, 2\pi)}^2
+
C\big\| {\tilde f} \big\|_{L^2(0, 2\pi)}^2,
\end{align*}
due to \eqref{beta-bound}.
The combination of the above estimates and \eqref{reduced tilde f boundarycondition}
will finish the proof of \eqref{reduced mathfrak f boundarycondition}.
\end{proof}

\medskip
\subsubsection{}
For the linear problem
\begin{align} \label{reduced for e2 new}
\epsilon^2\frac{4\pi^2}{{\tilde\ell}^2} {\mathfrak e}'' + \lambda_0 {\mathfrak e} \,=\, d\quad\mbox{in }\ (0,2\pi),
\quad {\mathfrak e}(0) \,=\, {\mathfrak e}(2\pi),
\quad {\mathfrak e}'(0) \,=\, {\mathfrak e}' (2\pi),
\end{align}
where the constants ${\tilde\ell}$, $\lambda_0$ are given in \eqref{ztau} and \eqref{lambda0}, we now present the following linear theory.

\begin{proposition} \label{linear theory for e}
Suppose that \eqref{gap condition1} holds.
If $d\in L^2(0, 2\pi)$, there exits a unique solution ${\mathfrak e}\in H^2 (0,2\pi)$ to the linear problem \eqref{reduced for e2 new}, which satisfies the estimate
\begin{align} \label{estimate norm of e1}
\|{\mathfrak e}\|^2_{**}
:\,=\,
\|{\mathfrak e}\|^2_{L^\infty(0, 2\pi)}
+ \frac{\epsilon^2}{{\tilde\ell}^2} \|{\mathfrak e}'\|^2_{L^2 (0, 2\pi)}
+ \frac{\epsilon^4} {{\tilde\ell}^4}\|{\mathfrak e}''\|^2_{L^2(0, 2\pi)}
\leq C \frac{{\tilde\ell}^2} {\epsilon^2} \| d\|^2_{L^2(0, 2\pi)}.
\end{align}

\end{proposition}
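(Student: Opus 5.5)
Since \eqref{reduced for e2 new} has constant coefficients on the circle, I will solve it by Fourier series in $\tau$. I write
\[
d(\tau)=\sum_{k\ge 0}\big(d_{1k}\cos k\tau+d_{2k}\sin k\tau\big),
\qquad
{\mathfrak e}(\tau)=\sum_{k\ge 0}\big(e_{1k}\cos k\tau+e_{2k}\sin k\tau\big).
\]
Substituting, \eqref{reduced for e2 new} decouples into the scalar relations
\[
\Big(\lambda_0-\frac{4\pi^2\epsilon^2k^2}{\tilde\ell^2}\Big)e_{jk}=d_{jk},
\qquad j=1,2,\ k\ge0 .
\]
Since $\lambda_*=\lambda_0/(4\pi^2)$, the multiplier equals $-4\pi^2\big(k^2\epsilon^2/\tilde\ell^2-\lambda_*\big)$. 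The gap condition \eqref{gap condition1} gives $|\,\cdot\,|\ge 4\pi^2c\,\epsilon/\tilde\ell$ for every $k\ge1$, and for $k=0$ the multiplier is simply $\lambda_0=4$. Hence $e_{jk}=d_{jk}/(\lambda_0-4\pi^2\epsilon^2k^2/\tilde\ell^2)$ is well defined. This gives existence and uniqueness in $H^2(0,2\pi)$ once the bounds below are in place. The periodic boundary conditions are built into the Fourier basis.

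For the estimates I will split into two frequency regimes. When $\epsilon^2k^2/\tilde\ell^2\ge 2\lambda_*$, the multiplier is at least comparable to $1+\epsilon^2k^2/\tilde\ell^2$. This yields $|e_{jk}|+\frac{\epsilon^2k^2}{\tilde\ell^2}|e_{jk}|\le C|d_{jk}|$, with no loss of $\tilde\ell/\epsilon$. When $\epsilon^2k^2/\tilde\ell^2<2\lambda_*$, the factor $\epsilon^2k^2/\tilde\ell^2$ is bounded, and the gap bound gives $|e_{jk}|\le C\frac{\tilde\ell}{\epsilon}|d_{jk}|$; the same bound then holds for $\frac{\epsilon k}{\tilde\ell}|e_{jk}|$ and $\frac{\epsilon^2k^2}{\tilde\ell^2}|e_{jk}|$. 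Summing over $k$ with Parseval, and using $\tilde\ell/\epsilon\ge1$ (by \eqref{tildeell-lowerbound}), I get
\[
\|{\mathfrak e}\|_{L^2}^2+\frac{\epsilon^2}{\tilde\ell^2}\|{\mathfrak e}'\|_{L^2}^2+\frac{\epsilon^4}{\tilde\ell^4}\|{\mathfrak e}''\|_{L^2}^2\le C\frac{\tilde\ell^2}{\epsilon^2}\|d\|_{L^2}^2 .
\]

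The main obstacle is the $L^\infty$ part of $\|\cdot\|_{**}$. The weight $\epsilon/\tilde\ell$ is small, so the naive Sobolev bound $\|{\mathfrak e}\|_\infty\le C(\|{\mathfrak e}\|_{L^2}+\|{\mathfrak e}'\|_{L^2})$ would cost an extra factor $\tilde\ell/\epsilon$. To avoid this, I will use $\|{\mathfrak e}\|_\infty\le\sum_k(|e_{1k}|+|e_{2k}|)$ and Cauchy--Schwarz with weights $(1+\epsilon k/\tilde\ell)^{\pm1}$:
\[
\sum_k|e_{jk}|\le\Big(\sum_k\frac{1}{(1+\epsilon k/\tilde\ell)^2}\Big)^{1/2}\Big(\sum_k(1+\epsilon k/\tilde\ell)^2|e_{jk}|^2\Big)^{1/2}.
\]
The first factor is $\lesssim(\tilde\ell/\epsilon)^{1/2}$. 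This would give $\|{\mathfrak e}\|_\infty^2\lesssim\frac{\tilde\ell^3}{\epsilon^3}\|d\|^2$, which is worse than the claim by a factor $\tilde\ell/\epsilon$.

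To close the gap I would instead aim for $\|{\mathfrak e}\|^2_\infty\lesssim\frac{\tilde\ell}{\epsilon}\sum_k(1+\epsilon^2k^2/\tilde\ell^2)|e_{jk}|^2$ and track more carefully where the resonant loss enters. The gap condition is near-resonant only for the $O(1)$ modes $k$ close to $\tilde\ell\sqrt{\lambda_*}/\epsilon$. On these modes the loss is $\tilde\ell/\epsilon$ in amplitude but there are only $O(1)$ of them, so their $\ell^1$ and $\ell^2$ sums agree. On the non-resonant modes the multiplier is bounded below by $c\,|k-k_*|\,\epsilon/\tilde\ell\cdot\epsilon k_*/\tilde\ell$. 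Summing $|d_{jk}|/|k-k_*|$ by Cauchy--Schwarz should give $\|{\mathfrak e}\|_\infty\lesssim\frac{\tilde\ell}{\epsilon}\|d\|_{L^2}$ after also using the refined form \eqref{gap condition1.22} of the gap condition. I expect this bookkeeping near $k_*$ to be the delicate step.
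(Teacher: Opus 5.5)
Your proposal is correct and follows the paper's route. Expand in the periodic eigenfunctions, which here are just $\cos k\tau,\sin k\tau$, and divide by the multiplier $\lambda_0-4\pi^2\epsilon^2k^2/\tilde\ell^2$, which \eqref{gap condition1} keeps away from zero. The $L^\infty$ bound then comes from Cauchy--Schwarz against $\sum_k|\lambda_0-4\pi^2\epsilon^2k^2/\tilde\ell^2|^{-2}\lesssim\tilde\ell^2/\epsilon^2$, which the paper proves by the same near/far-resonance splitting you sketch, so your weighted detour is unnecessary. The step you flag as delicate is routine: $|\lambda_0-4\pi^2\epsilon^2k^2/\tilde\ell^2|=4\pi^2\frac{\epsilon^2}{\tilde\ell^2}|k-k_*|(k+k_*)\ge 4\pi^2\sqrt{\lambda_*}\,\frac{\epsilon}{\tilde\ell}\,|k-k_*|$ with $k_*=\tilde\ell\sqrt{\lambda_*}/\epsilon$, and \eqref{gap condition1} handles the at most two $k$ with $|k-k_*|<1$, so the refined form \eqref{gap condition1.22} is not needed.
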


\begin{proof}
We follow the method in \cite{delPKowWei1} and consider the auxiliary eigenvalue problem
\begin{align} \label{eigenvalue1 new}
 \Theta'' + \Lambda\Theta \,=\, 0,
 \qquad
 \Theta(0) \,=\, \Theta(2\pi), \quad \Theta'(0) \,=\, \Theta' (2\pi).
\end{align}
 It is well-known that problem \eqref{eigenvalue1 new} has a sequence of eigenvalues $\big\{\Lambda_0, \Lambda_1, \cdots, \Lambda_j,\cdots\big\}$
 with an associated orthogonal basis in $L^2(0, 2\pi)$, constituted by eigenfunctions $\big\{\Theta_0, \Theta_1, \cdots, \Theta_j,\cdots\big\} $.
 The asymptotic behaviour of eigenvalue is
\begin{align} \label{Lambda_k asymptotic behavior}
\sqrt{ \Lambda_j } =j+O(1/j^3), \quad \forall\,j\in {\mathbb N}^{+} .
\end{align}
 We note that
\[
|\Theta_j|\leq C, \quad \forall\,j \,=\, 0, 1, 2, \cdots.
\]
Suppose
\[
{\mathfrak e} = \sum_{j=0}^\infty {\mathfrak e}_j \Theta_j,
\]
and
\[
d = \sum_{j=0}^\infty d_j \Theta_j, \quad \text{with}~~ d_j = \int_0^{2\pi} d\, \Theta_j \,{\mathrm d}\tau.
\]
We can rewrite \eqref{reduced for e2 new} as
\begin{align*}
\epsilon^2\frac{4\pi^2}{{\tilde\ell}^2} {\mathfrak e}_j \Theta_j'' + \lambda_0 {\mathfrak e}_j \Theta_j \,=\, d_j \Theta_j,
\end{align*}
i.e.,
\begin{align*}
-\epsilon^2\frac{4\pi^2}{{\tilde\ell}^2} {\mathfrak e}_j \Lambda_j\Theta_j + \lambda_0 {\mathfrak e}_j \Theta_j \,=\, d_j \Theta_j,
\end{align*}
i.e.,
\begin{align*}
-\epsilon^2\frac{4\pi^2}{{\tilde\ell}^2} {\mathfrak e}_j \Lambda_j + \lambda_0 {\mathfrak e}_j \,=\, d_j,
\end{align*}
for all $j=0,1,\cdots$.
Then
\begin{align*}
{\mathfrak e}_j = \frac{d_j} {\lambda_0 - \epsilon^2\frac{4\pi^2}{{\tilde\ell}^2} \Lambda_j},
\quad \text{i.e.,}\quad
{\mathfrak e}= \sum_{j \,=\, 0}^\infty \frac{d_j} {\lambda_0 - \epsilon^2\frac{4\pi^2}{{\tilde\ell}^2}\Lambda_j} \Theta_j.
\end{align*}
 Then we know that problem \eqref{reduced for e2 new} is solvable if and only if
\begin{align*}
\lambda_0 - \epsilon^2\frac{4\pi^2}{{\tilde\ell}^2}\Lambda_j \ne 0, \quad \text{for all $j=0, 1, 2, \cdots$}.
\end{align*}
By the asymptotic behavior of $\Lambda_j$ in \eqref{Lambda_k asymptotic behavior}, we can choose $\epsilon$ such that
\begin{align*}
\left|\lambda_0 - \epsilon^2\frac{4\pi^2}{{\tilde\ell}^2}\Lambda_j \right| >c \frac\epsilon {\tilde\ell},
\end{align*}
for a small $c>0$ depending only on $\lambda_0$.
This is the gap condition given in \eqref{gap condition1}.
Then we have
\begin{align*}
\|{\mathfrak e}\|_{L^2(0, 2\pi)}^2 \leq \sum_{j \,=\, 0}^\infty \frac{d_j^2} {\left|\lambda_0 - \epsilon^2\frac{4\pi^2}{{\tilde\ell}^2}\Lambda_j\right|^2} \leq \frac {{\tilde\ell}^2}{\epsilon^2} \|d\|_{L^2(0, 2\pi)}^2,
\end{align*}
\begin{align*}
\|{\mathfrak e}'\|_{L^2(0, 2\pi)}^2
= - \int_0^{2\pi} {\mathfrak e}'' {\mathfrak e} \,{\mathrm d}\tau
= \sum_{j \,=\, 0}^\infty
\frac{\Lambda_j d_j^2}{\left|\lambda_0 -\epsilon^2\frac{4\pi^2}{{\tilde\ell}^2}\Lambda_j\right|^2}
\leq
\frac {{\tilde\ell}^4}{\epsilon^4} \|d\|_{L^2(0, 2\pi)}^2.
\end{align*}
By using the equation in \eqref{reduced for e2 new}, we get
\begin{align*}
\frac{\epsilon^2}{{\tilde\ell}^2} \|{\mathfrak e}''\|_{L^2(0, 2\pi)}
\le
\|{\mathfrak e}\|_{L^2(0, 2\pi)} + \|d\|_{L^2(0, 2\pi)}
\le
\frac {{\tilde\ell}}{\epsilon} \|d\|_{L^2(0, 2\pi)}.
\end{align*}
Next, we consider the $L^\infty$ estimate of ${\mathfrak e}$.
There holds
\begin{align*}
|{\mathfrak e}(\tau)|^2
= \left|\sum_{j \,=\, 0}^\infty \frac{d_j} {\lambda_0 - \epsilon^2\frac{4\pi^2}{{\tilde\ell}^2}\Lambda_j} \Theta_j\right|^2
\le
\Bigg(\sum_{j \,=\, 0}^\infty d_j^2 \Bigg)\Bigg(\sum_{j \,=\, 0}^\infty \left|\frac{ \Theta_j} {\lambda_0 - \epsilon^2\frac{4\pi^2}{{\tilde\ell}^2}\Lambda_j} \right|^2\Bigg).
\end{align*}

We claim that
\begin{align} \label{linfty claim}
\sum_{j \,=\, 0}^\infty \left|\frac{ \Theta_j} {\lambda_0 - \epsilon^2\frac{4\pi^2}{{\tilde\ell}^2}\Lambda_j} \right|^2 \le C \frac {{\tilde\ell^2}}{\epsilon^2}.
\end{align}
In fact, we have
\begin{align*}
\sum_{j \,=\, 0}^\infty \left|\frac{ \Theta_j} {\lambda_0 - \epsilon^2\frac{4\pi^2}{{\tilde\ell}^2}\Lambda_j} \right|^2
&
 \le C \sum_{j \,=\, 0}^\infty \left|\frac{1} {\lambda_0 - \epsilon^2\frac{4\pi^2}{{\tilde\ell}^2}\Lambda_j} \right|^2
\nonumber
\\ &
 = C \sum_{\{j \,:\,  \epsilon^2\frac{4\pi^2}{{\tilde\ell}^2}\Lambda_j < \frac{\lambda_0}2 \}} \left|\frac{1} {\lambda_0 - \epsilon^2\frac{4\pi^2}{{\tilde\ell}^2}\Lambda_j} \right|^2
  + C \sum_{\{j \,:\,  \frac{\lambda_0}2 \le \epsilon^2\frac{4\pi^2}{{\tilde\ell}^2}\Lambda_j \le \lambda_0 \}} \left|\frac{1} {\lambda_0 - \epsilon^2\frac{4\pi^2}{{\tilde\ell}^2}\Lambda_j} \right|^2
\nonumber
\\ & \quad
 +C \sum_{\{j \,:\, \epsilon^2\frac{4\pi^2}{{\tilde\ell}^2}\Lambda_j > \lambda_0\}} \left|\frac{1} {\lambda_0 - \epsilon^2\frac{4\pi^2}{{\tilde\ell}^2}\Lambda_j} \right|^2.
\end{align*}
By the asymptotic expansion of $\Lambda_j$, it's easy to get that
\begin{align*}
 &\sum_{\{j \,:\,  \epsilon^2\frac{4\pi^2}{{\tilde\ell}^2}\Lambda_j < \frac{\lambda_0}2 \}} \left|\frac{1} {\lambda_0 - \epsilon^2\frac{4\pi^2}{{\tilde\ell}^2}\Lambda_j} \right|^2
 \le
C \sum_{i =0}^{\frac{\tilde\ell} \epsilon} \frac 1{\lambda_0} \le C \frac{{\tilde\ell}^2} {\epsilon^2},
\end{align*}
and
\begin{align*}
\sum_{\{j \,:\,  \epsilon^2\frac{4\pi^2}{{\tilde\ell}^2}\Lambda_j > \lambda_0\}} \left|\frac{1} {\lambda_0 - \epsilon^2\frac{4\pi^2}{{\tilde\ell}^2}\Lambda_j} \right|^2
& \le C \left|\frac{1} {\lambda_0 - \epsilon^2\frac{4\pi^2}{{\tilde\ell}^2}\Lambda_{j_1}} \right|^2
+
C \sum_{j =j_1+1} \left|\frac{1} {\epsilon^2\frac{4\pi^2}{{\tilde\ell}^2}\Lambda_{j_1} - \epsilon^2\frac{4\pi^2}{{\tilde\ell}^2}\Lambda_j} \right|^2 \le C \frac{\tilde\ell^2} {\epsilon^2}.
\end{align*}
Here we note that $j_1$ is smallest number such that $\epsilon^2\frac{4\pi^2}{{\tilde\ell}^2}\Lambda_{j_1} > \lambda_0.$
 Moreover, we have
\begin{align*}
 & \sum_{\{j \,:\,  \frac{\lambda_0}2 \le \epsilon^2\frac{4\pi^2}{{\tilde\ell}^2}\Lambda_j \le \lambda_0 \}} \left|\frac{1} {\lambda_0 - \epsilon^2\frac{4\pi^2}{{\tilde\ell}^2}\Lambda_j} \right|^2
\nonumber
\\ & \le C \left|\frac{1} {\lambda_0 - \epsilon^2\frac{4\pi^2}{{\tilde\ell}^2}\Lambda_{j_2}} \right|^2
+ C \sum_{j = j_3}^{j_2-1} \left|\frac{1} {\epsilon^2\frac{4\pi^2}{{\tilde\ell}^2}\Lambda_{j_2} - \epsilon^2\frac{4\pi^2}{{\tilde\ell}^2}\Lambda_j} \right|^2 \le C \frac{\tilde\ell^2} {\epsilon^2}.
\end{align*}
 We note $j_2$ is the largest number such that
$$
\epsilon^2\frac{4\pi^2}{{\tilde\ell}^2}\Lambda_{j_2} < \lambda_0,
$$
and $j_3$ is the smallest number such that
$$
\epsilon^2\frac{4\pi^2}{{\tilde\ell}^2}\Lambda_{j_3} \ge \frac{\lambda_0}2.
$$
Therefore, we prove \eqref{linfty claim}.
The validity of \eqref{estimate norm of e1} can be easily derived.
\end{proof}


\subsection{ Solving the reduced nonlinear system}\label{Section8.2}\

 Note that for any pair $(f, e)$ with the constraints in \eqref{constraint-f}-\eqref{constraint-e}, there hold
$$
 \|f \|_*\leq \epsilon^{1-\alpha}, \quad \|e \|_{**}\leq \epsilon^{\frac 32 \varrho + \frac 12- 2\alpha}.
$$
 We denote $\hat{\mathcal L} (f, e)= \left( \hat {\mathcal L}_1, \hat {\mathcal L}_2 \right) (f, e), $ with
\begin{align*}
 \hat {\mathcal L}_1 (f, e):
\,=\, & - \frac{4\pi^2}{{\tilde\ell}^2}{f}'' - \frac{4\pi}{\tilde\ell} \frac{\beta'}{\beta^2} f'
+ \Big(\frac{|\beta'|^2}{\beta^4}
+\frac{\beta''} {\beta^3}
\,-\,\frac{5\kappa^2}{3\beta^2} \Big) f
+ \, \frac 32\frac{\epsilon^2W_{tt}(0, \theta)}{\beta^4} f,
\nonumber\\[2mm]
\hat {\mathcal L}_2 (f, e):
\,=\, & \epsilon^2\frac{4\pi^2}{{\tilde\ell}^2} e'' + \lambda_0 e.
\end{align*}
By Proposition \ref{reduced tilde f boundarycondition-lemma} and Proposition \ref{linear theory for e} provided in Section \ref{Section8.1}, we know that the problem
\[ \hat {\mathcal L} (f, e) = \left({\mathfrak h}, d\right) \]
with the boundary conditions
\begin{align}
f(0) \,=\, f(2\pi), \quad f'(0) \,=\, f' (2\pi), \quad e(0) \,=\, e(2\pi), \quad e'(0) \,=\, e' (2\pi),
\label{boundaryhatf-e}
\end{align}
is invertible in the sense that
\begin{align}
 \|f\|_* \le \frac{C} {\epsilon^\alpha}\|\beta {\mathfrak h}\|_{L^2(0, 2\pi)}, \quad \|e\|_{**} \le \frac{C}{\epsilon^{2-\varrho}} \|d\|_{L^2(0, 2\pi)}.
\end{align}

\medskip
Next, we denote
\begin{align*}
 \mathcal Q_{1\epsilon} (f, e)
=  -\left\{ \frac {{\mathbf b}_1(d_1+d_2)e}{\varrho_1\beta}
+\frac{\epsilon}{|\beta|^3 \varrho_1}{\mathcal D}_1(\theta)
+ \frac{d_6} {\beta\, \varrho_1}{\mathbf b}_1 \frac{\epsilon^2 e''}{\tilde\ell^2}
+ \frac{\epsilon^3}{\beta^3}\Big[ -\frac{8\pi^2}{{\tilde\ell}^2}f' +\frac{4\pi}{ {\tilde\ell}}\frac{\beta'}{\beta^2} f\Big]e'd_8 \right\},
\end{align*}
and
\begin{align*}
\mathcal Q_{2\epsilon} (f, e)
& = - \Bigg(-\frac{1}{2}\, \epsilon \frac{ \epsilon^2W_{tt}(0, \theta)}{\beta^3}|f|^2 b_5
+\frac{4}{9} \beta\epsilon f^2 {\mathbf b}_1^2 b_3
\nonumber\\[2mm]
 & \qquad +\epsilon\beta\Big[\frac{4\pi^2}{{\tilde\ell}^2}|f'|^2
-\frac{4\pi}{{\tilde\ell}}\frac{\beta'}{\beta^2}ff'
+\frac{|\beta'|^2}{\beta^4} |f|^2
\Big] b_6
 +\frac 23\epsilon {\mathbf b}_1 fe + \epsilon \frac3{\beta}e^2b_1
 - \epsilon^3 \frac{8\pi^2} {{\tilde\ell}^2} {\mathbf b}_1 e'' f \Bigg).
\end{align*}
For the nonlinear coupling problem
\begin{align} \label{7-138}
 \hat {\mathcal L} (f, e) = \Big[\left(\mathcal Q_{1\epsilon}, \mathcal Q_{2\epsilon}\right) - \left(\mathcal A_{1\epsilon}, \mathcal A_{2\epsilon} \right) \Big](f, e)
\,+\, \left({\mathfrak h}, d\right),
\end{align}
with boundary conditions in \eqref{boundaryhatf-e},
we can deduce from the contraction mapping principle that the problem is uniquely solvable if
$$
\|\beta {\mathfrak h}\|_{L^2(0, 2\pi)} \le {C} \epsilon, \quad \|d\|_{L^2(0, 2\pi)} \le C \epsilon^{\frac32-\frac\varrho 2- \alpha}.
$$
In fact, define the set
\[
\tilde{\mathcal B} = \big\{ (f, e)\,:\, (f, e)~\text{satisfies} ~\eqref{boundaryhatf-e}~\text{and}~\|f\|_* \le \bar{C} \epsilon^{1-\alpha}, \|e\|_{**} \le \bar{C} \epsilon^{\frac 32 \varrho+ \frac12-2\alpha} \big\}.
\]
with $\bar C$ large enough.
Problem \eqref{7-138} can be transformed into the following fixed point problem
\begin{align} \label{fixed point problem}
 (f,e) =\, & \hat {\mathcal L}^{-1}\Bigg( \Big[ \left(\mathcal Q_{1\epsilon}, \mathcal Q_{2\epsilon}\right) - \left(\mathcal A_{1\epsilon}, \mathcal A_{2\epsilon}\right) \Big] (f, e)
 \,+\,
 \left({\mathfrak h}, d\right)\Bigg)
\nonumber\\[2mm]
: = \, & \mathbb{T} (f, e),
\end{align}
where
\begin{align*}
\hat {\mathcal L}^{-1} = \left( \hat {\mathcal L}_1^{-1}, \hat {\mathcal L}_2^{-1}\right).
\end{align*}
 By the results provided in Lemma \ref{Lemma7-2}, and combining \eqref{constraint-alpha-varrho}, \eqref{Linfty of f},  we can show that
$\mathbb{T} $ is a contradiction map from $\tilde{\mathcal B}$ to itself. From the contraction mapping principle, problem \eqref{fixed point problem} has a fixed point in $\tilde{\mathcal B}$.

\medskip
Finally, we consider the problem
\begin{align} \label{7-}
 \hat {\mathcal L} (f, e)
 = \Big[\left(\mathcal Q_{1\epsilon}, \mathcal Q_{2\epsilon}\right)
 - \left(\mathcal A_{1\epsilon}, \mathcal A_{2\epsilon}\right)
 - \left(\mathcal K_{1\epsilon}, \mathcal K_{2\epsilon}\right)  \Big](f, e),
\end{align}
 under the boundary condition \eqref{boundaryhatf-e}.
 Denote by
\begin{align*}
 \tilde {\mathcal L} (e,f)
 = \hat {\mathcal L} (f, e)
 - \Big[\left(\mathcal Q_{1\epsilon}, \mathcal Q_{2\epsilon}\right) - \left(\mathcal A_{1\epsilon}, \mathcal A_{2\epsilon}\right)\Big] (f,e).
\end{align*}
 Then we transfer problem \eqref{7-} into the following fixed point problem
\begin{align*}
 (f,e) = \underbrace{- \tilde {\mathcal L}^{-1} \left(\mathcal K_{1\epsilon}, \mathcal K_{2\epsilon}\right)}_{\text{compact operator}} (f,e).
\end{align*}
 By Schauder's fixed point theorem and the estimates provided in Lemma \ref{Lemma7-2}, we can get the existence of $(f, e)$ for \eqref{7-}, which satisfies \eqref{constraint-f}-\eqref{constraint-e}.
 \qed

\medskip
 \section{Choosing the parameter $\epsilon$ from mass constraint (\ref{constraint2})}\label{Section9}

After finding a solution $v$ of \eqref{eq3} in Sections \ref{Section5}-\ref{Section8}, we now consider the constraint \eqref{constraint2},
which can be formulated as the following
\begin{align} \label{constraint6}
a & \,=\, \int_{ {\mathfrak S}_\delta} |v|^2\,{\mathrm d}y
\ +\
\int_{{\mathbb R}^2\setminus{\mathfrak S}_\delta} |v|^2\,{\mathrm d}y
\nonumber\\[2mm]
 & \,=\, \int_{{\mathfrak S}_\delta } |v|^2 \Big[1+ \epsilon\kappa(\epsilon z) s \Big] {\mathrm d} s\,{\mathrm d} z
 \ +\
\int_{{\mathbb R}^2\setminus {\mathfrak S}_\delta } |v|^2\,{\mathrm d}y
\nonumber\\[2mm]
 & \,=\,
\frac1\epsilon\int_{{\mathfrak S}_\delta} |v|^2 \Bigg[1+ \kappa \frac{\epsilon (x+f)} {\beta} \Bigg] \frac {\tilde \ell} {2\pi \beta}{\mathrm d} \left(\frac{ x+f} {\beta}\right)\,{\mathrm d}\tau
\ +\
\int_{{\mathbb R}^2\setminus{\mathfrak S}_\delta} |v|^2\,{\mathrm d}y,
\end{align}
where
\begin{align*}
{\mathfrak S}_\delta=\Bigg\{ (x, \tau): \quad |x| \le \frac {\beta \delta} \epsilon, \quad 0<\tau< 2\pi\Bigg\}.
\end{align*}
 Since $|x| \le  \frac {\beta \delta} \epsilon$, then we have
\begin{align} \label{out of delta}
 -\frac\delta \epsilon + \frac{\epsilon f} {\beta} < s= \frac{(x+f)} {\beta} \le \frac \delta \epsilon+ \frac{\epsilon f} {\beta}.
\end{align}
 Following the analysis in Section \ref{Section6}, we know that
\begin{align*}
 v(y) & = \mathbf{U}(y) \,+\, \eta_{3\delta}^{\epsilon}(\epsilon |s|){\tilde\phi}(y)
 \,+\, {\tilde\psi}(y)
\nonumber\\[2mm]
 &
 = \eta_{3\delta}^\epsilon(s) \beta(\epsilon z)w_2(x, \tau)
 \,+\, \eta_{3\delta}^{\epsilon}(\epsilon |s|){\tilde\phi}(y)
 \,+\, {\tilde\psi}(y).
\end{align*}
In the above, $ \eta_{3\delta}^\epsilon(s) $ is the cut-off function given in Section \ref{Section6.1}, and the function $w_2(x, \tau) $ is given in \eqref{newthirdapproximatesolution} with the form
\begin{align*}
w_2(x, \tau) = U(x) \,+\, \epsilon \frac{e(\tau)}{\beta(\epsilon z)} \mathcal Z(x) \,+\, \epsilon \phi_1(x, \tau) \,+\, \epsilon^2\phi_2(x, \tau).
\end{align*}
where $\phi_1, \phi_2$ are given in \eqref{definition of phi1}, \eqref{definition of e2phi2}.

In the region ${\mathbb R}^2\setminus{\mathfrak S}_\delta$, which means far from the curve $\G_\epsilon$, the functions
\[ w_2(x, \tau), \quad \eta_{3\delta}^{\epsilon}(\epsilon |s|){\tilde\phi}(y), \quad {\tilde\psi}(y), \]
exhibit exponential decay, and by \eqref{out of delta}, thus the integral
\begin{align}
 \int_{{\mathbb R}^2\setminus{\mathfrak S}_\delta} |v|^2\,{\mathrm d}y \in O\left(e^{-\frac{\tilde \delta}\epsilon} \right).
\label{estimate1}
\end{align}
where $\tilde \delta$ is small positive constant.
On the other hand, we have
\begin{align}
 & \frac1\epsilon\int_{{\mathfrak S}_\delta} |v|^2 \Bigg[1+ \kappa \frac{\epsilon (x+f)} {\beta} \Bigg] \frac {\tilde \ell} {2\pi \beta}{\mathrm d} \left(\frac{ x+f} {\beta}\right)\,{\mathrm d}\tau
\nonumber\\[2mm]
 &
 = \frac {\tilde \ell} {2\pi \epsilon} \int_{{\mathfrak S}_\delta} |v|^2 \frac {1} {\beta^2}\, {\mathrm d} x \,{\mathrm d}\tau
\nonumber\\[2mm]
 &
 = \frac {\tilde \ell} {2\pi \epsilon} \int_{{\mathfrak S}_\delta} \frac {1} {\beta^2} \left[ |\beta|^2 w_2^2+ |{\tilde\phi}|^2+ |{\tilde\psi}|^2 + 2\beta w_2 \big( {\tilde\phi} + {\tilde\psi} \big) + 2 \tilde \phi\tilde \psi \right] \, {\mathrm d} x \,{\mathrm d}\tau
\nonumber\\[2mm]
 &
 = \frac {\tilde \ell} {2\pi \epsilon} \int_{{\mathfrak S}_\delta} w_2^2 \, {\mathrm d} x \,{\mathrm d}\tau
\ +\
\frac {\tilde \ell} {2\pi \epsilon} \int_{{\mathfrak S}_\delta} \frac {1} {\beta^2} \left[ |{\tilde\phi}|^2+ |{\tilde\psi}|^2 + 2\beta w_2 \big( {\tilde\phi} + {\tilde\psi} \big) + 2 \tilde \phi\tilde \psi \right] \, {\mathrm d} x \,{\mathrm d}\tau.
\label{estimate2}
\end{align}
The terms in the above formula will be estimated in the following way.

\smallskip
\noindent $\spadesuit$\
For the first term, we have
\begin{align*}
 & \frac {\tilde \ell} {2\pi \epsilon} \int_{{\mathfrak S}_\delta} w_2^2 \, {\mathrm d} x \,{\mathrm d}\tau
\nonumber\\[2mm]
 &
 = \frac {\tilde \ell} {2\pi \epsilon} \int_{{\mathfrak S}_\delta} \big|U(x) + \epsilon \frac e\beta \mathcal Z + \epsilon \phi_1 + \epsilon^2\phi_2 \big|^2\, {\mathrm d} x \,{\mathrm d}\tau
 \nonumber\\[2mm]
 &
 = \frac {\tilde \ell} {2\pi \epsilon} \int_{{\mathfrak S}} \big|U(x) + \epsilon \frac e\beta \mathcal Z + \epsilon \phi_1 + \epsilon^2\phi_2 \big|^2\, {\mathrm d} x \,{\mathrm d}\tau
  +O\left(e^{-\frac{\tilde \delta}\epsilon} \right)
\nonumber\\[2mm]
 &
 = \frac {\tilde \ell} {2\pi \epsilon} \int_{{\mathfrak S}}
\Big[ U^2 + \epsilon^2\frac{e^2}{\beta^2}|\mathcal Z|^2 + \epsilon^2|\phi_1|^2+ \epsilon^4|\phi_2|^2\Big]\, {\mathrm d} x \,{\mathrm d}\tau
 + \frac {\tilde \ell} {\pi \epsilon} \int_{{\mathfrak S}} U \left(\epsilon \frac e\beta \mathcal Z + \epsilon \phi_1 + \epsilon^2\phi_2 \right)\, {\mathrm d} x \,{\mathrm d}\tau
\nonumber\\[2mm]
 &
 \quad
 +\epsilon\frac {\tilde \ell} {\pi} \int_{{\mathfrak S}} \frac e\beta \mathcal Z \phi_1\, {\mathrm d} x \,{\mathrm d}\tau
 + \epsilon^2\frac {\tilde \ell} {\pi} \int_{{\mathfrak S}}\left(\frac e\beta \mathcal Z \phi_2 + \phi_1 \phi_2 \right) \, {\mathrm d} x \,{\mathrm d}\tau
 +O\left(e^{-\frac{\tilde \delta}\epsilon} \right).
\end{align*}
 We have
\begin{align*}
 & \frac {\tilde \ell} {2\pi \epsilon} \int_{{\mathfrak S}}
\Big[U^2 + \epsilon^2 \frac{e^2}{\beta^2}|\mathcal Z|^2 + \epsilon^2|\phi_1|^2+ \epsilon^4|\phi_2|^2\Big]\, {\mathrm d} x \,{\mathrm d}\tau
\nonumber\\[2mm]
 & = \frac {\tilde \ell} {2\pi \epsilon} \left[ \int_0^{2\pi} \int_{\mathbb R} U^2 \, {\mathrm d} x \,{\mathrm d}\tau
 + \epsilon^2 \int_0^{2\pi} \int_{\mathbb R} \frac{e^2}{\beta^2} |\mathcal Z|^2 \, {\mathrm d} x \,{\mathrm d}\tau
 + \epsilon^2 \int_0^{2\pi} \int_{\mathbb R} |\phi_1|^2 \, {\mathrm d} x \,{\mathrm d}\tau + \epsilon^4 \int_0^{2\pi} \int_{\mathbb R} |\phi_2|^2 \, {\mathrm d} x \,{\mathrm d}\tau \right]
\nonumber\\[2mm]
& =
 \frac {\tilde \ell} {2\pi \epsilon} \left[ 2\pi \varrho_0 + \epsilon^2 \Big\|\frac e\beta \Big\|^2_{L^2(0, 2\pi)} + \epsilon^2 \int_0^{2\pi} \int_{\mathbb R} \left( {\mathbf b}_1^2\varpi_1^2+ \frac49f^2 {\mathbf b}_1^2\varpi_2^2 \right) \, {\mathrm d} x \,{\mathrm d}\tau + \epsilon^4 \bar{D}_{1\epsilon} \right]
\nonumber\\[2mm]
& =
 \frac {\tilde \ell} {2\pi \epsilon} \left[ 2\pi \varrho_0 + \epsilon^2 \Big\|\frac e\beta \Big\|^2_{L^2(0, 2\pi)} + \epsilon^2\big( \bar{D}_{0\epsilon} + O\left(\|f\|_*^2 \right) \big) + \epsilon^4 \bar{D}_{1\epsilon} \right],
\end{align*}
 where
\begin{align*}
 \bar{D}_{0\epsilon} := \int_0^{2\pi} \int_{\mathbb R} {\mathbf b}_1^2\varpi_1^2 \, {\mathrm d} x \,{\mathrm d}\tau,
\qquad
\bar{D}_{1\epsilon} := \int_0^{2\pi} \int_{\mathbb R} |\phi_2|^2 \, {\mathrm d} x \,{\mathrm d}\tau
\end{align*}
 are uniformly bounded, and
\[ \int_0^{2\pi} \int_{\mathbb R} \frac49f^2 {\mathbf b}_1^2\varpi_2^2 \, {\mathrm d} x \,{\mathrm d}\tau \in O\left(\|f\|_*^2 \right). \]
 And
\begin{align*}
\frac {\tilde \ell} {\pi \epsilon} \int_{{\mathfrak S}} U \left(\epsilon \frac e\beta \mathcal Z + \epsilon \phi_1 + \epsilon^2\phi_2 \right)\, {\mathrm d} x \,{\mathrm d}\tau
& = \frac {\tilde \ell} {\pi} O\left(\|f\|_* + \|e\|_{**} \right) +\epsilon \frac {\tilde \ell} {\pi} \bar{D}_{2\epsilon},
\end{align*}
\begin{align*}
& \epsilon\frac {\tilde \ell} {\pi} \int_{{\mathfrak S}} \frac e\beta \mathcal Z \phi_1\, {\mathrm d} x \,{\mathrm d}\tau
 + \epsilon^2\frac {\tilde \ell} {\pi} \int_{{\mathfrak S}}\left( \frac e\beta \mathcal Z \phi_2 + \phi_1 \phi_2 \right) \, {\mathrm d} x \,{\mathrm d}\tau
= \epsilon \frac {\tilde \ell} {\pi} O\left(\|e\|_{**} \right)
+\epsilon^2\frac {\tilde \ell} {\pi} \bar{D}_{3\epsilon},
\end{align*}
 where the terms
\begin{align*}
 \bar{D}_{2\epsilon} = \int_{{\mathfrak S}} U \phi_2 \, {\mathrm d} x \,{\mathrm d}\tau,
\qquad
\bar{D}_{3\epsilon} = \int_{{\mathfrak S}} \phi_1 \phi_2 \, {\mathrm d} x \,{\mathrm d}\tau,
\end{align*}
 are bounded.
 Then we conclude that
\begin{align} \label{w2integral}
\frac {\tilde \ell} {2\pi \epsilon} \int_{{\mathfrak S}_\delta} w_2^2 \, {\mathrm d} x \,{\mathrm d}\tau
& =
 \frac {\tilde \ell} {2\pi \epsilon} \left[ 2\pi \varrho_0 + \epsilon^2\|\frac e\beta\|^2_{L^2(0, 2\pi)} + \epsilon^2\left( \bar{D}_{0\epsilon} + O\left(\|f\|_*^2 \right) \right) + \epsilon^4 \bar{D}_{1\epsilon} \right]
\nonumber\\[2mm]
& \quad +
 \frac {\tilde \ell} {\pi} O\left(\|f\|_* + \|e\|_{**} \right) +\epsilon \frac {\tilde \ell} {\pi} \bar{D}_{2\epsilon} +\epsilon \frac {\tilde \ell} {\pi} O\left(\|e\|_{**} \right) +\epsilon^2\frac {\tilde \ell} {\pi} \bar{D}_{3\epsilon}
\nonumber\\[2mm]
& = \frac {\tilde \ell} {\epsilon} \varrho_0 +
 \frac {\tilde \ell} {\pi} O\left(\|f\|_* + \|e\|_{**} +\epsilon \right).
\end{align}

\smallskip
\noindent $\spadesuit$\
 By the relation $\tilde \phi = \beta \phi^* $, and \eqref{contraction-varphi},  we have
\begin{align} \label{tilde phi+tilde psi}
& \frac {\tilde \ell} {2\pi \epsilon} \int_{{\mathfrak S}_\delta} \frac {1} {\beta^2}
\left[
|{\tilde\phi}|^2+ |{\tilde\psi}|^2
+ 2\beta w_2 \big( {\tilde\phi} + {\tilde\psi} \big)
+ 2 \tilde \phi\tilde \psi
\right] \, {\mathrm d} x \,{\mathrm d}\tau
\nonumber\\[2mm]
& = \frac {\tilde \ell} {2\pi \epsilon} \|\phi^*\|_{L^2(\mathfrak S_\delta)}
+
O\left(
\epsilon \lf[\|{\tilde\phi}\|_{L^{\infty}(|s|>\frac{\delta}{\epsilon})}
\,+\, \|\nabla {\tilde\phi}\|_{L^{\infty}(|s|>\frac{\delta}{\epsilon})}\ri]
\,+\,
e^{-\frac{ \upsilon_2 \delta}{\epsilon}}
\right)
\frac {\tilde \ell} {2\pi \epsilon} \int_{{\mathfrak S}_\delta} \frac {1} {\beta^2} \, {\mathrm d} x \,{\mathrm d}\tau
\nonumber\\[2mm]
& = \frac {\tilde \ell} {2\pi \epsilon} \|\phi^*\|_{L^2(\mathfrak S_\delta)}
+
\frac{\tilde \ell} \epsilon O\left(e^{-\frac{\tilde \delta}\epsilon} \right).
\end{align}
 Here we note that we can show
\begin{align*}
\epsilon \lf[\|{\tilde\phi}\|_{L^{\infty}(|s|>\frac{\delta}{\epsilon})}
 \,+\, \|\nabla {\tilde\phi}\|_{L^{\infty}(|s|>\frac{\delta}{\epsilon})}\ri]
\in O\left(e^{-\frac{\tilde \delta}\epsilon} \right),
\end{align*}
 by a barrier argument.

\medskip
 Combining \eqref{constraint6}, \eqref{estimate1} and \eqref{estimate2} together with \eqref{w2integral}-\eqref{tilde phi+tilde psi}, we get
\begin{align} \label{a-equality}
a &= \frac {\tilde \ell} {\epsilon} \varrho_0
+\frac {\tilde \ell} {\epsilon\pi} O\Big(\epsilon\|f\|_* +\epsilon \|e\|_{**} +\epsilon^2 +\|\phi^*\|_{L^2(\mathfrak S_\delta)} \Big)
+ \frac{\tilde \ell} \epsilon O\left(e^{-\frac{\tilde \delta}\epsilon} \right)
\nonumber\\[2mm]
& = \frac {\tilde \ell} {\epsilon} \varrho_0 \left(1+ O\left(\epsilon^{2-\alpha}\right) \right),
\end{align}
due to \eqref{constraint-f}, \eqref{constraint-e} and \eqref{constraint-alpha-varrho}.
 By \eqref{a-equality}, we conclude that
\begin{align} \label{realtion for a eps}
 \frac\epsilon{\tilde \ell} = \frac{\varrho_0}{a} \left(1+ O\left(\epsilon^{2-\alpha}\right) \right),
\end{align}
where the estimate of ${\tilde\ell}$ is given in \eqref{boundneess of length}.
Moreover, for any $\epsilon$ small, we need the gap condition \eqref{gap condition1}
to ensure the validity of estimate \eqref{estimate norm of e1} in Lemma \ref{linear theory for e}.
By \eqref{gap condition1.22} and \eqref{realtion for a eps},
we need the following holds
\begin{align*}
\Bigg|\, \frac{\varrho_0}{a} \left(1+ O\left(\epsilon^{2-\alpha}\right) \right) - \frac{\sqrt{\lambda_*}} {j}\, \Bigg|
\,>\,
\frac{c} {j^2},
\quad \forall\,j \in {\mathbb N}.
\end{align*}
In fact, if the validity of \eqref{condition for a} holds, we then have
\begin{align*}
\Bigg|\, \frac{\varrho_0}{a} \left(1+ O\left(\epsilon^{2-\alpha}\right) \right)- \frac{\sqrt{\lambda_*}} {j}\, \Bigg|
&\,>\,
\Bigg|\, \frac{\varrho_0} a - \frac{\sqrt{\lambda_*}} {j}\, \Bigg|
\,-\,
C \epsilon^{\varrho-\alpha} \frac {\epsilon^{2-\varrho}} {a}
\nonumber\\[2mm]
&
\,>\, \frac{2c} {j^2}-C\epsilon^{\varrho-\alpha} \frac {1} {a^2} >\frac{c} {j^2},
\quad \forall\,j \in {\mathbb N},
\end{align*}
due to \eqref{realtion for a eps}, \eqref{boundneess of length},  and the assumption in \eqref{constraint-alpha-varrho}.

\medskip
\begin{appendix}

\medskip
\section{Proof of (\ref{7-89}) } \label{appendixa1}

By the norm defined in \eqref{norm h2 mathfrak S}, we will prove \eqref{7-89}, i.e.
\begin{align}
\sup_{\tau \in (0, 2\pi)} \|\phi^*(\cdot, \tau)\|^2_{L^2({\mathbb R})} \le C\frac{\tilde \ell} {\epsilon} \|\phi^*\|_{H^2_*(\mathfrak S)}^2,
\qquad
\sup_{\tau \in (0, 2\pi)} \|\phi_x^*(\cdot, \tau)\|^2_{L^2({\mathbb R})} \le C \frac{\tilde \ell} {\epsilon} \|\phi^*\|_{H^2_*(\mathfrak S)}^2.
\label{7-89-1}
\end{align}
In fact, we consider the following two cases.

\smallskip
\noindent $\clubsuit$\
If
$$
\sup_{\tau \in (0, 2\pi)} \|\phi^*(\cdot, \tau)\|^2_{L^2({\mathbb R})}
\leq 2\inf_{\tau \in (0, 2\pi)} \|\phi^*(\cdot, \tau)\|^2_{L^2({\mathbb R})} ,
$$
then
$$
\sup_{\tau \in (0, 2\pi)} \|\phi^*(\cdot, \tau)\|^2_{L^2({\mathbb R})}
\leq
2\frac{1}{2\pi}\int_0^{2\pi}\|\phi^*(\cdot, \tau)\|^2_{L^2({\mathbb R})} {\mathrm d}\tau
\leq C\frac{\tilde \ell} {\epsilon} \|\phi^*\|_{H^2_*(\mathfrak S)}^2,
$$
where ${\tilde\ell}$ has a uniformly positive lower bound in \eqref{tildeell-lowerbound}.

\smallskip
\noindent $\clubsuit$\
We consider the case
$$
\sup_{\tau\in (0, 2\pi)} \|\phi^*(\cdot, \tau)\|^2_{L^2({\mathbb R})} > 2\inf_{\tau\in (0, 2\pi)} \|\phi^*(\cdot, \tau)\|^2_{L^2({\mathbb R})} .
$$
By denoting $\tau_0$ the  point  such that
\begin{equation*}
 \|\phi^*(\cdot, \tau_0)\|^2_{L^2({\mathbb R})} \leq \frac{5}{4}\inf_{\tau\in (0, 2\pi)} \|\phi^*(\cdot, \tau)\|^2_{L^2({\mathbb R})},
\end{equation*}
we get
\begin{align*}
\sup_{\tau\in (0, 2\pi)} \|\phi^*(\cdot, \tau)\|^2_{L^2({\mathbb R})}
\,\leq\,&
\|\phi^*(\cdot, \tau_0)\|^2_{L^2({\mathbb R})}
\,+\,2\int^{2\pi}_0 \Bigg[\Big(\int_{\R} |\phi^*(x, \tau)|^2  {\mathrm d} x \Big)^{\frac{1}{2}}
\frac{{\mathrm d}}{{\mathrm d} \tau}\Big(\int_{\R} |\phi^*(x, \tau)|^2  {\mathrm d} x \Big)^{\frac{1}{2}} \Bigg] {\mathrm d} \tau
\\[2mm]
\,\leq\,&
\frac{5}{4}\inf_{\tau\in (0, 2\pi)} \|\phi^*(\cdot, \tau)\|^2_{L^2({\mathbb R})}
\,+\,2\int^{2\pi}_0\Bigg[ \Big(\int_{\R} |\phi^*(x, \tau)|^2  {\mathrm d} x \Big)^{\frac{1}{2}}
\frac{{\mathrm d}}{{\mathrm d} \tau}\Big(\int_{\R} |\phi^*(x, \tau)|^2  {\mathrm d} x \Big)^{\frac{1}{2}} \Bigg] {\mathrm d} \tau
\\[2mm]
\,\leq\,&
\frac{5}{8}\sup_{\tau\in (0, 2\pi)} \|\phi^*(\cdot, \tau)\|^2_{L^2({\mathbb R})}
\,+\,2\int^{2\pi}_0 \Bigg[\Big(\int_{\R} |\phi^*(x, \tau)|^2  {\mathrm d} x \Big)^{\frac{1}{2}}
\frac{{\mathrm d}}{{\mathrm d} \tau}\Big(\int_{\R} |\phi^*(x, \tau)|^2  {\mathrm d} x \Big)^{\frac{1}{2}}\Bigg]  {\mathrm d} \tau.
\end{align*}
Hence,
\begin{align*}
\sup_{\tau\in (0, 2\pi)} \|\phi^*(\cdot, \tau)\|^2_{L^2({\mathbb R})}
\leq
C\,\int^{2\pi}_0\Bigg[ \Big(\int_{\R} |\phi^*(x, \tau)|^2  {\mathrm d} x \Big)^{\frac{1}{2}}
\frac{{\mathrm d}}{{\mathrm d} \tau}\Big(\int_{\R} |\phi^*(x, \tau)|^2  {\mathrm d} x \Big)^{\frac{1}{2}}\Bigg]  {\mathrm d} \tau.
\end{align*}
The term in the right hand side can be estimated as follows
\begin{align*}
&\int^{2\pi}_0\Bigg[ \Big(\int_{\R} |\phi^*(x, \tau)|^2  {\mathrm d} x \Big)^{\frac{1}{2}}
\frac{{\mathrm d}}{{\mathrm d} \tau}\Big(\int_{\R} |\phi^*(x, \tau)|^2  {\mathrm d} x \Big)^{\frac{1}{2}}\Bigg]  {\mathrm d} \tau
\nonumber\\[2mm]
&\le C \Bigg\{\frac{\tilde\ell}{\epsilon} \int_0^{2\pi} \int_{\R} |\phi^*(x, \tau)|^2  {\mathrm d} x {\mathrm d}\tau
+ \frac{\epsilon}{\tilde\ell} \int_0^{2\pi} \Bigg[\frac{{\mathrm d}}{{\mathrm d} \tau}\Big(\int_{\R} |\phi^*(x, \tau)|^2  {\mathrm d} x \Big)^{\frac{1}{2}} \Bigg]^2{\mathrm d}\tau \Bigg\}
\nonumber\\[2mm]
 &\leq C \Bigg\{\frac{\tilde\ell}{\epsilon} \int_0^{2\pi} \int_{\R} |\phi^*(x, \tau)|^2  {\mathrm d} x {\mathrm d}\tau
+\frac{\tilde\ell}{\epsilon}\int_0^{2\pi}\int_{\R} \Big| \frac{\epsilon}{\tilde\ell}\phi^*_{\tau}(x, \tau)\Big|^2 {\mathrm d} x{\mathrm d}\tau \Bigg\}
\nonumber\\[2mm]
 &\le C\frac{\tilde \ell} {\epsilon} \|\phi^*\|_{H^2_*(\mathfrak S)}^2,
\end{align*}
where we have used the fact
\begin{align*}
 \int_0^{2\pi} \Bigg[\frac{{\mathrm d}}{{\mathrm d} \tau}\Big(\int_{\R} |\phi^*(x, \tau)|^2  {\mathrm d} x \Big)^{\frac{1}{2}} \Bigg]^2{\mathrm d}\tau
 \,=\,&\int_0^{2\pi} \Bigg[\frac{1}{2}\Big(\int_{\R} |\phi^*(x, \tau)|^2  {\mathrm d} x \Big)^{-\frac{1}{2}} \frac{{\mathrm d}}{{\mathrm d} \tau}\Big(\int_{\R} |\phi^*(x, \tau)|^2  {\mathrm d} x \Big) \Bigg]^2{\mathrm d}\tau
\nonumber\\[2mm]
  \,=\,&\frac{1}{4}\int_0^{2\pi}\frac{\Big(\int_{\R} 2\phi^*(x, \tau)\phi^*_{\tau}(x, \tau) {\mathrm d} x \Big)^2}{\int_{\R} |\phi^*(x, \tau)|^2  {\mathrm d} x }{\mathrm d}\tau
  \nonumber\\[2mm]
  \,\leq\,&C\int_0^{2\pi}\frac{\Big(\int_{\R} | \frac{\epsilon}{\tilde\ell}\phi^*_{\tau}(x, \tau)|^2 {\mathrm d} x \Big)\Big(\int_{\R} |\frac{\tilde\ell}{\epsilon}\phi^*(x, \tau)|^2 {\mathrm d} x \Big)}{\int_{\R} |\phi^*(x, \tau)|^2  {\mathrm d} x }{\mathrm d}\tau
    \nonumber\\[2mm]
  \,\leq\,&C \frac{\tilde\ell^2}{\epsilon^2}\int_0^{2\pi}\int_{\R} \Big| \frac{\epsilon}{\tilde\ell}\phi^*_{\tau}(x, \tau)\Big|^2 {\mathrm d} x{\mathrm d}\tau.
\end{align*}
Therefore, the first estimate in \eqref{7-89-1} has been verified, and the second one can be shown in a similar way.

\end{appendix}

\medskip
\medskip
{\bf Acknowledgements:}
L. Duan  is  supported by NSFC (No. 12201140), Guangdong Basic and Applied Basic Research Foundation (No. 2022A1515111131) and Guangzhou Basic and Applied Basic Research Foundation (No. 2025A04J0029).
S. Wei was supported by  Guangzhou Basic and Applied Basic Research Foundation (No. 2024A04J4907).
J. Yang was supported by NSFC (No. 12171109 and No. 12431003).



\begin{thebibliography}{99}

\bibitem{aftalionriviere}
A. Aftalion and T. Rivi\'{e}re,
\emph{Vortex energy and vortex bending for a rotating BEC},
Phys. Rev. A
64 (2001), 043611.

\bibitem{aftalionbook}
A. Aftalion,
\emph{Vortices in Bose Einstein Condensates},
Progr, Nonlinear Differential Equations Appl., vol. 67, Birkh\"{a}user
Boston, Inc., Boston, MA, 2006.

\bibitem{AMNconjecture1}
A. Ambrosetti, A. Malchiodi and W.-M. Ni,
\emph{Singularly perturbed elliptic equations with symmetry: existence of solutions concentrating on spheres I},
Comm. Math. Phys.
235 (2003), no. 3, 427-466.




\bibitem{AMNconjecture2}
A. Ambrosetti, A. Malchiodi and W.-M. Ni,
\emph{Singularly perturbed elliptic equations with symmetry: existence of solutions concentrating on spheres II},
Indiana Univ. Math. J.
53 (2004), no. 2, 297-329.


\bibitem{Ambrosetti}
A. Ambrosetti, M. Badiale and S. Cingolani,
\emph{Semiclassical states of nonlinear Schr\"odinger equations},
Arch. Rat. Mech. Anal.
{140} (1997), 285-300.


\bibitem{AmbMalSecc2001}
A. Ambrosetti, A. Malchiodi and S. Secchi,
\emph{Multiplicity results for some nonlinear Schr\"odinger equations with potentials},
Arch. Rat. Mech. Anal.
159 (2001), 253-271.


\bibitem{Bao}
W. Bao and Y. Cai,
\emph{Mathmatical theory and numerical methods for Bose-Einstein condensation},
Kinetic and Related Models
{6} (2013), 1-135.

\bibitem{numerics}
W. Bao and Q. Du,
\emph{Computing the ground state solution of BECs by a normalized gradient flow},
SIAM J. Sci. Comput.
25 (2004), 1674-1679.


\bibitem{baoREVIEW}
W. Bao,
\emph{Ground states and dynamics of rotating BECs}, in Transport phenomena and kinetic
theory, C. Cercignani and E. Gabetta (eds), Birkha\"{u}ser, 2007.

\bibitem{BP}
T. Bartsch and S. Peng,
\emph{Semiclassical symmetric Schr\"odinger equations: existence of solutions concentrating simultaneously on several spheres},
Z. Angew. Math. Phys.
58 (2007), 778-804.

\bibitem{BP1}
T. Bartsch and S. Peng,
\emph{Existence of solutions concentrating on higher dimensional subsets for singularly perturbed elliptic equations I},
Indiana Univ. Math. J.
57 (2008), 1599-1632.

\bibitem{BP2}
T. Bartsch and S. Peng,
\emph{Solutions concentrating on higher dimensional subsets for singularly perturbed elliptic equations II},
J. Differential Equations
248 (2010), 2746-2767.

\bibitem{Bloch}
I. Bloch, J. Dalibard and W. Zwerger,
\emph{Many-body physics with ultracold gases},
Rev. Mod. Phys.
{80} (2008), 885.


\bibitem{CPY2019}
D. Cao, S. Peng and S. Yan,
\emph{Singularly perturbed methods for nonlinear elliptic problems},
Cambridge Studies in Advanced Mathematics, 191.
Cambridge University Press, Cambridge, 2021.

\bibitem{Cornell}
E. Cornell and C. Wieman,
\emph{Nobel Lecture: Bose-Einstein condensation in a dilute gas, the first 70 years and some recent experiments},
Rev. Mod. Phys.
74 (2002), 875-893.



\bibitem{DY10}
E. Dancer and S. Yan,
\emph{A new type of concentrating solutions for a singularly perturbed elliptic problem},
Trans. Amer. Math. Soc.
359 (2007), 1765-1790.

\bibitem{Davis}
K. Davis, M. Mewes, M. Andrews, N. van Druten, D. Durfee, D. Kurn and W. Ketterle,
\emph{Bose-Einstein condensation in a gas of sodium atoms},
Phys. Rev. Lett.
75 (1995), 3969-3973.



\bibitem{delPFelmer1996}
M. del Pino and P. Felmer,
\emph{Local mountain passes for semilinear elliptic problems in unbounded domains},
Calc. Var. Partial Differential Equations
4 (1996), no. 2, 121-137.




\bibitem{delPFelmer1997}
M. del Pino and P. Felmer,
\emph{Semi-classical states for nonlinear Schr\"odinger equations},
J. Funct. Anal.
149 (1997), no. 1, 245-265.




\bibitem{delPKowWei0} 
M. del Pino, M. Kowalczyk and J. Wei,
\emph{Resonance and interior layers in an inhomogeneous phase transition model},
SIAM J. Math. Anal. 38 (2006/07), no. 5, 1542-1564.



\bibitem{delPKowWei1} 
M. del Pino, M. Kowalczyk and J. Wei,
\emph{Concentration on curves for nonlinear Schr\"odinger equations},
Comm. Pure Appl. Math. {60} (2007), no. 1, 113-146.


\bibitem{delPKowWeiYang} 
M. del Pino, M. Kowalczyk and J. Wei, J. Yang,
\emph{Interface foliation near minimal submanifolds in Riemannian manifolds with positive Ricci curvature},
Geom. Funct. Anal. 20 (2010), no. 4, 918-957.


\bibitem{docarmo}
M. P. do Carmo, Differential geometry of curves and surfaces,
Translated from the Portuguese. Prentice-Hall, Inc., Englewood Cliffs, N.J., 1976. viii+503 pp.


%












\bibitem{FloerWein1986}
 A. Floer and A. Weinstein,
 \emph{Nonspreading wave packets for the cubic Schr\"odinger equation with a bounded potential},
 J. Funct. Anal.
 69 (1986), no. 3, 397-408.

\bibitem{Gilbarg}
D. Gilbarg and N. Trudinger,
\emph{Elliptic partial differential equations of second order}, Second edition. Springer-Verlag, Berlin, 1983.



\bibitem{Gierer-Meinhardt}
A. Gierer and H. Meinhardt,
\emph{A theory of biological pattern formation},
newblock Kybernetik 12(1):30-39, 1972.



\bibitem{Gross}
E. Gross,
\emph{Structure of a quantized vortex in boson systems},
Nuovo Cimento
20 (1961), 454-466.


\bibitem {Guoqing-chunhua}
 Q. Guo, P. Luo, C. Wang, J. Yang,
\emph{ Existence and local uniqueness of normalized peak solutions for a Schrodinger-Newton system},
https://arxiv.org/pdf/2008.01557.pdf, to appear in Ann. Sc. Norm. Super. Pisa Cl. Sci.


\bibitem {Zhouyang}
Q. Guo, S. Tian and Y. Zhou,
\emph{Curve-like concentration for Bose-Einstein condensates},
Calc. Var. Partial Differential Equations
61 (2022), no. 2, Paper No. 63, 20 pp.

\bibitem{GuoYang}
Y. Guo and J. Yang,
\emph{Concentration on surfaces for a singularly perturbed Neumann problem in three-dimensional domains},
J. Differential Equations 255 (2013), no. 8, 2220-2266.

\bibitem{Guo JMPA}
Y. Guo,
\emph{ The nonexistence of vortices for rotating Bose-Einstein condensates in non-radially symmetric traps},
 J. Math. Pures Appl. (9) 170 (2023), 1-32.

\bibitem{Guo-luo-peng}
Y. Guo, Y. Luo and S. Peng,
\emph{Local uniqueness of ground states for rotating Bose-Einstein condensates with attractive interactions},
 Calc. Var. Partial Differential Equations 60 (2021), no. 6, Paper No. 237, 27 pp.

\bibitem{GS}
Y. Guo and R. Seiringer,
\emph{On the mass concentration for Bose-Einstein condensates with attractive interactions},
Lett. Math. Phys.
104 (2014), 141-156.




\bibitem{Guo}
Y. Guo, Z. Wang, X. Zeng and H. Zhou,
\emph{Properties of ground states of attractive Gross-Pitaevskii equations with multi-well potentials},
Nonlinearity
31 (2018), 957-979.


\bibitem{Guo1}
Y. Guo, X. Zeng and H. Zhou,
\emph{Energy estimates and symmetry breaking in attractive Bose-Einstein condensates with ring-shaped potentials},
Ann. Inst. H. Poincar\'e Anal. Non Lin\'eaire
33 (2016), 809-828.




\bibitem{KG}
G. Karali and C. Sourdis,
\emph{The ground state of a Gross-Pitaevskii energy with general potential in the Thomas-Fermi limit},
Arch. Ration. Mech. Anal.
217 (2015), no. 2, 439-523.

\bibitem{LS}
F. Lederer, G.I. Stegeman, D.N. Christodoulides, G. Assanto, M. Segev and Y. Silberberg,
\emph{Discrete solitons in optics},
Phys. Rep.
463 (2008), 1-126.

\bibitem{LevitanSargsjan}
B. M. Levitan and I. S. Sargsjan,
Sturm-Liouville and Dirac operators. Mathematics and Its Applications
(Soviet Series), 59. Kluwer Academic Publishers Group, Dordrecht, 1991.



\bibitem{Lieb2}
E. Lieb, R. Seiringer and J. Yngvason,
\emph{A rigorous derivation of the Gross-Pitaevskii energy functional for a two-dimensional Bose gas},
Commun. Math. Phys.
224 (2001), 17-31.



\bibitem{LinNiTak1988}
C.-S. Lin, W.-M. Ni and I. Takagi,
\emph{Large amplitude stationary solutions to a chemotaxis system},
J. Differential Equations
72 (1988), no. 1, 1-27.

\bibitem{Luo}
P. Luo, S. Peng, J. Wei and S. Yan,
\emph{Excited states of Bose-Einstein condensates with degenerate attractive interactions},
Calc. Var. Partial Differential Equations
60, 155 (2021).




\bibitem{machiodi}
 F. Mahmoudi, A. Malchiodi and M. Montenegro,
\emph{Solutions to the nonlinear Schr\"{o}dinger equation carrying momentum along a curve},
Comm. Pure Appl. Math.
62 (2009), no. 9, 1155-1264.

\bibitem{Malchodi1}
A. Malchiodi and M. Montenegro,
\emph{Boundary concentration phenomena for a singularly perturbed elliptic problem},
Comm. Pure Appl. Math.
55 (2002), no. 12, 1507-1568.





\bibitem{Pistoia}
 B. Pellacci, A. Pistoia, G. Vaira and Verzini,
\emph{Normalized concentrating solutions to nonlinear elliptic problems},
 J. Differential Equations 275 (2021), 882-919.


\bibitem{Pethick-Smith}
C. Pethick and H. Smith,
\emph{Bose-Einstein Condensation in Dilute Gases},
 Cambridge University Press, Cambridge, UK, 2002.


\bibitem{Pitaevskii}
L. Pitaevskii,
\emph{Vortex lines in an imperfect Bose gas},
Sov. Phys. JETP
13 (1961), 451-454.

\bibitem{WangWeiYang}
L. Wang, J. Wei and J. Yang,
\emph{On Ambrosetti-Malchiodi-Ni conjecture for general hypersurfaces},
Comm. Partial Differential Equations
36 (2011), no. 12, 2117-2161.


\bibitem{Wangzhao}
L. Wang and C. Zhao,
\emph{Concentration on curves for a nonlinear Schr\"{o}dinger problem with electromagnetic potential},
J. Differential Equations
266 (2019), no. 8, 4800-4834.

\bibitem{WeiYang}
J. Wei and J. Yang,
\emph{Concentration on lines for a singularly perturbed Neumann problem in two-dimensional domains},
Indiana Univ. Math. J. 56 (2007), no. 6, 3025-3073.


\bibitem{Wei-xu-yang}
S. Wei, B. Xu and J. Yang,
\emph{On Ambrosetti-Malchiodi-Ni conjecture on two-dimensional smooth bounded domains},
Calc. Var. Partial Differential Equations
57 (2018), no. 3, Paper No. 87, 45 pp.

\bibitem{SWei-Yang}
S. Wei and J. Yang,
\emph{On Ambrosetti-Malchiodi-Ni conjecture on two-dimensional smooth bounded domains: clustering concentration layers},
J. Funct. Anal.
281 (2021), no. 10, Paper No. 109220, 102 pp.






\end{thebibliography}
\end{document}